\numberwithin{equation}{section}
\newtheorem{theorem}{Theorem}[section]
\newtheorem{corollary}[theorem]{Corollary}
\newtheorem{claim}[theorem]{Claim}
\newtheorem{lemma}[theorem]{Lemma}
\newtheorem{proposition}[theorem]{Proposition}
\theoremstyle{definition}
\newtheorem{definition}[theorem]{Definition}
\newtheorem{remark}[theorem]{Remark}
\newtheorem{conjecture}[theorem]{Conjecture}
\newtheorem{example}[theorem]{Example}
\theoremstyle{remark}
\newcommand{\C}{\mathbb{C}}
\newcommand{\ZZ}{\mathbb{Z}}
\newcommand{\Pp}{\mathbb{P}}
\newcommand{\Ff}{\mathcal{F}}
\newcommand{\Ef}{\mathcal{E}}
\newcommand{\Efra}{\mathfrak{E}}
\newcommand{\zd}{\mathcal{Z}}
\newcommand{\ud}{\mathcal{U}}
\newcommand{\vdj}{\mathcal{V}^{\delta,j}_{d}}
\newcommand{\vdjt}{\mathcal{V}^{\delta,j,t}_{d}}
\newcommand{\Ii}{\mathcal{I}}
\newcommand{\Jj}{\mathcal{J}}
\newcommand{\N}{\mathcal{N}}
\newcommand{\Oc}{\mathcal{O}}
\DeclareMathOperator{\Pic}{{\rm Pic}}
\DeclareMathOperator{\Ext}{{\rm Ext}}
\DeclareMathOperator{\cork}{{\rm cork}}
\begin{document}

\title[Extensions of line bundles and Brill--Noether loci]
{Extensions of line bundles and Brill--Noether loci of rank-two vector bundles on a general curve}

\author{Ciro Ciliberto}
\curraddr{Dipartimento di Matematica, Universit\`a degli Studi di
Roma Tor Vergata\\ Via della Ricerca Scientifica - 00133 Roma
\\Italy} \email{cilibert@mat.uniroma2.it}

\author{Flaminio Flamini}
\curraddr{Dipartimento di Matematica, Universit\`a degli Studi di
Roma Tor Vergata\\ Via della Ricerca Scientifica - 00133 Roma
\\Italy} \email{flamini@mat.uniroma2.it}


\let\thefootnote\relax\footnotetext{2010 {\em Mathematics Subject Classification}. 
Primary: 14H60, 14D20, 14J26, 14M12; Secondary: 14N05, 14D06.}
\keywords{Brill-Noether loci; Semistable vector bundles; Ruled surfaces; Degeneracy loci; 
Moduli.}

\thanks{{\em Thanks} The authors wish to thank E. Ballico, for interesting discussions on \cite{Ballico1,Lau}, and P.\,E. Newstead, A. Castorena for having pointed out to us additional references we missed in the huge amount of papers on this topic. At last, warm thanks to the referee for careful reading and positive comments.}

\begin{abstract}  In this paper we study Brill-Noether loci for rank-two, (semi)stable vector bundles on a general curve $C$. Our aim is to describe  the general member $\Ff$ of some of its components just in terms of extensions of line bundles with suitable {\em minimality properties}, providing information both on families of irreducible, unisecant curves of the ruled surface $\Pp(\Ff)$ and on the birational geometry of the component of the Brill-Noether locus to which $\Ff$ belongs. 
\end{abstract}

\maketitle

\tableofcontents


\section*{Introduction}\label{sec:intro} Let $C$ be a smooth, irreducible projective curve of genus $g$ and $U_C(d)$ be the moduli space of (semi)stable, degree $d$, rank-two vector bundles on $C$. In this paper we will be mainly concerned with $C$ of general moduli. 

Our aim is to study the Brill-Noether loci $B_C^{k}(d) \subset U_C(d)$ parametrizing (classes of) vector bundles \linebreak $[\Ff] \in U_C(d)$ having  $h^0(C,\Ff) \geqslant k$, with $k$ a  non-negative integer.

The classical Brill-Noether theory for line bundles on a general curve is very important and 
well established (cf., e.g., \cite{ACGH}). Brill-Noether theory for higher-rank vector bundles is a very active research area (see References, for some results in the subject),  but several basic questions concerning Brill--Noether loci,  like non-emptiness, dimension, irreducibility, local structure, etc.,  are still open in general.  Contrary to the rank-one case,  the Brill-Noether loci for $C$ general do not always behave as expected (cf. e.g. \cite{BeFe} and \S\,\ref{SS:low}).

Apart from its intrinsic interest, Brill-Noether theory is important in view of applications to other areas, like birational geometry to mention just one (cf. e.g. \cite{Beau,Be,BeFe,HR,Muk}).

The most general existence result in the rank-two case is the following:  

\begin{theorem}\label{thm:TB} (see \cite{TB}) Let $C$ be a curve with general moduli of genus $g \geqslant1$. Let $k \geqslant 2$ and $i := k + 2g-2-d \geqslant 2$ be integers. Let $\rho_d^k := 4g-3 - i k$ and assume 
$$\rho_d^k \geqslant 1 \;\;  {\rm when} \; d \; {\rm odd}, \;\;\;  \rho_d^k \geqslant 5 \;\;  {\rm when} \; d \; {\rm even}.$$Then 
$B^{k}_C(d)$ is not empty and  it contains a  component $\mathcal B$ of the expected dimension $\rho_d^k$. 
\end{theorem}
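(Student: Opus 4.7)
The plan is to prove non-emptiness by specialization. I would degenerate the general smooth curve $C$ of genus $g$ to a suitable reducible nodal curve $C_0$, construct explicit rank-two torsion-free sheaves on $C_0$ with at least $k$ global sections and the prescribed degree $d$, and transport these to nearby smooth fibres by semi-continuity of $h^0$; a parameter count on the limit family then pins down the dimension.

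For the degeneration, a natural and economical choice is the chain $C_0 = E_1 \cup E_2 \cup \cdots \cup E_g$ of elliptic curves glued at nodes $p_i \in E_i \cap E_{i+1}$, in the spirit of the Eisenbud--Harris limit linear series machinery adapted to higher rank. On each component $E_i$ I would take a rank-two vector bundle $F_i$ presented as an extension $0 \to L_i \to F_i \to M_i \to 0$, with $\deg L_i + \deg M_i$ chosen so as to distribute the total degree $d$ across components. Since line bundles on elliptic curves admit an elementary description, the section-theoretic properties of each $F_i$ are directly controllable. The $F_i$ are then glued at the nodes to form a torsion-free rank-two sheaf $\Ff_0$ on $C_0$ whose global sections are computed by imposing matching conditions at each node.

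The heart of the construction is to balance the degrees and section contributions across components so that $\Ff_0$ has exactly $k$ independent global sections while lying in a family of the expected dimension. The constraint at each node is codified by a vanishing sequence analogous to the rank-one situation, and summing these codimensions along the chain yields the expected total codimension $ik$; the hypothesis $i \geq 2$ prevents the limit analysis from degenerating into an exceptional regime. Once such an $\Ff_0$ is produced, a smoothing family $\pi\colon \mathcal{C} \to B$ of $C_0$ with a relative moduli space of rank-two sheaves, combined with semi-continuity of $h^0$, yields a family of rank-two bundles $\Ff$ on the general fibre with $h^0(C,\Ff) \geq k$ and of dimension at least $\rho_d^k = 4g - 3 - ik$. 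Meeting the upper bound coming from the determinantal (Petri-type) description of $B_C^k(d)$ inside $U_C(d)$, cf.\ \cite{ACGH}, then produces a component of the expected dimension.

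The main obstacle is stability. Bundles built as extensions of line bundles can easily fail to be stable, and the issue is especially delicate when $d$ is even, since $U_C(d)$ then possesses a strictly semistable locus of positive codimension which the degeneration may accidentally hit. When $d$ is odd, semistability automatically forces stability and the bound $\rho_d^k \geq 1$ suffices to produce a family within the stable locus. When $d$ is even, the stronger bound $\rho_d^k \geq 5$ is precisely calibrated so that the dimension of the constructed family of limits exceeds that of the strictly semistable locus of $B_C^k(d)$, ensuring that a general member of the resulting component $\mathcal B$ is genuinely stable. Verifying these numerical inequalities and checking the stability of the generic deformation across the smoothing would be the most delicate technical portion of the argument.
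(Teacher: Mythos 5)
First, a point of context: the paper does not prove this statement. Theorem \ref{thm:TB} is quoted from Teixidor i Bigas's paper \cite{TB}, and the authors explicitly describe it as being ``proved with a quite delicate degeneration argument'' which they deliberately avoid (their own contribution is a different, extension-theoretic construction giving slightly weaker numerical bounds but more geometric information; cf.\ Theorems \ref{LN}, \ref{C.F.VdG}, \ref{uepi}, \ref{unepi} and Remark \ref{rem:C.F.VdG}--(3)). So there is no in-paper proof to compare against. What you have written is a reasonable reconstruction of the \emph{strategy} of the cited reference: degeneration to a chain of elliptic curves, a rank-two analogue of limit linear series with vanishing/matching conditions at the nodes contributing total codimension $ik$, semicontinuity under smoothing, and a separate treatment of the even-degree case to avoid the strictly semistable locus. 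That is indeed the shape of Teixidor's argument.

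However, as a proof your text has a genuine gap: every substantive step is announced rather than carried out. You do not specify the degrees $\deg L_i$, $\deg M_i$ or the gluing data, you do not define the rank-two vanishing sequences or verify that the matching conditions are independent (which is exactly where the ``delicate'' work lies and where the hypothesis $i\geqslant 2$, $k\geqslant 2$ enters), and you do not prove that the limit locus has dimension \emph{at most} $\rho_d^k$ — note that the lower bound $\dim \geqslant \rho_d^k$ for any nonempty component meeting $U_C^s(d)$ is automatic from the determinantal description (Remark \ref{rem:BNloci}), so the content of ``a component of the expected dimension'' is entirely in the upper bound, which your semicontinuity argument does not address. Likewise, the assertion that $\rho_d^k\geqslant 5$ is ``precisely calibrated'' to beat the dimension of the strictly semistable locus is stated as a hope, not verified; for $d$ even the locus $U_C^{ss}(d)$ is not of codimension controlled by such a soft count, and one must actually exhibit a stable smoothing. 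In short: the outline points at the right proof, but none of the inequalities or independence statements that constitute the proof are established, so this cannot stand as a demonstration of the theorem.
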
   

\noindent
This previous result is proved with a quite delicate degeneration argument (cf. also \cite{CMT}); in some particular cases, one has improvements of it (cf., e.g.,  \cite{Sun,Lau,TB0,TB00,Tan,FO,TB1,LNP}).

\bigskip

The degeneration technique used in \cite{TB}, though powerful, does not provide a \emph{geometric description} of the (isomorphism classes of) bundles $\Ff$ in $B^{k}_C(d)$, in particular of the general one in a component. By ``geometric description'' we mean a description of  families of curves on the ruled surface $\Pp(\Ff)$, in particular of unisecant curves to its fibres. This translates in turn to exhibiting $\Ff$ as an extension of line bundles $$(*)\;\;\; 0 \to N \to \Ff \to L \to 0$$(cf.\,\eqref{eq:Fund}), which we call a {\em presentation} of $\Ff$. Of particular interest is a presentation $(*)$ with suitable {\em minimality properties} on the quotient line bundle $L$, which translate into minimality properties for families of irreducible unisecant curves on the surface $\Pp(\Ff)$ (cf.\,\S\,\ref{S:VB} below).

This approach provides basic information about the vector bundle $\Ff$, which can be useful in a field in which so little is known and which has not been given so far: indeed, the description of such a minimal presentation is not known in general and, in particular, has not been provided in Theorem \ref{thm:TB}.  

One of the main objective of this paper is to shed some light on this subject. As a consequence of our analysis, we provide explicit parametric representations (and so information about the birational geometry) of some components of $B_C^{k}(d)$ (cf.\,\S's\,\ref{S:PSBN},\,\ref{S:BND}).

\bigskip

Viewing rank two vector bundles as extensions of line bundles is very classical: by suitably interpreting the classical language, this   goes back to C. Segre \cite {Seg}.   In  recent times, it  has been exploited, e.g., in \cite[\S\,2,3]{BeFe}, \cite[\S\,8]{Muk}, where the case of canonical determinant and  $g \leqslant 12$ has been treated. 
As noted in \cite{BeFe}, this approach ``{\em works well enough in low genera.... but seems difficult to implement in general }''. However, we tried to follow this route, with no upper-bounds on the genus but, as we will see, by bounding the speciality 
$i:=h^1(C,\Ff)$.  

Our approach is as follows.  We construct (semi)stable vector bundles $\Ff$ in Brill-Noether loci, as extensions of line bundles $L$ and $N$: the Brill--Noether loci we hit in this way depend on the cohomology of $L$ and $N$ and on the behaviour of the {\em coboundary map} $H^0(C, L) \stackrel{\partial}{\longrightarrow} H^1(C,N)$ associated to  $(*)$, cf.\,\S's\,\ref{S:BNLN},\,\ref{sec:constr}; we exhibit explicit constructions of such vector bundles in Theorems \ref{LN}, \ref{C.F.VdG}, \ref{uepi}, \ref{unepi}. 
These theorems provide existence results for $B^{k}_C(d)$ which are comparable to, though slightly worse but easier to prove than, Theorem \ref {thm:TB} (cf.\,Remark.\,\ref{rem:C.F.VdG}--(3)).  At the same time, they imply non--emptiness for fibres of the determinant map $B^{k}_C(d)\to {\rm Pic}^ d(C)$, i.e. for Brill--Noether loci with fixed determinant $\det(\Ff) := L\otimes N$, for any possible $L$ and $N$ as in the assumptions therein; this is in the same spirit of \cite {LaNw}, where however only the case with fixed determinant of odd degree has been considered. 

In any event, as we said, the main purpose of this paper is not the one of constructing new components of Brill-Noether loci, but of  providing a minimal presentation for the \emph{general element} of components of $B^{k}_C(d)$, for $C$ with general moduli. To do this, we take line bundles $L$ and $N$ with assumptions as in Theorems \ref{LN}--\ref{unepi} and let them vary in their own Brill-Noether loci of their Picard schemes. Accordingly, we let the constructed bundles $\Ff$ vary in suitable degeneracy loci $\Lambda \subseteq {\rm Ext}^1(L,N)$, defined in such a way that
$\Ff \in \Lambda$ general has the desired speciality $i$ (cf. \S\,\ref{S:PSBN}). In this way we obtain irreducible varieties  parametrizing triples $(L,N,\Ff)$, i.e. any such variety is endowed with a morphism $\pi$ to $U_C(d)$, whose image is contained in a component $\mathcal B$ of a Brill-Noether locus.  To find a  minimal presentation $(*)$ for a general member $\Ff$ of $\mathcal B$, we are reduced to find conditions on $L$, $N$ and on the coboundary map $\partial$ ensuring the morphism $\pi$ to be dominant onto $\mathcal B$. We achieve this goal by using results in \S\,\ref{S:VB}, which deal with the study of some families of irreducible unisecants of given speciality on the ruled surface $\Pp(\Ff)$ (cf.\,Lemmas \ref{lem:claim1},\,\ref{lem:claim2},\,Corollaries \ref{C.F.1b},\,\ref{C.F.1c},\,\ref{C.F.2},\,\ref{C.F.3b}\,and Remarks\,\ref{rem:17lug},\,\ref{rem:bohb}).

As is clear from the foregoing description, to find a presentation of $\Ff$ general in a component of a Brill-Noether locus is a difficult problem in general. We are able to solve it here for  $i\leqslant 3$.

Our main results for Brill--Noether loci $B_C^k(d)$ are Theorems \ref{i=1}, \ref{i=2}, \ref{i=3}, which respectively deal with cases $i=1,2,3$ and $k= d-2g+2+i$. A first fact we prove therein is that (a component of) $B_C^k(d)$ is filled-up by vector bundles $\Ff$ having a minimal {\em special} presentation as 
$$0 \to N \to \Ff \to \omega_C(- D_{i-1}) \to 0,$$where $D_{i-1} \in {\rm Sym}^{i-1}(C)$, $N \in {\rm Pic}^{d-2g+1-i}$ and  $\Ff \in \Lambda_{i-1}$ are general, where $\Lambda_{i-1}$ is a {\em good component} of the degeneracy locus 
$$\left\{\Ff \in {\rm Ext}^1(\omega_C(- D_{i-1}), N) \,|\, \dim {\rm Coker} \left(H^0(C,L) \stackrel{\partial}{\longrightarrow} H^1(C,N) \right) \geqslant i-1 \right\} \subseteq {\rm Ext}^1(\omega_C(- D_{i-1}), N)$$(cf.\,Def.\,\ref{def:ass1}, for precise definitions of special presentation and minimality, Rem.\,\ref{rem:wt} and Defs.\,\ref{def:goodc}, \ref{def:goodtot}, for goodness, and Thms.\,\ref{thm:mainext1},\,\ref{thm:mainextt}, for existence of good components). The case $i=1$ was already treated in \cite {Ballico1} with different methods (cf. Remark \ref{rem:i=1}); in cases $i=2,3$ our results are new. 

Statements of our main results, to which the reader is referred, contain even more. Indeed they also describe families of special, irreducible unisecants of $\Pp(\Ff)$ which are of minimal degree with respect to its tautological line bundle. Apart from its intrinsic interest, this description plays a fundamental role when one tries to construct components of the Hilbert scheme parametrizing linearly normal, genus $g$ and degree $d$ special scrolls in projective spaces and  whose general point parametrizes a {\em stable scroll} (cf. e.g.\,\cite{CF}). 

Finally, the proofs of Theorems \ref{i=1}, \ref{i=2}, \ref{i=3} show in particular that the map $\pi$ from the parameter space of triples $(L, N, \Ff)$ to (the dominated component of) $B_C^k(d)$ is generically finite, sometimes even birational (cf. Rmk.\,\ref{rem:i=1}--(3)), giving therefore information about the birational geometry of the Brill--Noether locus.

\bigskip 

Other main results of the paper are given by Theorems \ref{prop:M12K}, \ref{prop:M22K}, \ref{prop:M32K} which deal with the canonical determinant case.

\bigskip

In principle, there is no obstruction in pushing further the ideas in this paper, to treat higher speciality cases. However, this is increasingly complicated and therefore we limited ourselves to expose at the end of the paper a few suggestions on how to proceed in general and  propose a conjecture (see \S\,\ref{sec:high}). 

\bigskip

The paper is organized as follows. Section \ref{S:VB} is  devoted to preliminaries about families of special, irreducible unisecants on ruled surfaces $\Pp(\Ff)$ and the corresponding special presentation of the bundle $\Ff$. Sections \ref{S:MS} and \ref{S:BNLN} are devoted to recalling basic facts on (semi)stable, rank--two vector bundles of degree $d$, extensions of line bundles, and useful results of Lange--Narashiman and Maruyama (cf. Proposition \ref{prop:LN} and Lemma \ref{lem:technical}). Section \ref{sec:constr} is the technical one, which contains our  constructions of vector bundles in Brill-Noether loci as extensions of line bundles $L$ and $N$.  
Section \ref{S:PSBN} is where we deal with parameter spaces of triples and maps from them to  $U_C(d)$, landing in Brill-Noether loci. The general  machinery developed in the previous sections is then used in \S\,\ref{S:BND}, in order to prove our main results mentioned above.


\section{Notation and terminology}\label{sec:P}

In this paper we work over $\mathbb C$. All schemes will be endowed with the Zariski topology. We will  interchangeably use the terms rank-$r$ vector bundle on a scheme $X$ and rank-$r$ locally free sheaf. 

We denote by $\sim$ the linear equivalence of divisors, by $\sim_{alg}$ their algebraic equivalence and by $\equiv$ their numerical equivalence. We may abuse notation and identify divisor classes with the corresponding line bundles, interchangeably using additive and multiplicative notation.  
 
If $\mathcal P$ is the parameter space of a flat family of subschemes of $X$ and if $Y$ is an element of the family, we denote by 
$Y \in \mathcal P$ the point corresponding to $Y$. If $\mathcal M$ is a moduli space, parametrizing geometric objects modulo a given equivalence relation, we denote by $[Z] \in \mathcal M$ the moduli point corresponding to the equivalence class of $Z$.

Let 

\noindent
$\bullet$ $C$ be a smooth, irreducible, projective curve of genus $g$, and

\noindent
$\bullet$ $\Ff$ be a rank-two vector bundle on $C$.

Then,  $F:= \Pp(\Ff) \stackrel{\rho}{\to} C$ will denote the {\em (geometrically) ruled
surface} (or {\em the scroll}) associated to  $(\Ff, C)$; $f$ will denote the general $\rho$-fibre and 
$\Oc_F(1)$ the {\em tautological line bundle}. A divisor in $|\Oc_F(1)|$ will be usually denoted by $H$. 
If $\widetilde{\Gamma}$ is a divisor on $F$, we will set ${\rm deg}(\widetilde{\Gamma}) := \widetilde{\Gamma} H$.


We will use the notation $$d:=  \deg(\Ff) = \deg(\det(\Ff)) = H^2 = \deg(H);$$$i(\Ff):= h^1(\Ff)$ is called the {\em speciality} of $\Ff$ and will be denoted by $i$, if there is no danger of confusion. $\Ff$ (and $F$) is {\em non-special}
if $i= 0$,  {\em special} otherwise.

As customary,  $W^r_a(C)$ will denote the  {\em Brill-Noether locus},  parametrizing  line bundles $A \in Pic^a(C)$ such that 
$h^0(A) \geqslant r+1$,  
$$\rho(g, r, a):= g - (r+1) ( r+g-a)$$the {\em Brill-Noether number} and 
\begin{equation}\label{eq:Petrilb}
\mu_0(A) : H^0(C,A) \otimes H^0(\omega_C \otimes A^{\vee}) \to H^0(C, \omega_C)
\end{equation} the {\em Petri map}. As for the rest, we will use standard terminology and notation as in e.g. \cite{ACGH}, \cite{Ha}, etc. 


\section{Scrolls unisecants}\label{S:VB}

We recall some basic facts on unisecant curves 
of the scroll $F$ (cf.  \cite{GP2, Ghio} and \cite[V-2]{Ha}). 

One has ${\rm Pic}(F) \cong \ZZ[\Oc_F(1)] \oplus \rho^*({\rm Pic}(C))$ 
(cf.\ \cite[\S\,5, Prop.\,2.3]{Ha}). Let ${\rm Div}_F$ be the scheme (not of finite type) of effective divisors on $F$, which is a sub-monoid of ${\rm Div}(F)$. For any 
$k \in {\mathbb N}$, let ${\rm Div}^k_F$ be the subscheme (not of finite type) of ${\rm Div}_F$ formed by all divisors 
$\widetilde{\Gamma}$ such that $\Oc_F(\widetilde{\Gamma}) \cong \Oc_F(k) \otimes \rho^*(N^{\vee})$, for some $N \in {\rm Pic} (C)$ (this $N$ is uniquely determined); then one has a natural morphism 
$$\Psi_k: {\rm Div}^k_F \to {\rm Pic}(C), \;\;\;\;\;\;\;\; \widetilde{\Gamma} \stackrel{\Psi_k}{\longrightarrow} N.$$

If $D \in {\rm Div} (C)$, then $\rho^*(D)$ will be denoted by $f_D$. Then $\widetilde{\Gamma} \in {\rm Div}^k_F$ 
if and only if $\widetilde{\Gamma} \sim kH - f_D$, for some $D \in {\rm Div}(C)$, and $\deg(\widetilde{\Gamma}) 
= k \deg(\Ff) - \deg(D)$.

The curves in ${\rm Div}^1_F$ are called {\em unisecants}  of $F$. Irreducible unisecants are isomorphic to $C$ and called  {\em sections} of $F$. For any positive integer $\delta$, we consider (cf. \cite[\S\;5]{Ghio}) 
$${\rm Div}_F^{1,\delta} : = \{\widetilde{\Gamma} \in {\rm Div}_F^1 \; | \; {\rm deg}(\widetilde{\Gamma}) = \delta \},$$which is the {\em Hilbert scheme of unisecants of degree $\delta$ of $F$} (w.r.t. $H$).

\begin{remark}\label{rem:Hilbert} Let 
$\widetilde{\Gamma} = \Gamma + f_A$ be a unisecant, with  $\Gamma$ a section and $A $ effective. Equivalently, 
we have an exact sequence 
\begin{equation}\label{eq:Fund2}
0 \to N ( - A) \to \Ff \to L \oplus \Oc_{A} \to 0
\end{equation} (cf.\;\cite{CCFMnonsp, CCFMBN}); in particular if $A =0$, i.e. 
$\widetilde{\Gamma} = \Gamma$ is a section, $\Ff$ fits in the exact sequence 
\begin{equation}\label{eq:Fund}
0 \to N \to \Ff \to L \to 0
\end{equation} and
\begin{equation}\label{eq:Ciro410}
\N_{\Gamma/F} \cong L \otimes N^{\vee}, \;\; {\rm so} \;\; \Gamma^2 = \deg(L) - \deg(N) = 2\delta - d,  
\end{equation} (cf. \;\cite[\S\;V,\;Prop. 2.6, 2.9]{Ha}). Accordingly  $\Psi_{1,\delta}: {\rm Div}^{1,\delta}_F \to {\rm Pic}^{d-\delta}(C)$, the restriction of $\Psi_1$, 
endows ${\rm Div}_F^{1,\delta}$ with a  structure of Quot scheme: with notation as in \cite[\S\,4.4]{Ser}, one has
\begin{equation}\label{eq:isom1}
\begin{array}{rrcc}
\Phi_{1,\delta}: & {\rm Div}_F^{1,\delta} & \stackrel{\cong}{\longrightarrow} & {\rm Quot}^C_{\Ff,\delta+t-g+1} \\
 & \widetilde{\Gamma} & \longrightarrow & \left\{\Ff \to\!\!\to L \oplus \Oc_A \right\}.  
\end{array}
\end{equation} From standard results (cf. e.g. \cite[\S\,4.4]{Ser}), \eqref{eq:isom1} gives 
identifications between tangent and obstruction spaces:
\begin{equation}\label{eq:tang}
H^0(\N_{\widetilde{\Gamma}/F}) \cong T_{[\widetilde{\Gamma}]} ({\rm Div}_F^{1,\delta}) \cong {\rm Hom} (N (-A), L \oplus \Oc_A) \;\;\; {\rm and} \; \;\; H^1(\N_{\widetilde{\Gamma}/F})  \cong {\rm Ext}^1 (N (-A), L \oplus \Oc_A) 
\end{equation} Finally, if $\widetilde{\Gamma} \sim H - f_D$, then one easily checks that 
\begin{equation}\label{eq:isom2}
|\Oc_F(\widetilde{\Gamma})| \cong \Pp(H^0(\Ff (-D))).  
\end{equation}

\end{remark}

\begin{definition}\label{def:ass0} $\widetilde{\Gamma} \in {\rm Div}^{1,\delta}_F$ is said to be:  

\noindent
(a) {\em linearly isolated (li)}  if $\dim(|\Oc_F(\widetilde{\Gamma} )|) =0$, 

\noindent
(b) {\em algebraically isolated (ai)}  if  $\dim({\rm Div}^{1,\delta}_F) =0$.

\end{definition}

\begin{remark}\label{rem:linisol} (1) If $\widetilde{\Gamma}$ is ai, then it is also li but the converse is false (c.f. e.g. Example \ref{ex:contro}, Corollary \ref{C.F.3b}).

\noindent 
(2) When ${\rm Div}^{1,\delta}_F$ is of pure dimension,  
a sufficient condition for $\widetilde{\Gamma}$ to be  ai is $h^0(\N_{\widetilde{\Gamma}/F}) = 0$ (cf. e.g. Theorem \ref{C.F.VdG}, Corollary \ref{C.F.1c} and \S\,\ref{i=1} below). 
\end{remark}



\subsection{The Segre-invariant}\label{ss:S} 

\begin{definition}\label{def:seginv}
The {\em Segre invariant} of $\Ff$ is defined as $$s(\Ff) := \deg(\Ff) - 2 ({\rm max} \; \{\deg (N) \}),$$where the maximum is taken among all sub-line bundles $N$ of $\Ff$ (cf.\ e.g.\ \cite{LN}). The bundle $\Ff$ is {\em stable} [resp. {\em semi--stable}], if $s(\Ff)>0$ [resp. if $s(\Ff)\geqslant 0$]. 

Equivalently $\Ff$ is stable [resp. semistable] if for every sub-line bundle $N\subset \Ff$ one has $\mu(N)<\mu(\Ff)$ [resp. $\mu(N)\le\mu(\Ff)$], where
 $\mu(\mathcal E)=\deg (\mathcal E)/{\rm rk}(\mathcal E)$ is the \emph{slope}  of a vector bundle $\mathcal E$.

\end{definition}Note that, for any $A \in {\rm Pic}(C)$, one has
\begin{equation}\label{eq:seginv}
s(\Ff) = s(\Ff \otimes A). 
\end{equation}

\begin{remark}\label{rem:seginv}
From \eqref{eq:Ciro410}, $s(\Ff)$ coincides with the minimum 
self-intersection  of sections of  $F$. In particular, 
if $\Gamma \in {\rm Div}^{1,\delta}_F$ is a section s.t. $\Gamma^2 = s(\Ff)$, then 
$s(\Ff) = 2 \delta - d$ and $\Gamma$ is a section of {\em minimal degree} of $F$, i.e. for any 
section $\Gamma' \subset F$ one has $\deg(\Gamma') \geqslant \deg(\Gamma)$. 
\end{remark}

We recall the following fundamental result. 

\begin{proposition}\label{prop:Nagata} Let $C$ be of genus $g \geqslant 1$ and let $\Ff$ be indecomposable. Then, $2-2g \leqslant s(\Ff) \leqslant g$. 
\end{proposition}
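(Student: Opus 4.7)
The plan is to prove the two bounds separately, as they have quite different characters: the lower bound is a short consequence of indecomposability via Serre duality, while the upper bound is Nagata's classical theorem and rests on a more delicate Brill--Noether type argument on $\Pic(C)$.

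For the lower bound $s(\Ff)\geqslant 2-2g$, I would pick a sub-line bundle $N\subset \Ff$ of maximal degree, with quotient $L=\Ff/N$, so that by \eqref{eq:Ciro410} one has $s(\Ff)=\deg L-\deg N$. The extension
\[ 0\to N\to \Ff\to L\to 0 \]
cannot split, otherwise $\Ff\cong N\oplus L$ would be decomposable, contradicting the hypothesis. Hence its class in $\Ext^1(L,N)\cong H^1(C,L^{\vee}\otimes N)$ is nonzero, and Serre duality identifies this vector space with $H^0(C,\omega_C\otimes L\otimes N^{\vee})^{\vee}$. Nonvanishing forces $h^0(\omega_C\otimes L\otimes N^{\vee})\geqslant 1$, which in turn requires $\deg(\omega_C\otimes L\otimes N^{\vee})=2g-2+s(\Ff)\geqslant 0$, i.e.\ $s(\Ff)\geqslant 2-2g$.

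For the upper bound $s(\Ff)\leqslant g$, the plan is to argue by contradiction: assume $s(\Ff)\geqslant g+1$, so that $n:=\deg(N_{\max})$ satisfies $n<(d-g)/2$. The goal is to construct a sub-line bundle of $\Ff$ of degree $\geqslant n+1$, contradicting maximality. Equivalently, I would find $A\in \Pic^{n+1}(C)$ with $h^0(C,\Ff\otimes A^{\vee})>0$, since any nonzero map $A\to \Ff$ factors through its saturation to yield a sub-line bundle of degree $\geqslant n+1$. Riemann--Roch alone yields only $\chi(\Ff\otimes A^{\vee})=s(\Ff)-2g$, generically negative under our hypothesis, so the decisive input is a Brill--Noether style analysis of the determinantal locus
\[ W:=\{A\in \Pic^{n+1}(C)\,:\, h^0(C,\Ff\otimes A^{\vee})>0\}\subseteq \Pic^{n+1}(C), \]
whose expected codimension is $1-\chi=2g-s(\Ff)+1$. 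Under $s(\Ff)\geqslant g+1$ the expected dimension $s(\Ff)-g-1$ is nonnegative, and a nonemptiness theorem (in the spirit of Nagata and of Lange--Narasimhan \cite{LN}) then guarantees $W\neq \emptyset$, producing the contradiction.

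The main obstacle will be the upper bound: the lower bound follows immediately once indecomposability is invoked, but the naive Riemann--Roch estimate $h^0\geqslant \chi$ only yields $s(\Ff)\leqslant 2g-1$, falling short by a factor of order $g$. The delicate point is the nonemptiness of $W$ when its expected dimension is nonnegative; this ultimately reduces to a degeneracy locus computation for a morphism of vector bundles on $\Pic^{n+1}(C)$ realizing $W$ as its degeneracy scheme, and is where the essential content of Nagata's theorem lies.
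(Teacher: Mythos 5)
The paper does not actually prove this proposition: it cites \cite[V, Thm.\ 2.12(b)]{Ha} for the lower bound and Nagata \cite{Na} for the upper bound. Your argument for the lower bound is complete and correct, and is precisely the standard argument behind the Hartshorne citation: a maximal-degree sub-line bundle $N$ is saturated, so $L=\Ff/N$ is a line bundle, the extension is non-split by indecomposability, and $0\neq\Ext^1(L,N)\cong H^0(\omega_C\otimes L\otimes N^{\vee})^{\vee}$ forces $2g-2+s(\Ff)\geqslant 0$.

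For the upper bound your strategy is the standard modern proof of Nagata's theorem (as in Lange--Narasimhan), but as written it contains a genuine gap at exactly the decisive step: nonnegative expected dimension of the determinantal locus $W\subseteq\Pic^{n+1}(C)$ does \emph{not} by itself imply $W\neq\emptyset$, and there is no off-the-shelf ``nonemptiness theorem'' you can invoke without proof. What is actually needed is to realize $W$ as a degeneracy locus of a morphism of vector bundles on $\Pic^{n+1}(C)$ (e.g.\ via an evaluation sequence twisted by a sufficiently positive divisor), and then to compute its expected class by Thom--Porteous together with a Grothendieck--Riemann--Roch computation on $C\times\Pic^{n+1}(C)$; the class comes out as a positive rational multiple of $\theta^{\,2g-s(\Ff)+1}$, which is nonzero because $\theta$ is ample, and only then does nonemptiness follow. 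You correctly flag that this is where the content of Nagata's theorem lies, but flagging it is not the same as supplying it, so the upper bound remains unproven in your write-up (as it does, by citation, in the paper). Two minor remarks: indecomposability is irrelevant to the upper bound, consistent with the fact that your argument for it never uses the hypothesis; and you should say explicitly why $\Ff/N$ is locally free (saturation of a maximal sub-line bundle).
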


\begin{proof} The lower-bound  follows from $\Ff$ being indecomposable (see e.g. \cite[V,\;Thm.\;2.12(b)]{Ha}). The upper-bound is Nagata's Theorem (see \cite{Na}). 
\end{proof}



\subsection{Special scrolls unisecants}\label{ss:SU}

In the paper we will be mainly concerned about the speciality of unisecants of a (necessarily special) scroll $F$.

\begin{definition}\label{def:lndirec} For $\widetilde{\Gamma} \in {\rm Div}_F$, we set $\Oc_{\widetilde{\Gamma}} (1) := \Oc_F(1) \otimes \Oc_{\widetilde{\Gamma}}$. The {\em speciality} 
of $\widetilde{\Gamma}$ is $i(\widetilde{\Gamma}) := h^1(\Oc_{\widetilde{\Gamma}} (1))$. $\widetilde{\Gamma}$ is {\em special} if $i(\widetilde{\Gamma}) >0$. 
\end{definition}

If $\widetilde{\Gamma}$ is given by \eqref{eq:Fund2}, then by \eqref{eq:isom2} one has 
$\widetilde{\Gamma} \in |\Oc_F(1) \otimes \rho^*(N^{\vee}(A))|$. Applying $\rho_*$ to the exact sequence  
$$0 \to \Oc_F(1) \otimes \Oc_F(-\widetilde{\Gamma})\to \Oc_F(1) \to \Oc_{\widetilde{\Gamma}}(1) \to 0$$and using 
$\rho_*( \Oc_F(1) \otimes \Oc_F(-\widetilde{\Gamma})) \cong N ( - A)$, 
$ R^1\rho_*(\Oc_F(\rho^*(N ( -A))) = 0$, we get 
\begin{equation}\label{eq:iLa}
i(\widetilde{\Gamma}) = h^1(L \oplus \Oc_{A}) = h^1(L) = i(\Gamma), 
\end{equation}where $\Gamma$ the unique section in  $\widetilde{\Gamma}$.

The following examples show that, in general, speciality is not constant either in linear systems or in  algebraic families. 

\begin{example}\label{ex:1} Take $g=3$, $i=1$ and $d=9= 4g-3$. There are smooth, linearly normal, special scrolls $S \subset \Pp^5$ of degree $9$, speciality $1$, sectional genus $3$ with general moduli containing a unique 
special section $\Gamma$ which is a genus $3$  canonical curve (cf. \cite[Thm.\;6.1]{CCFMsp}). Moreover,  $\Gamma$ is the unique section of minimal degree $4$ (cf. also \cite{Seg}). There are lines $f_1, \ldots, f_5$ of the ruling, such that $\widetilde{\Gamma} := \Gamma + f_1 + \ldots + f_5 \in |H|$, where $H$ the hyperplane section of $S$. These curves $\widetilde{\Gamma}$ vary in a sub-linear system of dimension $2$ contained in $|H|$, whose movable part is the complete linear system $|f_1 + \cdots + f_5|$. The curves as $\widetilde{\Gamma}$ are the only special unisecants in $|H|$. 
\end{example}

\begin{example}\label{ex:contro} Let $C$ be a non-hyperelliptic curve of genus $g \geqslant 3$,  
$d = 3g-4$ and $N \in {\rm Pic}^{g-2}(C)$ general. $N$ is non-effective with $h^1(N) = 1$. Consider ${\rm Ext}^1(\omega_C, N)$. It has dimension $2g-1$ and its general point  
gives rise to a rank-two vector bundle $\Ff$ of degree $d$, fitting in an exact sequence  like \eqref{eq:Fund}, with $L = \omega_C$. By generality, 
the coboundary map $\partial : H^0(\omega_C) \to H^1(N) \cong \C$ is surjective (cf. Corollary \ref{cor:mainext1} below); therefore 
$i(\Ff_u)=1$. Since $\Ff$ is of rank-two with $\det(\Ff) = \omega_C \otimes N$, by Riemann-Roch one has $h^0(\Ff \otimes N^{\vee})  =1$. From \eqref{eq:isom2}, the canonical section 
$\Gamma \subset F$, corresponding to $\Ff \to\!\! \to \omega_C$, is li. From \eqref{eq:Ciro410}, $\N_{\Gamma/F} \cong \omega_C \otimes N^{\vee}$ hence $h^i(\N_{\Gamma/F}) = 1-i$, for $i = 0, 1$. Let $\mathcal D$ be the irreducible, one-dimensional component of the Hilbert scheme containing the point corresponding to $\Gamma$ (which is smooth for the Hilbert scheme). Therefore 
$\mathcal D$ is an algebraic (non-linear) family whose general member is a li section. As a consequence of Proposition \ref{prop:lem4} below,  $\Gamma$ is the only special section in $\mathcal D$. In particular, if all curves in $\mathcal D$ are irreducible, then $\Gamma$ is the only special curve in $\mathcal D$ (see Lemma \ref{lem:ovviolin}). 

Note that $\Ff$ is indecomposable. Indeed, assume $\Ff = A \oplus B$, with $A$, $B$ line bundles. Since $h^0(\Ff \otimes N^{\vee}) =1$,  we may assume $h^0(A-N) = 1$ and $h^0(B-N) = 0$. By the genericity of $N$, $A-N$ and $B-N$ are both general of their degrees. Therefore $\deg(A-N) = g$, hence $\deg(A) = 2g-2$ and $\deg(B) = g-2$. The image of $A$ in the surjection $\Ff \to \!\!\to \omega_C$ is zero, otherwise $A=\omega_C$ hence $B =N$ which is impossible, because $h^0(B-N) = 0$. Then we would have an injection $A \hookrightarrow N$ which is impossible by degree reasons. 
\end{example}

Since ${\rm Div}^{1,\delta}_F$ is a Quot-scheme, there is the universal quotient $\mathcal Q_{1,\delta} \to {\rm Div}^{1,\delta}_F$. Taking 
${\rm Proj} (\mathcal Q_{1,\delta}) \stackrel{p}{\to} {\rm Div}^{1,\delta}_F$, we can consider 

\begin{equation}\label{eq:aga}
\mathcal S_F^{1,\delta} := \{\widetilde{\Gamma} \in {\rm Div}^{1,\delta}_F \;\; | \;\; R^1p_*(\Oc_{\Pp(\mathcal Q_{1,\delta})}(1))_{\widetilde{\Gamma}} \neq 0\} \; \;\;  {\rm and} \; \;\; a_F(\delta) := \dim (\mathcal S_F^{1,\delta}), 
\end{equation} i.e.  $\mathcal S_F^{1,\delta}$ is the support of $R^1p_*(\Oc_{\Pp(\mathcal Q_{1,\delta})}(1))$. It parametrizes degree $\delta$, special unisecants of $F$.

\begin{definition}\label{def:ass1} Let $\widetilde{\Gamma}$ be a special unisecant of $F$. Assume $\widetilde{\Gamma} \in \mathfrak{F}$, where $\mathfrak{F} \subseteq {\rm Div}^{1,\delta}_F$ is a subscheme. 


\noindent
$\bullet$ We will say that $\widetilde{\Gamma}$ is: 

\noindent
(i) {\em specially unique (su)} in $\mathfrak{F}$, if $\widetilde{\Gamma}$ is  the only special unisecant in $\mathfrak{F}$, or   

\noindent
(ii) {\em specially isolated (si)} in $\mathfrak{F}$, if $\dim_{\widetilde{\Gamma}} \left(\mathcal S_F^{1,\delta} \cap \mathfrak{F} \right) = 0$.


\noindent
$\bullet$ In particular:

\noindent
(a) when $\mathfrak{F} = |\Oc_F(\widetilde{\Gamma})|$,  $\widetilde{\Gamma}$ is said to be {\em linearly specially unique (lsu)} in case (i) 
and  {\em linearly specially isolated (lsi)} in case (ii);

\noindent 
(b) when $ \mathfrak{F} =  {\rm Div}^{1,\delta}_F$,  $\widetilde{\Gamma}$ is said to be {\em algebraically specially unique (asu)} in case (i) and 
{\em algebraically specially isolated (asi)} in case (ii).  


\noindent
$\bullet$ When a section $\Gamma \subset F$ is asi, we will say that $\Ff$ is {\em rigidly specially presented (rsp)} as $\Ff \to \!\! \to L$ or by the sequence \eqref{eq:Fund} 
corresponding to $\Gamma$. When $\Gamma$ is ai (cf. Def. \ref{def:ass0}), we will say that 
$\Ff$ is {\em rigidly presented (rp)} via  $ \Ff \to \!\! \to L$ or \eqref{eq:Fund}.  
\end{definition}

For examples, c.f. e.g. \S\,\ref{S:BND} below. 

\begin{lemma}\label{lem:ovviolin} Let $\Gamma \subset F$ be a section corresponding to a sequence as in \eqref{eq:Fund}. A section $\Gamma'$,  corresponding to $\Ff \to\!\!\!\!\! \to L'$, is s.t.  $\Gamma \sim \Gamma'$ if and only if 
$L \cong L'$. In particular  

\noindent
(a) $i(\Gamma) = i(\Gamma')$;  

\noindent
(b) $\Gamma$ is lsu if and only if it is lsi if and only if it is li.  
\end{lemma}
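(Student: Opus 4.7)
The plan is to translate linear equivalence of sections into an equivalence of the associated sub-line bundles via the description of $\Pic(F)$, and then use \eqref{eq:iLa} to control the speciality of every unisecant in a given complete linear system.

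For the first assertion, I would recall from Remark \ref{rem:Hilbert} that a section $\Gamma$ arising from \eqref{eq:Fund} satisfies $\Oc_F(\Gamma)\cong \Oc_F(1)\otimes \rho^*(N^\vee)$, and analogously $\Oc_F(\Gamma')\cong \Oc_F(1)\otimes \rho^*((N')^\vee)$ for the sub-line bundle $N':=\det(\Ff)\otimes (L')^\vee$ of $\Gamma'$. Since $\rho:F\to C$ is a $\Pp^1$-bundle, $\rho^*:\Pic(C)\hookrightarrow \Pic(F)$ is injective (this is also visible from the splitting $\Pic(F)\cong\ZZ[\Oc_F(1)]\oplus\rho^*\Pic(C)$ recalled at the start of \S\,\ref{S:VB}); hence $\Gamma\sim \Gamma'$ is equivalent to $N\cong N'$, which in view of $L\otimes N\cong \det(\Ff)\cong L'\otimes N'$ is in turn equivalent to $L\cong L'$. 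Item (a) is then immediate from $i(\Gamma)=h^1(L)=h^1(L')=i(\Gamma')$ via \eqref{eq:iLa}.

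For item (b), the core step is to prove that once $\Gamma$ is special, every unisecant in $|\Oc_F(\Gamma)|$ is automatically special. I would decompose any $\widetilde{\Gamma}\in|\Oc_F(\Gamma)|$ as $\widetilde{\Gamma}=\Gamma''+f_A$ with $\Gamma''$ a section and $A$ an effective divisor on $C$, so that
$$\Oc_F(\Gamma'')\cong \Oc_F(\Gamma)\otimes \Oc_F(-f_A)\cong \Oc_F(1)\otimes \rho^*((N(A))^\vee);$$
by the first part, $\Gamma''$ corresponds to a sequence \eqref{eq:Fund} with sub-line bundle $N(A)$ and quotient $L(-A)$. The cohomology of $0\to L(-A)\to L\to L|_A\to 0$ gives $h^1(L(-A))\geqslant h^1(L)>0$, and then \eqref{eq:iLa} yields $i(\widetilde{\Gamma})=h^1(L(-A))>0$. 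Thus $|\Oc_F(\Gamma)|\subseteq \mathcal S_F^{1,\delta}$.

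With this inclusion in hand, the three properties collapse. If $\Gamma$ is li, then $|\Oc_F(\Gamma)|=\{\Gamma\}$ and both lsu and lsi hold trivially. Conversely, lsu forces $|\Oc_F(\Gamma)|=\{\Gamma\}$ (since every element of it is special, hence equal to $\Gamma$), so $\Gamma$ is li; while lsi gives $\dim |\Oc_F(\Gamma)|=\dim_\Gamma(\mathcal S_F^{1,\delta}\cap |\Oc_F(\Gamma)|)=0$ by the irreducibility of a linear system, again yielding li. The only mildly subtle ingredient will be the monotonicity $h^1(L(-A))\geqslant h^1(L)$ — a short cohomology chase — together with the tensor bookkeeping of sub-line bundles in $\Pic(F)$; the hard part, if any, is simply to confirm that a reducible unisecant in $|\Oc_F(\Gamma)|$ really does inherit the speciality of $\Gamma$.
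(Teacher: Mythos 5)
Your proof is correct and follows essentially the same route as the paper, which simply invokes \eqref{eq:isom2} (equivalently, the splitting of $\Pic(F)$) for the first assertion and declares (a), (b) clear. The details you supply — that every member of $|\Oc_F(\Gamma)|$ is special because a reducible member $\Gamma''+f_A$ has speciality $h^1(L(-A))\geqslant h^1(L)>0$ — are exactly the ones the paper leaves to the reader, and they are right.
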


\begin{proof} The first assertion follows from  \eqref{eq:isom2}.  Then, 
(a) and (b) are both clear. 
\end{proof}

\begin{proposition}\label{prop:lem4} Let $F$ be indecomposable and let $\Gamma \in \mathfrak{F} \subseteq \mathcal{S}^{1,\delta}_F$ be a section, where  $\mathfrak{F}$ is an irreducible, projective scheme of dimension $k$. Assume:  

\noindent
(a) $k \geqslant 1$, if $\mathfrak{F}$ is a linear system; 

\noindent
(b) either $k \geqslant 2$, or  $k=1$ and $\mathfrak{F}$ with base points, if $\mathfrak{F}$ is not linear.

Then, $\mathfrak{F}$ contains reducible unisecants $ \widetilde{\Gamma} $ with
\begin{equation}\label{eq:speciality}
i(\widetilde{\Gamma}) \geqslant i(\Gamma).
\end{equation}
\end{proposition}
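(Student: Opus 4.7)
The plan is to exhibit reducible unisecants in $\mathfrak{F}$ and compare their speciality to $i(\Gamma)$ via a cohomological identity. The starting point is the following inequality: if $\widetilde{\Gamma} = \Gamma' + f_A$ (with $A \geqslant 0$ effective non-zero and $\Gamma'$ a section) is linearly equivalent to $\Gamma$, and $\Gamma$ corresponds via \eqref{eq:Fund} to sub $N$ and quotient $L$, then the linear equivalence forces the saturated sub-line bundle of $\Gamma'$ to be $N(A)$, so $\Gamma'$ has quotient $L(-A)$. By \eqref{eq:iLa} and the short exact sequence $0 \to L(-A) \to L \to L|_A \to 0$, this yields $i(\widetilde{\Gamma}) = h^1(L(-A)) \geqslant h^1(L) = i(\Gamma)$.

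For Case (a), $\mathfrak{F} = |\Oc_F(\Gamma)|$ has dimension $k \geqslant 1$, and by \eqref{eq:isom2} it equals $\Pp(H^0(\Ff \otimes N^\vee))$ with $h^0(\Ff \otimes N^\vee) \geqslant 2$. A member is reducible iff the corresponding section of $\Ff \otimes N^\vee$ has a zero. If no member of $\mathfrak{F}$ were reducible, every non-zero section of $\Ff \otimes N^\vee$ would be nowhere vanishing, making the evaluation map $H^0(\Ff \otimes N^\vee) \otimes \Oc_C \to \Ff \otimes N^\vee$ pointwise injective; since the target has rank two, this forces $\Ff \otimes N^\vee \cong \Oc_C^{\oplus 2}$, i.e.\ $\Ff \cong N \oplus N$, contradicting the indecomposability of $F$. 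So a reducible member exists, and the cohomological inequality above delivers the conclusion.

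For Case (b), the plan is to reduce to Case (a) via the restriction $\psi := \Psi_{1,\delta}|_\mathfrak{F} : \mathfrak{F} \to {\rm Pic}^{d-\delta}(C)$, which is non-constant since $\mathfrak{F}$ is non-linear. The fibre of $\psi$ at $\psi(\Gamma) = N$ is $\mathfrak{F} \cap |\Oc_F(\Gamma)| \subseteq \Pp(H^0(\Ff \otimes N^\vee))$. If this fibre has positive dimension, Case (a) applied within this linear subsystem yields a reducible member of $\mathfrak{F}$ linearly equivalent to $\Gamma$ with speciality $\geqslant i(\Gamma)$. Otherwise $\psi$ is generically finite, so its image in $\{N' : h^0(\Ff \otimes (N')^\vee) \geqslant 1\} \subseteq {\rm Pic}^{d-\delta}(C)$ has dimension $k$; the hypotheses (b1) ($k \geqslant 2$) or (b2) ($k=1$ with base points), together with the Nagata bound of Proposition \ref{prop:Nagata} on the Segre invariant of $\Ff$, are used to force a specialization of $\Gamma$ in $\mathfrak{F}$ onto the closed reducible locus in ${\rm Div}^{1,\delta}_F$. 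Along such a $1$-parameter degeneration the limiting sub-sheaf acquires torsion on an effective divisor $A$; upper semi-continuity of $h^1$ applied to the family of quotient line bundles, performed inside the closed stratum $\{i \geqslant i(\Gamma)\} \subseteq \mathfrak{F}$, still yields $i(\widetilde{\Gamma}) \geqslant i(\Gamma)$.

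The main obstacle is Case (b): a reducible member of a non-linear $\mathfrak{F}$ is in general not linearly equivalent to $\Gamma$, so the clean identity of the first step does not apply directly. The delicate point is to produce the reducible specialization within the upper-semi-continuity stratum $\{i \geqslant i(\Gamma)\}$, and this is precisely where the hypotheses (b1) and (b2) intervene — the former via a dimension count forcing a positive-dimensional linear fibre of $\psi$, the latter via the rigidity imposed by the common base points, which concentrates the family inside a linear pencil over a single point of ${\rm Pic}^{d-\delta}(C)$ up to unavoidable degeneration.
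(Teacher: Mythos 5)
Your preliminary identity and your Case (a) are correct, and the latter is arguably cleaner than the paper's argument for linear systems: identifying the reducible members of a linear subsystem of $|\Oc_F(\Gamma)|$ with the sections of $\Ff\otimes N^{\vee}$ having a zero, and observing that a base-point-free two-dimensional space of sections of a rank-two bundle trivializes it via evaluation, treats $\Gamma^2>0$ and $\Gamma^2=0$ uniformly (the paper instead splits into these subcases, using a base point in the first and two disjoint sections in the second). The exact formula $i(\widetilde{\Gamma})=h^1(L(-A))\geqslant h^1(L)$ is also sharper than the semicontinuity the paper invokes.

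Case (b), however, contains a genuine gap: the reducible members are never actually produced. First, $\psi=\Psi_{1,\delta}|_{\mathfrak F}$ can perfectly well be constant on a non-linear $\mathfrak F$ (a non-linear subvariety of a single complete linear system), so your dichotomy does not get started; and when a fibre of $\psi$ is positive-dimensional it is a closed but in general non-linear subvariety of $\Pp(H^0(\Ff\otimes N^{\vee}))$, to which your Case (a) argument --- which uses linearity of the space of sections essentially --- does not apply as written. More seriously, in the ``generically finite'' branch you only assert that the hypotheses ``are used to force a specialization onto the reducible locus'': no such degeneration is constructed, Nagata's bound plays no role in this statement, and the concluding semicontinuity ``performed inside the stratum $\{i\geqslant i(\Gamma)\}$'' is circular, since showing that a reducible member lies in that stratum is exactly what has to be proved. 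The missing idea --- which is the paper's entire proof and explains precisely why the dimension hypotheses are what they are --- is that a unisecant meeting a fibre $f_p$ in a scheme of length $\geqslant 2$ must contain $f_p$. Hence requiring the curves of $\mathfrak F$ to pass through two fixed points of a fibre imposes at most two conditions on the projective scheme $\mathfrak F$ and forces that fibre to split off: for $k\geqslant 2$ one takes two general points of a general fibre, while for $k=1$ one point is already supplied by a base point, so a single further condition suffices. This yields a non-empty projective subfamily of reducible unisecants, and \eqref{eq:speciality} then follows by upper semicontinuity of the speciality on $\mathfrak F$ (or, on the linear fibres of $\psi$, from your exact formula). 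I suggest keeping your Case (a) and replacing Case (b) by this fibre-splitting argument.
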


\begin{proof} If $k \geqslant 2$, let $t$ be the unique integer such that $0 \leqslant k':= k-2t \leqslant 1$. Let $f_1, \ldots, f_t$ be $t$ general $\rho$-fibres of $F$. Since $k' \geqslant 0$, by imposing to the curves in $\mathfrak{F}$ to contain fixed general pairs of points on $f_1, \ldots, f_t$,  we see that 
$$\mathfrak{F}' := \mathfrak{F} \left(- \sum_{i=1}^t f_i\right) \subset \mathfrak{F}$$is non-empty, all components of it have dimension $k'$, and they all parametrize  unisecants 
$\Gamma'  \sim_{alg} \Gamma -  \sum_{i=1}^t f_i$. Then $\mathfrak{F}$ contains reducible elements $\widetilde{\Gamma}$, and they verify \eqref{eq:speciality} by upper-semicontinuity. This proves the assertion when $k \geqslant 2$.  

So we are left with the case $k=1$. Assume first that $\mathfrak{F}$ is a linear pencil. Since $\mathfrak{F} \subseteq |\Oc_F(\Gamma)|$, from the exact sequence $0 \to \Oc_F \to \Oc_F(\Gamma) \to \Oc_{\Gamma}(\Gamma) \to 0$, the line bundle  $\Oc_{\Gamma}(\Gamma)$ is effective so $\Gamma^2 \geqslant 0$. 
Let ${\rm Bs}(\mathfrak{F})$ be the base locus of $\mathfrak{F}$. If  $\Gamma^2 >0$, take $p \in {\rm Bs}(\mathfrak{F})$. We can clearly split off the fibre through $p$ with one condition, thus proving the result. 

If $\Gamma^2 = 0$, $\mathfrak{F}$ is a base-point-free pencil. So $F$ contains two disjoint sections and this implies that $\Ff$ is decomposable, a contradiction. 
 
Finally, if $\mathfrak{F}$  is non-linear, then ${\rm Bs} (\mathfrak{F}) \neq \emptyset$ and we can argue as in the linear case with $\Gamma^2 >0$. 
\end{proof}


\section{Brill-Noether loci} \label{S:MS} 

As usual,  $U_C(d)$ denotes the moduli space of (semi)stable, degree $d$, rank-two vector bundles on  $C$. The subset $U_C^s(d)\subseteq U_C(d)$ parametrizing (isomorphism classes of) stable bundles, is an open subset. The points in  ${U_C}^{ss}(d):=U_C(d)\setminus U_C^s(d)$ correspond to (S-equivalence classes of) \emph{strictly semistable} bundles (cf. e.g. \cite{Ram,Ses}). 

\begin{proposition}\label{prop:sstabh1}
Let $C$ be a smooth curve of  genus $g \geqslant 1$ and let $d$ be an integer.

\noindent
(i) If $d \geqslant 4g-3$, then for any $[\Ff] \in U_C(d)$, one has $i(\Ff) = 0$.

\noindent 
(ii) If $g \geqslant 2$ and $d \geqslant 2g-2$, for $[\Ff] \in U_C(d)$ general,  one has $i(\Ff) = 0$.  

\end{proposition}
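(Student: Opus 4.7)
The plan for (i) is to argue via Serre duality. Writing
$$i(\Ff)=h^1(C,\Ff)=h^0(C,\Ff^{\vee}\otimes \omega_C),$$
I observe that $\Ff^{\vee}\otimes \omega_C$ inherits semistability from $\Ff$, and it has slope $\mu(\Ff^{\vee}\otimes \omega_C)=2g-2-d/2$, hence total degree $4g-4-d\leqslant -1$ under the assumption $d\geqslant 4g-3$. A nonzero section $s\colon \Oc_C \to \Ff^{\vee}\otimes \omega_C$ would saturate to a sub-line bundle $L\subset \Ff^{\vee}\otimes \omega_C$ with $h^0(L)\geqslant 1$, hence $\deg(L)\geqslant 0$; semistability would then force $0\leqslant \deg(L)\leqslant \mu(\Ff^{\vee}\otimes \omega_C)<0$, a contradiction. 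Therefore $h^0(\Ff^{\vee}\otimes \omega_C)=0$ and $i(\Ff)=0$.

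For (ii), my plan is to combine upper semicontinuity of $h^1$ with the irreducibility of $U_C(d)$ for $g\geqslant 2$: it suffices to exhibit a single $[\Ff_0]\in U_C(d)$ with $h^1(\Ff_0)=0$, for then $\{[\Ff]\in U_C(d)\,:\,h^1(\Ff)=0\}$ is non-empty and open, hence dense. I would construct $\Ff_0$ as an extension of two line bundles, splitting by parity. When $d=2m$ is even, I pick $L\in \Pic^m(C)$ general (possible because $m\geqslant g-1$, and in the boundary case $m=g-1$ by choosing $L$ off the theta divisor $W^0_{g-1}$) so that $h^1(L)=0$, and set $\Ff_0:=L\oplus L$: this is strictly semistable with $h^1(\Ff_0)=2h^1(L)=0$. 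When $d=2m+1$ is odd, I pick general $L_1\in \Pic^m(C)$ and $L_2\in \Pic^{m+1}(C)$ with $h^1(L_i)=0$, and take a generic non-split extension $0\to L_1\to \Ff_0\to L_2\to 0$, which exists since $\dim \Ext^1(L_2,L_1)=h^1(L_1\otimes L_2^{\vee})=g\geqslant 2$.

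Vanishing of $h^1(\Ff_0)$ in the odd case is immediate from the long exact sequence. Stability of $\Ff_0$ follows because every saturated sub-line bundle $N\subset \Ff_0$ either factors through $L_1$, forcing $N=L_1$ of degree $m<d/2$, or maps injectively to $L_2$; in the latter case, saturation of $N$ in $\Ff_0$ (torsion-freeness of $\Ff_0/N$) forces $N\cong L_2$, and this would split the extension against our choice. Hence every sub-line bundle has degree at most $m<d/2$ and $\Ff_0$ is stable.

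The main technical point to verify carefully is the transfer of upper semicontinuity of $h^1$ from a flat family of sheaves to the coarse moduli space $U_C(d)$, which is routinely handled through a local (\'etale) versal family of semistable bundles on $U_C^s(d)$ or via the universal Quot-scheme construction. A minor issue is the careful choice of non-special line bundles in the boundary case $d=2g-2$, where the relevant degree drops to $g-1$ and one must stay off the theta divisor $W^0_{g-1}\subsetneq \Pic^{g-1}(C)$; for $d\geqslant 2g-1$ a general line bundle of the required degree is automatically non-special.
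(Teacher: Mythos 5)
The paper does not actually prove this proposition: it is dispatched by citation ([New, Lemma 5.2] for (i), [Lau] or [Ballico] for (ii)), so your self-contained elementary argument is a genuinely different (and more informative) route. Part (i), via Serre duality and the fact that a semistable bundle of negative slope has no sections, is exactly the standard content of the cited lemma and is correct. Part (ii) — exhibit one semistable bundle with $h^1=0$, then use irreducibility of $U_C(d)$ for $g\geqslant 2$ together with closedness of the locus $\{h^1\geqslant 1\}$ (which the paper itself constructs as a degeneracy locus on the Quot scheme and pushes down, cf.\ Remark \ref{rem:BNloci}) — is also sound, and you were right to flag the coarse-moduli subtlety; note in the even case that every bundle S-equivalent to $L\oplus L$ is an extension of $L$ by $L$ and so also has $h^1=0$, so the point $[L\oplus L]$ really does lie off the closed locus $B^{k_1}_C(d)$. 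The one slip is in the odd-degree stability check: saturation of $N$ in $\Ff_0$ does \emph{not} force $N\cong L_2$ when the composite $N\to L_2$ is injective — there are many saturated sub-line bundles mapping injectively but not isomorphically onto $L_2$. What you actually need, and what is true, is that an injective map of line bundles gives $\deg(N)\leqslant \deg(L_2)=m+1$, with equality only if $N\to L_2$ is an isomorphism, which would provide a splitting of the extension contrary to its choice; hence every sub-line bundle has degree at most $m<d/2$ and $\Ff_0$ is stable. With that sentence repaired the proof is complete.
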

\begin{proof} For (i), see  \cite[Lemma 5.2]{New}; for (ii) see \cite[p. 100]{Lau} or  \cite[Rem. 3]{Ballico}. 
\end{proof}

Thus, from Proposition \ref{prop:sstabh1}, Serre duality and invariance of stability under operations like tensoring with a line bundle or passing to the dual bundle, for $g \geqslant 2$ it makes sense to consider the proper sub-loci of 
$U_C(d)$ parametrizing classes  $[\Ff]$ such that  $i(\Ff) > 0$ for   
\begin{equation}\label{eq:congd}
2g-2 \leqslant d \leqslant 4g-4. 
\end{equation}

\begin{definition}\label{def:BNloci} Given non-negative integers $d$, $g$ and $ i $, we set
\begin{equation}\label{eq:ki}
k_i = d - 2 g + 2 + i.
\end{equation}Given a curve $C$ of genus $g$, we define 
$$B^{k_i}_C(d) := \left\{ [\Ff] \in U_C(d) \, | \,  h^0(\Ff) \geqslant k_i \right\} = 
\left\{ [\Ff] \in U_C(d) \, | \, h^1(\Ff) \geqslant i \right\}$$which we call the $k_i^{th}$--{\em Brill-Noether locus}.
\end{definition}

\begin{remark}\label{rem:BNloci}  The Brill-Noether loci $B^{k_i}_C(d)$ have a natural structure of closed subschemes of 
$U_C(d)$: 


\noindent
$(a)$ When $d$ is odd, $U_C(d)=U^s_C(d)$,   then $U_C(d)$  is a fine moduli space and the existence of a universal bundle on $C \times U_C(d)$  allows one to construct $B^{k_i}_C(d)$ as the degeneracy locus of a morphism between suitable  vector bundles on $U_C(d)$ (see, e.g. \cite{GT, M}). Accordingly, the  {\em expected dimension} of $B^{k_i}_C(d)$ is 
${\rm max} \{ -1, \ \rho_d^{k_i}\}$, where 
\begin{equation}\label{eq:bn}
\rho_d^{k_i}:= 4g - 3 - i k_i
\end{equation} is  the \emph{Brill-Noether number}.  If  $\emptyset \neq B^{k_i}_C(d) \neq U_C(d)$, 
then $B^{k_i+1}_C(d) \subseteq {\rm Sing}(B^{k_i}_C(d))$. Since any $[\Ff] \in B^{k_i}_C(d)$ is stable, it is a smooth point of $U_C(d)$ and 
$T_{[\Ff]}(U_C(d))$ can be identified with $H^0( \omega_C \otimes \Ff \otimes \Ff^{\vee})^{\vee}$. If $[\Ff] \in B^{k_i}_C(d) \setminus B^{k_i+1}_C(d)$, 
 the tangent space to $B^{k_i}_C(d)$ at $[\Ff]$ is the  annihilator of the image of the cup--product, {\em Petri map} of $\Ff$ (see, e.g. \cite{TB00}) 
\begin{equation}\label{eq:petrimap}
P_{\Ff} : H^0(C, \Ff) \otimes H^0(C, \omega_C \otimes \Ff^{\vee})
\longrightarrow H^0(C, \omega_C \otimes \Ff \otimes \Ff^{\vee}).
\end{equation} 
If $[\Ff] \in B^{k_i}_C(d) \setminus B^{k_i+1}_C(d)$, then 
$$\rho_d^{k_i} = h^1(C, \Ff \otimes \Ff^{\vee}) - h^0(C, \Ff) h^1(C, \Ff)$$and 
$B^{k_i}_C(d)$ is non--singular, of the expected dimension at $[\Ff]$ if and only if $P_{\Ff}$ is injective.


\noindent
$(b)$ When $d$ is even, $U_C(d)$ is not a fine moduli space (because $U^{ss}_C(d)\neq \emptyset$). There is a suitable open, non-empty subscheme 
$ \mathcal Q^{ss} \subset \mathcal Q$ of a certain Quot scheme $\mathcal Q$ defining $U_C(d)$ via the GIT-quotient sequence
$$0 \to PGL(q) \to \mathcal Q^{ss} \stackrel{\pi}{\longrightarrow} U_C(d) \to 0$$(cf. e.g. \cite{Tha} for details); 
one can define $B^{k_i}_C(d)$ as the image via $\pi$ of the degeneracy locus of a morphism between suitable vector bundles on $\mathcal Q^{ss}$. 
The fibres of $\pi$ over strictly semistable bundle classes are not isomorphic to $PGL(q)$. It may happen for a component   $\mathcal B $ of a Brill--Noether locus to be  totally contained in $U_C^{ss}(d)$; 
in this case the lower bound  $\rho_d^{k_i}$ for the expected dimension of $\mathcal B$ is no longer valid  (cf. Corollary \ref{C.F.2} below and \cite[Remark 7.4]{BGN}).  
The lower bound $ \rho_d^{k_i}$ is still valid if $\mathcal B\cap U^s_C(d)\neq \emptyset$.  
\end{remark}

\begin{definition}\label{def:regsup} Assume $B_C^{k_i}(d) \neq \emptyset$. A component $ \mathcal B \subseteq B_C^{k_i}(d)$ 
such that $\mathcal B \cap U_C^s(d) \neq \emptyset$ will be called {\em regular},  if $\dim(\mathcal B) = \rho_d^{k_i}$, 
{\em superabundant}, if  $\dim(\mathcal B) > \rho_d^{k_i}$. 
\end{definition}


\section{(Semi)stable vector bundles and extensions}\label{S:BNLN}

In this section we discuss how to easily produce special, (semi)stable vector bundles $\Ff$ as extensions of  line bundles $L$ and $N$ as in \eqref{eq:Fund}. This is the same as considering vector bundles $\Ff$, with a sub-line bundle $N$ s.t. 
$\Ff \otimes N^{\vee}$ has a nowhere vanishing section.

If $g =2$, in the range  \eqref{eq:congd} one has  bundles  $\Ff$ with slope $1 \leqslant \mu(\Ff) \leqslant 2$ on a hyperelliptic curve, which  have been studied in \cite{BGN, BMNO, M1, M4}. 
Thus, we will assume $C$ non-hyperelliptic of genus $g \geqslant 3$, with $d$ as in \eqref{eq:congd}.



\subsection{Extensions and a result of Lange-Narashiman}\label{ss:LN} Let $\delta \leqslant d$ be a positive integer. Consider $L \in {\rm Pic}^{\delta}(C)$ and $N \in {\rm Pic}^{d-\delta}(C)$; 
${\rm Ext}^1(L,N)$ parametrizes (strong) isomorphism classes of extensions (cf. \cite[p. 31]{Frie}). Any $u \in {\rm Ext}^1(L,N)$ gives rise to a degree $d$, rank-two vector bundle $\Ff=\Ff_u$  as in \eqref{eq:Fund}. 
In order to get $\Ff_u$ (semi)stable, a necessary condition is (cf. Remark \ref{rem:seginv}) 
\begin{equation}\label{eq:assumptions}
2\delta-d \geqslant 0. 
\end{equation}Therefore, by Riemann-Roch theorem, we have
\begin{equation}\label{eq:lem3note}
m:= \dim({\rm Ext}^1(L,N)) = \left\{\begin{array}{ll}
2\delta - d + g - 1 & {\rm if} \; L \; |\!\!\!\!\!  \cong N \\
g & {\rm if} \; L \cong N.
\end{array}
\right.
\end{equation}

\begin{lemma}\label{lem:1e2note} Let $\Ff$ be a (semi)stable, special, rank-two vector bundle on $C$ of degree $d \geqslant 2g-2$. Then $\Ff = \Ff_u$, for a special, effective line bundle $L$  on $C$, $N = \det(\Ff) \otimes L^{\vee}$ as in \eqref{eq:Fund} and 
$u \in {\rm Ext}^1(L,N)$.
\end{lemma}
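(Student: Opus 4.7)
The plan is to produce the desired presentation directly from the speciality of $\Ff$ together with Serre duality, then to verify the required properties of the quotient line bundle $L$ using semistability and Riemann--Roch.

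First I would convert the hypothesis $h^1(\Ff)>0$ into the existence of a nonzero morphism $\varphi\colon \Ff\to\omega_C$ via Serre duality, since $h^1(\Ff)=h^0(\omega_C\otimes\Ff^\vee)$. Let $L\subseteq\omega_C$ be the image of $\varphi$. Since $\omega_C$ is a line bundle, $L$ is a torsion-free rank-one subsheaf of it, hence a line bundle of the form $L\cong\omega_C(-D)$ for a (uniquely determined) effective divisor $D\geqslant 0$. Because the quotient $\Ff/\ker(\varphi)\cong L$ is torsion-free, the kernel $N:=\ker(\varphi)$ is a saturated sub-line bundle of $\Ff$, and we obtain the exact sequence
\begin{equation*}
0\to N\to \Ff\to L\to 0,
\end{equation*}
which is precisely \eqref{eq:Fund}. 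Taking determinants, $\det(\Ff)=N\otimes L$, whence $N=\det(\Ff)\otimes L^\vee$, as required. The corresponding class is the element $u\in\mathrm{Ext}^1(L,N)$ representing this extension.

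Next I would check that $L$ is special and effective. Speciality is automatic: from $L=\omega_C(-D)$ with $D\geqslant 0$ one has
\begin{equation*}
h^1(L)=h^0(\omega_C\otimes L^\vee)=h^0(\Oc_C(D))\geqslant 1.
\end{equation*}
For effectiveness, I would invoke (semi)stability: since $L$ is a quotient line bundle of the (semi)stable bundle $\Ff$, one has $\mu(L)\geqslant \mu(\Ff)=d/2$, hence $\deg(L)\geqslant d/2\geqslant g-1$ by the hypothesis $d\geqslant 2g-2$. Then Riemann--Roch yields
\begin{equation*}
h^0(L)=h^1(L)+\deg(L)-g+1\geqslant 1+0=1,
\end{equation*}
so $L$ is effective.

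The argument is largely formal; the main (mild) obstacle is the boundary case where $d=2g-2$ and $\deg(L)$ attains the minimum $g-1$, in which Riemann--Roch alone is not enough and one genuinely needs the speciality bound $h^1(L)\geqslant 1$ to conclude $h^0(L)\geqslant 1$. Everything else follows directly from Serre duality, the structure of subsheaves of a line bundle, and the slope inequality for semistable bundles.
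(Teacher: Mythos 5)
Your proof is correct and follows essentially the same route as the paper's: Serre duality produces a nonzero morphism $\Ff\to\omega_C$, its image is taken as the special quotient $L$, and (semi)stability combined with $d\geqslant 2g-2$ gives $\deg(L)\geqslant g-1$, so that Riemann--Roch together with $h^1(L)\geqslant 1$ yields effectiveness. The only difference is that you spell out the details (saturation of the kernel, $L\cong\omega_C(-D)$, the boundary case $\chi(L)=0$) that the paper leaves implicit.
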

\begin{proof}  By Serre duality, $i(\Ff) >0 $ gives a non-zero morphism $\Ff \stackrel{\sigma^{\vee}}{\to} \omega_C$. The line bundle $L:= {\rm Im}(\sigma^{\vee}) \subseteq \omega_C$ is special. Set $\delta= \deg(L)$. Since $\Ff$ is (semi)stable, then  \eqref{eq:assumptions} holds hence  
$\delta \geqslant \frac{d}{2} \geqslant g-1$, therefore $\chi(L) \geqslant 0$, so  $L$ is effective.
\end{proof}

\begin{remark}\label{rem:rigid} In the setting of Lemma \ref{lem:1e2note}, consider the scroll  $F= \Pp(\Ff)$ and let $\Gamma \subset F$ 
be the section corresponding to $L$, with $L \in {\rm Pic}^{\delta}(C)$ a special, effective quotient of minimal degree of $\Ff$. 
Suppose $\Ff$ indecomposable. From Proposition \ref{prop:lem4}, one has
\begin{equation}\label{eq:aiutoE}
\Gamma  \subset F \; \; {\rm is \; li\; with} \; \; a_{F}(\delta) \leqslant 1, 
\end{equation}
where $a_{F}(\delta)$ as in \eqref{eq:aga}. Then $\Ff$ is rsp via $L$ if $a_F(\delta)=0$, 
and even rp if $\Gamma$ is ai. 
\end{remark}
Fix  $L$ special, effective of degree $\delta$ and $N$ of degree $d-\delta$, where $d$ satisfies \eqref{eq:congd} and \eqref{eq:assumptions} (so $\deg(L) \geqslant \deg(N) \geqslant 0$). 
We fix once and for all the following notation:
\begin{equation}\label{eq:exthyp1}
\begin{array}{lcl}
j:= h^1(L) > 0, & &  \ell := h^0(L) = \delta-g+1 +j >0,\\
r := h^1(N) \geqslant 0,& &  n := h^0(N)= d- \delta-g+1 +r\geqslant 0.
\end{array}
\end{equation}
Any $u \in {\rm Ext^1}(L,N)$ gives rise to the following diagram
\begin{equation}\label{eq:exthyp}
\begin{array}{crcccccl}
(u): & 0 \to & N & \to & \Ff_u & \to & L & \to 0 \\
{\rm deg} & & d-\delta & & d & & \delta & \\
h^0 & &   n & & & & \ell & \\
h^1 & &  r & & & & j & 
\end{array}
\end{equation}Let  
$$\partial_u : H^0(L) \to H^1(N)$$be the {\em coboundary map} (simply denoted by $\partial$ if there is no danger of confusion) and let $\cork(\partial_u) := \dim({\rm Coker}(\partial_u))$. Then
$$i(\Ff_u) = j +  \cork(\partial_u).$$As for  (semi)stability of $\Ff_u$, information can be obtained by using \cite[Prop. 1.1]{LN}  (see  Proposition \ref{prop:LN} below) and the projection technique from \cite{CF} (see Theorem \ref{C.F.VdG} below). 

For the reader's convenience,  we recall \cite[Prop. 1.1]{LN} (cf. also \cite[\S's\,3,\,4]{Be}, \cite[\S\,3]{BeFe}). 
Take $u \in {\rm Ext}^1(L,N)$. Tensor by $N^{\vee}$ and consider $ \Ef_e:= \Ff_u \otimes N^{\vee}$, which is an extension  
$$(e) : \;\;\; 0 \to \Oc_C \to \Ef_e \to L \otimes N^{\vee} \to 0,$$where $e \in {\rm Ext}^1(L\otimes N^{\vee}, \Oc_C)$. Then $\deg(\Ef_e) = 2\delta - d$.  
From \eqref{eq:seginv}, one has $s(\Ff_u) = s(\Ef_e)$ and, by Serre duality,  $u$ and $e$ define the same point in  
\begin{equation}\label{eq:PP}
\Pp:= \Pp(H^0(K_C + L-N)^{\vee}).   
\end{equation}  

\begin{remark}\label{rem:sech}
If  $\deg(L-N)=2\delta - d \geqslant 2$,  then $\dim(\Pp) \geqslant g \geqslant 3 $  and the map $\varphi := \varphi_{|K_C+L-N|} : C \to \Pp$ is a morphism. Set $X := \varphi(C) \subset \Pp$. 
For any positive integer $h$ denote by ${\rm Sec}_h(X)$ the {\em $h^{th}$-secant variety of $X$}, defined as the closure of the union of all linear subspaces $\langle \varphi (D) \rangle \subset \Pp$, for all effective general  divisors of degree $h$. One has 
\begin{equation}\label{eq:sech}
\dim({\rm Sec}_h(X)) = {\rm min} \{\dim(\Pp), 2h-1\}. 
\end{equation} 
\end{remark}

\begin{proposition}  \label{prop:LN} (see \cite[Prop. 1.1]{LN})  Let $2\delta-d \geqslant 2$. For any integer $$\sigma \equiv  2\delta - d \pmod{2} \;\;and  \;\; 4 + d - 2 \delta \leqslant \sigma \leqslant 2\delta -d,$$one has 
$$s(\Ef_e) \geqslant \sigma \Leftrightarrow e \in \!\!\!\!| \ \ {\rm Sec}_{\frac{1}{2}(2\delta - d + \sigma -2)}(X).$$
\end{proposition}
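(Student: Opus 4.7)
The plan is to translate the computation of $s(\Ef_e)$ into a geometric condition on the position of $e\in\Pp$ with respect to $X=\varphi(C)$. First I classify the saturated sub-line bundles $M\hookrightarrow\Ef_e$. If the composition $M\hookrightarrow\Ef_e\to L\otimes N^{\vee}$ is zero, then $M$ is a sub-line bundle of $\Oc_C$, hence $\deg M\leq 0$; if it is non-zero, then the image is a line subsheaf of $L\otimes N^{\vee}$, forcing $M\cong L\otimes N^{\vee}(-D)$ for some effective divisor $D$, with $M\hookrightarrow\Ef_e$ being a lift of the natural $L\otimes N^{\vee}(-D)\hookrightarrow L\otimes N^{\vee}$. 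Since the distinguished embedding $\Oc_C\hookrightarrow\Ef_e$ already provides a sub-line bundle of degree $0$, setting
$$h_0:=\min\bigl\{h\geq 0\bigm|\exists\, D\geq 0,\ \deg D=h,\ L\otimes N^{\vee}(-D)\hookrightarrow\Ef_e\bigr\},$$
one obtains $\max\{\deg M\}=\max\{0,\,2\delta-d-h_0\}$, and therefore $s(\Ef_e)=2h_0-(2\delta-d)$ whenever $h_0\leq 2\delta-d$.

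Next I would prove the liftability criterion: $L\otimes N^{\vee}(-D)\hookrightarrow\Ef_e$ exists iff $e\in\langle\varphi(D)\rangle\subset\Pp$. Tensoring the extension $(e)$ by $L^{\vee}\otimes N(D)$ produces the long exact sequence
$$H^0(\Ef_e\otimes L^{\vee}\otimes N(D))\longrightarrow H^0(\Oc_C(D))\stackrel{\cup\, e}{\longrightarrow}H^1(L^{\vee}\otimes N(D)),$$
so the canonical section $s_D\in H^0(\Oc_C(D))$ lifts iff $e\cup s_D=0$, equivalently iff $e$ lies in the kernel of the surjection $H^1(L^{\vee}\otimes N)\twoheadrightarrow H^1(L^{\vee}\otimes N(D))$ induced by multiplication by $s_D$. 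Serre duality identifies this kernel with the annihilator $V_D^{\perp}\subset V^{\vee}$ of $V_D:=H^0(K_C+L-N-D)$ inside $V:=H^0(K_C+L-N)$; the geometric Riemann--Roch interpretation of $\varphi=\varphi_{|K_C+L-N|}$ then identifies $\Pp(V_D^{\perp})\subset\Pp(V^{\vee})=\Pp$ with the linear span $\langle\varphi(D)\rangle$, giving the criterion.

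Finally, $h_0$ is the minimum $h$ for which $e$ lies on an $h$-secant linear span of $X$. Since ${\rm Sec}_h(X)$ is the closure of the union of all such spans and is increasing in $h$, one has $e\notin{\rm Sec}_h(X)\iff h_0\geq h+1$. Inserting $s(\Ef_e)=2h_0-(2\delta-d)$, the inequality $s(\Ef_e)\geq\sigma$ becomes $h_0\geq\tfrac12(2\delta-d+\sigma)$ (an integer by the parity hypothesis), which is in turn equivalent to
$$e\notin{\rm Sec}_{\frac12(2\delta-d+\sigma-2)}(X).$$
The stated range $4+d-2\delta\leq\sigma\leq 2\delta-d$ corresponds exactly to $2\leq h_0\leq 2\delta-d$: below it the condition is vacuous (since ${\rm Sec}_0(X)=\emptyset$), and above it $s(\Ef_e)$ saturates at $2\delta-d$ because of the embedding $\Oc_C\hookrightarrow\Ef_e$.

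The main technical point is the duality identification $V_D^{\perp}\leftrightarrow\langle\varphi(D)\rangle$ in $\Pp(V^{\vee})$, which is the standard geometric Riemann--Roch but requires care at divisors $D$ where evaluation on $D$ fails to be surjective on $|K_C+L-N|$; since both sides of the equivalence are closed conditions on $e$ and only the closure is needed to recover ${\rm Sec}_h(X)$, this poses no real obstruction.
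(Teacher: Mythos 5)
The paper offers no proof of this statement at all: it is quoted verbatim from Lange--Narasimhan \cite[Prop.\ 1.1]{LN}, so there is no argument of the authors' to compare yours with. What you have written is, in substance, the standard Lange--Narasimhan argument: reduce $s(\Ef_e)$ to the minimal degree $h_0$ of an effective $D$ for which $L\otimes N^{\vee}(-D)$ lifts to a sub-line bundle of $\Ef_e$, identify the obstruction to lifting with $e\cup s_D$, and dualize to read the vanishing of that obstruction as $e\in\langle\varphi(D)\rangle=\Pp\bigl({\rm Ann}\,H^0(K_C+L-N-D)\bigr)$. All of that is correct, including the observation that sub-line bundles mapping to zero in $L\otimes N^{\vee}$ contribute at most degree $0$, which is already achieved by $\Oc_C\hookrightarrow\Ef_e$.

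The one place where your justification is not quite right is the final closure issue, and your proposed fix (``both sides of the equivalence are closed conditions on $e$'') does not address it: the delicate implication is $e\in{\rm Sec}_h(X)\Rightarrow h_0\leqslant h$, and since ${\rm Sec}_h(X)$ is defined as a closure, one must rule out the possibility that $e$ is a limit of points of spans without lying on the span of any actual degree-$h$ divisor. A limit of spans $\langle\varphi(D_t)\rangle$ may strictly contain $\langle\varphi(D_0)\rangle$ when $h^0(K_C+L-N-D)$ jumps at $D_0$, so closedness of the union of honest spans is exactly what needs proof. The correct and easy resolution is a degree count: in the stated range one has $h=\tfrac12(2\delta-d+\sigma-2)\leqslant 2\delta-d-1$, hence $\deg(K_C+L-N-D)=2g-2+2\delta-d-h\geqslant 2g-1$, so $h^1(K_C+L-N-D)=h^0(N-L+D)=0$ for \emph{every} $D\in C^{(h)}$. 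Thus every degree-$h$ divisor imposes independent conditions on $|K_C+L-N|$, the incidence variety $\{(D,p)\,:\,p\in\langle\varphi(D)\rangle\}$ is a $\Pp^{h-1}$-bundle over $C^{(h)}$, and its (proper, hence closed) image in $\Pp$ is precisely ${\rm Sec}_h(X)$, which is therefore the honest union of the spans. With this one line inserted, your argument is complete.
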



To end this section, we remark that later on we will need the following technical result.

\begin{lemma}\label{lem:technical} Let $L$ and $N$ be as in \eqref{eq:exthyp} and such that $h^0(N-L) = 0$. 
Take $u, u' \in {\rm Ext}^1(L,N)$ such that:

\noindent
(i) the corresponding rank-two vector bundles $\Ff_u$ and $\Ff_{u'}$ are stable, and

\noindent
(ii) there exists an isomorphism $\varphi$ 
\[
\begin{array}{ccccccl}
0 \to & N & \stackrel{\iota_1}{\longrightarrow} & \Ff_{u'} & \to & L & \to 0 \\ 
 & & & \downarrow^{\varphi} & & &  \\
0 \to & N & \stackrel{\iota_2}{\longrightarrow} & \Ff_u & \to & L & \to 0
\end{array}
\]such that $\varphi \circ \iota_1 = \lambda \iota_2$, for some $\lambda \in \C^*$. 

Then $\Ff_u = \Ff_{u'}$, i.e.  $u, u'$ are proportional vectors in ${\rm Ext}^1(L,N)$. 
\end{lemma}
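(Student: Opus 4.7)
\medskip

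\noindent\textbf{Proof plan.} The hypothesis $\varphi\circ \iota_1=\lambda\iota_2$ with $\lambda\in\C^*$ says that $\varphi$ carries the sub-line bundle $\iota_1(N)\subset\Ff_{u'}$ isomorphically onto $\iota_2(N)\subset\Ff_u$. Consequently $\varphi$ descends to a morphism between the quotients, yielding a commutative diagram of short exact sequences
$$
\begin{array}{ccccccccc}
0 & \to & N & \stackrel{\iota_1}{\longrightarrow} & \Ff_{u'} & \to & L & \to & 0\\
  &    & \lambda\cdot\downarrow &  & \varphi\downarrow & & \bar\varphi\downarrow & &\\
0 & \to & N & \stackrel{\iota_2}{\longrightarrow} & \Ff_{u} & \to & L & \to & 0
\end{array}
$$
where $\bar\varphi\in \mathrm{Hom}(L,L)$ is the induced map. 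The strategy is to show $\bar\varphi$ is a nonzero scalar and then invoke functoriality of $\mathrm{Ext}^1$.

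\medskip

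For the first point, I would argue that $\mathrm{Hom}(L,L)\cong H^0(C,\Oc_C)\cong \C$, so $\bar\varphi=\mu\cdot\mathrm{id}_L$ for some $\mu\in\C$. To see that $\mu\neq 0$, observe that if $\mu=0$ then $\bar\varphi=0$ forces $\mathrm{Im}(\varphi)\subseteq \iota_2(N)$; but $\varphi$ is an isomorphism onto $\Ff_u$, which properly contains $\iota_2(N)$ (since $L\neq 0$), giving a contradiction. Thus $\mu\in\C^*$.

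\medskip

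For the second point, I would apply the standard functoriality of $\mathrm{Ext}^1(-,-)$ in both variables to the vertical maps of the above diagram. The commutativity of the diagram translates precisely into the identity
$$
\lambda\cdot u' \;=\; \mu\cdot u \qquad \text{in } \mathrm{Ext}^1(L,N),
$$
where on the left one pushes forward $u'$ along $\lambda\cdot\mathrm{id}_N$ and on the right one pulls back $u$ along $\mu\cdot\mathrm{id}_L$. Since $\lambda,\mu\in\C^*$, this yields $u'=(\mu/\lambda)\,u$, i.e.\ $u$ and $u'$ are proportional as claimed.

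\medskip

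No step seems genuinely hard: the real content is just that $\mathrm{Hom}(L,L)=\C$ together with the naturality of the extension class. The hypotheses on stability of $\Ff_u,\Ff_{u'}$ and $h^0(N-L)=0$ do not explicitly enter the argument I propose; I expect they play their role in the ambient context (guaranteeing uniqueness of the sub-$N$ when applying this lemma elsewhere), rather than inside the proof itself. The only subtle point is the nonvanishing of $\mu$, which I would want to justify cleanly to avoid degenerate cases.
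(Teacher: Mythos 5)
Your proof is correct, and it is essentially the standard argument the paper has in mind: the paper does not write out a proof but simply refers to Maruyama's Lemma~1 and leaves the details to the reader, and those details are exactly your two steps (the induced endomorphism $\bar\varphi$ of $L$ is a nonzero scalar $\mu$ since $\mathrm{Hom}(L,L)\cong\C$ and $\varphi$ is an isomorphism, and then functoriality of $\mathrm{Ext}^1$ gives $\lambda u'=\mu u$). Your side remark is also accurate: the stability and $h^0(N-L)=0$ hypotheses are not needed once condition (ii) with the scalar $\lambda$ is in hand; they serve in the applications of the lemma, where that condition has to be established first.
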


\begin{proof} The proof is similar to that in \cite[Lemma 1]{Ma3} so it can be left to the reader. 
 \end{proof}


\section{Stable bundles as extensions of line bundles}\label{sec:constr}

In this section we start with line bundles $L$ and $N$ on $C$ as in \S\,\ref{S:BNLN}, and consider rank--two vector bundles $\Ff$ on $C$ arising as extensions as in \eqref {eq:Fund}. We give conditions under which $\Ff$ is stable, with a given speciality, and $L$ is a quotient with suitable minimality properties.

\subsection{The case $N$ non-special}\label{S:Nns} Here we  focus on the case $N$ non-special.  Notation as in  \eqref{eq:exthyp1},  \eqref{eq:exthyp}, with  therefore $r=0$ (by the non-speciality assumption).  

\begin{theorem}\label{LN} Let $j \geqslant 1$ and  $g \geqslant 3$ be integers.  Let $C$ be of genus $g$ with general moduli. Let $\delta$ and $d$ be integers such that 
\begin{equation}\label{eq:ln2}
\delta \leqslant g-1 + \frac{g}{j} -j,  
\end{equation}
\begin{equation}\label{eq:ln3} 
\delta + g- 1 \leqslant d \leqslant 2\delta -2. 
\end{equation}Let $N \in {\rm Pic}^{d-\delta}(C)$ be general and $L \in W^{\delta-g+j}_{\delta}(C)$ be a smooth point (i.e. $L$ of speciality $j$ and so $h^0(L) = \ell$ as in \eqref{eq:exthyp1}). Then, for $u \in {\rm Ext}^1(L,N)$ general, the corresponding rank-two vector bundles $\Ff_u$ is indecomposable with:

\noindent
(i) $s(\Ff_u) = 2 \delta -d$. In particular $2 \leqslant s(\Ff_u) \leqslant \frac{g}j-j$, hence $\Ff_u$ is also stable; 

\noindent
(ii) $i(\Ff_u) = j$;  

\noindent
(iii) $L$ is a quotient  of  minimal degree of $\Ff_u$. 

\end{theorem}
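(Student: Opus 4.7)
The plan is to verify the three claims in the order (ii), (i), (iii), exploiting first the cohomology long exact sequence of \eqref{eq:exthyp}, then the Lange--Narasimhan criterion of Proposition \ref{prop:LN}, and finally the Segre-invariant formula for quotients.

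First, observe that the two hypotheses \eqref{eq:ln2} and \eqref{eq:ln3} make the datum $(L,N)$ non-vacuous. The inequality \eqref{eq:ln2} is exactly $\rho(g,\delta-g+j,\delta) = g - j\ell \geqslant 0$ (with $\ell = \delta-g+1+j$), so on a curve with general moduli $W^{\delta-g+j}_\delta(C)$ contains smooth points, i.e.\ line bundles $L$ with $h^0(L)=\ell$ and $h^1(L)=j$; and \eqref{eq:ln3} gives $\deg(N) = d-\delta \geqslant g-1$, hence a general $N$ satisfies $h^1(N) = 0$. Feeding the vanishing $H^1(N)=0$ into the cohomology sequence of \eqref{eq:exthyp} gives $h^1(\Ff_u) = h^1(L) = j$ for every $u$, proving (ii).

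For (i), tensor with $N^{\vee}$ and set $\Ef_e := \Ff_u \otimes N^{\vee}$ as in \S\,\ref{ss:LN}; by \eqref{eq:seginv}, $s(\Ff_u) = s(\Ef_e)$, and $e$ lives in the projective space $\Pp$ of \eqref{eq:PP}. Since $\deg(N \otimes L^{\vee}) = d-2\delta \leqslant -2$ by \eqref{eq:ln3}, Riemann--Roch together with Serre duality gives $\dim\Pp = g-2+2\delta-d$. Apply Proposition \ref{prop:LN} with $\sigma = 2\delta-d$: the parity and range conditions on $\sigma$ collapse to $d \leqslant 2\delta - 2$, which is \eqref{eq:ln3}. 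The criterion then asserts that $s(\Ef_e)\geqslant 2\delta-d$ is equivalent to $e \notin \mathrm{Sec}_{2\delta-d-1}(X)$. By \eqref{eq:sech}, this secant variety has dimension at most $4\delta-2d-3$, which is strictly smaller than $\dim\Pp$ precisely when $2\delta-d \leqslant g$; the combined bounds \eqref{eq:ln2}--\eqref{eq:ln3} yield $2\delta-d \leqslant \delta - g + 1 \leqslant g/j - j \leqslant g-1$, so the inequality holds. Thus for $u$ general one has $s(\Ff_u) \geqslant 2\delta-d$, and the reverse inequality is automatic because $N\hookrightarrow \Ff_u$ is saturated of degree $d-\delta$. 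The bounds $2\leqslant s(\Ff_u) \leqslant g/j - j$ then fall out of the same chain of inequalities; in particular $s(\Ff_u) \geqslant 2 > 0$, so $\Ff_u$ is stable, hence simple, hence indecomposable.

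Claim (iii) is now immediate from Remark \ref{rem:seginv}: via \eqref{eq:Ciro410} applied to the quotient $L$, the minimal degree of a quotient line bundle of $\Ff_u$ equals $(d+s(\Ff_u))/2 = \delta = \deg(L)$, and $L$ realizes this minimum.

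The main obstacle is not conceptual but numerical: one must run the Lange--Narasimhan criterion carefully, verifying both the parity/range conditions on $\sigma$ in Proposition \ref{prop:LN} and that $\mathrm{Sec}_{2\delta-d-1}(X)$ is a proper subvariety of $\Pp$. The first is \eqref{eq:ln3}, and the second is the essential point where \eqref{eq:ln2} enters, through the chain $\delta - g + 1 \leqslant g/j - j \leqslant g-1$.
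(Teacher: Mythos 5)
Your proof is correct and follows essentially the same route as the paper's: part (ii) via the non-speciality of the general $N$ of degree $d-\delta\geqslant g-1$ and the cohomology sequence, and parts (i) and (iii) via Proposition \ref{prop:LN} with $\sigma=2\delta-d$, checking that $\mathrm{Sec}_{2\delta-d-1}(X)$ is proper in $\Pp$ through the chain $2\delta-d\leqslant\delta-g+1\leqslant g/j-j$. The only cosmetic difference is that the paper gets the reverse inequality $s(\Ff_u)\leqslant 2\delta-d$ from $\Gamma^2=2\delta-d$ for the section corresponding to $\Ff_u\to\!\!\to L$, while you read it off from the sub-line bundle $N$; these are equivalent.
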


\begin{remark}\label{rem:lnb}  (1) Inequality in \eqref{eq:ln2} is equivalent to $\rho(g, \ell -1, \delta) \geqslant 0$, where $\rho(g, \ell-1,\delta)$ is the Brill-Noether number as in \S\ref{sec:P} for line 
bundles  $L \in {\rm Pic}^{\delta}(C)$ of speciality $j$; this is a necessary and sufficient condition for $C$ of genus $g$ with general moduli to admit such a line bundle $L$ (cf.\,\cite{ACGH}). For what concerns \eqref{eq:ln3},  the upper-bound on $d$ reads $2 \delta - d \geqslant 2$, which is required to apply Proposition \ref{prop:LN}, whereas the lower-bound is equivalent to $\deg(N) = d-\delta \geqslant g-1$, hence the general  $N \in {\rm Pic}^{d-\delta}(C)$, for $C$ general, is non-special.

\noindent
(2)  Notice that \eqref{eq:ln3} implies $\delta \geqslant g+1$. Thus, together with \eqref{eq:ln2}, one has $g \geqslant j^2+ 2j$ i.e.  $1 \leqslant j \leqslant \sqrt{g+1} -1$. 
\end{remark}

\begin{proof}[Proof of Theorem \ref{LN}] By Remark  \ref{rem:lnb}-(1) $N$ is non-special, so (ii) holds. Moreover, always by Remark  \ref{rem:lnb}-(1), we can use Proposition \ref{prop:LN} with $\sigma :=  2\delta-d$. One has $\dim(\Pp) = 2\delta-d+g-2$. 
From \eqref{eq:sech}, we have$$\dim\left({\rm Sec}_{\frac{1}{2}(2(2\delta-d)-2)}(X)\right)  = {\rm min} \{ \dim(\Pp), 2(2\delta-d) -3 \} = 2(2\delta-d) -3,$$since 
 \eqref{eq:ln2} and \eqref{eq:ln3} yield $2\delta-d \leqslant\frac gj-j$ which implies $2(2\delta-d) -3  < 2\delta-d+g-2 $. In particular, \linebreak $ \dim\left({\rm Sec}_{\frac{1}{2}(2(2\delta-d)-2)}(X)\right) < \dim(\Pp)$. 
From Proposition \ref{prop:LN}, $u \in {\rm Ext}^1(L,N)$ general is such that $s(\Ff_u) \geqslant 2\delta-d$. If  $\Gamma$ is the section 
corresponding to $\Ff_u \to \!\! \to L$, one has $\Gamma^2 = 2 \delta - d$ as in \eqref{eq:Ciro410} so $s(\Ff_u) = 2 \delta - d$, hence (i) and (iii) are also proved. 
\end{proof}

\begin{corollary}\label{cor:LN}  Assumptions as in Theorem \ref{LN}.  Let $\Gamma$ be the section of $F_u = \Pp(\Ff_u)$ corresponding to $\Ff_u \to \!\! \to L$. Then $\Gamma$ is of minimal degree. In particular, $\Gamma$ is li and  $0 \leqslant \dim({\rm Div}_{F_u}^{1,\delta}) \leqslant 1$. 
\end{corollary}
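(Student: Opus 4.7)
The plan is to verify in turn the three conclusions: minimality of the section $\Gamma$, linear isolation of $\Gamma$, and the bound $0 \leqslant \dim({\rm Div}^{1,\delta}_{F_u}) \leqslant 1$.

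For minimality, the section $\Gamma$ corresponding to the quotient $\Ff_u \to\!\!\to L$ has self-intersection $\Gamma^2 = \deg(L) - \deg(N) = 2\delta - d$ by formula \eqref{eq:Ciro410}, which by Theorem \ref{LN}(i) coincides with $s(\Ff_u)$; so Remark \ref{rem:seginv} identifies $\Gamma$ as a section of minimal degree.

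To show $\Gamma$ is linearly isolated, I would invoke the identification $|\Oc_{F_u}(\Gamma)| \cong \Pp(H^0(\Ff_u \otimes N^{\vee}))$ from \eqref{eq:isom2} and aim to verify $h^0(\Ff_u \otimes N^{\vee}) = 1$. Tensoring the defining extension $0 \to N \to \Ff_u \to L \to 0$ by $N^{\vee}$ gives
\[
0 \to \Oc_C \to \Ff_u \otimes N^{\vee} \to L \otimes N^{\vee} \to 0,
\]
so it suffices to prove $h^0(L \otimes N^{\vee}) = 0$. Indeed, $\deg(L \otimes N^{\vee}) = 2\delta - d$, which by \eqref{eq:ln2}--\eqref{eq:ln3} is at most $g/j - j < g$; since $N$ is chosen generally in ${\rm Pic}^{d-\delta}(C)$ while $L$ is fixed, the twist $L \otimes N^{\vee}$ is a generic line bundle in ${\rm Pic}^{2\delta-d}(C)$, hence non-effective on a general curve of genus $g$ (the Brill--Noether locus $W^0_{2\delta-d}(C)$ being a proper subvariety). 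The long exact sequence then forces $h^0(\Ff_u \otimes N^{\vee}) = 1$.

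For the dimension bound, $\dim \geqslant 0$ is immediate from $\Gamma \in {\rm Div}^{1,\delta}_{F_u}$. For the upper bound, the key observation is that ${\rm Div}^{1,\delta}_{F_u}$ contains no reducible unisecants: a decomposition $\widetilde{\Gamma} = \Gamma' + \sum_{i=1}^{t} f_i$ with $t \geqslant 1$ would give a section $\Gamma'$ with $(\Gamma')^2 = 2(\delta - t) - d < s(\Ff_u)$, contradicting the definition of the Segre invariant. Then, invoking the geometric construction in the proof of Proposition \ref{prop:lem4} applied to any irreducible component $\mathfrak{F} \subseteq {\rm Div}^{1,\delta}_{F_u}$ of dimension $k \geqslant 2$ (or to any linear component of dimension $k \geqslant 1$, using indecomposability of $\Ff_u$ to exclude the base-point-free case), one produces a non-empty subfamily of reducible unisecants by imposing passage through two general points on each of $\lfloor k/2 \rfloor$ general $\rho$-fibres; this contradicts the previous observation and forces every component to have dimension at most $1$. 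The main delicate point is checking that the reducibility construction of Proposition \ref{prop:lem4} carries over to components of ${\rm Div}^{1,\delta}_{F_u}$ not a priori lying in the special locus $\mathcal{S}^{1,\delta}_{F_u}$; this however poses no difficulty, since the geometric step (splitting off a fibre once two of its points have been imposed on the family) is independent of any speciality hypothesis, which enters in Proposition \ref{prop:lem4} only through the upper-semicontinuity inequality $i(\widetilde{\Gamma}) \geqslant i(\Gamma)$ that plays no role here.
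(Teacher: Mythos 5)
Your proof is correct, and it follows the paper's argument for two of the three claims while replacing the third by a different computation. For minimality you argue exactly as the paper does: $\Gamma^2=2\delta-d=s(\Ff_u)$ by \eqref{eq:Ciro410} and Theorem \ref{LN}(i), so Remark \ref{rem:seginv} applies. For the bound $\dim({\rm Div}^{1,\delta}_{F_u})\leqslant 1$ you also use the paper's mechanism, namely the fibre-splitting construction from the proof of Proposition \ref{prop:lem4}; your explicit remark that this construction is independent of the speciality hypothesis (which the paper glosses over by simply citing ``minimality and Proposition \ref{prop:lem4}'') is a welcome point of care, since ${\rm Div}^{1,\delta}_{F_u}$ need not a priori lie in $\mathcal S^{1,\delta}_{F_u}$. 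Where you genuinely diverge is the linear isolation of $\Gamma$: the paper deduces it from minimality alone, again via Proposition \ref{prop:lem4} (a pencil through $\Gamma$, which lies in the special locus since $h^1(L)=j>0$, would split off a fibre and produce a section of degree $<\delta$), whereas you compute $h^0(\Ff_u\otimes N^{\vee})=1$ directly from \eqref{eq:isom2} by observing that $L\otimes N^{\vee}$ is a general line bundle of degree $2\delta-d\leqslant g/j-j<g$ (using \eqref{eq:ln2}--\eqref{eq:ln3} and the generality of $N$), hence non-effective. Your route costs the genericity of $N$, which the paper's purely geometric argument does not need, but it buys the sharper statement $h^0(\Ff_u\otimes N^{\vee})=1$ rather than just $\dim|\Oc_{F_u}(\Gamma)|=0$. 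Both arguments are sound.
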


\begin{proof} The fact that $\Gamma$ is of minimal degree follows from (i) of Theorem \ref{LN}. The rest is a consequence of minimality and 
Proposition \ref{prop:lem4}.  
\end{proof}

\begin{theorem}\label{C.F.VdG} Let $j \geqslant 1$ and  $g \geqslant 3$ be integers.  Let $C$ be of genus $g$ with general moduli. Let $\delta$ and $d$ be integers such that \eqref {eq:ln2} holds and moreover 
\begin{equation}\label{eq:cf3}
\delta + g+3 \leqslant d \leqslant 2\delta. 
\end{equation}Let $N \in {\rm Pic}^{d-\delta}(C)$ and  $L \in W^{\delta-g+j}_{\delta}(C)$ be general points. Then, 
for any $u \in {\rm Ext}^1(L,N)$,  the corresponding rank-two vector bundle $\Ff_u$ is very-ample, with  $i(\Ff_u) = j$. 
Moreover, for $u \in  {\rm Ext}^1(L,N)$ general

\noindent
(i) $L$ is the quotient of minimal degree of $\Ff_u$, thus  
$$0 \leqslant s(\Ff_u) = 2\delta -d \leqslant \frac{g-4j-j^2}{j},$$ so 
 $\Ff_u$ is stable when $2\delta-d >0$, strictly semistable when $d=2\delta$; 

\noindent
(ii) if $\Gamma$ is the section of $F_u = \Pp(\Ff_u)$ corresponding  to $\Ff_u \to \!\! \to L$, then ${\rm Div}^{1,\delta}_{F_u} = \left\{ \Gamma\right\}$ and $\Ff_u$ is rp via $L$.   
\end{theorem}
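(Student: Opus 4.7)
I would proceed in four steps, paralleling the proof of Theorem \ref{LN} but paying extra attention to the boundary cases $d\in\{2\delta-1,2\delta\}$ where Proposition \ref{prop:LN} is not directly applicable.

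\emph{Speciality.} Condition \eqref{eq:cf3} yields $\deg N=d-\delta\geqslant g+3>g-1$, so a general $N\in\Pic^{d-\delta}(C)$ is non-special: $h^1(N)=0$. The coboundary $\partial_u:H^0(L)\to H^1(N)$ is then zero for every $u$, and the long exact sequence attached to \eqref{eq:exthyp} yields $h^0(\Ff_u)=h^0(N)+h^0(L)$ and $i(\Ff_u)=h^1(L)=j$, as required.

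\emph{Very-ampleness.} For every $u$, the large degrees $\deg N,\deg L\geqslant g+3$ ensure, via the extension, that $\Ff_u$ has enough sections to separate points and tangents on every length-two subscheme $Z\subset C$. Concretely, one verifies $h^1(\Ff_u(-Z))=h^1(\Ff_u)=j$ for all such $Z$, which translates into $h^0(\Ff_u)-h^0(\Ff_u(-Z))=4$ and gives very-ampleness of $\Oc_{F_u}(1)$; this is the projection technique developed in \cite{CF}.

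\emph{Minimality (claim (i)).} For general $u$, I would prove $s(\Ff_u)=2\delta-d$. When $d\leqslant 2\delta-2$, so $2\delta-d\geqslant 2$, the proof of Theorem \ref{LN} applies verbatim: Proposition \ref{prop:LN} with $\sigma=2\delta-d$ combined with \eqref{eq:ln2} gives $\dim{\rm Sec}_{2\delta-d-1}(X)<\dim\Pp$, forcing $s(\Ff_u)\geqslant 2\delta-d$, with equality via the section $\Gamma$ and \eqref{eq:Ciro410}. When $d\in\{2\delta-1,2\delta\}$, minimality is automatic for every non-trivial extension: any sub-line bundle $M\subset\Ff_u$ either factors through $N$ (yielding $\deg M\leqslant d-\delta$) or injects into $L$, and in the latter case $\deg M=\delta$ would force a splitting. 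Hence $\deg M\leqslant\max(d-\delta,\delta-1)=d-\delta$, so $s(\Ff_u)=2\delta-d$. The upper bound $s(\Ff_u)\leqslant(g-4j-j^2)/j$ follows from \eqref{eq:ln2} together with $d\geqslant\delta+g+3$, and (strict semi)stability from the sign of $2\delta-d$.

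\emph{Uniqueness (claim (ii)).} For general $u$, I would show ${\rm Div}^{1,\delta}_{F_u}=\{\Gamma\}$. By (i) and \eqref{eq:Ciro410}, every section $\Gamma'\subset F_u$ has $\deg\Gamma'\geqslant\delta$, so writing a unisecant of degree $\delta$ as $\Gamma'+f_A$ with $A\geqslant 0$ forces $A=0$; thus every element of ${\rm Div}^{1,\delta}_{F_u}$ is an irreducible section of minimal degree, equivalently a sub-line bundle of $\Ff_u$ of maximal degree $d-\delta$. Such a sub-line bundle $M$ either equals $N$, or injects into $L$ via the quotient map, in which case $M=L(-D)$ with $D\in{\rm Sym}^{2\delta-d}(C)$ and $u$ lies in the kernel of $\Ext^1(L,N)\to\Ext^1(L(-D),N)$, i.e.\ in the image of $\Ext^1(\Oc_D,N)\to\Ext^1(L,N)$, a subspace of dimension $\leqslant 2\delta-d$. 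Letting $D$ vary in ${\rm Sym}^{2\delta-d}(C)$, the union of these subspaces has dimension $\leqslant 2(2\delta-d)$; combining \eqref{eq:ln2} (which implies $\delta\leqslant 2g-2$) with \eqref{eq:cf3} (which gives $2\delta-d\leqslant\delta-g-3$) yields $2\delta-d\leqslant g-5<g-1$, whence $2(2\delta-d)<2\delta-d+g-1=\dim\Ext^1(L,N)$. Hence for $u$ outside a proper subvariety no alternative maximal sub-line bundle exists; Lemma \ref{lem:technical} confirms that inequivalent extension classes give genuinely distinct embeddings, so $\Gamma$ is the unique element of the Quot scheme and $\Ff_u$ is rp via $L$.

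The delicate step I expect to be the main obstacle is the dimension count in the uniqueness argument: it simultaneously consumes both \eqref{eq:ln2} and \eqref{eq:cf3}, and Lemma \ref{lem:technical} is essential to ensure that the parameter count accurately reflects the geometry of maximal sub-line bundles rather than overcounting up to scalar equivalence.
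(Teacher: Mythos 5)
Your plan is correct and, in outline, follows the same route the paper takes — the paper's own proof is the single sentence that the argument of \cite[Theorem 2.1]{CF} (projection technique for very-ampleness, Proposition \ref{prop:LN} for minimality) goes through and extends to $d=2\delta$. What you supply, and what the paper leaves implicit, is precisely the content of that extension: an explicit treatment of the boundary cases $d\in\{2\delta-1,2\delta\}$, where Proposition \ref{prop:LN} is unavailable, via the elementary observation that a sub-line bundle of $\Ff_u$ either lands in $N$ or injects into $L$, and in the latter case degree $\delta$ would split the sequence; and an explicit dimension count for (ii), bounding the union over $D\in C^{(2\delta-d)}$ of the images of $\Ext^1(\Oc_D,N)$ in $\Ext^1(L,N)$ by $2(2\delta-d)$, which is indeed less than $m=2\delta-d+g-1$ since \eqref{eq:ln2} and \eqref{eq:cf3} give $2\delta-d\leqslant\delta-g-3\leqslant g-5$. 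Both computations check out. Two small remarks: Lemma \ref{lem:technical} is not actually needed in step (ii) — that lemma governs when two extension classes yield isomorphic bundles, whereas your uniqueness claim concerns the saturated sub-line bundles of a single fixed $\Ff_u$ and is already settled by your count; and for the equality ${\rm Div}^{1,\delta}_{F_u}=\{\Gamma\}$ one may additionally note $h^0(\N_{\Gamma/F_u})=h^0(L\otimes N^{\vee})=0$ for $N$ general, although Definition \ref{def:ass0} only requires the set-theoretic statement you prove.
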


\begin{proof}  The proof is as in \cite[Theorem 2.1]{CF}, and it works also in the case $d= 2\delta$, not considered there. 
\end{proof}

\begin{remark}\label{rem:C.F.VdG}  (1)  The lower-bound  in \eqref{eq:cf3} reads $\deg(N) = d-\delta \geqslant g+3$, hence $N \in {\rm Pic}^{d-\delta}(C)$ general is non-special and $\delta \geqslant g+3$.

\noindent
(2) From \eqref{eq:ln2} and $\delta \geqslant g+3$, one has $g \geqslant j^2+ 4j$ i.e.  $1 \leqslant j \leqslant \sqrt{g+4} -2$.

\noindent 
(3) The bounds on $d$  in \eqref{eq:ln3} and \eqref{eq:cf3} are in general slightly worse than those in Theorem \ref{thm:TB} (cf. \cite[Remark 1.7]{CF}). For $j$ close to the upper-bound (see Remark \ref{rem:lnb}-(2), respectively (2) 
above), the difference is of the order of $\sqrt{g}$. However our approach gives the additional information of the description of 
  vector bundles  in irreducible components of $B_C^{k_j}(d)$ 
(see also \S\,\ref{S:BND}) simply as  line bundle extensions, with no use of either limit linear series or degeneration techniques.    
\end{remark}



\subsection{The case $N$ special}\label{S:Ns}

In this section $N \in {\rm Pic}^{d-\delta}(C)$ is assumed to be special.  Hence, in \eqref{eq:exthyp1}, we have $\ell, j, r >0$ whereas $n \geqslant 0$ (according to the fact that $N$ is either effective or not). For any integer $t >0$, consider 
\begin{equation}\label{eq:wt}
\mathcal W_t := \left\{ u \in  {\rm Ext}^1(L,N) \ | \ \cork(\partial_u) \geqslant t \right\} \subseteq {\rm Ext}^1(L,N),  
\end{equation}which has a natural structure of determinantal scheme; as such, $\mathcal W_t$ has  {\em expected codimension}  
\begin{equation}\label{eq:clrt} 
c(l,r,t) := t (\ell - r + t)  
\end{equation}As in \eqref{eq:lem3note}, put $m = \dim( {\rm Ext}^1(L,N))$. Thus, if $m >0$ and $\mathcal W_t \neq \emptyset$, then any irreducible component $\Lambda_t \subseteq \mathcal W_t$ is such that
\begin{equation}\label{eq:expdimwt} 
\dim(\Lambda_t) \geqslant {\rm min} \left\{m , m- c(\ell,r,t) \right\}, 
\end{equation}where the right-hand-side is the {\em expected dimension} of $\mathcal W_t$. These loci have been considered also in \cite[\S\,2,\,3]{BeFe}, \cite[\S\;6,\,8]{Muk} for low genus and for vector bundles with canonical determinant. 

\begin{remark}\label{rem:cirow} One has $\dim({\rm Ker}(\partial_u)) =  1 + \dim(\langle \Gamma\rangle)$, where $\Gamma$ is the section corresponding to the quotient $\Ff \to \!\!\to L$. 
Note that $\dim(\langle \Gamma\rangle) = -1$ 
if and only if $\dim({\rm Ker}(\partial_u)) = 0$, i.e. $H^0(\Ff) = H^0(N)$. This happens if and only if $\Gamma$ is 
a fixed component of $|\Oc_F(1)|$, i.e. if and only if the image of the map $\Phi_{|\Oc_F(1)|}$ has dimension 
smaller than $2$.   If $d \geqslant 2g-2$ and this  happens, then  one must have 
$n \geqslant i \geqslant j \geqslant 1$, where $i = i(\Ff)$. 
\end{remark}

\begin{remark}\label{rem:wt} The coboundary map $\partial_u$  can be interpreted in terms of multiplication maps  
among global sections of suitable line bundles on $C$. Indeed,  consider $r \geqslant t$ and $\ell \geqslant {\rm max} \{1,r-t\}$. Denote 
by
$$\cup : H^0(L) \otimes H^1(N - L) \longrightarrow H^1(N),$$the cup-product: for any $u \in H^1(N - L) \cong {\rm Ext}^1(L,N)$, one has $\partial_u (-) = - \cup u.$ By Serre duality, 
the consideration of  $\cup$ is equivalent to the one of  the multiplication map
\begin{equation}\label{eq:mu}
\mu := \mu_{L,K_C-N} : H^0(L) \otimes H^0(K_C-N)  \to H^0(K_C+L-N)
\end{equation} (when $N=L$, $\mu$ coincides with $\mu_0(L)$ as in \eqref{eq:Petrilb}).  
For any subspace $W \subseteq H^0(K_C-N)$, we set
\begin{equation}\label{eq:muW}
\mu_W:= \mu|_W : H^0(L) \otimes W \to H^0(K_C+L-N).
\end{equation}Imposing $\cork(\partial_u) \geqslant t$ is equivalent to 
$$V_t := {\rm Im}(\partial_u)^{\perp}  \subset H^0(K_C - N) $$ having dimension at least $t$. Therefore 
$$\mathcal W_t = \left\{u \in  H^0(K_C+L-N)^{\vee} \mid 
\exists\,V_t \subseteq H^0(K_C-N),\; {\rm s.t.} \; \dim(V_t) \geqslant t \;{\rm and}\; {\rm Im}(\mu_{V_t}) \subseteq \{u = 0 \}
\right\},$$where $\{u = 0 \} \subset H^0(K+L-N)$ is the hyperplane defined by $u \in H^0(K_C+L-N)^{\vee}$. 
\end{remark}

\begin{theorem}\label{thm:mainext1} Let $C$ be a smooth curve of genus $g \geqslant 3$. Let 
$$r\geqslant 1, \; \ell \geqslant {\rm max} \{1, r-1\}, \;  m \geqslant \ell +1$$be integers as in \eqref{eq:lem3note}, \eqref{eq:exthyp1}. Then (cf.\,\eqref{eq:wt},\eqref{eq:clrt}): 

\noindent
(i) $c(\ell, r,1) =\ell-r+1\geqslant 0$; 

\noindent
(ii)  $\mathcal W_1$ is irreducible of the expected dimension $\dim (\mathcal W_1) = m - c(\ell, r,1) \geqslant r$.  In particular  
$\mathcal W_1={\rm Ext}^1(L,N)$ if and only if $\ell=r-1$. 
\end{theorem}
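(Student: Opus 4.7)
Part (i) is purely arithmetic: $c(\ell,r,1)=\ell-r+1\geqslant 0$ by the standing hypothesis $\ell\geqslant r-1$. The content is therefore (ii), for which my plan is to exhibit $\mathcal W_1$ as the image of an irreducible incidence variety whose dimension is automatically the expected one.

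Set $E:=H^0(K_C+L-N)$, so that Serre duality identifies ${\rm Ext}^1(L,N)=E^\vee$, and put $P:=\mathbb{P}(H^0(K_C-N))$, which has dimension $r-1$. Guided by Remark \ref{rem:wt}, I would consider the incidence variety
\[
\mathcal I\;:=\;\bigl\{(u,[s])\in E^\vee\times P\;:\;{\rm Im}(\mu_s)\subseteq\ker(u)\bigr\},
\]
where $\mu_s:H^0(L)\to H^0(K_C+L-N)$ is multiplication by $s$. By that Remark, the first projection $p_1:\mathcal I\to E^\vee$ has image exactly $\mathcal W_1$. The only non-tautological ingredient is that $\mu_s$ is injective whenever $s\neq 0$, since on the integral curve $C$ the product of two nonzero sections of line bundles is nonzero. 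Hence the fibre of the second projection $p_2:\mathcal I\to P$ over $[s]$ is the linear subspace ${\rm Im}(\mu_s)^\perp\subseteq E^\vee$ of dimension $m-\ell$, and these subspaces patch into a rank-$(m-\ell)$ subbundle of the trivial bundle $E^\vee\otimes\mathcal O_P$. Thus $\mathcal I$ is the total space of a vector bundle over the irreducible base $P$; in particular it is irreducible of dimension $(r-1)+(m-\ell)=m-\ell+r-1$.

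Consequently $\mathcal W_1=p_1(\mathcal I)$ is irreducible of dimension at most $m-\ell+r-1$. For the matching lower bound, $\mathcal W_1$ is nonempty (e.g.\ $u=0$ lies in it, as $\partial_0=0$ has corank $r\geqslant 1$), so the determinantal estimate \eqref{eq:expdimwt} yields $\dim \mathcal W_1\geqslant m-c(\ell,r,1)=m-\ell+r-1$. Equality of dimensions follows, and the bound $\dim\mathcal W_1\geqslant r$ is precisely the assumption $m\geqslant\ell+1$. Finally, since $\mathcal W_1$ is a closed, irreducible subvariety of the irreducible $m$-dimensional space ${\rm Ext}^1(L,N)$, the identity $\mathcal W_1={\rm Ext}^1(L,N)$ is equivalent to $\dim\mathcal W_1=m$, i.e.\ to $\ell=r-1$. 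I do not foresee a genuine obstacle: the only point requiring a moment of care is that the constructible set $p_1(\mathcal I)$ already coincides \emph{set-theoretically} with the closed $\mathcal W_1$ thanks to Remark \ref{rem:wt}, so irreducibility transfers without any closure argument, and the whole proof rests on the triviality that multiplication by a nonzero section of a line bundle on $C$ is injective.
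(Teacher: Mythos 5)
Your proposal is correct and follows essentially the same route as the paper: the paper also forms the incidence correspondence over $\Pp(H^0(K_C-N))$, uses the injectivity of multiplication by a nonzero section of $K_C-N$ to see that each fibre is a linear space of the right dimension ($m-1-\ell$ in the projectivized setting versus your $m-\ell$ affine one), concludes irreducibility and the upper bound on the dimension from the projection, and matches it with the determinantal lower bound \eqref{eq:expdimwt}. The only differences are cosmetic (affine versus projective incidence variety, and your packaging of the fibres as a vector bundle over the base).
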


\begin{proof}   Part (i) and $ m - c(\ell, r,1)\geqslant r$ are  obvious. Let us prove (ii). Since $\ell, r \geqslant 1$, both $L$ and $K_C-N$ are effective. One has an inclusion $$0 \to L \to K_C+L-N$$obtained by 
tensoring by $L$ the injection $\Oc_C \hookrightarrow K_C-N$ given by a given non--zero  section of $K_C-N$. 
Thus, for any $V_1 \in \Pp(H^0(K_C-N))$, $\mu_{V_1}$ as in \eqref{eq:muW} is injective. Since $m \geqslant \ell + 1$, one has $\dim({\rm Im} (\mu_{V_1})) = \ell \leqslant m-1 $, i.e. ${\rm Im}(\mu_{V_1})$ is contained in some hyperplane of 
$H^0(K_C+L-N)$.  Let  
$$\Sigma:= \left\{\sigma := H^0(L) \otimes V_1^{\sigma} \subseteq H^0(L) \otimes H^0(K_C-N) \; \mid \; V_1^{\sigma} \in \Pp(H^0(K_C-N)) \right\}.$$Thus 
$\Sigma \cong \Pp(H^0(K_C-N))$,  so it is irreducible of dimension $r-1$. Since 
$\Pp(H^0(K_C+L-N)^{\vee}) = \Pp$ as in \eqref{eq:PP}, we can define the incidence variety 
$$\Jj := \left\{(\sigma, \pi) \in \Sigma \times \Pp \; | \; \mu_{V_1^{\sigma}}(\sigma) \subseteq \pi \right\} \subset \Sigma \times \Pp.$$ Let 
$$ \Sigma \stackrel{pr_1}{\longleftarrow} \Jj \stackrel{pr_2}{\longrightarrow}  \Pp$$be the two  projections. As we saw, 
$pr_1$ is surjective. In particular $\Jj \neq \emptyset$ and, for any $\sigma \in \Sigma$, 
$$pr_1^{-1} (\sigma) = \left\{ \pi \in \Pp\,|\,\mu_{V_1^{\sigma}}(\sigma) \subseteq \pi\right\} \cong 
|\Ii_{\widehat{\sigma}\vert \Pp^{\vee}} (1)|,$$where $\widehat{\sigma} := \Pp(\mu_{V_1^{\sigma}}(\sigma))$. Since $\dim(\widehat{\sigma}) = 
\ell -1$, then $\dim(pr_1^{-1} (\sigma)) = m -1 - \ell \geqslant 0$.  

This shows that  $\Jj$ is irreducible and $\dim(\Jj) = m-1 - c(\ell,r,1) \leqslant m-1$. Then, 
$\widehat{\mathcal W}_1 := \Pp(\mathcal W_1) = \overline{pr_2(\Jj)}$. Recalling \eqref{eq:expdimwt}, $\mathcal{W}_1 \neq \emptyset$ is irreducible, of the expected dimension 
$m - c(\ell,r,1)$. 
\end{proof}

\begin{corollary}\label{cor:mainext1} Assumptions as in Theorem \ref{thm:mainext1}. If   $\ell \geqslant r$, then $\mathcal W_1 \varsubsetneq  {\rm Ext}^1(L,N)$ and 

\noindent
(i)  for $u \in {\rm Ext}^1(L,N)$ general, $\partial_u$ is surjective, in which case the corresponding bundle $\Ff_u$ is special with $i(\Ff_u) = h^1(L) = j$;  

\noindent
(ii) for $v \in \mathcal W_1$ general, $\cork(\partial_v)=1$, hence the corresponding bundle $\Ff_v$ is special with $i(\Ff_v) = j+1$.
\end{corollary}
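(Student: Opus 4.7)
The plan is to deduce this Corollary directly from Theorem \ref{thm:mainext1} and its proof, using the identity $i(\Ff_u) = j + \cork(\partial_u)$ stated right after diagram \eqref{eq:exthyp}.

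First I would deal with the strict inclusion. By part (ii) of Theorem \ref{thm:mainext1}, $\mathcal W_1 = {\rm Ext}^1(L,N)$ holds exactly when $\ell = r-1$. Since we assume $\ell \geqslant r$, this equality fails; equivalently, $c(\ell,r,1)=\ell-r+1\geqslant 1$, so $\dim(\mathcal W_1) = m - c(\ell,r,1) < m$ and the inclusion $\mathcal W_1 \subsetneq {\rm Ext}^1(L,N)$ is proper.

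For (i), the strict inclusion above means that for $u \in {\rm Ext}^1(L,N)$ general one has $u \notin \mathcal W_1$, i.e.\ $\cork(\partial_u) = 0$, so $\partial_u$ is surjective. Then the long exact cohomology sequence associated to \eqref{eq:exthyp} gives $h^1(\Ff_u) = h^1(L) + \cork(\partial_u) = j$, as claimed.

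For (ii), the key point is that a general $v\in\mathcal W_1$ has $\cork(\partial_v)$ equal to $1$, not to something larger. I would argue this by re-examining the incidence variety $\Jj\subset \Sigma\times \Pp$ constructed in the proof of Theorem \ref{thm:mainext1}. Using Serre duality, $\cork(\partial_v)$ equals the dimension of the subspace of $\sigma \in H^0(K_C-N)$ for which $\mu(H^0(L)\otimes \sigma)$ is contained in the hyperplane $\{v=0\}\subseteq H^0(K_C+L-N)$. Thus the fibre of the second projection $pr_2:\Jj \to \widehat{\mathcal W}_1$ over $[v]$ is naturally a projective space of dimension $\cork(\partial_v)-1$. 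Now the proof of Theorem \ref{thm:mainext1} yields $\dim(\Jj) = m-1-c(\ell,r,1)$, while $\dim(\widehat{\mathcal W}_1) = \dim(\mathcal W_1)-1 = m-1-c(\ell,r,1)$ by the irreducibility and expected-dimension statement. Hence $pr_2$ is surjective between irreducible varieties of the same dimension, so it is generically finite, forcing the general fibre to be $0$-dimensional. Therefore a general $v \in \mathcal W_1$ has $\cork(\partial_v) = 1$, and the formula $i(\Ff_v) = j + \cork(\partial_v)$ gives $i(\Ff_v) = j+1$.

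The main (mild) obstacle is the last dimension comparison: one must be confident that the parametrization built in the proof of Theorem \ref{thm:mainext1} genuinely realizes $\mathcal W_1$ with its expected dimension, so that equality of $\dim(\Jj)$ and $\dim(\widehat{\mathcal W}_1)$ holds and $pr_2$ is generically finite. Everything else is a straightforward reading of the coboundary long exact sequence and of Serre duality applied to the construction in the preceding theorem.
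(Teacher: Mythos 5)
Your proof is correct and follows exactly the route the paper intends: the paper states this corollary without proof as an immediate consequence of Theorem \ref{thm:mainext1}, and your deduction (strictness of the inclusion from $c(\ell,r,1)\geqslant 1$, part (i) from the coboundary long exact sequence, and part (ii) from the generic finiteness of $pr_2:\Jj\to\widehat{\mathcal W}_1$ together with the identification of its fibre over $[v]$ with $\Pp({\rm Im}(\partial_v)^{\perp})$) is precisely the intended argument. In particular you correctly identified that part (ii) is the only point with real content, and that it is settled by the equality $\dim(\Jj)=\dim(\widehat{\mathcal W}_1)$ already established in the proof of that theorem.
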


\medskip


\subsubsection{{\bf Surjective coboundary}}\label{ss:uepi}  Take $ 0 \neq u \in {\rm Ext}^1(L,N)$ and assume 
$\partial_u$ is surjective (from Corollary \ref{cor:mainext1}, this happens e.g. when $\ell \geqslant r$, $m \geqslant \ell +1$ and $u$ general). 

\begin{theorem}\label{uepi} Let $j \geqslant 1$ and $g \geqslant 3$ be integers. 
Let $C$ be of genus $g$ with general moduli. Let $\delta$ and $d$ be integers such that  \eqref {eq:ln2} holds and moreover 
\begin{equation}\label{eq:uepi3}
2g-2 \leqslant d \leqslant 2 \delta - g. 
\end{equation}Let $L \in W^{\delta-g+j}_{\delta}(C)$ be a smooth point and $N \in {\rm Pic}^{d - \delta}(C)$ be any point. 
Then, for $u \in {\rm Ext}^1(L,N)$ general, the corresponding bundle $\Ff_u$ is indecomposable with 
 
\noindent
(i) speciality $i(\Ff_u) = j = h^1(L)$.

\noindent
(ii) $s(\Ff_u) = g-\epsilon$, $\epsilon \in \{0,1\}$ such that $\epsilon \equiv d+g \pmod{2}$. In particular, 
$\Ff_u$ is stable.   

\noindent
(iii) The minimal degree of a quotient of $\Ff_u$  is $\frac{d+g-\epsilon}{2}$ and  
$1- \epsilon \leqslant \dim\left({\rm Div}^{1,\frac{d+g-\epsilon}{2}}_{F_u} \right) \leqslant  1$; 

\noindent
(iv) $L$ is a minimal degree quotient of $\Ff_u$ if and only if $\epsilon =0$ and $ d = 2 \delta-g$. 

\end{theorem}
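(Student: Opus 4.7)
My approach splits into four steps tracking the four conclusions. The standing setup, reflected in the heading of \S\ref{ss:uepi}, is that $u\in{\rm Ext}^1(L,N)$ is chosen general enough that (a) $\partial_u$ is surjective, and (b) the Lange--Narasimhan point associated to $u$ avoids a certain secant variety.

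\textbf{Step 1 (part (i)).} I first verify that the hypotheses \eqref{eq:ln2}, \eqref{eq:uepi3} imply the conditions of Corollary \ref{cor:mainext1}. We have $\ell=\delta-g+1+j$; since $\deg(N)=d-\delta\geqslant 2g-2-\delta$ and $C$ has general moduli, Clifford/Riemann--Roch gives $r=h^1(N)\leqslant \delta-g+1$ for the relevant $N$, hence $\ell\geqslant r+j>r$; the inequality $m\geqslant\ell+1$ is direct from \eqref{eq:lem3note}. Thus Corollary \ref{cor:mainext1} produces a dense open $u\in{\rm Ext}^1(L,N)\setminus\mathcal W_1$ with $\partial_u$ surjective, so $i(\Ff_u)=j+\cork(\partial_u)=j$.

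\textbf{Step 2 (part (ii)).} I apply Proposition \ref{prop:LN} with $\sigma:=g-\epsilon$. The parity $\sigma\equiv 2\delta-d\pmod 2$ follows from $\epsilon\equiv d+g\pmod 2$, and the range $4+d-2\delta\leqslant g-\epsilon\leqslant 2\delta-d$ from $2\delta-d\geqslant g$ (by \eqref{eq:uepi3}) and $g\geqslant 3$. Since $\deg(N-L)=d-2\delta\leqslant -g<0$, one has $h^1(K_C+L-N)=h^0(N-L)=0$, giving $\dim\Pp=g-2+2\delta-d$. Putting $h:=(2\delta-d+g-\epsilon-2)/2$, estimate \eqref{eq:sech} yields $\dim{\rm Sec}_h(X)\leqslant 2h-1=2\delta-d+g-\epsilon-3<\dim\Pp$. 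Hence a general $u$ lies off ${\rm Sec}_h(X)$ and Proposition \ref{prop:LN} gives $s(\Ff_u)\geqslant g-\epsilon$. Proposition \ref{prop:Nagata} supplies $s(\Ff_u)\leqslant g$, while the parity $s(\Ff_u)\equiv d\pmod 2$ (because $s(\Ff_u)=d-2\deg(M)$ for a maximal sub-line bundle $M$) combined with $g-\epsilon\equiv d\pmod 2$ forces $s(\Ff_u)=g-\epsilon$. Since $g\geqslant 3$, $s(\Ff_u)\geqslant 2>0$, so $\Ff_u$ is stable; the lower bound in Proposition \ref{prop:Nagata} then gives indecomposability.

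\textbf{Step 3 (parts (iii) and (iv)).} By Remark \ref{rem:seginv}, any minimal-degree section $\Gamma'\subset F_u$ satisfies $(\Gamma')^2=s(\Ff_u)=g-\epsilon$, so by \eqref{eq:Ciro410}, $\deg(\Gamma')=(d+g-\epsilon)/2$, which is therefore the minimal degree of a quotient line bundle. Writing $0\to N'\to\Ff_u\to L'\to 0$ with $\deg(L'-N')=g-\epsilon$, the identification \eqref{eq:tang} gives $T_{[\Gamma']}\bigl({\rm Div}^{1,(d+g-\epsilon)/2}_{F_u}\bigr)\cong H^0(L'-N')$ with obstructions in $H^1(L'-N')$; standard Quot-scheme deformation theory then yields the lower bound $\dim\geqslant\chi(L'-N')=1-\epsilon$. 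For the upper bound $\leqslant 1$, I would show that at a generic point of each irreducible component one has $h^0(L'-N')\leqslant 1$, by exploiting that $L'+N'=\det(\Ff_u)$ is fixed (so $L'-N'$ varies in a single copy of ${\rm Pic}^{g-\epsilon}(C)$) together with the general moduli of $C$ to exclude that a whole component consists of sections with $L'-N'\in W^1_{g-\epsilon}(C)$. Finally, (iv) is immediate: $L$ is a minimal quotient iff $\delta=(d+g-\epsilon)/2$, i.e., $d=2\delta-g+\epsilon$, which combined with \eqref{eq:uepi3} forces $\epsilon=0$ and $d=2\delta-g$.

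The main obstacle will be the upper bound $\dim\leqslant 1$ in (iii). The tangent-space estimate \eqref{eq:tang} gives the expected value $1-\epsilon$ at a generic point, but ruling out superabundant components whose generic section has $h^0(L'-N')\geqslant 2$ requires the aforementioned genericity argument via the fibration $\Psi_{1,(d+g-\epsilon)/2}:{\rm Div}^{1,(d+g-\epsilon)/2}_{F_u}\to{\rm Pic}^{(d-g+\epsilon)/2}(C)$, controlling the locus where $L'-N'$ lands in $W^1_{g-\epsilon}(C)$.
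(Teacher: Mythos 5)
Your Step 2, the identification of the minimal degree in Step 3, and part (iv) follow the paper's argument and are correct. There are, however, two genuine gaps.

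First, in Step 1 the inequality $r=h^1(N)\leqslant \delta-g+1$ is false for the $N$ the theorem allows: $N$ is \emph{any} point of ${\rm Pic}^{d-\delta}(C)$, so it may be effective and special. For instance, take $j=1$, $\delta=\tfrac{3}{2}g-1$, $d=2g-2=2\delta-g$ and $N$ a general effective divisor of degree $\tfrac{g}{2}-1$; then $n=1$ and $r=g-d+\delta=\tfrac{g}{2}+1>\tfrac{g}{2}=\delta-g+1$, so your chain $\ell\geqslant r+j>r$ breaks down (here $\ell=r$ exactly). The inequality $\ell\geqslant r$ that Corollary \ref{cor:mainext1} actually needs is still true, but it requires the case analysis of Remark \ref{rem:uepib}-(3): for $n\leqslant 1$ it follows from $d\geqslant 2g-2$ and $j\geqslant 1$, while for $n\geqslant 2$ one must invoke $\rho(N)\geqslant 0$ on the general curve to get $r\leqslant g/n\leqslant g/2$ and then use $\delta\geqslant\tfrac{3}{2}g-1$. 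Your appeal to ``Clifford/Riemann--Roch for the relevant $N$'' only covers non-effective $N$.

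Second, the upper bound $\dim\bigl({\rm Div}^{1,(d+g-\epsilon)/2}_{F_u}\bigr)\leqslant 1$ in (iii), which you yourself flag as the main obstacle, is left unproved, and the route you sketch (excluding that a component generically has $L'-N'\in W^1_{g-\epsilon}(C)$) is delicate, since $W^1_{g-\epsilon}(C)\neq\emptyset$ on a general curve of genus $\geqslant 4$. The paper gets this bound for free from minimality via the mechanism in the proof of Proposition \ref{prop:lem4}: if the scheme of minimal-degree unisecants had a component of dimension $\geqslant 2$, imposing passage through a general pair of points on a general fibre would produce inside it a reducible unisecant, i.e.\ a section of strictly smaller degree plus fibres, contradicting the minimality of $(d+g-\epsilon)/2$. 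This one-line geometric argument should replace your unfinished genericity analysis of the fibration $\Psi_{1,(d+g-\epsilon)/2}$.
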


\begin{remark}\label{rem:uepib}  (1) From \eqref{eq:uepi3} we get $\delta \geqslant \frac{3}{2}g-1$ hence from \eqref{eq:ln2}, $j \leqslant \frac{\sqrt{g^2 + 16 g} - g}{4}$.

\noindent
(2) Since $L$ is special, then $\delta \leqslant 2g-2$. Therefore, the upper-bound in \eqref{eq:uepi3} implies 
 $d - \delta \leqslant \delta - g  \leqslant  g -2$, i.e., any $N \in {\rm Pic}^{d-\delta}(C)$ is special too.

\noindent
(3) The inequalities \eqref {eq:ln2}, \eqref{eq:uepi3} imply  $\ell \geqslant r $, 
$m \geqslant \ell +1$ as in the assumptions of Corollary \ref{cor:mainext1}.  Indeed:

\noindent
$\bullet$ $\ell \geqslant r$ reads 
\begin{equation}\label{eq:aiuto}
\delta \geqslant g-1 + r - j.
\end{equation} 
Since  $r =  \delta -d + g -1 + n$, then   \eqref{eq:aiuto} is equivalent to $d \geqslant 2g-2 -j +n $. Thus \eqref{eq:aiuto} holds by \eqref{eq:uepi3}, if  $n \leqslant 1$. 
 If  $n \geqslant 2$, $C$ with general moduli implies $r \leqslant \frac{g}{n} \leqslant \frac{g}{2}$ so 
$g-1 + r - j \leqslant \frac{3}{2} g - 1 - j$ and \eqref{eq:aiuto} holds because $\delta \geqslant \frac{3}{2} g -1$. 

\noindent
$\bullet$ We have $d - \delta \leqslant \delta -g < \delta$ by \eqref{eq:uepi3}. So from \eqref{eq:lem3note}
we have $m = 2 \delta - d + g - 1$. Thus $m \geqslant \ell +1$ reads  $d \leqslant \delta + 2g- 3 - j$. 
By \eqref{eq:uepi3}, to prove this it suffices to prove $2 \delta - g \leqslant \delta + 2g- 3 - j$. This in turn is a consequence of  \eqref {eq:ln2}.

\noindent
(4) Notice that, under hypotheses of Theorem \ref{uepi}, when $\epsilon =1$ $L$ is not of minimal degree: from (iii), one would 
have $d=2\delta - g + 1$ which is out of range in \eqref{eq:uepi3}. Indeed, if $d = 2\delta - g + 1$ and e.g. $\delta = 2g-2$, then 
$d=3g-3$, $\deg(N) = d - \delta = g-1$, thus if $N$ is general, it is non-special, which is a case already considered in Theorem \ref{LN}. 
From (1) above, to allow minimality for $L$ also for $\epsilon =1$, one should replace \eqref{eq:ln2}, \eqref{eq:uepi3} in the statement of Theorem \ref{uepi} with the more annoying conditions 
$\delta \leqslant {\rm min} \{ g-1 + \frac{g}{j}- j, 2g-3\}$ and $d \leqslant 2\delta - g + \epsilon$, respectively.   
\end{remark}

\begin{proof}[Proof of Theorem \ref{uepi}] By Remark \ref{rem:uepib}-(2),  $N$ is special. Moreover, by  Remark \ref{rem:uepib}-(3) and Corollary \ref{cor:mainext1}, 
for $u \in {\rm Ext}^1(L,N)$ general, $\partial_u$ is surjective. Hence (i) holds. 

From the upper-bound in \eqref{eq:uepi3} and $g \geqslant 3$, we can apply Proposition \ref{prop:LN} 
with the choice $\sigma :=  g-\epsilon$, i.e.,  the maximum for which $\sigma \equiv 2 \delta - d \pmod{2}$, 
$\sigma \leqslant2 \delta - d$ and one has a strict  inclusion
$${\rm Sec}_{\frac{1}{2}(2\delta-d + g-\epsilon -2)}(X) \subset \Pp,$$from \eqref{eq:lem3note} and \eqref{eq:sech}.

If $\epsilon =0$,  (ii) follows from Propositions \ref{prop:Nagata}, \ref{prop:LN}. Let $\Gamma \subset F_u$ be a section of minimal degree, which we denote 
by $m_0$. Then, $\Gamma^2= 2 m_0 - d = g$ (cf. \eqref{eq:Ciro410} and Remark \ref{rem:seginv}).  In particular, $m_0 = \frac{d+g}{2}$ and$$1 = \Gamma^2 - g + 1 \leqslant \chi(\N_{\Gamma/F_u}) \leqslant \dim\left({\rm Div}^{1,\frac{d+g}{2}}_{F_u} \right) \leqslant 1,$$where 
the upper-bound holds by the minimality condition (cf. proof of Proposition \ref{prop:lem4}). This proves (iii) in this case.

When $\epsilon=1$,  by Propositions  \ref{prop:Nagata}, \ref{prop:LN}, one has $g-1 \leqslant s(\Ff_u) \leqslant g$ and, by parity, the leftmost equality holds. As above, part (iii) holds also for $\epsilon =1$. 
 
Finally, $L$ is a minimal degree quotient if and only if $2 \delta = d+g- \epsilon$ which by \eqref{eq:uepi3} is only possible for $\epsilon =0$, proving (iv) (cf. Remark \ref{rem:uepib}-(4)). 
 \end{proof}


\subsubsection{{\bf Non-surjective coboundary}}\label{ss:unepi} From  Corollary 
\ref{cor:mainext1}, when $\ell \geqslant r$ and $m \geqslant \ell+1$, for  $v \in \mathcal W_1\subsetneq {\rm Ext}^1(L,N)$  general, one has $\cork(\partial_v)=1$.

\begin{definition}\label{def:goodc} Take $\ell \geqslant r \geqslant t \geqslant 1$ integers. Assume

\noindent
(1) there exists an irreducible component $\Lambda_t \subseteq \mathcal W_t$ with the expected dimension $\dim(\Lambda_t) = m - c(\ell, r,t)$;

\noindent
(2) for $v \in \Lambda_t$ general, $\cork(\partial_v)=t$. 

\noindent
Any such $\Lambda_t$  is called  a {\em good component} of $\mathcal W_t$. 
\end{definition}

\noindent
By Theorem \ref{thm:mainext1}, $\Lambda_1 = \mathcal W_1$ is good. In \S\,\ref{ssec:suffcon} we shall give sufficient conditions  for goodness  when $t \geqslant 2$.


With notation as in  \eqref{eq:PP},  for any $t \geqslant 1$ and any good component $\Lambda_t$, we set
\begin{equation}\label{eq:Lahat}
\widehat{\Lambda}_t := \Pp(\Lambda_t) \subset \Pp
\end{equation}(cf. notation as in the proof of Theorem \ref{thm:mainext1} for $\widehat{\mathcal W}_1$). 

\begin{theorem}\label{unepi} Let $g \geqslant 3$, $d \geqslant 2g-2$, $j \geqslant 1$, $\ell \geqslant r \geqslant t \geqslant 1$ be integers. Take $\epsilon \in \{0,1\}$ such that 
$$d + g - c(\ell,r,t) \equiv \epsilon \pmod{2}.$$Take $\delta = \ell + g - 1 - j$ and assume 

\begin{equation}\label{eq:unepi0}
g \geqslant c(\ell, r, t) + \epsilon,
\end{equation}
\begin{equation}\label{eq:unepi1}
g +  r - j - 1 \leqslant \delta \leqslant   g-1 + \frac{g}{j} - j,
\end{equation}
\begin{equation}\label{eq:unepi2}
2 \delta - d \geqslant {\rm max} \{ 2, g -  c(\ell, r, t)- \epsilon\}. 
\end{equation} Let $C$ be a curve of genus $g$ with general moduli. Let $L \in W^{\ell-1}_{\delta}(C)$ be a smooth point, 
$N \in {\rm Pic}^{d-\delta}(C)$ of speciality $r$. Then, for any good component $\Lambda_t$ and for $v \in \Lambda_t$ general, the corresponding bundle $\Ff_v$ is 

\noindent
(i) special with $i(\Ff_v) = j+t = h^1(L) +t$; 

\noindent
(ii) $s(\Ff_v) \geqslant g - c(\ell,r,t) - \epsilon \geqslant 0$; in particular, when $ g - c(\ell,r,t)>0$,  $\Ff_v$ is stable, hence indecomposable. 

\noindent
(iii) If  $2 \delta = d + g - c(\ell,r,t) - \epsilon$, then $L$ is a quotient of minimal degree of $\Ff_v$.  
\end{theorem}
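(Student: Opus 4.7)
The plan is to adapt the proof of Theorem \ref{uepi} with the additional subtlety that generality of $v$ is now restricted to the proper subvariety $\Lambda_t \subsetneq \Ext^1(L,N)$, and to prove (i), (ii), (iii) in order. Part (i) is essentially built into the definition of goodness: by Definition \ref{def:goodc}(2) a general $v \in \Lambda_t$ satisfies $\cork(\partial_v) = t$; combining this with the identity $i(\Ff_v) = h^1(L) + \cork(\partial_v)$ recorded just after \eqref{eq:exthyp} yields $i(\Ff_v) = j + t$.

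For part (ii), I would invoke Proposition \ref{prop:LN} with the choice $\sigma := g - c(\ell,r,t) - \epsilon$. The parity condition $\sigma \equiv 2\delta - d \pmod{2}$ is equivalent to $d + g - c(\ell,r,t) \equiv \epsilon \pmod{2}$, which is precisely how $\epsilon$ is defined; the non-negativity $\sigma \geqslant 0$ is \eqref{eq:unepi0}; the upper bound $\sigma \leqslant 2\delta - d$ is part of \eqref{eq:unepi2}. The crux is to show that a general $v \in \Lambda_t$ avoids the secant variety $\mathrm{Sec}_h(X) \subset \Pp$, where $h := (2\delta - d + \sigma - 2)/2$. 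Since $2\delta - d \geqslant 2$ forces $L \not\cong N$, formula \eqref{eq:lem3note} yields $m = 2\delta - d + g - 1$, so by goodness
\[
\dim \widehat{\Lambda}_t \;=\; m - c(\ell,r,t) - 1 \;=\; 2\delta - d + g - 2 - c(\ell,r,t),
\]
while \eqref{eq:sech} bounds $\dim \mathrm{Sec}_h(X) \leqslant 2h - 1 = 2\delta - d + \sigma - 3$. A direct substitution then gives
\[
\dim \widehat{\Lambda}_t - \dim \mathrm{Sec}_h(X) \;\geqslant\; \epsilon + 1 \;\geqslant\; 1,
\]
so $\widehat{\Lambda}_t \not\subseteq \mathrm{Sec}_h(X)$, and by irreducibility of $\widehat{\Lambda}_t$ (part of the goodness hypothesis) a general $v \in \Lambda_t$ sits outside $\mathrm{Sec}_h(X)$. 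Proposition \ref{prop:LN} then delivers $s(\Ff_v) \geqslant g - c(\ell,r,t) - \epsilon$, and when this bound is positive, stability (hence indecomposability) of $\Ff_v$ follows from the very definition of the Segre invariant.

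For part (iii), under the equality $2\delta = d + g - c(\ell,r,t) - \epsilon$, the section $\Gamma \subset F_v = \Pp(\Ff_v)$ corresponding to the quotient $\Ff_v \to\!\!\to L$ has $\Gamma^2 = 2\delta - d$ by \eqref{eq:Ciro410}, and by hypothesis this equals the lower bound for $s(\Ff_v)$ established in (ii). Since $s(\Ff_v) \leqslant \Gamma^2$ by Remark \ref{rem:seginv}, equality is forced, so $\Gamma$ has minimal self-intersection, equivalently minimal degree, meaning $L$ is a quotient of $\Ff_v$ of minimal degree. The main obstacle in this plan is precisely the dimension count in part (ii): upon restricting from the full extension space to the subvariety $\Lambda_t$ of codimension $c(\ell,r,t)$, one must still ensure that a general element avoids the relevant secant locus. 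This relies crucially on $\Lambda_t$ attaining the expected dimension, i.e.\ on the hypothesis of goodness, and on the numerical slackness of \eqref{eq:unepi0} and \eqref{eq:unepi2}, which are calibrated to leave exactly $\epsilon + 1$ degrees of dimensional freedom.
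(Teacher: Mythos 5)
Your proposal is correct and follows essentially the same route as the paper: the paper's proof likewise applies Proposition \ref{prop:LN} with $\sigma = g - c(\ell,r,t) - \epsilon$ and observes that $\dim(\widehat{\Lambda}_t) > \dim\bigl({\rm Sec}_{\frac{1}{2}(2\delta-d+\sigma-2)}(X)\bigr)$, which is exactly your computation $\dim(\widehat{\Lambda}_t) - (2h-1) = \epsilon + 1 \geqslant 1$. Your explicit treatment of (i) via Definition \ref{def:goodc} and of (iii) via the sandwich $\sigma \leqslant s(\Ff_v) \leqslant \Gamma^2 = 2\delta - d$ just spells out what the paper leaves implicit.
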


\begin{remark}\label{rem:unepi}  (1) As before, the upper-bound on $\delta$ in \eqref{eq:unepi1} is  equivalent to $\rho(g , \ell -1 , \delta) \geqslant 0$ whereas the lower bound to  $\ell \geqslant r$. 

\noindent
(2) Condition $2\delta -d \geqslant 2$ in \eqref{eq:unepi2} is as in Proposition \ref{prop:LN}; the other condition 
in \eqref{eq:unepi2} will be clear by reading the proof of Theorem \ref{unepi}.

\noindent

\noindent
(3) Arguing as in Remark \ref{rem:uepib}-(3), one shows that $m \geqslant \ell +1$. 

\end{remark}

\begin{proof}[Proof of Theorem \ref{unepi}]  By \eqref{eq:unepi2} we may apply Proposition \ref{prop:LN} choosing $\sigma := g-c(\ell,r,t) - \epsilon$, which is non-negative 
by \eqref{eq:unepi0}. This is the maximum integer such that   $\sigma \leqslant 2 \delta - d$, $\sigma \equiv 2\delta - d \pmod{2}$ and such that 
$\dim(\widehat{\Lambda}_t) > \dim ({\rm Sec}_{\frac{1}{2}(2\delta-d + \sigma -2)}(X))$, as it follows from \eqref{eq:unepi2}.  Then, if $v \in \widehat{\Lambda}_t$ general, the assertions hold. 
\end{proof}


\begin{remark}\label{unepine} When $N$ of degree $d-\delta$ is non-effective of speciality $r$, by Riemann-Roch 
$ r = \delta - d + g-1$. Thus, by  \eqref{eq:ki}, one simply has $c (\ell, r, t) = t\;(d-2g+2+j+t)$, i.e. 
$c (\ell, r, t) = t\;k_{j+t}$, and conditions  in Theorem \ref{unepi} can be replaced by more explicit numerical conditions on $d$ and $\delta$   
$$\delta \leqslant   g-1 + \frac{g}{j} - j \; \; \; {\rm and} \; \;\; d \leqslant g-1 - t + {\rm min} \left\{ \delta, g - 1 - j +  \frac{g - \epsilon}{t}\right\},$$where $\epsilon \in \{0,1\}$  such that $d + g -  t k_{t+j} \equiv \epsilon \pmod{2}$, and
$$2 \delta - d \geqslant {\rm max} \{ 2, g -  t k_{t+j} - \epsilon\}.$$ 
\end{remark}

\begin{remark}\label{unepie} When  otherwise $N$, of degree $d-\delta$, is effective and of speciality $r$ one gets $d \geqslant \delta + g - r$.  Moreover, since its Brill-Noether number 
$\rho(g, n-1, d - \delta)$ has to be non negative (by the generality of $C$), one gets $d \leqslant \delta + g - 1 + \frac{g}{r}- r$. Thus, conditions in  Theorem \ref{unepi}  can be replaced by numerical conditions 
$$g -1 -j +r \leqslant \delta \leqslant   g-1 - j +  {\rm min} \left\{\frac{g}{j},  \frac{g - \epsilon}{t} + r - t-1 \right\},$$
$$g  + \delta - r \leqslant d \leqslant  \delta + g - 1 + \frac{g}{r} - r \;\;\; {\rm and} \;\;\; 2 \delta - d \geqslant {\rm max} \{ 2, g -  c(\ell,r,t) - \epsilon\},$$with $\epsilon$ and $c(\ell,r,t)$ as in Theorem \ref{unepi}.  
\end{remark}


\subsection{Existence of good components}\label{ssec:suffcon}  Recalling what defined in \eqref{eq:lem3note}, \eqref{eq:exthyp1}, \eqref{eq:wt}, \eqref{eq:clrt} and in Remark \ref{rem:wt}, one has 

\begin{theorem}\label{thm:mainextt} Let $C$ be a smooth curve of genus $g \geqslant 3$. Take integers $m$, $\ell$, $r$ and $t$ and assume $\ell \geqslant r \geqslant t \geqslant 2$. Take any integer $\eta$ such that 
\begin{equation}\label{eq:extt0}
0 \leqslant \eta \leqslant {\rm min}\{ t (r-t), \ell (t-1) \} \; \;\; {\rm and}\;\; \; m \geqslant \ell t+1-\eta.  
\end{equation} Suppose, in addition, that the subvariety  $\Sigma_{\eta} \subseteq \mathbb{G}(t, H^0(K_C-N)) := \mathbb{G}$, parametrizing 
$V_t \in  \mathbb{G}$ s.t.  
\begin{equation}\label{eq:aiutomamma}
\dim({\rm Ker}(\mu_{V_t})) \geqslant \eta,  
\end{equation}has pure codimension $\eta$ in $\mathbb{G}$ and that, for the general point $V_t$ in any irreducible component of $\Sigma_{\eta}$, 
equality holds in \eqref{eq:aiutomamma}. Then:

\noindent
(i) $c(\ell,r,t) >0$; 

\noindent
(ii) $\emptyset \neq \mathcal W_t \subset \mathcal W_1 \subset {\rm Ext}^1(L,N)$, where all the inclusions are strict;

\noindent
(iii) there exists a good component $\Lambda_t $ of $ \mathcal W_t$. 
 \end{theorem}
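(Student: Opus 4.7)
The plan is to establish (i), (ii), (iii) in order, with (iii) carrying the main substance.

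\emph{Part (i)} is a direct check: since $\ell \geqslant r$ and $t \geqslant 2$,
\[
c(\ell, r, t) = t(\ell - r + t) \geqslant t^2 \geqslant 4 > 0.
\]

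\emph{Part (ii).} The inclusion $\mathcal W_t \subseteq \mathcal W_1$ is tautological from $t \geqslant 1$. Note that $\eta \leqslant \ell(t-1)$ combined with $m \geqslant \ell t + 1 - \eta$ yields $m \geqslant \ell + 1$, so Theorem \ref{thm:mainext1} and Corollary \ref{cor:mainext1} apply. The strict inclusion $\mathcal W_1 \subsetneq {\rm Ext}^1(L,N)$ then follows from Theorem \ref{thm:mainext1}(ii), since the equality case $\ell = r-1$ is excluded by $\ell \geqslant r$. For $\mathcal W_t \subsetneq \mathcal W_1$, Corollary \ref{cor:mainext1}(ii) gives that a general $v \in \mathcal W_1$ has $\cork(\partial_v) = 1 < t$, hence $v \not\in \mathcal W_t$. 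Non-emptiness of $\mathcal W_t$ will follow from (iii).

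\emph{Part (iii)} is the core. Using the reformulation of Remark \ref{rem:wt}, set up the incidence variety
\[
\mathcal J := \{ (V_t, [u]) \in \Sigma_\eta \times \Pp \mid {\rm Im}(\mu_{V_t}) \subseteq \{u=0\}\},
\]
with projections ${\rm pr}_1, {\rm pr}_2$. By the equality clause in \eqref{eq:aiutomamma}, for a general $V_t$ in any irreducible component of $\Sigma_\eta$ the map $\mu_{V_t}$ has rank $\ell t - \eta$, so the fibre ${\rm pr}_1^{-1}(V_t)$ is a projective linear subspace of $\Pp$ of dimension $m - 1 - (\ell t - \eta) \geqslant 0$ (non-negativity from $m \geqslant \ell t + 1 - \eta$). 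Since $\dim \Sigma_\eta = t(r-t) - \eta$, every irreducible component of $\mathcal J$ has dimension
\[
t(r-t) - \eta + m - \ell t + \eta - 1 = m - 1 - c(\ell, r, t).
\]
Pick an irreducible component $\mathcal J'$ and set $\widehat{\Lambda}_t := \overline{{\rm pr}_2(\mathcal J')} \subseteq \Pp(\mathcal W_t)$, with $\Lambda_t \subseteq {\rm Ext}^1(L,N)$ its affine cone. By construction $\dim \widehat{\Lambda}_t \leqslant m - 1 - c$; on the other hand, $\mathcal W_t$ is the classical rank-degeneracy locus for the linear family of maps $H^0(L) \to H^1(N)$ given by $\sigma \mapsto u \cup \sigma$ as $u$ varies in ${\rm Ext}^1(L,N) \cong H^1(N-L)$, so by standard determinantal theory every component of $\Pp(\mathcal W_t)$ has dimension $\geqslant m - 1 - c$. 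Establishing $\dim \widehat{\Lambda}_t = m - 1 - c$ will then force $\widehat{\Lambda}_t$ to be a full irreducible component of $\Pp(\mathcal W_t)$, simultaneously yielding the non-emptiness in (ii) and the expected-dimension part of (iii).

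The main obstacle is the goodness condition (2) of Definition \ref{def:goodc}: showing that a general $[u] \in \widehat{\Lambda}_t$ has $\cork(\partial_u) = t$ exactly. Once this is in hand, the fibre ${\rm pr}_2^{-1}([u])$ reduces to the single point $V_t = {\rm Im}(\partial_u)^\perp$, so ${\rm pr}_2$ is generically injective on $\mathcal J'$ and the equality $\dim \widehat{\Lambda}_t = m - 1 - c$ is immediate. To establish goodness, I would fix a general $V_t \in \Sigma_\eta$ and estimate, inside the fibre $F_{V_t} := {\rm pr}_1^{-1}(V_t)$, the bad sublocus $B_{V_t}$ consisting of $[u]$ with $\cork(\partial_u) \geqslant t+1$. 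Every such $[u]$ must satisfy ${\rm Im}(\mu_{V_{t+1}}) \subseteq \{u=0\}$ for some $V_{t+1} \supset V_t$ of dimension $t+1$; parametrising these $V_{t+1}$ by $\Pp(H^0(K_C-N)/V_t) \cong \Pp^{r-t-1}$, each extension cuts inside $F_{V_t}$ a linear subspace of codimension $\dim {\rm Im}(\mu_{V_{t+1}}) - \dim {\rm Im}(\mu_{V_t})$. The hypotheses $\eta \leqslant \ell(t-1)$ and $\eta \leqslant t(r-t)$, combined with the purity of $\Sigma_\eta$ and the equality in \eqref{eq:aiutomamma}, are exactly what should bound $\dim \ker \mu_{V_{t+1}}$ from above for a generic extension, ensuring sufficient growth of ${\rm Im}(\mu_{V_{t+1}})$ to conclude $\dim B_{V_t} < \dim F_{V_t}$. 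A general $[u] \in F_{V_t}$ then lies outside $B_{V_t}$ and has $\cork(\partial_u) = t$, and upper-semicontinuity propagates goodness to a dense open subset of $\widehat{\Lambda}_t$.
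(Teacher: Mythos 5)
Your parts (i) and (ii) are correct and coincide with the paper's: $c(\ell,r,t)=t(\ell-r+t)\geqslant t^2>0$ from $\ell\geqslant r$, and \eqref{eq:extt0} gives $m\geqslant \ell t+1-\ell(t-1)=\ell+1$, so Theorem \ref{thm:mainext1} and Corollary \ref{cor:mainext1} yield the strict inclusions. Your part (iii) also follows the paper's route: the paper's own proof is a two-line deferral to ``a similar argument as in the proof of Theorem \ref{thm:mainext1}'', and the argument it has in mind is exactly your incidence correspondence $\mathcal J$ over $\Sigma_\eta$ with the fibre count $\dim {\rm pr}_1^{-1}(V_t)=m-1-(\ell t-\eta)$ and total dimension $m-1-c(\ell,r,t)$ (this is spelled out later in the paper, in the proof of Claim \ref{cl:good}, where the discrepancies $\tau-\eta-\epsilon$ play precisely the role you assign them). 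Two small imprecisions: the claim that \emph{every} component of $\mathcal J$ has dimension $m-1-c$ should be restricted to components dominating a component of $\Sigma_\eta$ (over the jump locus where $\dim\ker(\mu_{V_t})>\eta$ the fibres of ${\rm pr}_1$ are larger and nothing is assumed there), and one must check that the chosen $\mathcal J'$ does dominate such a component.

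The genuine gap is the one you yourself flag: the goodness condition, i.e. that $\cork(\partial_u)=t$ exactly for $[u]$ general in $\widehat\Lambda_t$. Everything else in your chain (generic injectivity of ${\rm pr}_2$, hence $\dim\widehat\Lambda_t=m-1-c$, hence componenthood by the determinantal lower bound) hangs on it, and you only offer a strategy. Moreover, the strategy as written cannot be closed from the stated hypotheses: bounding the bad locus $B_{V_t}\subset F_{V_t}$ requires a lower bound on $\dim{\rm Im}(\mu_{V_{t+1}})-\dim{\rm Im}(\mu_{V_t})$, i.e. an upper bound on $\dim\ker(\mu_{V_{t+1}})$ for $(t+1)$-planes $V_{t+1}\supset V_t$ --- but the hypothesis \eqref{eq:aiutomamma} only controls $t$-planes, and a priori some $V_{t+1}\supset V_t$ could satisfy ${\rm Im}(\mu_{V_{t+1}})={\rm Im}(\mu_{V_t})$, in which case \emph{every} point of $F_{V_t}$ has corank at least $t+1$ and your codimension count collapses. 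The paper's mechanism for goodness is dual to yours: instead of stratifying the ${\rm pr}_1$-fibres, it examines the ${\rm pr}_2$-fibres. The fibre of ${\rm pr}_2$ over $[u]$ is $\Sigma_\eta\cap\mathbb{G}(t,{\rm Im}(\partial_u)^{\perp})$, so if the general point of the image had corank $t'>t$ this fibre would contain (a slice of $\Sigma_\eta$ with) the positive-dimensional Grassmannian $\mathbb{G}(t,{\rm Im}(\partial_u)^{\perp})$; combined with the fact that every component of $\widehat{\mathcal W}_t$ has dimension at least $m-1-c$ and is swept out by the images of the components of $\mathcal J$, this is what forces $\epsilon=0$ and corank exactly $t$ in the paper's bookkeeping. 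You should either adopt that dual argument or add a hypothesis-level justification for your bound on $\ker(\mu_{V_{t+1}})$; as it stands, part (iii) is not proved.
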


\begin{proof}   By \eqref{eq:extt0}  one has $m \geqslant \ell +1$; moreover $\ell \geqslant r$ by assumption. 
Thus, from Corollary \ref{cor:mainext1},  $\emptyset \neq \mathcal W_1 \subset {\rm Ext}^1(L,N)$ and the inclusion is strict. 
By definition $\mathcal W_t \subset  \mathcal W_1$, where the inclusion is strict by Corollary  \ref{cor:mainext1}. 
A similar argument  as in the proof  of Theorem \ref{thm:mainext1} applies. 
\end{proof}

\begin{corollary}\label{cor:mainextt} Let $C$ be of genus $g \geqslant 3$ with general moduli. Let $j \geqslant 1$, $\ell \geqslant r \geqslant t \geqslant 2$ and $m \geqslant \ell t+1$ be integers. Let $L \in W^{\delta-g+j}_{\delta}(C)$ be a smooth point. 

If $j \geqslant t$, $N \in {\rm Pic}^{d-\delta}(C)$ is general and $\ell \leqslant 2\delta - d$, then for $V_t \in  \mathbb{G} (t, H^0(K_C-N))$ general, $\mu_{V_t}$ is injective. In particular, there exists a good component $ \emptyset \neq  \Lambda_t \subseteq \mathcal W_t$. 
\end{corollary}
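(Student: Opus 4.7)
The plan is to apply Theorem~\ref{thm:mainextt} with $\eta=0$. With this choice the numerical constraints $0\leqslant\eta\leqslant\min\{t(r-t),\,\ell(t-1)\}$ are trivial, and the hypothesis $m\geqslant\ell t+1$ coincides with $m\geqslant\ell t+1-\eta$. Moreover $\Sigma_0$ is all of $\mathbb{G}$, so it trivially has pure codimension $0$. The only nontrivial condition of Theorem~\ref{thm:mainextt} left to check is that equality holds in \eqref{eq:aiutomamma} for a general $V_t$, i.e.\ that $\mu_{V_t}$ is injective; this is exactly the first assertion of the corollary, and once it is proved Theorem~\ref{thm:mainextt}(iii) produces the good component $\Lambda_t\subseteq\mathcal{W}_t$ claimed in the ``in particular''.

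To establish injectivity of $\mu_{V_t}$ for generic $V_t$, I would first reduce, by upper semi-continuity of $\dim\ker(\mu_{V_t})$ on the Grassmannian $\mathbb{G}$, to exhibiting a single $V_t$ with $\mu_{V_t}$ injective. Next, I would arrange base-point-freeness: the hypotheses $\ell\leqslant 2\delta-d$ and $j\geqslant t$, together with $\delta\leqslant 2g-2$ (because $L$ is special), force
\begin{equation*}
d\leqslant\delta+g-1-j\leqslant 3g-3-t,
\end{equation*}
and for $N$ general on a general curve this suffices to make $K_C-N$ globally generated, hence a generic $V_t\subseteq H^0(K_C-N)$ base-point-free. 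For such a $V_t$ the evaluation map $V_t\otimes\mathcal{O}_C\twoheadrightarrow K_C-N$ is surjective with kernel a rank $t-1$ vector bundle $\mathcal{K}$ of determinant $(K_C-N)^{-1}$, and the Eagon--Northcott complex provides an exact resolution
\begin{equation*}
0\to\Lambda^t V_t\otimes (K_C-N)^{1-t}\to\cdots\to\Lambda^2 V_t\otimes (K_C-N)^{-1}\to\mathcal{K}\to 0.
\end{equation*}
Tensoring with $L$ identifies $\ker(\mu_{V_t})$ with $H^0(\mathcal{K}\otimes L)$, so injectivity of $\mu_{V_t}$ is reduced to a cohomology chase.

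The decisive base step is the vanishing $h^0(L\otimes(K_C-N)^{-1})=0$, which holds because $\deg(L\otimes(K_C-N)^{-1})=d-2g+2\leqslant g-1-t$ and $N$ is generic; the higher twists $L\otimes(K_C-N)^{-s}$ for $s\geqslant 2$ have strictly smaller degree and share the same vanishing. The main obstacle is then the cohomology chase itself: splicing the tensored Eagon--Northcott resolution into short exact sequences produces, at each intermediate stage, contributions from $H^1$ of the twisted exterior powers that do not individually vanish (by Serre duality these $H^1$'s correspond to nontrivial sections of appropriate multiples of $K_C-L-sN$). The heart of the proof is therefore to track the differentials in the resulting Koszul-type spectral sequence and use the genericity of both $V_t$ and $N$ to show that the successive connecting maps systematically annihilate these $H^1$ contributions, yielding $H^0(\mathcal{K}\otimes L)=0$ and so the desired injectivity of $\mu_{V_t}$.
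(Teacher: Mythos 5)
Your reduction is fine as far as it goes: taking $\eta=0$ in Theorem \ref{thm:mainextt}, noting that $\Sigma_0=\mathbb{G}$ trivially has pure codimension $0$, and observing that the only thing left to check is injectivity of $\mu_{V_t}$ for $V_t$ general, is exactly how the corollary is meant to be deduced. The identification $\ker(\mu_{V_t})\cong H^0(\mathcal K\otimes L)$ via the kernel bundle of the evaluation map, and the Koszul resolution of $\mathcal K$ by twists of $(K_C-N)^{-1}$, are also correct. But the proof stops precisely where it has to start. The splicing argument only gives $h^0(\mathcal K\otimes L)\leqslant h^1(Z_2\otimes L)$ after the base vanishing $h^0(L-K_C+N)=0$, and the intermediate $H^1$ groups (Serre-dual to $H^0$ of line bundles of large positive degree such as $3K_C-L-2N$) are genuinely nonzero; you acknowledge this and defer the annihilation of these contributions to unspecified "connecting maps" controlled by "genericity". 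That is the entire content of the statement, and nothing in your setup indicates why genericity of $V_t$ and $N$ would force those differentials to be surjective. A telling symptom is that your argument never uses the hypothesis $j\geqslant t$, which is not decorative: for $t\geqslant 3$ the statement is not expected to follow from degree counts alone.

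The paper's proof avoids the cohomology chase entirely by a specialization. Set $h=2\delta-d$ and degenerate $N$ to $N_0=L-D$ with $D\in C^{(h)}$ general; the hypothesis $\ell\leqslant 2\delta-d$ guarantees $h^0(N_0)=0$, so $N_0$ has the same cohomology as the general $N$. Then $H^0(K_C-N_0)=H^0(K_C-L+D)$ contains $W:=H^0(K_C-L)$, of dimension $j$, and the restriction of the multiplication map to $H^0(L)\otimes W$ factors through the Petri map $\mu_0(L)$ of \eqref{eq:Petrilb}, which is injective by the Gieseker--Petri theorem since $C$ has general moduli. Because $j\geqslant t$, one can choose $\widetilde V_t\subseteq W$ of dimension $t$, getting an injective $\mu^0_{\widetilde V_t}$, and semicontinuity in $(N,V_t)$ finishes the argument. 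To repair your proof you would either have to carry out the spectral-sequence analysis in full (which is not routine), or replace it by an argument of this degeneration-plus-Gieseker--Petri type.
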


\begin{proof} Set $h := 2\delta -d$ and let $N_0:= L-D \in {\rm Pic}^{d-\delta}(C)$, with 
$D=\sum_{i=1}^h p_i \in C^{(h)}$ general. Since $0 < \ell \leqslant h$,  we have $h^0(N_0) =0$. Thus, $N \in {\rm Pic}^{d-\delta}(C)$ general is also non-effective, 
so $h^1(N) = h^1(N_0)$. 

Let $\mu$ be as in \eqref{eq:mu}. To prove injectivity of $\mu_{V_t}$ as in \eqref{eq:muW} 
for $N$ and $V_t$ general, it suffices to prove a similar condition for  
$$\mu^0: H^0(L) \otimes H^0\left(K_C - L + D \right) \to H^0\left(K_C + D \right).$$Consider 
$$W:= H^0(K_C-L) \subset H^0\left(K_C-L+ D\right).$$ One has $\dim(W) = j$. We have the  diagram
\[\begin{array}{rcl}
H^0(L) \otimes W \cong H^0(L) \otimes H^0(K_C-L) & \stackrel{ \mu^0_{W} }{\longrightarrow} &  H^0\left(K_C+D \right) \\
\searrow^{\mu_0(L)} & & \nearrow_{\iota} \\
& H^0(K_C) & 
\end{array}
\]where $\mu^0_{W} = \mu^0|_{H^0(L) \otimes W}$, $\mu_0(L)$ is as in \eqref{eq:Petrilb}   and $\iota$ is the obvious inclusion. 

By Gieseker--Petri theorem $\mu_0(L)$ is injective. By composition with $\iota$, $\mu^0_{W}$ is also injective. Since by assumptions $t \leqslant j$, then for any $\widetilde{V}_t \in \mathbb{G} (t,W)$, $\mu^0_{\widetilde{V}_t }$ is also injective. By semicontinuity, for $N \in {\rm Pic}^{d-\delta}(C)$ and 
$V_t \in \mathbb{G} (t, H^0(K_C-N))$ general, $\mu_{V_t}$ is injective. Then, one can conclude by using Theorem \ref{thm:mainextt}. 
\end{proof}


\section{Parameter spaces}\label{S:PSBN} 

Let $C$ be a projective curve of genus $g$ with general moduli. Given a sequence as in  \eqref{eq:exthyp},  for brevity we set  
$$\rho(L):= \rho(g, \ell-1, \delta) \; \;\; {\rm and} \; \;\; \rho(N) := \rho(g, n-1, d-\delta),$$
$$
W_L:= \left\{
\begin{array}{cc}
W_{\delta}^{\ell-1}(C) & {\rm if} \; \rho(L)>0\\
\{L\} & {\rm if} \; \rho(L)=0
\end{array}
\right. \;\;\; {\rm and}\;\;\; 
W_N:= \left\{
\begin{array}{cc}
W_{d-\delta}^{n-1}(C) & {\rm if} \; \rho(N)>0\\
\{N\} & {\rm if} \; \rho(N)=0
\end{array}
\right..$$Both $W_L$ and $W_N$ are irreducible, generically smooth, of dimensions $\rho(L)$ and $\rho(N)$
 (cf. \cite[p. 214]{ACGH}). Let 
$$\mathcal{N} \to C \times {\rm Pic}^{d-\delta}(C) \;\; {\rm and} \;\; \mathcal{L} \to C \times {\rm Pic}^{\delta}(C)$$be Poincar\'e line-bundles. With an abuse of notation, we will denote by $\mathcal L$ (resp., by $\mathcal N$) also the restriction of Poincar\'e line-bundle to the Brill-Noether locus. Set$$\mathcal Y := {\rm Pic}^{d-\delta}(C) \times W_L \;\; \;\; {\rm and} \;\; \;\; \mathcal Z:= W_N \times W_L \subset \mathcal Y .$$They are both irreducible,  of dimensions 
\begin{equation}\label{eq:yde1}
\dim(\mathcal Y) = g + \rho(L) \;\;\;  {\rm and} \; \;\; \dim(\mathcal Z) = \rho(N) + \rho(L).
\end{equation}Consider the  natural projections 
\[
\begin{array}{ccccc}
C \times {\rm Pic}^{d-\delta}(C) & \stackrel{pr_{1,2}}{\longleftarrow} &\!\!\!C \times\mathcal Y & \stackrel{pr_{2,3}}{\longrightarrow} & \mathcal Y \\
 & & \;\;\;\; \downarrow^{pr_{1,3}} & & \\
 & &\!\! C \times W_L & & .
\end{array}
\]As in \cite[p. 164-179]{ACGH}), we  define 
$$\Efra_\delta := R^1(pr_{2,3})_*  \left( pr_{1,2}^* (\mathcal N)   \otimes pr_{1,3}^* (\mathcal L^{\vee}) \right),$$depending on the choices of $d$ and $\delta$. By \eqref{eq:lem3note}, when $2 \delta - d \geqslant 1$, $\Efra_\delta$ is a vector bundle of rank $m = 2 \delta - d + g -1$ on $\mathcal Y$ whereas, 
when $d = 2 \delta$,  $\Efra_\delta$ is a vector bundle of rank $g-1$ on $\mathcal Y \setminus \Delta_{\mathcal Y}$, where 
$\Delta_{\mathcal Y} = \{ (M, M) \; | \; M \in W_L\} \cong W_L $. We set 
\begin{equation}\label{eq:ude}
\mathcal U :=  \left\{
\begin{array}{cc}
\mathcal Y & {\rm if} \; 2 \delta - d \geqslant 1 \\
\mathcal Y \setminus \Delta_{\mathcal Y} & {\rm if} \; d = 2 \delta
\end{array}
\right. \;\;\;\;\;\; \;\;\;\;\;\; {\rm and} \;\;\;\;\;\;\;\;\;\;\;\;  \Pp(\Efra_\delta) \stackrel{\gamma}{\longrightarrow} \mathcal U,
\end{equation}where $\gamma$ the  projective bundle morphism: the $\gamma$-fibre of $y = (N,L) \in \mathcal U$ is $\Pp(\Ext^1(L,N)) = \Pp$, as in \eqref{eq:PP}.

From \eqref{eq:lem3note} and \eqref{eq:yde1}, one has 
\begin{equation}\label{eq:yde2}
\dim(\Pp(\Efra_\delta)) = g + \rho(L) + m-1 \;\;\; \;\;\; {\rm and} \;\;\;\;\;\; \dim(\Pp(\Efra_\delta)|_{\zd}) = \rho(N)+ \rho(L) + m-1.
\end{equation}Since (semi)stability is an open condition (cf. e.g. \cite[Prop. 6-c, p. 17]{Ses}), for any choice of integers $g$, $d$ and $\delta$ satisfying numerical conditions as in the theorems and corollaries proved in \S's\;\ref{S:Nns} and \ref{S:Ns}, there is an open, dense subset 
$\Pp(\Efra_\delta)^0  \subseteq \Pp(\Efra_\delta)$ and a morphism 
\begin{equation}\label{eq:pde}
\pi_{d,\delta} : \Pp(\Efra_\delta)^0 \rightarrow U_C(d). 
\end{equation} We set 
\begin{equation}\label{eq:Nudde}
\vdj := {\rm Im} (\pi_{d,\delta}) \; \; \; {\rm and} \;\;\;  \nu_d^{\delta,j} = \dim(\vdj). 
\end{equation}



\subsection{Non-special $N$}\label{ssec:Nns} We will put ourselves in the hypotheses either of Theorem \ref{LN} or of Theorem \ref{C.F.VdG}. In either case, $d - \delta \geqslant g-1$ so $N$ can be taken general in $\Pic^{d-\delta}(C)$ and $\vdj \subseteq B^{k_j}_C(d)$, by what proved about (semi)stability.


\noindent
\subsubsection{{\bf Case $2 \delta - d \geqslant 1$}} In this case, by what proved in Theorems \ref{LN}, \ref{C.F.VdG}, one has $\vdj \subseteq B^{k_j}_C(d)  \cap U_C^s(d)$. Therefore any irreducible component of $B^{k_j}_C(d)$ intersected by  $\vdj$ has at least dimension $\rho_d^{k_j}$ 
(cf. Remark \ref{rem:BNloci} and Definition \ref{def:regsup}).

\begin{proposition}\label{C.F.1}  Assumptions as in Theorem \ref{C.F.VdG}, with $2 \delta - d \geqslant 1$. Then, for any integers $j,\,\delta,\,d$ therein, there exists an irreducible component $\mathcal{B} \subseteq B^{k_j}_C(d)$ such that: 

\noindent
(i) $\vdj \subseteq \mathcal{B}$;

\noindent
(ii) $\mathcal{B}$ is regular and generically smooth; 

\noindent
(iii) for $[\Ef] \in \mathcal{B}$ general, $\Ef$ is stable, with $s(\Ef) \geqslant 2\delta -d$ and $i(\Ef) =j$.  
\end{proposition}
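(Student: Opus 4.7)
The strategy is to identify a candidate component $\mathcal{B}\supseteq\vdj$ of $B^{k_j}_C(d)$ and then upgrade the generic properties of $\Ff_u$ from Theorem~\ref{C.F.VdG} to the asserted dimension and smoothness statements via an injectivity check on the Petri map.

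First I would observe that $\Pp(\Efra_\delta)^0$ is irreducible, being a dense open of the projective bundle $\gamma\colon\Pp(\Efra_\delta)\to\mathcal{U}=\mathcal{Y}$ over the irreducible base $\mathcal{Y}=\Pic^{d-\delta}(C)\times W_L$; hence $\vdj=\pi_{d,\delta}(\Pp(\Efra_\delta)^0)$ is irreducible. By Theorem~\ref{C.F.VdG}, for a general $u\in\Pp(\Efra_\delta)^0$ lying over a general $(N,L)\in\mathcal{U}$, the bundle $\Ff_u$ is stable with $s(\Ff_u)=2\delta-d$ and $i(\Ff_u)=j$; in particular $\vdj\subseteq B^{k_j}_C(d)\cap U^s_C(d)$. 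I would then take $\mathcal{B}$ to be any irreducible component of $B^{k_j}_C(d)$ containing $\vdj$, yielding (i), so that Remark~\ref{rem:BNloci}(a) gives $\dim\mathcal{B}\geq\rho_d^{k_j}$. Part (iii) would follow because the presence in $\vdj$ of bundles of speciality exactly $j$ prevents $\mathcal{B}\subseteq B^{k_j+1}_C(d)$, while lower-semicontinuity of the Segre invariant propagates $s(\Ff)\geq 2\delta-d$ from $\vdj$ to the general point of $\mathcal{B}$.

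For (ii), by the tangent-space description in Remark~\ref{rem:BNloci}(a) it would suffice to exhibit a single $[\Ff]\in\vdj$ at which the Petri map $P_\Ff$ of~\eqref{eq:petrimap} is injective: such a point would then be a smooth point of $B^{k_j}_C(d)$ of the expected dimension $\rho_d^{k_j}$, forcing $\dim\mathcal{B}=\rho_d^{k_j}$ and generic smoothness of $\mathcal{B}$. The plan is to exploit the filtration $0\to N\to \Ff\to L\to 0$ with $N$ non-special: since $h^1(N)=0$, Serre duality identifies $H^0(\omega_C\otimes\Ff^\vee)\cong H^0(\omega_C-L)$ (of dimension $j$) and one obtains a non-canonical splitting $H^0(\Ff)\cong H^0(N)\oplus H^0(L)$. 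Decomposing $\omega_C\otimes\Ff\otimes\Ff^\vee$ via the two short exact sequences induced by the extension, the restriction of $P_\Ff$ to $H^0(L)\otimes H^0(\omega_C-L)$ would factor through the classical Petri map $\mu_0(L)$ of~\eqref{eq:Petrilb}, injective by Gieseker--Petri since $L$ is a smooth point of $W_\delta^{\ell-1}(C)$ on a general-moduli curve; the restriction to $H^0(N)\otimes H^0(\omega_C-L)$ is the multiplication $H^0(N)\otimes H^0(\omega_C-L)\to H^0(\omega_C+N-L)$, injective for $N$ generic by a base-point-free pencil argument analogous to the specialization trick used in the proof of Corollary~\ref{cor:mainextt}.

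The main obstacle is the diagram chase combining these two partial injectivities into injectivity of $P_\Ff$ itself: a priori a kernel element of $P_\Ff$ could mix nonzero pieces from the two summands, the cancellation being mediated by the extension class $u\in\Ext^1(L,N)$ via the connecting homomorphisms of the associated sequences. I would show that for $u$ generic the two contributions land in linearly independent subspaces of $H^0(\omega_C\otimes\Ff\otimes\Ff^\vee)$, so that each summand of $\ker P_\Ff$ must vanish separately and the partial injectivities above apply. This is the technical heart of the argument; once $P_\Ff$ is shown injective at some $[\Ff]\in\vdj$, that point is smooth on $B^{k_j}_C(d)$ of expected dimension, completing (ii) and hence the proposition.
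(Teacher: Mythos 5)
Your treatment of (i) and (iii) matches the paper's: the paper deduces them directly from Theorem \ref{C.F.VdG} together with lower-semicontinuity of the Segre invariant, and your additional remark that upper-semicontinuity of $h^1$ pins down $i(\Ef)=j$ at the general point of $\mathcal B$ is the right (implicit) justification.

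For (ii), however, you have set up the argument at a \emph{general} $u\in\Ext^1(L,N)$ and then flagged, as ``the technical heart,'' the claim that the contributions of $H^0(L)\otimes H^0(\omega_C-L)$ and $H^0(N)\otimes H^0(\omega_C-L)$ to $\ker P_{\Ff}$ cannot mix. As written this is a genuine gap: for $u\neq 0$ the bundle $\omega_C\otimes\Ff\otimes\Ff^{\vee}$ only carries a filtration, not a splitting, and the asserted linear independence of the two images is exactly what is not obvious and is nowhere established in your sketch. The paper (via the citation of \cite[Lemma 2.1]{CF}, spelled out in the proof of Proposition \ref{C.F.3}-(iv)) avoids this entirely by specializing to the \emph{split} extension $u=0$, i.e.\ $\Ff_0=L\oplus N$. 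There $P_{\Ff_0}$ genuinely decomposes as $\mu_0(L)\oplus\mu$ with the two summands landing in distinct direct summands of $H^0(\omega_C\otimes\Ff_0\otimes\Ff_0^{\vee})$, so no mixing can occur; $\mu_0(L)$ is injective by Gieseker--Petri and $\mu\colon H^0(N)\otimes H^0(\omega_C-L)\to H^0(\omega_C+N-L)$ is injective for $N$ general, exactly as you indicate. Since $h^1(N)=0$ forces $h^0(\Ff_u)=k_j$ and $h^1(\Ff_u)=j$ for \emph{every} $u$, the source of the Petri map has constant dimension along $\Ext^1(L,N)$, and injectivity propagates from $u=0$ to the general $u$ by semicontinuity of the rank (it is irrelevant that $\Ff_0$ itself is unstable, as the semicontinuity takes place over $\Ext^1(L,N)$, not over $U_C(d)$). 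Replacing your unproved independence claim by this specialization-plus-semicontinuity step closes the gap and recovers the paper's proof.
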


\begin{proof} Parts (i) and (iii) follow from Theorem \ref{C.F.VdG} (note that the Segre invariant is lower-semicontinuous; cf. also \cite[\S\;3]{LN}).  
Assertion (ii) follows from the fact that, for $[\Ff] \in \vdj$ general, the Petri map $P_{\Ff}$ is injective  (cf. Remark \ref{rem:BNloci} and  \cite[Lemma2.1]{CF}).  
\end{proof}

\begin{lemma}\label{lem:claim1} In the hypotheses of Theorem \ref{C.F.VdG}, with $2 \delta - d \geqslant 1$, the morphism 
$\pi_{d,\delta}$ is generically injective. 
\end{lemma}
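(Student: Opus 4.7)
\medskip

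\noindent
\textbf{Proof proposal.} The plan is to exploit the rigidity statement in Theorem \ref{C.F.VdG}(ii), which says that under its hypotheses the section $\Gamma\subset F_u$ corresponding to $\Ff_u\to\!\!\!\!\to L$ is the \emph{only} unisecant of degree $\delta$ on $F_u$, together with Lemma \ref{lem:technical}, which separates extensions once one knows the quotient. Generic injectivity of $\pi_{d,\delta}$ will then follow by reading off $L$ (and therefore $N=\det(\Ff_u)\otimes L^\vee$) from $\Ff_u$, and afterwards recovering the class $[u]\in\Pp(\mathrm{Ext}^1(L,N))$.

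The first step is to fix a general $y=(N,L,[u])\in\Pp(\Efra_\delta)^0$ and set $[\Ff]:=\pi_{d,\delta}(y)\in \vdj$. By Theorem \ref{C.F.VdG}(i)--(ii), $\Ff=\Ff_u$ is stable, $L$ is the unique minimal-degree quotient of $\Ff$, the corresponding section $\Gamma\subset F=\Pp(\Ff)$ is the unique element of $\mathrm{Div}^{1,\delta}_F$, and hence the surjection $\Ff\to\!\!\!\!\to L$ is determined up to a scalar; consequently the inclusion $N\hookrightarrow\Ff$ (as its kernel) is also determined up to a scalar.

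The second step is to take any other point $y'=(N',L',[u'])\in\pi_{d,\delta}^{-1}([\Ff])$ and show $y'=y$. Pick an isomorphism $\varphi\colon \Ff_{u'}\stackrel{\sim}{\to}\Ff$. By the rigidity discussed above, $\varphi$ must carry the unique minimal-degree quotient of $\Ff_{u'}$ to that of $\Ff$, so $L'\cong L$ and, taking determinants, $N'\cong N$. Likewise, $\varphi$ intertwines the two inclusions $N\hookrightarrow\Ff_{u'}$ and $N\hookrightarrow\Ff$ up to a nonzero scalar, so the hypotheses of Lemma \ref{lem:technical} hold, \emph{provided} $h^0(N-L)=0$. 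This vanishing is automatic in our range: under Theorem \ref{C.F.VdG} together with the present assumption $2\delta-d\geqslant 1$, one has $\deg(N-L)=d-2\delta\leqslant -1$. Lemma \ref{lem:technical} then yields that $u$ and $u'$ are proportional in $\mathrm{Ext}^1(L,N)$, i.e.\ $[u']=[u]$, so $y'=y$.

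The third and final step is bookkeeping: the two steps above show that the scheme-theoretic fibre of $\pi_{d,\delta}$ over a general point of $\vdj$ is reduced to a single point, which gives the generic injectivity. The only delicate point is the second step, where one has to insist that the isomorphism $\varphi$ respect the subbundle $N$; this is forced by the rigidity of the minimal-degree quotient, and I expect this to be the main conceptual content of the argument, while the numerical check $h^0(N-L)=0$ is routine.
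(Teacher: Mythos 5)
Your proposal is correct and follows essentially the same route as the paper: the uniqueness of the degree-$\delta$ unisecant from Theorem \ref{C.F.VdG}(ii) forces $L'\cong L$ (hence $N'\cong N$) and pins down the kernel $N\hookrightarrow\Ff_u$ up to scalar, after which Lemma \ref{lem:technical} identifies $[u']$ with $[u]$. The paper phrases the scalar-proportionality step via $h^0(\Ff_u\otimes N^{\vee})=1$ (linear isolation of $\Gamma$) rather than via rigidity of the quotient, but this is the same mechanism, and your explicit check that $h^0(N-L)=0$ is a detail the paper leaves implicit.
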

\begin{proof} Let $[\Ff] \in \vdj$ be  general; then $\Ff = \Ff_u$, for $u \in \Ext^1(L,N)$ and $y= (N,L) \in \ud$ general. Then 
$$\pi_{d,\delta}^{-1}([\Ff_u]) = \left\{ (N', L', u') \in \Pp(\Efra)^0 \, | \; \Ff_{u'} \cong \Ff_u \right\}.$$Assume by contradiction there exists 
$ (N', L', u') \neq (N,L,u)$ in $\pi_{d, \delta}^{-1} ([\Ff_u])$. Then  $N\otimes L \cong N' \otimes L'$. 

\noindent
(1) If $L \cong L' \in W_L$ then $N \cong N' \in \Pic^{d-\delta}(C)$. Thus, $u, u' \in \Pp$. Let $\varphi : \Ff_{u'} {\to} \Ff_u$ 
be the isomorphism between the two bundles. Since $\Ff_u$ is stable, then $u \neq u' \in \Pp$ (notation as in \eqref{eq:PP}) and we have the  diagram 
\[
\begin{array}{ccccccl}
0 \to & N & \stackrel{\iota_1}{\longrightarrow} & \Ff_{u'} & \to & L & \to 0 \\ 
 & & & \downarrow^{\varphi} & & &  \\
0 \to & N & \stackrel{\iota_2}{\longrightarrow} & \Ff_u & \to & L & \to 0.
\end{array}
\]The maps $\varphi \circ \iota_1$ and $\iota_2$ determine two non-zero sections $ s_1 \neq s_2 \in H^0(\Ff_u \otimes N^{\vee}) $. They are linearly dependent, otherwise the section 
$\Gamma \subset F_u$, corresponding to $\Ff_u \to \!\! \to L$,  would not be li (cf. \eqref{eq:isom2} and Theorem \ref{C.F.VdG}-(ii)).  So $s_1= \lambda s_2$. But then Lemma \ref{lem:technical} implies $u=u'$,  a contradiction.


\noindent
(2) If $L \cong\!\!\!\!\!| \,\,\; L' \in W_{L}$ (in particular, $\rho(L)>0$), sections $\Gamma \neq \Gamma'$, corresponding respectively to 
$\Ff_u \to \!\!\to L$ and $\Ff_u \to \!\! \to L'$, would be such that 
$\Gamma \sim_{alg} \Gamma'$ on $F_u$, contradicting  Theorem \ref{C.F.VdG}-(ii). 
 \end{proof}

\begin{example}\label{ex:C.F.1}  One can have loci $\vdj$ of the same dimension for different values of $\delta$. For instance, let  $j =2$, $g \geqslant 18$ and 
$d=2g+9$  in Theorem \ref{C.F.VdG}. Then  $g+5 \leqslant\delta \leqslant g+6$ are admissible values in \eqref{eq:ln2} and for both of them one has 
$2 \delta - d >0$. Now $\rho(g,7,g+5) = g-16 > g-18 = \rho(g,8,g+6) $; by \eqref{eq:Nudde} and Lemma \ref{lem:claim1} one has  
$\nu_{2g+9}^{g+5,2} = \nu_{2g+9}^{g+6,2} = 3g-17$. 
\end{example}

\begin{remark}\label{rem:claim1}  From \eqref{eq:bn}, \eqref{eq:Nudde} and Lemma \ref{lem:claim1}, for varieties $\vdj$ as in Proposition 
\ref{C.F.1} (i.e. defined by integers $d$, $\delta$ and $j$ as in Theorems \ref{LN}, \ref{C.F.VdG}, with $2\delta - d \geqslant 1$) one has   
\begin{equation}\label{eq:nudde2}
\nu_d^{\delta,j} - \rho_d^{k_j} = d (j-1) - \delta (j -2) - (g-1) (j+1). 
\end{equation}

\noindent
(1) For $j =1$,  $\nu_d^{\delta,1} - \rho_d^{k_1} = \delta - 2g+2 \leqslant 0$, since $L$ special, and equality holds if and only if $\delta$ reaches the upper-bound in \eqref{eq:ln2}. 

\noindent
(2) Otherwise, for $j \geqslant 2$, using the upper-bound in \eqref{eq:ln2} and  the fact $d < 2 \delta$, from \eqref{eq:nudde2} one gets 
$$\nu_d^{\delta,j} - \rho_d^{k_j} < \delta j - g j - g + j + 1 \leqslant 1 - j^2 <0.$$ Thus, $\vdj$ can never be dense in a regular component of $B_C^{k_j}(d)$ unless $j=1$ and $\delta = 2g-2$. 
\end{remark} 


\begin{corollary}\label{C.F.1b} Let $C$ be of genus $g \geqslant 5$ with general moduli. For any integer $d$ s.t.  $3g+1 \leqslant d \leqslant 4g-5$, 
the variety $\mathcal V_{d}^{2g-2,1}$ is dense in a regular, generically smooth component $\mathcal B \subseteq B_C^{k_1}(d)$. Moreover:

\noindent
(i) $[\Ff_u]\in  \mathcal V_{d}^{2g-2,1}$ general is stable and comes from $u \in \Ext^1(\omega_C,N)$ general, with $N \in \Pic^{d - 2g+2}(C)$ general. 
In particular, $i(\Ff_u)=1$.

\noindent
(ii) The minimal degree quotient of $\Ff_u$ is $\omega_C$, so $s(\Ff_u) = 4g-4-d >0$. 

\noindent
(iii) ${\rm Div}_{F_u}^{1,2g-2} = \{ \Gamma\}$, where $\Gamma$ is the section of $F_u = \Pp(\Ff_u)$ corresponding to $\Ff_u \to \!\!\! \to \omega_C$ 
(i.e. $\Ff_u$ is rp via $\omega_C$).  
\end{corollary}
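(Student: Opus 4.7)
The statement is essentially the specialization of Theorem \ref{C.F.VdG} and Proposition \ref{C.F.1} to the extreme values $j=1$, $\delta=2g-2$, $L=\omega_C$. The first task is to verify the numerical hypotheses: \eqref{eq:ln2} with $j=1$ reads $\delta\leqslant 2g-2$, which holds with equality; \eqref{eq:cf3} reads $3g+1\leqslant d\leqslant 4g-4$, which is compatible with the range $3g+1\leqslant d\leqslant 4g-5$ in the statement. The slightly tighter upper bound $d\leqslant 4g-5$ is precisely what is needed to force the strict inequality $s(\Ff_u)=2\delta-d=4g-4-d\geqslant 1$, hence stability (rather than mere semistability) of the general $\Ff_u$. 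Also note that the constraint $g\geqslant j^2+4j=5$ of Remark \ref{rem:C.F.VdG}(2) is exactly $g\geqslant 5$.

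With the hypotheses in place, applying Theorem \ref{C.F.VdG} to $L=\omega_C$ and a general $N\in\Pic^{d-2g+2}(C)$ (which is non-special since $d-2g+2\geqslant g+3$) gives directly items (i), (ii), (iii) of the corollary: for $u\in{\rm Ext}^1(\omega_C,N)$ general, $\Ff_u$ has speciality $i(\Ff_u)=j=1$, the line bundle $\omega_C$ is the minimal-degree quotient of $\Ff_u$ with $s(\Ff_u)=4g-4-d>0$ (hence $\Ff_u$ is stable), and ${\rm Div}^{1,2g-2}_{F_u}=\{\Gamma\}$, so $\Ff_u$ is rigidly presented via $\omega_C$.

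It remains to prove that $\mathcal V_d^{2g-2,1}$ is dense in a regular, generically smooth component of $B_C^{k_1}(d)$. I would invoke Proposition \ref{C.F.1} to produce an irreducible component $\mathcal B\subseteq B_C^{k_1}(d)$ with $\mathcal V_d^{2g-2,1}\subseteq\mathcal B$, regular and generically smooth. The density then reduces to a dimension count: since $L=\omega_C$ is unique, $\rho(L)=0$ and $W_L=\{\omega_C\}$, while $\deg(N)\neq\deg(L)$ in our range, so by \eqref{eq:lem3note} the rank of $\Efra_\delta$ is $m=2\delta-d+g-1=5g-5-d$. Combining \eqref{eq:yde2} with the generic injectivity of $\pi_{d,\delta}$ from Lemma \ref{lem:claim1} yields
\[
\nu_d^{2g-2,1}=\dim\Pp(\Efra_{2g-2})=g+\rho(L)+m-1=6g-6-d.
\]
On the other hand, since $k_1=d-2g+3$ and $i=1$, the Brill--Noether number is
\[
\rho_d^{k_1}=4g-3-k_1=6g-6-d,
\]
so $\nu_d^{2g-2,1}=\rho_d^{k_1}=\dim(\mathcal B)$, which forces $\mathcal V_d^{2g-2,1}$ to be dense in $\mathcal B$.

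The argument is essentially an application of earlier results; the only substantive point (and the one I would double-check most carefully) is the numerical coincidence $\nu_d^{2g-2,1}=\rho_d^{k_1}$, which works precisely because $\delta$ attains the upper bound in \eqref{eq:ln2} for $j=1$, consistently with Remark \ref{rem:claim1}(1).
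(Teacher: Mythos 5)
Your proof is correct and follows essentially the same route as the paper, which simply cites Theorem \ref{C.F.VdG} (with $2\delta-d\geqslant 1$, $j=1$), Proposition \ref{C.F.1}, and Remark \ref{rem:claim1}; your dimension count $\nu_d^{2g-2,1}=6g-6-d=\rho_d^{k_1}$ via Lemma \ref{lem:claim1} is exactly the content of Remark \ref{rem:claim1}-(1) specialized to $\delta=2g-2$. All the numerical verifications check out.
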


\begin{proof} It  follows from Theorem \ref{C.F.VdG}, with $2\delta - d \geqslant 1$ and $j=1$, from  Proposition \ref{C.F.1} and from Remark \ref{rem:claim1}.  
\end{proof}

\begin{remark}\label{30.12l} 
Using Theorem \ref{LN} and Corollary \ref{cor:LN}, one can prove results similar to Proposition \ref{C.F.1} and  Corollary \ref{C.F.1b} with slightly different numerical bounds. As in Remark \ref{rem:claim1}, $\vdj$ can never be dense in a regular component  of $B_C^{k_j}(d)$, unless $\delta = 2g-2$ and $j = 1$. The  numerical bounds in this case are  $3g-3 \leqslant d \leqslant 4g-6$ with $g \geqslant 3$, hence the cases not already covered by Corollary \ref{C.F.1b} are $3g-3 \leqslant d \leqslant {\rm {min}} \{3g, 4g-6\}$.  
\end{remark}

\begin{corollary}\label{C.F.1c} Let $C$ be of genus $g \geqslant 6$ with general moduli. For  any integer $d$ s.t. $3g-3 \leqslant d \leqslant 3g$,  
the variety $\mathcal V_{d}^{2g-2,1}$ is dense in a regular, generically smooth component $\mathcal B \subseteq B_C^{k_1}(d)$. Moreover:

\noindent

\noindent
(i) $[\Ff_u] \in \mathcal V_{d}^{2g-2,1}$ general is stable and comes from $u \in \Ext^1(\omega_C,N)$ general, with $N \in \Pic^{d - 2g+2}(C)$ general (so non-special). In particular, $i(\Ff_u)=1$.

\noindent
(ii) The minimal degree quotient of $\Ff_u$ is $\omega_C$, thus $s(\Ff_u) = 4g-4-d \geqslant g-4$. 

\noindent
(iii)  ${\rm Div}_{F_u}^{1,2g-2} = \{ \Gamma\}$, where $\Gamma$ the section of $F_u = \Pp(\Ff_u)$ corresponding to $\Ff_u \to \!\!\! \to \omega_C$ (i.e. $\Ff_u$ is rp via $\omega_C$).  

\end{corollary}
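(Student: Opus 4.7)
The strategy is to apply Theorem \ref{LN} with $\delta = 2g-2$ and $j=1$, as suggested in Remark \ref{30.12l}. Since $W_{2g-2}^{g-1}(C) = \{\omega_C\}$, this forces $L = \omega_C$. Hypothesis \eqref{eq:ln2} reads $\delta \leqslant 2g-2$ and is satisfied with equality, while \eqref{eq:ln3} becomes $3g-3 \leqslant d \leqslant 4g-6$. Since $g \geqslant 6$ implies $4g-6 \geqslant 3g$, the range $3g-3 \leqslant d \leqslant 3g$ of the statement is admissible. Moreover $\deg(N) = d-2g+2 \geqslant g-1$, so a general $N \in \Pic^{d-2g+2}(C)$ is non-special. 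Theorem \ref{LN} then yields that $\Ff_u$ is indecomposable and stable with $s(\Ff_u) = 2\delta - d = 4g-4-d \geqslant g-4$, that $i(\Ff_u) = 1$, and that $\omega_C$ is a quotient of minimal degree; this proves parts (i) and (ii).

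For the component structure, since $L = \omega_C$ is forced, $W_L = \{\omega_C\}$ and $\rho(L) = 0$, so $\dim(\ud) = g$ by \eqref{eq:yde1}. From $2\delta-d = 4g-4-d \geqslant 2$ and \eqref{eq:lem3note}, $m = 5g-5-d$; hence \eqref{eq:yde2} gives
\[
\dim \Pp(\Efra_\delta) \,=\, g + m -1 \,=\, 6g-6-d \,=\, \rho_d^{k_1},
\]
the last equality following from $\rho_d^{k_1} = 4g-3-k_1 = 4g-3-(d-2g+3)$. Generic injectivity of $\pi_{d,\delta}$ is obtained by reproducing the argument of Lemma \ref{lem:claim1}: as $L$ has no alternative choice in $W_L$, only the ``$L \cong L'$'' case applies, and the key ingredient is that $\Gamma$ be li. Tensoring \eqref{eq:Fund} by $N^{\vee}$ yields
\[
0 \to \Oc_C \to \Ff_u \otimes N^{\vee} \to \omega_C \otimes N^{\vee} \to 0;
\]
for $d \geqslant 3g-3$ one has $\deg(\omega_C \otimes N^{\vee}) = 4g-4-d \leqslant g-1$, and since $\omega_C \otimes N^{\vee}$ is a general line bundle of this degree, Brill--Noether forces $h^0(\omega_C \otimes N^{\vee}) = 0$, giving $h^0(\Ff_u \otimes N^{\vee}) = 1$ and $\dim|\Oc_F(\Gamma)|=0$ by \eqref{eq:isom2}. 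Lemma \ref{lem:technical} then concludes $u = u'$ projectively. Hence $\nu_d^{2g-2,1} = \rho_d^{k_1}$ and $\mathcal V_d^{2g-2,1}$ is dense in a regular component $\mathcal B \subseteq B_C^{k_1}(d)$; generic smoothness of $\mathcal B$ follows from injectivity of the Petri map $P_{\Ff_u}$, as in Proposition \ref{C.F.1} (cf.\ \cite[Lemma 2.1]{CF}).

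To prove (iii), any $\widetilde{\Gamma} \in {\rm Div}^{1,2g-2}_{F_u}$ decomposes as $\Gamma'' + f_A$ with $\Gamma''$ a section (cf.\ Remark \ref{rem:Hilbert}); since $(\Gamma'')^2 \geqslant s(\Ff_u) = 4g-4-d$ forces $\deg(\Gamma'') \geqslant 2g-2$, one gets $\deg(A) = 0$ and $\widetilde{\Gamma} = \Gamma''$ is a section of degree exactly $2g-2$. It therefore suffices to show that the only line-bundle quotient of $\Ff_u$ of degree $2g-2$ is $\omega_C$: for a candidate quotient $L''$, tensoring \eqref{eq:Fund} by $(L'')^{\vee}$ gives
\[
0 \to N \otimes (L'')^{\vee} \to \Ff_u \otimes (L'')^{\vee} \to \omega_C \otimes (L'')^{\vee} \to 0;
\]
since $\deg(N \otimes (L'')^{\vee}) = d-4g+4 < 0$ and $h^0(\omega_C \otimes (L'')^{\vee}) = 0$ for $L'' \not\cong \omega_C$ (the degree-zero bundle $\omega_C \otimes (L'')^{\vee}$ being non-trivial), one finds $h^0(\Ff_u \otimes (L'')^{\vee}) = 0$, contradicting the existence of the quotient. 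Hence $L'' \cong \omega_C$, and by Lemma \ref{lem:ovviolin} every section of degree $2g-2$ lies in $|\Oc_F(\Gamma)| = \{\Gamma\}$ (by the previous paragraph), proving ${\rm Div}^{1,2g-2}_{F_u} = \{\Gamma\}$. The main subtlety is (iii); the rest is a routine assembly of Theorem \ref{LN}, Corollary \ref{cor:LN} and the dimension/injectivity count in the spirit of Corollary \ref{C.F.1b}.
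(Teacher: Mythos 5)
Your treatment of the main statement and of parts (i)--(ii) is correct and is essentially the paper's own route: apply Theorem \ref{LN} with $\delta=2g-2$, $j=1$ (the range $3g-3\leqslant d\leqslant 4g-6$ covers $d\leqslant 3g$ for $g\geqslant 6$), note $W_L=\{\omega_C\}$ so only case (1) of Lemma \ref{lem:claim1} is relevant, and use $h^0(K_C-N)=h^1(N)=0$ to get $h^0(\Ff_u\otimes N^{\vee})=1$, whence generic injectivity of $\pi_{d,2g-2}$ via Lemma \ref{lem:technical}, the dimension count $\dim\Pp(\Efra_{2g-2})=6g-6-d=\rho_d^{k_1}$, and Petri injectivity as in \cite[Lemma 2.1]{CF}. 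This matches the paper.

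Part (iii), however, contains a genuine error. You claim that the existence of a quotient $\Ff_u\to\!\!\to L''$ would force $h^0(\Ff_u\otimes (L'')^{\vee})\neq 0$. That space is ${\rm Hom}(L'',\Ff_u)$, i.e.\ it detects $L''$ as a \emph{sub}-bundle, not as a quotient; a quotient corresponds to a nonzero element of ${\rm Hom}(\Ff_u,L'')=H^0(\Ff_u^{\vee}\otimes L'')$, equivalently (cf.\ \eqref{eq:isom2}) to a nonzero section of $\Ff_u\otimes (N'')^{\vee}$ with $N''=\det(\Ff_u)\otimes(L'')^{\vee}$ the kernel. Your criterion is refuted by $L''=\omega_C$ itself: tensoring \eqref{eq:Fund} by $\omega_C^{\vee}$ gives $0\to N\otimes\omega_C^{\vee}\to\Ff_u\otimes\omega_C^{\vee}\to\Oc_C\to 0$ with $h^0(N\otimes\omega_C^{\vee})=0$ and coboundary $1\mapsto u\neq 0$, so $h^0(\Ff_u\otimes\omega_C^{\vee})=0$ even though $\omega_C$ \emph{is} a quotient. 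With the correct space the vanishing also fails to be automatic: composing $N\hookrightarrow\Ff_u\to\!\!\to L''$ shows that a degree-$(2g-2)$ quotient is either $\omega_C$ or of the form $N(D)$ with $D\geqslant 0$ of degree $4g-4-d$, and excluding the latter requires an argument on the coboundary for general $u$, which you do not supply. The paper avoids all of this: by \eqref{eq:Ciro410} one has $\N_{\Gamma/F_u}\cong\omega_C\otimes N^{\vee}$, so $h^0(\N_{\Gamma/F_u})=h^0(K_C-N)=h^1(N)=0$ for $N$ general non-special, and by the identification \eqref{eq:tang} of $H^0(\N_{\Gamma/F_u})$ with the tangent space to ${\rm Div}^{1,2g-2}_{F_u}$, Remark \ref{rem:linisol}(2) gives that $\Gamma$ is ai, i.e.\ $\Ff_u$ is rp via $\omega_C$. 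Your opening reduction in (iii) (every degree-$(2g-2)$ unisecant is a section, by the Segre invariant) is correct and worth keeping, but the quotient-elimination step must be replaced.
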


\begin{proof} We need to prove that $\pi_{d, 2g-2}$ is generically injective. The proof of Lemma \ref{lem:claim1} shows that, for $[\Ff_u] \in \mathcal V_{d}^{2g-2,1}$ general, one has
$\dim(\pi_{d, 2g-2}^{-1} ([\Ff_u]) ) \leqslant \dim({\rm Div}_{F_u}^{1,2g-2})$. By construction of  $\mathcal V_{d}^{2g-2,1}$ and by \eqref{eq:Ciro410}, 
$\N_{\Gamma/F_u} \cong K_C-N$. Since $N$ is general of degree $d-2g+2$, one has $h^1(N)=0$. From Remark \ref{rem:linisol} we conclude. To prove the injectivity of $P_{\Ff_u}$ one can argue as in \cite[Lemma2.1]{CF} (we leave the easy details to the reader). 
\end{proof}


\subsubsection{{\bf Case $d = 2 \delta$}} Fom what proved in Theorem \ref{C.F.VdG}, for any integers $j \geqslant 1$, $g$ and $\delta$ as in \eqref{eq:ln2} and in 
Remark \ref{rem:C.F.VdG},  we have $$\mathcal V_{2\delta}^{\delta,j} \subseteq B_{C}^{k_j}(2\delta) \cap U_C^{ss}(2\delta).$$

\begin{lemma}\label{lem:claim2} The morphism $\Pp(\Efra_{\delta})^0 \stackrel{\pi_{2\delta, \delta}}{\longrightarrow} U_C(d)$ contracts the $\gamma$-fibres, with $\gamma$ as in \eqref{eq:ude}. Thus, $\nu_{2\delta}^{\delta,j} \leqslant g + \rho(L)$. 
\end{lemma}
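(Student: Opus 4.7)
The plan is to exploit the fact that, when $d = 2\delta$, every bundle $\Ff_u$ arising from an extension
$$0 \to N \to \Ff_u \to L \to 0$$
with $\deg(L) = \deg(N) = \delta$ is necessarily strictly semistable: one has $\mu(N) = \mu(L) = \delta = \mu(\Ff_u)$, so $N \subset \Ff_u$ is a destabilizing sub-line bundle. Hence $[\Ff_u] \in U_C(2\delta) \setminus U_C^s(2\delta)$, and its moduli point is its S-equivalence class.

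The key observation is that for any $u \in \Ext^1(L,N)$ with $y = (N,L) \in \mathcal U$ (so in particular $L \not\cong N$, since $\mathcal U = \mathcal Y \setminus \Delta_{\mathcal Y}$), the Jordan--H\"older filtration of $\Ff_u$ is given by $0 \subset N \subset \Ff_u$, with graded object $\mathrm{gr}(\Ff_u) \cong N \oplus L$. Thus, by definition of S-equivalence (cf.\ \cite{Ses}), one has
$$\pi_{2\delta,\delta}(u) = [\Ff_u] = [N \oplus L] \in U_C(2\delta),$$
and this point depends only on $y = (N,L)$, not on $u \in \Pp(\Ext^1(L,N)) = \gamma^{-1}(y)$. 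Therefore $\pi_{2\delta,\delta}$ is constant on each $\gamma$-fibre, proving the first assertion.

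For the dimension bound, since $\pi_{2\delta,\delta}$ factors through $\gamma$, one obtains
$$\nu_{2\delta}^{\delta,j} = \dim\bigl(\mathrm{Im}(\pi_{2\delta,\delta})\bigr) \leqslant \dim(\mathcal U) = \dim(\mathcal Y) = g + \rho(L),$$
using \eqref{eq:yde1} and the fact that removing the diagonal $\Delta_{\mathcal Y}$ does not decrease the dimension. This gives the stated inequality. The only subtlety is ruling out any issue with the factorization through $\gamma$; but since S-equivalence of $N \oplus L$ only identifies the unordered pair $\{N, L\}$, the induced map $\mathcal U \to U_C(2\delta)$ is generically finite (at most $2$-to-$1$) onto its image, which is harmless for the dimension count.
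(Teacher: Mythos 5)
Your proof is correct and follows essentially the same route as the paper: the key point in both is that for $d=2\delta$ every $\Ff_u$ is strictly semistable with $\mathrm{gr}(\Ff_u)\cong N\oplus L$, so all points of a $\gamma$-fibre are S-equivalent and $\pi_{2\delta,\delta}$ factors through $\gamma$, giving the dimension bound. The extra remark that the induced map $\mathcal U\to U_C(2\delta)$ is at most $2$-to-$1$ is a harmless addition not needed for the inequality.
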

\begin{proof} For any $y=(N,L) \in \ud$, $\gamma^{-1}(y) \cong \Pp$ as in \eqref{eq:PP}. For any $u \in \Pp$, one has ${\rm gr}(\Ff_u) = L\oplus N$, where ${\rm gr}(\Ff_u)$ is the graded object 
associated to $\Ff_u$ (cf. \cite[Thm. 4]{Ses}). Therefore, all elements in a $\gamma$-fibre determine $S$-equivalent bundles (cf. e.g. \cite{M, Tha}).  This implies that $\pi_{d, \delta}$ contracts any $\gamma$-fibre.  
\end{proof}

\begin{corollary}\label{C.F.2} Let $C$ be of genus $g \geqslant 5$ with general moduli. One has$$B_C^{k_1}(4g-4) = \overline{\mathcal V_{4g-4}^{2g-2,1}}.$$Thus:

\noindent
(i) $B_C^{k_1}(4g-4) $ is irreducible,  of dimension $g < \rho_{4g-4}^{k_1} = 2g-2$. In particular, it is birational to $\Pic^{2g-2}(C)$, with $B_C^{k_2}(4g-4) = \{[\omega_C \oplus \omega_C]\}$;

\noindent
(ii) $[\Ff_u] \in  B_C^{k_1}(4g-4) $ general comes from $u \in \Ext^1(\omega_C,N)$ general, with $N \in \Pic^{2g-2}(C)$ general. Hence, $i(\Ff_u)=1$.

\noindent
(iii) The minimal degree quotient of $\Ff_u$ is $\omega_C$, thus $s(\Ff_u) = 0$ and $\Ff_u$ is strictly semistable.

\noindent
(iv) ${\rm Div}_{F_u}^{1,2g-2} = \{ \Gamma\}$, where $\Gamma$ the section of $F_u = \Pp(\Ff_u)$ corresponding to $\Ff_u \to \!\!\! \to \omega_C$ (i.e. $\Ff_u$ is rp via $\omega_C$).

\end{corollary}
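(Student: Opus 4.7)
The plan is to prove the equality $B_C^{k_1}(4g-4) = \overline{\mathcal V_{4g-4}^{2g-2,1}}$ first, and then read off properties (i)--(iv) from the explicit structure of the parameter space. The containment $\overline{\mathcal V_{4g-4}^{2g-2,1}} \subseteq B_C^{k_1}(4g-4)$ holds by construction, so the work is in the reverse inclusion. For it, I would apply Lemma \ref{lem:1e2note} to an arbitrary $[\Ff] \in B_C^{k_1}(4g-4)$, obtaining an extension $0 \to N \to \Ff \to L \to 0$ with $L \hookrightarrow \omega_C$ special and effective; semistability forces $\delta := \deg L \geqslant d/2 = 2g-2$, while $L \hookrightarrow \omega_C$ forces $\delta \leqslant 2g-2$. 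Hence $L = \omega_C$ and $\deg N = 2g-2$, so $[\Ff]$ lies in the image of $\pi_{4g-4,2g-2}$, i.e.\ in $\mathcal V_{4g-4}^{2g-2,1}$.

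To extract the dimension and birational description, I would note that since $\rho(L) = 0$, one has $W_L = W_{2g-2}^{g-1}(C) = \{\omega_C\}$, so $\mathcal Y = \Pic^{2g-2}(C)$ is $g$-dimensional. By Lemma \ref{lem:claim2}, $\pi_{4g-4,2g-2}$ contracts every $\gamma$-fibre to the $S$-equivalence class $[\omega_C \oplus N]$ of its associated graded, so the map factors through the morphism $\Pic^{2g-2}(C) \to U_C(4g-4)$, $N \mapsto [\omega_C \oplus N]$. Krull--Schmidt on polystable bundles shows this factoring map is injective, so $\nu_{4g-4}^{2g-2,1} = g$ and $B_C^{k_1}(4g-4)$ is irreducible and birational to $\Pic^{2g-2}(C)$. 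Since $\rho_{4g-4}^{k_1} = 2g-2 > g$ for $g \geqslant 5$, this component is superabundant and sits entirely inside $U_C^{ss}(4g-4)$, consistent with Remark \ref{rem:BNloci}(b). The statement about $B_C^{k_2}(4g-4)$ follows from $h^0(\omega_C \oplus N) = g + h^0(N) \geqslant 2g$ if and only if $N \cong \omega_C$.

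For (ii)--(iv), I would work with a general representative $\Ff_u$ coming from a general $N \in \Pic^{2g-2}(C)$ (hence non-special) and a general $u \in \Ext^1(\omega_C, N)$. Assertion (ii) is immediate from the coboundary sequence of \eqref{eq:exthyp}, since $H^1(N) = 0$ trivially makes $\partial_u$ surjective, giving $i(\Ff_u) = h^1(\omega_C) = 1$. Part (iii) follows from \eqref{eq:Ciro410}: the quotient $\omega_C$ has degree $d/2$, so $s(\Ff_u) \leqslant 2\delta - d = 0$, while semistability gives $s(\Ff_u) \geqslant 0$, and the equality witnesses $\Ff_u$ as strictly semistable. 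For (iv), any additional section of degree $2g-2$ would correspond to a sub-line-bundle $M \not\cong N$ of the same degree; the composite $M \hookrightarrow \Ff_u \twoheadrightarrow \omega_C$, being a nonzero map between line bundles of equal degree, is forced to be an isomorphism, which splits the extension and contradicts $u \neq 0$. A reducible unisecant $\Gamma' + f_A$ with $\deg A > 0$ and total degree $2g-2$ would yield, via Remark \ref{rem:Hilbert}, a sub-line-bundle of degree $2g-2+\deg A > d/2$, violating semistability.

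The only mildly technical point is the bookkeeping with sub-line-bundles and reducible unisecants in (iv); all the conceptual content is in Lemmas \ref{lem:1e2note} and \ref{lem:claim2}, the former identifying every element of $B_C^{k_1}(4g-4)$ as an extension of $\omega_C$ by a degree $2g-2$ line bundle, and the latter collapsing the $(g-1)$-dimensional extension parameter to a single $S$-equivalence class.
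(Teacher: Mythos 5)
Your proposal is correct and follows essentially the same route as the paper: the reverse inclusion via Lemma \ref{lem:1e2note} together with the degree pinch $2g-2 \leqslant \delta \leqslant 2g-2$ forcing $L = \omega_C$, and the collapse of the extension parameter via Lemma \ref{lem:claim2} with the factoring map $N \mapsto [\omega_C \oplus N]$ being injective (the paper invokes the determinant where you invoke Krull--Schmidt, an immaterial difference). You also supply the verifications of (ii)--(iv) that the paper explicitly leaves to the reader, and they check out.
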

\begin{proof} From Theorem \ref{C.F.VdG}, the only case for $d = 4g-4$ is $j=1$ and $\delta = 2g-2$. Since $d = 2 \delta$, from \eqref{eq:ude} we have $\mathcal U \cong \Pic^{2g-2} (C) \setminus \{\omega_C\}$ and $\mathfrak{E}$ is a vector bundle of rank $g-1$ on $\mathcal U$. From Lemma 
\ref{lem:claim2}, the moduli map $\pi_{d, 2g-2}$ factors through a map from $\mathcal U$ to $B_C^{k_1}(4g-4) $, which is injective by Chern class reasons. 

Next we prove that $B_C^{k_1}(4g-4) $ is irreducible. Consider $[\Ff] $ general in a component of $B_C^{k_1}(4g-4) $; it can be presented via an exact sequence as in \eqref{eq:Fund}, with $L$ special and effective (cf. Lemma \ref{lem:1e2note}). Since $s(\Ff) =0$, then $\deg(L) = 2g-2$, i.e. $L \cong \omega_C$. Thus, we are in the image of $ \mathcal U$ to $B_C^{k_1}(4g-4)$. 

The remaining assertions  are easy to check and can be left to the reader. 
\end{proof}

Corollary \ref{C.F.2} has been proved already in   \cite[Theorems 7.2, 7.3 and Remark 7.4]{BGN}, via different techniques. Our proof is completely independent. 



\subsection{Special $N$}\label{ssec:Ns} Under the numerical assumptions of Theorem \ref{uepi}, any $N \in \Pic^{d-\delta}(C)$ is special (cf. Remark \ref{rem:uepib}-(2)) and,  for $u \in \Ext^1(L,N)$ general, $\partial_u$ is surjective (cf. Remark \ref{rem:uepib}-(3)). Hence $i(\Ff_u) = h^1(L) = j$. We have: 

\begin{proposition}\label{C.F.3} Assumptions as in Theorem \ref{uepi}. For any integers $j,\;\delta$ and $d$ therein, there exists an irreducible component 
$\mathcal B \subseteq B_C^{k_j}(d)$ such that: 

\noindent
(i) $\vdj \subseteq \mathcal B$; 

\noindent
(ii) $\mathcal B \cap U_C^s(d) \neq \emptyset$;

\noindent
(iii) For $[\Ef] \in \mathcal B$ general, $\Ef$ is stable, with $s(\Ef) \geqslant g-\epsilon$ and $\epsilon$ as in Theorem \ref{uepi}.  
The minimal degree quotients of $\Ef$ as well as the minimal degree sections of $\Pp(\Ef)$ are as in (iii) and (iv) 
of Theorem \ref{uepi}. In particular, $L$ is of minimal degree if and only if $ d = 2 \delta - g$.

\noindent
(iv) If moreover $d \geqslant \delta + g - 3$ (so $\delta \geqslant 2g-3$), then  $\mathcal B$ is also regular and generically smooth. 

\end{proposition}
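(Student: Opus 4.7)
The plan is to mirror the argument of Proposition \ref{C.F.1}, adjusting for the fact that $N$ is now special. By Theorem \ref{uepi}, on a dense open subset of $\Pp(\Efra_\delta)^0$ the bundle $\Ff_u$ is stable with $s(\Ff_u) = g-\epsilon$ and $i(\Ff_u) = j$, so $\vdj \subseteq B_C^{k_j}(d) \cap U_C^s(d)$. Since $\vdj$ is the image of the irreducible variety $\Pp(\Efra_\delta)^0$, it is irreducible and therefore contained in a single irreducible component $\mathcal B$ of $B_C^{k_j}(d)$, proving (i) and (ii) simultaneously.

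For (iii), lower-semicontinuity of the Segre invariant (see \cite[\S\,3]{LN}) implies that $\{[\mathcal E]\in\mathcal B : s(\mathcal E) \geqslant g-\epsilon\}$ is open in $\mathcal B$, and since it meets $\vdj$ it is dense; combined with Nagata's bound $s \leqslant g$ (Proposition \ref{prop:Nagata}) and the parity $s \equiv d \pmod{2}$, this forces $s(\mathcal E) \in \{g-\epsilon,\, g-\epsilon+2,\, \ldots,\, g\}$, with the minimum value $g-\epsilon$ attained on a dense open subset. For such an $\mathcal E$, Remark \ref{rem:seginv} combined with \eqref{eq:Ciro410} gives that minimal-degree quotients of $\mathcal E$, and correspondingly minimal-degree sections of $\Pp(\mathcal E)$, have degree $(d+g-\epsilon)/2$; the dimension bounds in Theorem \ref{uepi}(iii) persist by upper-semicontinuity of the normal bundle cohomology in the universal family over the relative Quot-scheme, and the characterization of when $L$ itself is of minimal degree is Theorem \ref{uepi}(iv).

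For (iv), by Remark \ref{rem:BNloci} it suffices to exhibit some $[\Ff_u] \in \vdj$ at which the Petri map $P_{\Ff_u}$ from \eqref{eq:petrimap} is injective: this yields $\dim_{[\Ff_u]} \mathcal B = \rho_d^{k_j}$ and smoothness at $[\Ff_u]$, so $\mathcal B$ is regular and generically smooth. The strategy is to tensor the defining extension \eqref{eq:Fund} by $\Ff_u^\vee$ and by $\omega_C \otimes \Ff_u^\vee$, thereby splitting the source and target of $P_{\Ff_u}$ into blocks involving the line bundles $L$, $N$, $L\otimes N^\vee$ and $N\otimes L^\vee$, and then to compare the induced maps with the line-bundle Petri maps $\mu_0(L)$ and $\mu_0(N)$, which are injective on a general curve $C$ by the Gieseker--Petri theorem. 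The extra hypothesis $d \geqslant \delta + g - 3$, equivalently $\deg(N) \geqslant g-3$, forces $\delta \geqslant 2g - 3$ and supplies the vanishings required to control the off-diagonal blocks, in the spirit of \cite[Lemma 2.1]{CF} invoked in the proof of Corollary \ref{C.F.1c}. The main obstacle will therefore be the careful cohomological bookkeeping in this diagram chase, which must be handled case-by-case according to the parity $\epsilon$ and according to whether $N$ is effective; everything else is semicontinuity.
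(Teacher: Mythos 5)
Parts (i)--(iii) of your proposal are fine and essentially match the paper's (very brief) argument: irreducibility of $\vdj$, lower semicontinuity of the Segre invariant, Nagata's bound and parity. The problem is part (iv), where you describe a strategy but do not execute the step that actually carries the proof, and the strategy as described is aimed at the wrong object. You propose to prove injectivity of $P_{\Ff_u}$ for a \emph{general} (hence non-split, stable) $\Ff_u$ by filtering source and target via the extension \eqref{eq:Fund} and ``controlling the off-diagonal blocks''; you then concede that this bookkeeping is ``the main obstacle.'' That obstacle is precisely what the paper avoids: it evaluates the Petri map at the \emph{split} bundle $\Ff_0 = L \oplus N$ (the point $u=0$ of ${\rm Ext}^1(L,N)$, which is not in $\vdj$ but lies in the same family), where, since $N$ general of degree $d-\delta\in\{g-3,g-2\}$ is non-effective, one has $H^0(\Ff_0)=H^0(L)$ and $P_{\Ff_0}$ literally decomposes as a direct sum $\mu_0(L)\oplus \mu$ with no off-diagonal terms; injectivity for general $u$ then follows by semicontinuity over ${\rm Ext}^1(L,N)$, and hence for $[\Ef]\in\mathcal B$ general since $\vdj\subseteq\mathcal B$.

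The second, more substantive, issue is that you identify the two blocks as the line-bundle Petri maps $\mu_0(L)$ and $\mu_0(N)$ and invoke Gieseker--Petri for both. But $\mu_0(N)$ is irrelevant here ($h^0(N)=0$ for $N$ general in this range, so its source is zero), and the second block is actually the \emph{mixed} multiplication map $\mu=\mu_{L,K_C-N}\colon H^0(L)\otimes H^0(K_C-N)\to H^0(K_C+L-N)$ of \eqref{eq:mu}, whose injectivity is \emph{not} a consequence of Gieseker--Petri and is the real content of the hypothesis $d\geqslant\delta+g-3$: that bound forces $h^1(N)\in\{1,2\}$, and one proves injectivity of $\mu$ directly when $h^1(N)=1$ (multiplication by a single nonzero section, as in the proof of Theorem \ref{thm:mainext1}) and via the base-point-free pencil trick when $h^1(N)=2$, using $h^0(N-(K_C-L))=0$ because $K_C-L$ is effective and $N$ is not. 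Without identifying this map and supplying this argument, part (iv) is not proved.
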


\begin{proof}  Assertions (i), (ii) and (iii) follow from Theorem \ref{uepi}, the map \eqref{eq:pde} and the fact that the Segre invariant is lower-semicontinuous (cf. e.g. \cite[\S\;3]{LN}).

To prove (iv), we argue as in \cite[Lemma 2.1]{CF}. 
Take $\Ff_0 = L \oplus N$, with $N \in \Pic^{d-\delta}(C)$ general. Then, $N$ is non-effective and $1 \leqslant h^1(N) \leqslant 2$. The Petri map $P_{\Ff_0}$ decomposes as $\mu_0(L) \oplus \mu$, where $\mu_0(L)$ 
is the Petri map of $L$ as in \eqref{eq:Petrilb} and $\mu$ is as in \eqref{eq:mu}. Since $C$ has general moduli, $\mu_0(L)$ is injective 
(cf. \cite[(1.7), p. 215]{ACGH}).  The injectivity of $\mu$ is immediate when $h^1(N) = 1$ (cf. the proof of Theorem \ref{thm:mainext1}). When 
$h^1(N) =2$, the generality of $N$ implies that $|K_C-N|$ is a base-point-free pencil 
so the injectivity of $\mu$ follows from the base-point-free pencil trick, since $h^0(N- (K_C-L))=0$ (because $K_C-L$ is effective and $N$ non-effective).  
By semicontinuity on the elements of $\Ext^1(L,N)$ and the fact that $\vdj \subseteq \mathcal B$, the Petri map  $P_{\Ef}$ is injective. One concludes by Remark \ref{rem:BNloci}.  
\end{proof}

\begin{remark}\label{rem:C.F.3} Computing  $\dim(\Pp(\Efra_\delta)) - \rho_d^{k_j}$ one finds the right-hand-side of  \eqref{eq:nudde2}.  Since $d < 2 \delta$ (see \eqref{eq:uepi3}),  as in 
Remark \ref{rem:claim1}-(2) one sees that $\dim(\Pp(\Efra_\delta)) - \rho_d^{k_j} <0$,  unless $j =1$ and $\delta = 2g-2$,  in which case 
$\dim(\Pp(\Efra_\delta)) - \rho_d^{k_j} =0$. As in Lemma \ref{lem:claim1}, we see that $\pi_{d, 2g-2}$ is generically injective.  Thus, with notation as in \eqref{eq:Nudde}, one has $\nu_d^{\delta,j} \geqslant \rho_d^{k_j}$ only if $j=1$, $\delta = 2g-2$ and $N \in \Pic^{d-\delta}(C)$ is general, in which case $\nu_d^{2g-2,1} = \rho_d^{k_1}$. 
\end{remark}

\begin{corollary}\label{C.F.3b} Let $C$ be of genus $g \geqslant 3$ with general moduli. For  any integer $d$ such that $2g-2 \leqslant d \leqslant 3g-4$, one has $\nu_d^{2g-2,1}= \rho_d^{k_1} = 6g-6-d$. Moreover:   

\noindent
(i) $[\Ff_u] \in \mathcal V_{d}^{2g-2,1}$ general is stable and comes from $u \in \Ext^1(\omega_C,N)$ general, with 
$N \in \Pic^{d - 2g+2}(C)$ general (hence special, non-effective). In particular, $i(\Ff_u)=1$.

\noindent
(ii) If $ 3g-5\leqslant d \leqslant 3g-4$, then $\mathcal V_{d}^{2g-2,1}$ is  dense in a regular, generically smooth component of  
$B_C^{k_1}(d)$.

\noindent
(iii) $s(\Ff_u) = g-\epsilon$, with $\epsilon$ as in Theorem \ref{uepi}.  Quotients of minimal degree of $\Ff_u$ (equivalently sections of minimal degree on $F_u = \Pp(\Ff_u)$) are those described in Theorem \ref{uepi}-(111) and (iv). In particular, they are li sections. 

\noindent
(iv) The canonical section $\Gamma \subset F_u$ is the only special section; it is lsu and asu but not ai. Moreover,   
it is of minimal degree only when $d = 3g-4$.

\noindent
(v)  $\Ff_u$ is rsp but not rp via $\omega_C$. 
\end{corollary}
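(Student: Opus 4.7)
The plan is to specialize Theorem \ref{uepi} and Proposition \ref{C.F.3} to the case $L = \omega_C$, $\delta = 2g-2$, $j = 1$, and then to verify (iv) and (v) directly from $h^1(\Ff_u) = 1$. First I would check the numerical hypotheses: with these choices, bound \eqref{eq:ln2} becomes an equality and \eqref{eq:uepi3} reduces to $2g-2 \leqslant d \leqslant 3g-4$, precisely the stated range. Since $\deg(N) = d - 2g + 2 \leqslant g - 2$, a general $N \in {\rm Pic}^{d-\delta}(C)$ is automatically non-effective and special. Applying Theorem \ref{uepi} would then deliver (i), the Segre invariant $s(\Ff_u) = g - \epsilon$, and the description of minimal-degree quotients and sections entering (iii).

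Next, to establish $\nu_d^{2g-2,1} = \rho_d^{k_1} = 6g - 6 - d$, I would appeal to Remark \ref{rem:C.F.3}: since $\rho(\omega_C) = \rho(g, g-1, 2g-2) = 0$, formula \eqref{eq:yde2} gives $\dim(\Pp(\Efra_{2g-2})) = g + m - 1 = 6g - 6 - d$, and the argument of Lemma \ref{lem:claim1} adapts without change (the quotient $L = \omega_C$ is forced, hence so is $N = \det(\Ff) \otimes \omega_C^{\vee}$, and Lemma \ref{lem:technical} excludes any further collision $u \neq u'$). Combined with Proposition \ref{C.F.3}, this places $\mathcal V_d^{2g-2,1}$ in some component $\mathcal B \subseteq B_C^{k_1}(d)$ of dimension $\geqslant \rho_d^{k_1}$, forcing equality. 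For (ii), I would simply note that the stated range $3g-5 \leqslant d \leqslant 3g-4$ is exactly where the regularity hypothesis $d \geqslant \delta + g - 3 = 3g-5$ of Proposition \ref{C.F.3}-(iv) holds, yielding regularity and generic smoothness of $\mathcal B$.

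The substantive step, which I expect to be the main obstacle, is (iv). By Serre duality $h^0(\Ff_u^{\vee} \otimes \omega_C) = h^1(\Ff_u) = 1$, so the defining surjection $\Ff_u \to\!\!\to \omega_C$ is, up to scalar, the unique non-zero map $\Ff_u \to \omega_C$. Any special quotient $\Ff_u \to\!\!\to L'$ admits a non-zero injection $L' \hookrightarrow \omega_C$ from $h^1(L') > 0$, and the composition must recover the unique map, forcing $L' \cong \omega_C$ by surjectivity. Hence $\Gamma$ is the only special section; a degree count then shows that any special unisecant $\widetilde{\Gamma} = \Gamma + f_A \in {\rm Div}^{1,2g-2}_{F_u}$ must have $A = 0$, proving asu (and a fortiori lsu). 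The clause about $\Gamma$ being of minimal degree only when $d = 3g-4$ is inherited from Theorem \ref{uepi}-(iv). For the non-ai assertion, \eqref{eq:Ciro410} and \eqref{eq:tang} yield $h^0(\N_{\Gamma/F_u}) = h^0(\omega_C \otimes N^{\vee}) = h^1(N) = 3g-3-d \geqslant 1$ by the non-effectivity of $N$, so $\Gamma$ moves in a positive-dimensional family and is not ai. Finally, (v) is immediate: asu implies asi, giving rsp via $\omega_C$, while failure of ai gives failure of rp via $\omega_C$.
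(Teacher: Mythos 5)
Your proposal is correct and follows essentially the same route as the paper: (i)--(iii) by specializing Theorem \ref{uepi} and Proposition \ref{C.F.3} (with the dimension count of Remark \ref{rem:C.F.3} and the injectivity argument of Lemma \ref{lem:claim1}), and (iv)--(v) from $h^0(\Ff_u\otimes N^{\vee})=h^1(\Ff_u)=1$ together with $h^0(\N_{\Gamma/F_u})=h^1(N)=3g-3-d\geqslant 1$ (and $h^1(\N_{\Gamma/F_u})=h^0(N)=0$, which you should invoke to turn the tangent-space bound into actual positive dimension of ${\rm Div}^{1,2g-2}_{F_u}$ at $\Gamma$). Your direct argument that any special quotient must be $\omega_C$ -- composing $\Ff_u\to\!\!\to L'$ with $L'\hookrightarrow\omega_C$ and using $\dim{\rm Hom}(\Ff_u,\omega_C)=1$ -- is a slightly more self-contained version of the paper's appeal to $W^{g-1}_{2g-2}(C)=\{\omega_C\}$ and Remark \ref{rem:C.F.3}, but it is the same mechanism.
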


\begin{proof} (i), (ii) and (iii) follow from Theorem \ref{uepi}, Proposition \ref{C.F.3} and  Remarks \ref{rem:BNloci}, \ref{rem:C.F.3}. Sections of minimal degree are li (see the proof of Proposition \ref{prop:lem4}). 

As for (iv) and (v), from Serre duality and the fact that $\Ff_u$ is of rank-two with $\det(\Ff_u) = \omega_C \otimes N$, one has 
\begin{equation}\label{eq:casaciro2}
h^0(\Ff_u \otimes N^{\vee}) = h^1(\Ff_u^{\vee} \otimes \omega_C \otimes N) = h^1(\Ff_u).
\end{equation} Since $i(\Ff_u) = 1$, from \eqref{eq:isom2} $\Gamma$ is li. Since $N$ is special and non-effective, from 
\eqref{eq:Ciro410}, $ {\rm Div}^{1,2g-g}_{F_u}$ is smooth, of dimension $3g-3-d \geqslant 1$ at $\Gamma$. 
Thus, $\Gamma$ is not ai but, since $W_{2g-2}^{g-1} (C) = \{\omega_C\}$, it is asu (see the proof of Proposition \ref{prop:lem4} and 
Remark \ref{rem:C.F.3}). For the same reason, from Theorem \ref{uepi}-(iv), the only possibility for $\omega_C$ to be a minimal quotient is  $d = 3g-4$. Finally, 
the fact that $\Gamma \subset F_u$ is the only special section follows from  Remark \ref{rem:C.F.3}.   \end{proof}

Recall that, when $N \in \Pic^{d-\delta}(C)$ is special and $L \in W^{\delta-g+j}_{\delta}(C)$ is a smooth point, assumptions as in Theorem \ref{thm:mainext1} imply that $\partial_u$ is surjective for $u \in \Ext^1(L,N)$ general (cf. Corollary \ref{cor:mainext1}), and so $i(\Ff_u) = j$. 

Therefore, to have $i(\Ff_u) >j$, we are forced to use degeneracy loci described in \eqref{eq:wt}. To do this 
let $y = (N,L)$ be general in $\ud$, respectively in $\zd$, when $N$ is non-effective, respectively when it is effective (recall notation as in \eqref{eq:ude}). Set $\Pp(y) := \gamma^{-1}(y) \cong \Pp$. Take numerical assumptions as in Remark \ref{unepine}, respectively in Remark \ref{unepie}, according to  $N$ is respectively non-effective or effective. 

With notation as in \eqref{eq:Lahat}, for any good component $\widehat{\Lambda}_t(y)  \subseteq \widehat{W}_t(y) \subset \Pp(y)$ we have 
$$\emptyset \neq \widehat{\mathcal W}_t^{\rm {Tot}} \subset \Pp(\Efra_\delta),$$where a point in $\widehat{\mathcal W}_t^{\rm {Tot}}$ corresponds to the datum of a pair $(y, u)$, with $y = (N,L)$  and $u \in \widehat{W}_t(y)$. Any  irreducible component of $\widehat{\mathcal W}_t^{\rm {Tot}} $ has dimension at least $\dim (\Pp(\Efra_\delta)) - c (\ell,r,t)$ (where $c(\ell,r,t)$ as in \eqref{eq:clrt} and where $\dim (\Pp(\Efra_\delta))$ as in  \eqref{eq:yde2}). 
From the generality of $y$, for any good component  $\widehat{\Lambda}_t(y) $, 
we have an irreducible component $$\widehat{\Lambda}_t^{\rm {Tot}} \subseteq \widehat{\mathcal W}_t^{\rm {Tot}} \subset \Pp(\Efra_\delta) $$such that 
\begin{itemize}
\item[(i)] $\widehat{\Lambda}_t^{\rm {Tot}}$ dominates $\ud$ (resp., $\zd$); 
\item[(ii)] $\dim(\widehat{\Lambda}_t^{\rm {Tot}}) = \dim (\Pp(\Efra_\delta)) - c (\ell,r,t)$; 
\item[(iii)] for $(y, u) \in \widehat{\Lambda}_t^{\rm {Tot}}$ general, $\cork(\partial_u)=t$; 
\item[(iv)] if $\lambda:= \gamma|_{\widehat{\Lambda}_t^{\rm {Tot}}}$, for $y$ general one has $\lambda^{-1}(y) = \widehat{\Lambda}_t(y) $. 
\end{itemize}

\begin{definition}\label{def:goodtot} Any component $\widehat{\Lambda}_t^{\rm {Tot}}$ satisfying  (i)-(iv) above will be called a {\em (total) good component} of $\widehat{\mathcal W}_t^{\rm {Tot}}$. 
\end{definition}

We set  
\begin{equation}\label{eq:nuddet}
\vdjt := {\rm Im} \left(\pi_{d, \delta}|_{\widehat{\Lambda}_t^{\rm {Tot}}}\right) \subseteq B_C^{k_{j+t}}(d) \;\;\;\; {\rm and} \;\;\;\; \nu_d^{\delta,j,t} := \dim(\vdjt).
\end{equation} Two cases have to be discussed, according to the effectivity of $N$.

\noindent
\subsubsection{{\bf $N$ non-effective}}\label {ssec:noneff} With assumptions as in Remark  \ref{unepine}, $N$ can be taken general in $\Pic^{d-\delta}(C)$; the general bundle in $\vdjt $ is stable (by Theorem \ref{unepi} and by the open nature of stability). 
For brevity sake, set
\begin{equation}\label{eq:fojt}
\varphi_0 (\delta, j,t) :=\dim (\widehat{\Lambda}_t^{{\rm Tot}}) -\rho_d^{k_{j+t}}  =  d (j-1) - \delta (j-2) - (g-1) (j+1) + j t,
\end{equation}which therefore  takes into account the expected dimension of the general fibre of 
$\pi_{d,\delta}|_{\widehat{\Lambda}_t^{{\rm Tot}}}$ and the codimension of its image in a regular component of 
$B_C^{k_{j+t}}(d)$).

One has $\varphi_0 (\delta, j,t) \geqslant \nu_d^{\delta,j,t} - \rho_d^{k_{j+t}} $ with equality  if and only if 
$\pi_{d, \delta}|_{\widehat{\Lambda}_t^{{\rm Tot}}}$ is generically finite. Thus, from Remark \ref{rem:BNloci},  it is clear that 
$\vdjt$ cannot fill up a dense subset of a component of $B_C^{k_{j+t}}(d) $ if $\varphi_0(\delta, j,t) <0$; in other words, the negativity 
of $\varphi_0(\delta, j,t)$ gives numerical obstruction to describe the general point of a (regular) component of $B_C^{k_{j+t}}(d) $. 

\noindent
$\bullet$ For $j =1$, one has 
\begin{equation}\label{eq:fo1t}
\varphi_0 (\delta, 1,t)  = \delta - 2g + 2 + t. 
\end{equation}

\noindent
$\bullet$ When $j \geqslant 2$, from Remark \ref{unepine} and arguing as in Remark \ref{rem:claim1}, one gets 
\begin{equation}\label{eq:fojtb}
\varphi_0 (\delta, j,t) \leqslant j (t-j). 
\end{equation}Thus, $\vdjt$ never fills up a dense subset of a component of $B_C^{k_{j+t}}(d)$ as soon as $j >t \geqslant 1$.

\subsubsection {{\bf $N$ effective}} With assumptions as in Remark \ref{unepie}, $N$ is general in $W_{\rho(N)}$. From the second equality in 
\eqref{eq:yde2}, for any $n \geqslant 1$, one puts
\begin{equation}\label{eq:fnjt}
\varphi_n (\delta, j,t) :=\dim (\widehat{\Lambda}_t^{{\rm Tot}}) -\rho_d^{k_{j+t}}  =  \varphi_0(\delta, j,t) - n (r-t), 
\end{equation}where $\varphi_0(\delta,j,t)$ as in \eqref{eq:fojt} above. 

\begin{remark}\label{rem:conpar2} For a total  good component $\widehat{\Lambda}_t^{{\rm Tot}}$ and for  $(L,N,u) \in \widehat{\Lambda}_t^{{\rm Tot}}$ general, one has $n (r-t) = h^0(N) \; {\rm rk}(\partial_u)$. Hence, $n (r-t)$ is non-negative and it is zero if and only if $r=t$, i.e. 
$\partial_u$ is the zero map. Therefore, $\varphi_n(\delta, j,t) \leqslant \varphi_0(\delta, j,t)$ and equality holds if and only if $r=t$. The possibility for a $\vdjt$ to fill up a dense subset of a component of $B_C^{k_{j+t}}(d)$ can be discussed as in \S~ \ref {ssec:noneff}. 
\end{remark} 

By definition of $\nu_{d}^{\delta,j,t}$, it is clear that 
$\nu_{d}^{\delta,j,t} - \rho_d^{k_{j+t}} \leqslant \varphi_n(\delta, j,t)$, for any $n \geqslant 0$. Thus a necessary condition  for $\nu_d^{\delta,j,t} \geqslant \rho_d^{k_{j+t}}$, i.e. for $\mathcal V_d^{\delta,j,t}$ to have at least the dimension of a regular component of 
$B_C^{k_{j+t}}(d)$, is $\varphi_n(\delta, j,t) \geqslant 0$.

Next proposition easily follows.

\begin{proposition}\label{C.F.4} Assumptions as in Theorem \ref{unepi} (more precisely, either as in Remark 
\ref{unepine}, when $N$ is non-effective, or as in Remark \ref{unepie}, when $N$ is effective). Then for any integers $j,\;\delta$ and $d$ therein, there exists an irreducible component $\mathcal B \subseteq B_C^{k_{j+t}}(d)$ such that: 

\noindent
(i) $\vdjt \subseteq \mathcal B$; 

\noindent
(ii) For $[\Ef] \in \mathcal B$ general, $s(\Ef) \geqslant g- c(\ell,r,t)-\epsilon \geqslant 0$, 
where $c(\ell,r,t)$ as in \eqref{eq:clrt} and $\epsilon \in \{0,1\}$ such that $d+g-c(\ell,r,t) \equiv \epsilon \pmod{2}$;  

\noindent
(iii) $\mathcal B \cap U_C^s(d) \neq \emptyset$, if $g - c(\ell,r,t)-\epsilon >0$.  
\end{proposition}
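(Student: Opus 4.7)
The plan is to bootstrap directly from Theorem \ref{unepi} together with the definition of the total good component $\widehat{\Lambda}_t^{\rm Tot}$, essentially repackaging Theorem \ref{unepi} in terms of components of Brill--Noether loci, following the blueprint of Proposition \ref{C.F.3}. First I observe that $\widehat{\Lambda}_t^{\rm Tot}$ is irreducible by construction (Definition \ref{def:goodtot}). By parts (i)--(ii) of Theorem \ref{unepi}, for $(N,L,u) \in \widehat{\Lambda}_t^{\rm Tot}$ general the associated bundle $\Ff_u$ satisfies $i(\Ff_u) = j+t$ and $s(\Ff_u) \geqslant g - c(\ell,r,t) - \epsilon \geqslant 0$ (the last inequality being \eqref{eq:unepi0}), so $\Ff_u$ is semistable. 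Thus $\pi_{d,\delta}$ is defined on an open dense subset of $\widehat{\Lambda}_t^{\rm Tot}$, and its image $\vdjt$ is irreducible and contained in $B_C^{k_{j+t}}(d)$. It follows that $\vdjt$ lies in a unique irreducible component $\mathcal B$ of $B_C^{k_{j+t}}(d)$, proving (i).

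For (ii), I invoke the lower-semicontinuity of the Segre invariant (cf.\,\cite[\S\,3]{LN}): for every integer $s_0$ the locus $\{[\Ff] \in \mathcal B \,|\, s(\Ff) \geqslant s_0\}$ is open in $\mathcal B$. Applying this with $s_0 = g - c(\ell,r,t) - \epsilon$, Theorem \ref{unepi}(ii) guarantees that this open set meets $\vdjt \subseteq \mathcal B$, so it is non-empty; since $\mathcal B$ is irreducible, it is dense in $\mathcal B$, yielding the claimed bound for the generic $[\Ef] \in \mathcal B$.

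For (iii), under the hypothesis $g - c(\ell,r,t) - \epsilon > 0$, the lower bound in (ii) is strictly positive on the dense open subset of $\mathcal B$ produced above, so the bundles parametrized by that open set are actually stable, giving $\mathcal B \cap U_C^s(d) \neq \emptyset$. The only point which might require attention is the verification that $\pi_{d,\delta}$ is defined at the general point of $\widehat{\Lambda}_t^{\rm Tot}$, which is the semistability assertion already handled by Theorem \ref{unepi}(ii) in combination with \eqref{eq:unepi0}; consequently there is no substantial obstacle, and the proposition follows almost formally from the machinery built up in \S\,\ref{sec:constr} and \S\,\ref{S:PSBN}.
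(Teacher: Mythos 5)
Your proof is correct and follows essentially the same route the paper intends: the paper states that the proposition "easily follows" from the construction of the total good component, the map $\pi_{d,\delta}$, Theorem \ref{unepi}, and lower-semicontinuity of the Segre invariant, exactly mirroring the proof of Proposition \ref{C.F.3}. The only cosmetic slip is the claim that $\vdjt$ lies in a \emph{unique} component $\mathcal B$ (an irreducible set may lie in several components of $B_C^{k_{j+t}}(d)$), but since the statement only asserts existence of one such component, this does not affect the argument.
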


\begin{remark}\label{rem:17lug} In order to estimate $\nu_d^{\delta,j,t}$, one has to estimate the dimension of the general fibre 
of the map $\pi_{d, \delta}$ restricted to a total good component $\widehat{\Lambda}_t^{\rm {Tot}}$. Thus, if for 
$[\Ff] \in \mathcal V_d^{\delta,j,t}$ general we put for simplicity $f_{\Ff} := \dim\left(\pi_{d, \delta}|_{\widehat{\Lambda}_t^{Tot}}^{-1} ([\Ff])\right)$, a rough estimate is  
\begin{equation}\label{eq:aiutoB}
f_{\Ff} \leqslant a_{F}(\delta), 
\end{equation}where $F = \Pp(\Ff)$ and $a_{F}(\delta)$ the dimension of the scheme of special unisecants of degree $\delta$ on $F$ as in \eqref{eq:aga}. 
\end{remark}

\begin{remark}\label{rem:bohb}  
Assume $j=1$ in Proposition \ref{C.F.4}.  


\noindent
(1) When $N \in \Pic^{d-\delta}(C)$ is general, assumptions as in Remark \ref{unepine}  give $\delta \leqslant 2g-2$ and $N$ non-effective, for any $t \geqslant 1$. The only case to consider is  therefore 
$\varphi_0 (\delta, 1,t)$. A necessary condition for 
$\mathcal{V}_d^{\delta, 1,t}$ to have dimension at least $\rho_d^{k_{j+t}}$ is 
$\varphi_0(\delta, 1,t) \geqslant 0$, i.e.  $\delta \geqslant 2g-2 - t$ (cf. \eqref{eq:fo1t}). Thus: 


\noindent
$\bullet$ when $\delta = 2g-2 - t$, then $L = \omega_C(-D_t)$, with $D_t \in C^{(t)}$, $t<g$,  imposing independent conditions to $|\omega_C|$. Since  $\varphi_0(\delta, 1,t)=0$, for $D_t \in C^{(t)}$  
general, the estimate \eqref{eq:aiutoB} and a parameter count suggest that for $[\Ff] \in \mathcal V_d^{2g-2-t,1,t} $ general one has $a_{F}(2g-2-t) = 0$, i.e. $\Ff$  is rsp via $\omega_C(-D_t)$, and $\mathcal{B}  = \overline{\mathcal{V}_d^{\delta, 1,t}}$ is regular.


\noindent
$\bullet$ to the opposite, when $\delta = 2g-2$, then $L = \omega_C$.  Let $[\Ff] \in \mathcal{V}_d^{2g-2, 1,t}$ be general, and let 
$\Gamma \subset F = \Pp(\Ff)$ be the canonical section corresponding to $\Ff \to\!\! \to \omega_C$. By definition of 
$\mathcal{V}_d^{2g-2, 1,t}$, $\Ff = \Ff_v$ for $v \in \Lambda_t \subset \Ext^1(N, \omega_C)$ general in a good component. 
By \eqref{eq:isom2} and  \eqref{eq:casaciro2}, one has 
$\dim(|\Oc_{F}(\Gamma)|) = t$. Thus,  $[\Ff] \in \mathcal V_d^{2g-2,1,t} $ general is  not rsp via $\omega_C$, since the general fibre of $\pi_{d, 2g-2}|_{\widehat{\Lambda}_t^{{\rm Tot}}}$ has dimension at least $t$. 


It is therefore natural to expect that the component $\mathcal B$ in Proposition \ref{C.F.4} is such that
$$\mathcal B = \overline{\mathcal V_d^{2g-2,1,t}} = \overline{\mathcal V_d^{2g-3,1,t}} = 
\cdots = \overline{\mathcal V_d^{2g-2-t,1,t}},$$where $[\Ff] \in \mathcal B$ general is rsp only 
when $[\Ff]$ is considered as element in $\mathcal V_d^{2g-2-t,1,t}$.


\noindent
(2) One may expect something similar when $j=1$ and $N$ effective general  in $W_{\rho(N)}$. In this case, $\varphi_n(\delta, 1,t) \geqslant 0$ gives $ \delta \geqslant 2g-2 + rn - t (n+1) $ whereas, from the first line of bounds on $\delta$ in Remark \ref{unepie}, we get $\delta \leqslant {\rm min} \{ 2g-2, g-2 + r-t + \frac{g-\epsilon}{t} \}$. A necessary condition for $\nu_d^{\delta,j,t} \geqslant \rho_d^{k_{j+t}}$ is therefore  
\begin{equation}\label{eq:8.1}
rn - t (n+1) <0,  
\end{equation}otherwise either $L$ would be non special, contradicting Lemma \ref{lem:1e2note}, or 
$L \cong \omega_C$, so $\Ff_v$ would be not rsp as in (1) above. 
In the next section, we will discuss these questions. 
\end{remark}


\section{Low speciality, canonical determinant}\label{S:BND}
In this section we apply results in \S's\;\ref{S:Nns}, \ref{S:Ns} and \ref{S:PSBN} to describe Brill-Noether loci of vector bundles with canonical determinant and Brill-Noether loci of vector bundles of fixed degree $d$ and low speciality $i \leqslant 3$ on a curve $C$ with general moduli. In particular, the more general analysis discussed in the previous sections allows us to determine 
rigidly specially presentation of the general point of  irreducible components arising from constructions in \S\;\ref{S:PSBN}, 

From now on, for any integers $g \geqslant 3$, $i \geqslant 1$ and $2g-2 \leqslant d \leqslant 4g-4$, we will set 
\begin{equation}\label{eq:Btilde}
\widetilde{B_C^{k_i}}(d) := 
\left\{
\begin{array}{cl}
B_C^{k_i}(d) & \mbox{if either $d$ odd or $d=4g-4$} \\
B_C^{k_i}(d)\cap U_C^s(d) & \mbox{otherwise} 
\end{array}
\right.
\end{equation}




\subsection{Vector bundles with canonical determinant}\label{SS:low} Given an integer $d$ and any $\xi \in \Pic^d(C)$, there exists 
the {\em moduli space of (semi)stable, rank-two vector bundles with fixed determinant $\xi$}. Following 
\cite{Muk2, Muk}, we denote it by $M_C(2, \xi)$ (sometimes a different notation is used, see e.g. 
\cite{Ses,Be,BeFe,TB000,Oss,Voi,Beau2,LNP}). 

The scheme $M_C(2, \xi)$ is defined as the fibre over $\xi \in \Pic^d(C)$ of the {\em determinantal map} 
\begin{equation}\label{eq:det}
U_C(d) \stackrel{\rm det}{\longrightarrow} \Pic^d (C).
\end{equation}For any $\xi \in \Pic^d(C)$, $M_C(2, \xi)$ is smooth, irreducible, of dimension $3g-3$ (cf. \cite{NR1,Ses}). 

Brill-Noether loci can be considered in $M_C(2, \xi)$. Recent results for arbitrary $\xi$ are given in \cite{Oss,Oss2,LaNw}. A case which has been particularly studied (for its connections with Fano varieties) is    $M_C(2, \omega_C)$. Seminal papers on the subject are 
\cite{BeFe, Muk2}; other important results are contained in \cite{TB000,Voi,IVG,LNP}. If $[\Ff] \in M_C(2, \omega_C)$, Serre duality gives 
\begin{equation}\label{eq:net1}
i(\Ff) = k_i(\Ff) := k.
\end{equation} For $[\Ff] \in M_C(2, \omega_C) \subset U_C(2g-2)$, the Petri map $P_{\Ff}$ in \eqref{eq:petrimap} 
splits as $P_{\Ff} = \lambda_{\Ff} \oplus \mu_{\Ff}$, where 
$$\lambda_{\Ff} : \bigwedge^2 H^0(\Ff) \to H^0(\omega_C) \quad \mbox {and}\quad 
\mu_{\Ff} : {\rm Sym}^2 (H^0(\Ff))  \to H^0({\rm Sym}^2 (\Ff));$$the latter is called the {\em symmetric Petri map}.

For $[\Ff] \in  M_C(2, \omega_C)$ general, one has $k = 0$ (cf. \cite[\S\,4]{Muk}, after formula (4.3)). For any $k \geqslant 1$, one sets$$M_C^k(2, \omega_C) := \{ [\Ff] \in M_C(2, \omega_C) \; | \; h^0(\Ff) = h^1(\Ff) \geqslant k\}$$which is called the $k^{th}$-{\em Brill-Noether locus} in $M_C(2, \omega_C)$. In analogy with \eqref{eq:Btilde}, we set 
$$\widetilde{M_C^k}(2, \omega_C) := M_C^k(2, \omega_C) \cap U_C^s(2g-2).$$By  \cite[Prop.\,1.4]{Muk}, \cite[\S\,2]{BeFe} and \eqref{eq:net1}, one has
$${\rm expcodim}_{M_C(2, \omega_C)} (\widetilde{M_C^k}(2, \omega_C))  = \frac{k(k+1)}{2} \leqslant k^2 = i(\Ff) k_i(\Ff).$$Similarly to  $\widetilde{B_C^{k_i}}(d)$, if $[\Ff]\in  
\widetilde{M_C^k}(2, \omega_C)$, then $\widetilde{M_C^k}(2, \omega_C)$ is smooth and regular (i.e. of the expected dimension) at 
$[\Ff]$ if and only if $\mu_{\Ff}$ is injective (see \cite{BeFe,Muk2,Muk}).  

Several basic questions on $\widetilde{M_C^k}(2, \omega_C)$, like non-emptiness, irreducibility, etc.,  are still open.  A  description of these bundles in terms of extensions (as we do here) is available only for some $k$ on $C$ general of genus $g \leqslant 12$ (cf.  \cite[\S\,4]{Muk}, \cite{BeFe}). Further existence results are contained in  \cite{TB1,LNP}. On the other hand, if one assumes $ [\Ff]\in M_C^k(2, \omega_C)$,  
injectivity of $\mu_{\Ff}$ on $C$ general of genus  $g \geqslant 1$ has been proved in \cite{TB000} (cf.  \cite{Beau2} for  $k < 6$  with a  different approach).

\subsection{Case $i=1$}\label{ssec:1} In this case $\rho_d^{k_1} = 6g-6 -d$. Using notation and results as in \S\,\ref{S:PSBN}, we get:

\begin{theorem}\label{i=1} Let $C$ be of genus $g \geqslant 5$, with general moduli. For  any integer $d$ s.t. $2g-2 \leqslant d \leqslant 4g-4$, 
$$\widetilde{B_C^{k_1}}(d)= \overline{\mathcal V_d^{2g-2,1}},$$as in Corollaries \ref{C.F.1b}, \ref{C.F.1c}, \ref{C.F.2} and \ref{C.F.3b}. 
In particular, 

\noindent
(i)  $\widetilde{B_C^{k_1}}(d)$ is non-empty, irreducible. For $2g-2 \leqslant d \leqslant 4g-5$ 
it is regular, whereas $\dim(\widetilde{B_C^{k_1}}(4g-4)) = g < \rho_{4g-4}^{k_1} = 2g-2$. 

\noindent
(ii) For $3g-5 \leqslant d \leqslant 4g-4$, $\widetilde{B_C^{k_1}}(d)$ is  generically smooth. 

\noindent
(iii) $[\Ff] \in \widetilde{B_C^{k_1}}(d)$ general is stable for $2g-2 \leqslant d \leqslant 4g-5$, and strictly semistable for $d=4g-4$, fitting in a (unique) sequence$$ 0 \to N \to \Ff \to \omega_C \to 0,$$where $N \in \Pic^{d-2g+2}(C)$ is  general, the coboundary map  is surjective and $i(\Ff)=1$.

\noindent
(iv) For $3g-4 \leqslant d \leqslant 4g-4$ and $[\Ff] \in \widetilde{B_C^{k_1}}(d)$ general, one has  $s(\Ff) = 4g-4-d$, the quotient of minimal degree being $\omega_C$.
The section $\Gamma \subset F= \Pp(\Ff)$ corresponding to $\Ff \to \!\!\! \to \omega_C$ is the only special section of $F$. Moreover: 
\begin{itemize} 
\item for $d \geqslant 3g-3$,  $\Gamma$ is ai, 
\item for $d = 3g-4$, $\Gamma$ is lsu and asu but not ai. 
\end{itemize}

\noindent
(v) For $2g-2 \leqslant d \leqslant 3g-5$ and $[\Ff] \in \widetilde{B_C^{k_1}}(d)$ general, one has  $s(\Ff) = g-\epsilon$, with $\epsilon \in \{0,1\}$ such that $d+g \equiv \epsilon \pmod{2}$. The section $\Gamma \subset F$ is the only special section; it is asu but not ai.  Moreover, $\Gamma$ is not of minimal degree; indeed:

\begin{itemize}
\item when $d+g$ is even, minimal degree sections of $F$ are li sections of degree $\frac{d+g}{2}$ s.t. $\dim( {\rm Div}_{F}^{1,\frac{d+g}{2}}) = 1$;

\item when $d+g$ is odd, minimal degree sections are li of degree $\frac{d+g-1}{2}$ and  $\dim( {\rm Div}_{F}^{1,\frac{d+g-1}{2}}) \leqslant  1$.

\end{itemize}

\noindent
(vi) In particular, for $2g-2 \leqslant d \leqslant 4g-4$, $[\Ff] \in \widetilde{B_C^{k_1}}(d)$ general  is rp via $\omega_C$.  
\end{theorem}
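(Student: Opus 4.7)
The plan is to establish the equality $\widetilde{B_C^{k_1}}(d) = \overline{\mathcal V_d^{2g-2,1}}$; once this is in place, parts (i)--(vi) are read off from Corollaries \ref{C.F.1b}, \ref{C.F.1c}, \ref{C.F.2}, \ref{C.F.3b}, whose ranges of $d$ overlap and together cover $2g-2 \leqslant d \leqslant 4g-4$. Those four corollaries also supply the easier inclusion $\overline{\mathcal V_d^{2g-2,1}} \subseteq \widetilde{B_C^{k_1}}(d)$ together with the irreducibility, regularity, generic smoothness, explicit Segre invariants, rp/rsp presentations, and section descriptions of the image, so only the reverse inclusion really needs to be addressed.

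For that, I would fix a component $\mathcal B$ of $\widetilde{B_C^{k_1}}(d)$ and take $[\Ff]$ general in it. For $2g-2 \leqslant d \leqslant 4g-5$ one has $\mathcal B \cap U_C^s(d) \neq \emptyset$ and $\dim \mathcal B \geqslant \rho_d^{k_1}$ by Remark \ref{rem:BNloci}; the case $d=4g-4$ is handled directly by Corollary \ref{C.F.2}, which pins down $\dim \widetilde{B_C^{k_1}}(4g-4) = g$ and the $S$-equivalence content of (i). By upper semicontinuity of $h^1$, $i(\Ff) = 1$ on a dense open of $\mathcal B$, and Lemma \ref{lem:1e2note} then yields a presentation $0 \to N \to \Ff \to L \to 0$ with $L$ special effective and $g-1 \leqslant \delta := \deg L \leqslant 2g-2$. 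The identity $i(\Ff_u) = h^1(L) + \cork(\partial_u)$, combined with $h^1(L) \geqslant 1$, forces $h^1(L) = 1$ and $\partial_u$ surjective, so $L = \omega_C(-D)$ with $D \in C^{(2g-2-\delta)}$.

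The crux is now a dimension count. For fixed $\delta$, the parameter space of triples $(D, N, u)$ with $D \in C^{(2g-2-\delta)}$, $N \in \Pic^{d-\delta}(C)$, and $[u] \in \Pp(\Ext^1(\omega_C(-D), N))$ has dimension at most
$$(2g-2-\delta) + g + (m-1) = 4g-4+\delta-d,$$
using \eqref{eq:lem3note} with $m = 2\delta - d + g - 1$ in the generic range $L \not\cong N$. Compared with $\rho_d^{k_1} = 6g-6-d$ the excess is $\delta - 2g + 2 \leqslant 0$, with equality iff $\delta = 2g-2$. The condition that $\partial_u$ be surjective, when non-generic, only cuts the dimension further; and for $\delta = 2g-2$ the map $\pi_{d,2g-2}$ is generically injective by Lemma \ref{lem:claim1} (for $d \leqslant 4g-5$) and contracts the $\gamma$-fibres by Lemma \ref{lem:claim2} (for $d = 4g-4$), exactly realising the expected dimension. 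Hence no component of $\widetilde{B_C^{k_1}}(d)$ is dominated by triples with $\delta < 2g-2$, forcing $L = \omega_C$ for the general $[\Ff]\in \mathcal B$ and thus $\mathcal B \subseteq \overline{\mathcal V_d^{2g-2,1}}$. The main obstacle will be checking that the fibre analysis of $\pi_{d,2g-2}$ and the injectivity of the Petri map behave uniformly across the transitions $d \in \{3g-4,\,3g-3,\,4g-5,\,4g-4\}$, where the speciality/effectivity regime of $N$ switches and where semistability replaces stability; but these are precisely the distinctions already catalogued case-by-case in the four corollaries, so reading them off gives (ii)--(vi) together with the generic finiteness/injectivity of $\pi_{d,2g-2}$ needed for the dimension equality.
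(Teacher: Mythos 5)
Your overall strategy (read off everything from Corollaries \ref{C.F.1b}, \ref{C.F.1c}, \ref{C.F.2}, \ref{C.F.3b} and reduce the reverse inclusion to a dimension count forcing $\delta=2g-2$) is the same as the paper's, and your excess computation $\delta-2g+2\leqslant 0$ matches Remark \ref{rem:claim1}. But there is a genuine gap at the very first step of your reverse inclusion: you assert that ``by upper semicontinuity of $h^1$, $i(\Ff)=1$ on a dense open of $\mathcal B$.'' Semicontinuity goes the wrong way here: it tells you that $\{i\geqslant 2\}$ is closed in $\mathcal B$, not that it is proper. A component of $B_C^{k_1}(d)$ could a priori be entirely contained in $B_C^{k_2}(d)$, i.e.\ its general member could have $i(\Ff)\geqslant 2$; this is exactly the kind of unexpected behaviour of higher-rank Brill--Noether loci that the paper warns about, and every subsequent step of your argument (the identity $i(\Ff_u)=h^1(L)+\cork(\partial_u)$ forcing $h^1(L)=1$ and $\partial_u$ surjective, hence the parametrization by $\omega_C(-D)$) depends on this unproved claim.

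The paper's proof spends its entire case (2) closing this hole, and the argument is not formal: assuming $i(\Ff)=i>1$ for $[\Ff]\in\mathcal B$ general, the dimension estimates of Remarks \ref{rem:claim1}, \ref{30.12l}, \ref{rem:C.F.3} force the minimal special quotient to be $L=\omega_C$; then $\dim(\Pp)=5g-6-d$ together with $\dim(\mathcal B)\geqslant\rho_d^{k_1}=6g-6-d$ forces $\mathcal B$ to be regular and $\Ff$ to correspond to a \emph{general} point of $\Ext^1(\omega_C,N)$ with $N\in\Pic^{d-2g+2}(C)$ general (hence non-effective, $r=3g-3-d$); finally the numerical hypotheses $\ell=g\geqslant r$ and $m=5g-5-d\geqslant\ell+1$ of Corollary \ref{cor:mainext1} are verified, so the coboundary map of the general extension is surjective and $i(\Ff)=1$, a contradiction. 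You need to supply this (or an equivalent) argument; without it the equality $\widetilde{B_C^{k_1}}(d)=\overline{\mathcal V_d^{2g-2,1}}$, and in particular irreducibility, is not established.
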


\begin{proof} All the assertions, except the irreducibility,  follow 
from Corollaries \ref{C.F.1b}, \ref{C.F.1c}, \ref{C.F.2} and \ref{C.F.3b}. 
For $d = 4g-4$ irreducibility has been proved in Corollary \ref{C.F.2}. Thus, we focus on cases 
$ 2g-2 \leqslant d \leqslant 4g-5$.  

Let us consider an irreducible component $\mathcal B \subseteq \widetilde{B_C^{k_1}}(d)$. From Lemma \ref{lem:1e2note}, $[\Ff] \in \mathcal B$ general is as in  \eqref {eq:Fund}, with $h^1(L) = j \geqslant 1$ and $L$ of minimal degree among special,  effective quotient line bundles. 
Moreover $\dim(\mathcal B) \geqslant \rho_d^{k_1}$ (cf. Remark \ref{rem:BNloci}). Two cases have to be considered. 

\smallskip

\noindent
(1) If $i(\Ff) =1$, then $j=1$ (notation as in 
\eqref{eq:exthyp1}, \eqref{eq:exthyp}) and $\partial: H^0(L) \to H^1(N)$ is surjective. In particular $\ell \geqslant r$. If $r=0$ then we are in cases of Corollaries \ref{C.F.1b}, \ref{C.F.1c}, and $\mathcal B =  \overline{\mathcal V_d^{2g-2,1}}$. If $r>0$,  
as in Remark \ref{rem:C.F.3}  one has $$0 \leqslant \dim(\Pp(\Efra_\delta)) - \dim(\mathcal{B}) \leqslant  \delta-2g+2$$(cf. \eqref{eq:nudde2}). Hence 
$\delta = 2g-2$ and  $\mathcal B =  \overline{\mathcal V_d^{2g-2,1}}$ as in Corollary \ref{C.F.3b}. 

\smallskip

\noindent
(2) Assume $i(\Ff) = i > 1$. As in Remarks \ref{rem:claim1}, \ref{30.12l}, \ref{rem:C.F.3} one has $L = \omega_C$. Thus $i > 1$ forces  $r \geqslant {\rm cork}(\partial) = i-1 > 0 $. Recalling \eqref{eq:lem3note} and \eqref{eq:PP}, one has $\dim(\Pp) = 5g-6-d$. Therefore, $\mathcal B$ must be regular and 
$\Ff$ corresponds to the general point of $\Ext^1(\omega_C, N)$, with $N \in \Pic^{d-2g+2}(C)$ general, so non-effective. In particular, one has 
$2g-2 \leqslant d \leqslant 3g-4$ and $ r = 3g-3-d$. On the other hand, since $$\ell = g, \;  1 \leqslant r \leqslant g-1, \; 2g -1 \leqslant m = 5g-5-d \leqslant 3g-3$$we are in the hypotheses of Corollary \ref{cor:mainext1}, hence ${\rm cork}(\partial)= 0$, a contradiction.    
\end{proof}


\begin{remark}\label{rem:i=1} (1) Theorem \ref{i=1} gives alternative proofs of results in \cite{Sun,Lau,BGN} for the rank-two case. It provides in addition a description of the general point of $\widetilde{B_C^{k_1}}(d) \cong \widetilde{B_C^1}(4g-4-d)$, for any $2g-2 \leqslant d \leqslant 4g-4$. The same description is  given in \cite{Ballico1}, with a different approach, i.e. using {\em general negative elementary transformations} as in \cite{Lau}. In terms of scrolls of speciality $1$, partial classification are given also in \cite[Theorem 3.9]{GP3}.

\noindent
(2) As a consequence of Theorem \ref{i=1}, one observes that the Segre invariant $s$ does not stay constant on a component of the Brill-Noether locus. For example, the general element of $\widetilde{B_C^{k_1}}(4g-7)$ has  $s=3$ and  $i = 1$; on the other hand, in Theorem \ref{C.F.VdG}, we constructed vector bundles in $ \mathcal V_{4g-7}^{2g-3,1} \subset \widetilde{B_C^{k_1}}(4g-7)$ with $s = i = 1$. The minimal special quotient of the latter vector bundles is the canonical bundle minus a point, whereas for the general vector bundle in $\widetilde{B_C^{k_1}}(4g-7)$ is the canonical bundle.  

\noindent
(3) From the proof of Theorem \ref{i=1}, for $d \leqslant 4g-3$, the map $\pi_{d,2g-2}$ is birational onto $\widetilde{B_C^{k_1}}(d) = B_C^{k_1}(d)$, i.e. $B_C^{k_1}(d)$ is uniruled.  
\end{remark}

\begin{theorem}\label{prop:M12K} Let $C$ be of genus $g \geqslant 5$, with general moduli. Then
$\widetilde{M_C^1}(2, \omega_C) \neq \emptyset$. Moreover, there exists an irreducible component which is  

\begin{itemize}
\item[(i)] generically smooth 
\item[(ii)] regular (i.e. of dimension $3g-4$), and 
\item[(iii)] its general point $[\Ff_u]$ comes from $u \in \Pp(\Ext^1(\omega_C, \Oc_C))$ general. In particular, 
$s(\Ff_u) = g - \epsilon$, where $\epsilon \in \{0,1\}$ such that $g \equiv \epsilon \pmod{2}$.    
\end{itemize}
\end{theorem}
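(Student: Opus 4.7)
The plan is to realize the asserted component as the image of the rational map
\[
\psi: \Pp(\Ext^1(\omega_C, \Oc_C)) \dashrightarrow M_C(2, \omega_C), \qquad u \longmapsto [\Ff_u],
\]
where $\Ff_u$ is the middle term of the extension $0 \to \Oc_C \to \Ff_u \to \omega_C \to 0$. By Serre duality, $\dim \Ext^1(\omega_C, \Oc_C) = h^0(\omega_C^{\otimes 2}) = 3g-3$, so the source has dimension $3g-4$; since $\det(\Ff_u) = \omega_C$, the map $\psi$ genuinely takes values in $M_C(2, \omega_C)$.

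First I would apply Theorem \ref{uepi} and Corollary \ref{cor:mainext1} with $L = \omega_C$, $N = \Oc_C$, $j = 1$, $\delta = d = 2g-2$; the numerical data are $\ell = g$, $r = 1$, $m = 3g-3 \geqslant g+1$, and the bounds \eqref{eq:ln2}, \eqref{eq:uepi3} hold at this endpoint for $g \geqslant 2$. By Corollary \ref{cor:mainext1}(i) the coboundary $\partial_u : H^0(\omega_C) \to H^1(\Oc_C)$ is surjective for $u$ general, so the long exact sequence associated to \eqref{eq:exthyp} yields $h^0(\Ff_u) = 1$ and $i(\Ff_u) = 1$, placing $[\Ff_u] \in M_C^1(2, \omega_C)$. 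Theorem \ref{uepi}(ii) then gives $s(\Ff_u) = g - \epsilon$ with $\epsilon \equiv d+g \equiv g \pmod 2$; since $g \geqslant 5$, $s(\Ff_u) \geqslant g-1 > 0$, so $\Ff_u$ is stable and $\psi$ lands in $\widetilde{M_C^1}(2, \omega_C)$. This already settles non-emptiness and part (iii).

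Next I would show that $\psi$ is generically injective. If $\Ff_u \cong \Ff_{u'}$ for a general $u$, then since $h^0(\Ff_u) = 1$ the subsheaf $\Oc_C \hookrightarrow \Ff_u$ is the unique nonzero $\Oc_C$-subsheaf up to scalar, so any isomorphism $\varphi : \Ff_{u'} \to \Ff_u$ matches the two copies of $\Oc_C$ up to a constant. The hypothesis $h^0(\Oc_C \otimes \omega_C^{\vee}) = 0$ of Lemma \ref{lem:technical} holds trivially for $g \geqslant 2$, so $u$ and $u'$ are proportional and $\psi$ is generically injective. Therefore $\mathcal B := \overline{{\rm Im}(\psi)}$ is an irreducible subvariety of $\widetilde{M_C^1}(2, \omega_C)$ of dimension $3g-4$, which equals the expected regular dimension $\dim M_C(2, \omega_C) - \binom{k+1}{2} = 3g-3-1$, proving (ii).

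Generic smoothness (i) then comes for free: at $[\Ff_u] \in \mathcal B$, smoothness in $M_C^1(2, \omega_C)$ is equivalent to injectivity of the symmetric Petri map $\mu_{\Ff_u} : {\rm Sym}^2 H^0(\Ff_u) \to H^0({\rm Sym}^2 \Ff_u)$; since $h^0(\Ff_u) = 1$ the source is one-dimensional, generated by $s^{\otimes 2}$ with $0 \neq s \in H^0(\Ff_u)$, and its image $s^2$ is nonzero because $s$ vanishes only at finitely many points. The main subtlety is the coordination of the numerical hypotheses at the boundary case $\delta = d = 2g-2$ in Theorem \ref{uepi} and the verification that Lemma \ref{lem:technical} applies; once one confirms $h^0(\Ff_u) = 1$ from the coboundary analysis, the remaining steps are formal.
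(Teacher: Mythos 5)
Your proof is correct and follows essentially the same route as the paper's: general extension in $\Ext^1(\omega_C,\Oc_C)$, surjective coboundary giving $h^0(\Ff_u)=1$, Lange--Narasimhan (via Theorem \ref{uepi}/Proposition \ref{prop:LN}) for stability with $s(\Ff_u)=g-\epsilon$, generic injectivity of the parametrization via uniqueness of the section and Lemma \ref{lem:technical}, and injectivity of the symmetric Petri map for generic smoothness and regularity. One bookkeeping slip: $r=h^1(\Oc_C)=g$, not $1$ (you appear to have written $n=h^0(\Oc_C)=1$); taken literally $r=1$ would give $h^0(\Ff_u)=1+\dim\ker(\partial_u)=g$, but since $\ell=r=g$ the hypothesis $\ell\geqslant r$ of Corollary \ref{cor:mainext1} still holds and the intended conclusion $h^0(\Ff_u)=1$ is recovered. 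Your direct verification that $\mu_{\Ff_u}$ is injective (the image $s^2$ of the generator $s\cdot s$ of the one-dimensional source is generically nonvanishing, hence nonzero) is a clean substitute for the paper's degeneration of $P_{\Ff_u}$ to the split bundle $\Oc_C\oplus\omega_C$, and it is this smoothness statement — rather than the dimension count alone — that actually forces $\overline{{\rm Im}(\psi)}$ to be a whole component, so it should logically precede your claim of (ii).
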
 

\begin{proof} Take $u \in \Pp(\Ext^1(\omega_C, \Oc_C))$ general. With notation as in \eqref{eq:exthyp1}, \eqref{eq:exthyp} one has 
$$\ell = r = g, \; {\rm and} \;  m = 3g-3 \geqslant \ell+1.$$Thus, from Corollary \ref{cor:mainext1} and from \eqref{eq:net1}, 
$h^0(\Ff_u) = h^1(\Ff_u) =1$. 
From \eqref{eq:lem3note}, $\dim( \Pp(\Ext^1(\omega_C, \Oc_C)) = 3g-4$. Thus,  $\Ff_u $  stable with 
$s(\Ff_u) = g-\epsilon$ follows from Proposition \ref{prop:LN}, with $\sigma = g-\epsilon$.  This shows that $\widetilde{M_C^1}(2, \omega_C) \neq \emptyset$. 

Since $\bigwedge^2 H^0(\Ff_u) = (0)$, $\mu_{\Ff_u}$ is injective if and only if $P_{\Ff_u}$ is. On the other hand, 
one has $H^0(\Ff_u) \otimes H^0(\omega_C \otimes \Ff_u^{\vee}) \cong \C$. 
Therefore, one needs to show that $P_{\Ff_u}$ is not the zero-map. This follows by limit of $P_{\Ff_u}$ when $u$ tends to $0$, so that $\Ff_0 = \Oc_C \oplus \omega_C$: then the limit of $P_{\Ff_u}$  is the map $H^ 0(\Oc_C)\otimes H^ 0(\Oc_C)\to H^ 0(\Oc_C)$.

To get (i)-(iii) at once,  one observes that $\pi_{2g-2, 2g-2}|_{\Pp(\Ext^1(\omega_C, \Oc_C))} $ is generically injective, since the exact sequence 
$$ 0 \to \Oc_C \to \Ff_u \to \omega_C \to 0$$is unique: indeed, the surjection $\Ff_u \to\!\!\to \omega_C$ is unique and $h^0(\Ff_u) = 1$ (cf. \eqref{eq:isom2} and computations as in \eqref{eq:casaciro2}), 
moreover, by Lemma \ref{lem:technical}, two general vector bundles in $\Pp(\Ext^1(\omega_C, \Oc_C))$ cannot be isomorphic.   
\end{proof}

\begin{remark}\label{rem:net2} (1) For a similar description, cf. \cite{BeFe}. 
Generic smoothness for components of $\widetilde{M_C^1}(2, \omega_C)$ follows also from results in \cite{TB000,Beau2}.

\noindent
(2) From Theorem \ref{i=1}, $[\Ff] \in \widetilde{B_C^{k_1}}(2g-2)$ general fits in a sequence
$0 \to \eta \to \Ff \to \omega_C \to 0$, with $\eta \in \Pic^0(C)$ general. Hence  the map $\widetilde{d} := {\rm det}|_{\widetilde{B_C^{k_1}}(2g-2)}$ is dominant. Since $\rm{Pic}^{2g-2}(C)$ and $\widetilde{B_C^{k_1}}(2g-2)$ are irreducible and generically smooth, then $\widetilde{d}^{-1}(\eta) = \widetilde{M_C^1}(2, \omega_C \otimes \eta)$ is equidimensional and each component is  generically smooth. Theorem \ref{prop:M12K} yields that in this situation each component of $ \widetilde{M_C^1}(2, \omega_C \otimes \eta)$ has dimension $3g-4$ (equal to the expected dimension). This agrees with \cite[Theorem 1.1]{Oss}.  
\end{remark}



\subsection{Case $i =2$}\label{ssec:2} In this case, $\rho_d^{k_2} = 8g- 11 - 2d$.  

\begin{theorem}\label{i=2} 

Let $C$ be of genus $g \geqslant 3$, with general moduli. For any integer $d$ s.t. $2g-2 \leqslant d \leqslant 3g-6$, one has $\widetilde{B_C^{k_2}}(d) \neq \emptyset$.

\noindent
(i)  $\overline{\mathcal V_d^{2g-3,1,1}}$ is the unique component of $\widetilde{B_C^{k_2}}(d) $, whose  general point corresponds to a vector bundle $\Ff$ 
with $i(\Ff) = 2$. Moreover,  $\overline{\mathcal V_d^{2g-3,1,1}} = \overline{\mathcal V_d^{2g-2,1,1}}$ and it is a regular component of $\widetilde{B_C^{k_2}}(d) $. 

\noindent  
(ii) For  $[\Ff]\in \overline{\mathcal V_d^{2g-3,1,1}}$ general, one has
$s(\Ff) \geqslant 3g-4-d-\epsilon >0$ and $\Ff$ fits in an exact sequence$$0 \to N \to \Ff \to \omega_C (-p) \to 0,$$with 

\begin{itemize}
\item[$\bullet$] $p \in C$ general, 
\item[$\bullet$]  $N \in \Pic^{d-2g+3}(C)$ general, and 
\item[$\bullet$] $\Ff=\Ff_v$ and $v$ is general in the good locus  $ \mathcal W_1  \subset \Ext^1(\omega_C(-p), N)$. 
\end{itemize}

\noindent
(iii) A section  $\Gamma \subset F$, corresponding to a quotient $\Ff \to\!\!\to \omega_C(-p)$, is not of minimal degree. 
However, it is of minimal degree among special sections and it is asi but not ai (i.e. $\Ff$ is rsp but not rp via  $\omega_C(-p)$).

\noindent
(iv) For $g \geqslant 13$ and $2g+6 \leqslant d \leqslant 3g-7$, $\overline{\mathcal V_d^{2g-3,1,1}}$ is generically smooth.

\end{theorem}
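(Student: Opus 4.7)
The plan is to realize the unique component of $\widetilde{B_C^{k_2}}(d)$ with a general element of speciality $2$ via the machinery of \S\ref{S:PSBN} applied with parameters $(\delta,j,t)=(2g-3,1,1)$, show it coincides with the companion construction using $\delta=2g-2$, rule out all other presentations via the dimension bounds of \S\ref{ssec:Ns}, and finish with the geometric description of minimal unisecants and a Petri-map computation. First I would verify the hypotheses of Theorems \ref{unepi} and \ref{thm:mainext1} for $d\in[2g-2,3g-6]$: taking $L=\omega_C(-p)$ and $N\in\Pic^{d-2g+3}(C)$ general (which is non-effective, since $\deg N\leq g-3<g$) gives $\ell=g-1$, $r=h^1(N)=3g-4-d$, $c(\ell,r,1)=d-2g+4$, and the stability bound $g-c(\ell,r,1)-\epsilon=3g-4-d-\epsilon>0$. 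Theorem \ref{thm:mainext1} provides the irreducibility of $\mathcal W_1$ at the expected dimension, so $\widehat\Lambda_1^{\rm Tot}$ is irreducible of dimension
\begin{equation*}
\dim\Pp(\Efra_{2g-3})-c(\ell,r,1)=(6g-7-d)-(d-2g+4)=8g-11-2d=\rho_d^{k_2}.
\end{equation*}
To propagate this to $\mathcal V_d^{2g-3,1,1}$, I would prove generic finiteness of $\pi\vert_{\widehat\Lambda_1^{\rm Tot}}$: for a general $[\Ff]$ in the image, quotients $\Ff\to\omega_C(-p)$ correspond to $H^0(\Ff^\vee\otimes\omega_C(-p))\cong H^1(\Ff(p))^\vee$, and the sequence $0\to\Ff\to\Ff(p)\to\Ff|_p\to 0$ together with the surjectivity of the connecting map $\Ff|_p\to H^1(\Ff)$ (generically an isomorphism between two-dimensional spaces) yields $h^1(\Ff(p))=0$ for generic $p$; the admissible $p$ form a finite set (counted by the zeros of $\det(H^0(\Ff^\vee\otimes\omega_C)\otimes\Oc_C\to\Ff^\vee\otimes\omega_C)$), and Lemma \ref{lem:technical} controls the residual ambiguity in $(N,u)$.

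Next I would show $\mathcal V_d^{2g-3,1,1}=\mathcal V_d^{2g-2,1,1}$, following Remark \ref{rem:bohb}(1). A parallel computation with $\delta=2g-2$, $L=\omega_C$, gives $\dim\widehat\Lambda_1^{\rm Tot}=\rho_d^{k_2}+1$, but the $\pi$-fibre over a general $[\Ff]$ is the pencil $\Pp(H^0(\Ff^\vee\otimes\omega_C))\cong\Pp^1$ of (generically surjective) quotients $\Ff\to\omega_C$, so $\dim\mathcal V_d^{2g-2,1,1}=\rho_d^{k_2}$. For the inclusion $\mathcal V_d^{2g-2,1,1}\subseteq\mathcal V_d^{2g-3,1,1}$: the linear pencil $|\Oc_F(\Gamma)|$ (with $\Gamma$ corresponding to $\Ff\to\omega_C$) has dimension $1$ by the cohomology computation on $0\to\Oc_C\to\Ff\otimes N^{-1}\to\omega_C\otimes N^{-1}\to 0$; Proposition \ref{prop:lem4} then produces reducible elements $\Gamma'+f_p\in|\Oc_F(\Gamma)|$, and applying Remark \ref{rem:Hilbert} identifies $\Gamma'$ as a section of degree $2g-3$ corresponding to the quotient $\Ff\to\omega_C(-p)$, placing $[\Ff]$ in $\mathcal V_d^{2g-3,1,1}$. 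Both varieties being irreducible of the same dimension $\rho_d^{k_2}$, they coincide.

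For uniqueness in (i), let $\mathcal B\subseteq\widetilde{B_C^{k_2}}(d)$ be any component with general $\Ff$ satisfying $i(\Ff)=2$; then $\dim\mathcal B\geq\rho_d^{k_2}$ by Remark \ref{rem:BNloci}. By Lemma \ref{lem:1e2note}, $\Ff$ has a presentation $0\to N\to\Ff\to L\to 0$ with $L$ special effective of degree $\delta\leq 2g-2$ and $j=h^1(L)\in\{1,2\}$. The case $j=2$ is excluded by Remark \ref{rem:C.F.3}, which gives $\dim\Pp(\Efra_\delta)<\rho_d^{k_2}$; the possibility of an effective $N$ with $r=1$ is ruled out by Riemann-Roch (as it would force $d\geq\delta+g-2\geq 3g-5$, incompatible with $d\leq 3g-6$). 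Hence $j=1$, $\cork(\partial)=1$, $N$ non-effective, and the dimensional bound $\varphi_0(\delta,1,1)=\delta-2g+3\geq 0$ of \eqref{eq:fo1t}, together with $\delta\leq 2g-2$, leaves $\delta\in\{2g-3,2g-2\}$, both giving the same component by the preceding paragraph.

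Parts (ii) and (iii) then follow from Theorem \ref{unepi} and the foregoing: the Segre bound $s(\Ff)\geq 3g-4-d-\epsilon>0$, the identification of $\omega_C(-p)$ as minimal special quotient, and the section $\Gamma$ being asi (only finitely many $p$'s yield special sections of degree $2g-3$, by the analysis of $H^1(\Ff(p))$) but not ai (since $\chi(\mathcal N_{\Gamma/F})=3g-5-d\geq 1$ forces $\dim\mathrm{Div}^{1,2g-3}_{F_u}\geq 1$). For generic smoothness (iv), I would argue as in Proposition \ref{C.F.3}(iv): degenerate $\Ff$ to the split bundle $\Ff_0=N\oplus\omega_C(-p)$, so that $P_{\Ff_0}=\mu_0(\omega_C(-p))\oplus\mu_{\omega_C(-p),K_C-N}$; the first summand is injective by Gieseker-Petri for $C$ general, and the second by the base-point-free pencil trick applied to $|K_C-N|$ under the hypotheses $g\geq 13$, $2g+6\leq d\leq 3g-7$; one concludes by semicontinuity and Remark \ref{rem:BNloci}. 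The main obstacle is a fully rigorous establishment of the equality $\mathcal V_d^{2g-3,1,1}=\mathcal V_d^{2g-2,1,1}$ and the precise computation of the $\pi$-fibre dimensions, which requires careful bookkeeping of the surjective versus non-surjective quotients $\Ff\to\omega_C$ in the $\delta=2g-2$ parameter space.
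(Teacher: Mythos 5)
Your overall architecture tracks the paper's proof closely (construction via $\widehat{\Lambda}_1^{\rm Tot}$ with $\delta=2g-3$, the dimension counts ruling out $j=2$ and effective $N$, the identification with the $\delta=2g-2$ parameter space), but there are two genuine gaps. The first concerns the generic finiteness of $\pi_{d,2g-3}$ restricted to $\widehat{\Lambda}_1^{\rm Tot}$. You reduce it to the non-vanishing of the determinant of the evaluation map $H^0(\Ff^{\vee}\otimes\omega_C)\otimes\Oc_C\to\Ff^{\vee}\otimes\omega_C$ (equivalently, to the connecting map $H^0(\Ff(p)|_p)\to H^1(\Ff)$ being ``generically an isomorphism''), but you never exclude the degenerate case in which this determinant vanishes identically, i.e. in which every $p\in C$ admits a special quotient of degree $2g-3$ — equivalently $a_F(2g-3)\geqslant 1$. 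Stability of $\Ff^{\vee}\otimes\omega_C$ together with the gonality of $C$ does not by itself forbid both sections of the two-dimensional space $H^0(\Ff^{\vee}\otimes\omega_C)$ from factoring through a line subbundle, so the finiteness is unproved. This is exactly what the paper's Claim \ref{4.1} supplies, and it is the technical heart of the whole theorem: assuming a one-dimensional family $\{\Gamma_q\}$ of special degree-$(2g-3)$ sections, one forms the reducible unisecants $\widetilde{\Gamma}_q=\Gamma_q+f_q$, shows they all lie in a single linear system $|\Oc_F(\Gamma)|$ with $\Gamma$ a canonical section, and reaches the contradiction $2\leqslant\dim|\Oc_F(\Gamma)|=i(\Ff)-1=1$. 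Some such global argument is indispensable in your write-up.

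The second gap is part (iv). Degenerating to $\Ff_0=N\oplus\omega_C(-p)$ with $N$ non-effective of degree $d-2g+3$, the relevant block of $P_{\Ff_0}$ is $\mu\colon H^0(\omega_C(-p))\otimes H^0(K_C-N)\to H^0(2K_C-p-N)$, whose source has dimension $(g-1)(3g-4-d)$ while the target has dimension $5g-7-d$; already for $d=3g-7$ one has $3(g-1)>2g$, so $\mu$ — hence $P_{\Ff_0}$ — cannot be injective anywhere in the range of (iv), and the base-point-free pencil trick is unavailable because $|K_C-N|$ has dimension $r-1=3g-5-d\geqslant 2$ there. (Note also that $h^1(\Ff_0)=3g-3-d>2$, so the split bundle sits in a deeper Brill--Noether stratum.) The paper instead derives (iv) from Proposition \ref{C.F.1}-(ii) with $j=2$: by the uniqueness statement in (i), the regular, generically smooth component containing $\mathcal V_d^{\delta,2}$ produced by Theorem \ref{C.F.VdG} (a $j=2$ special quotient over a non-special $N$, where Petri injectivity is checked as in \cite{CF}) must coincide with $\overline{\mathcal V_d^{2g-3,1,1}}$; this is precisely the source of the hypotheses $g\geqslant 13$ and $2g+6\leqslant d\leqslant 3g-7$, which your argument leaves unexplained.
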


\begin{proof}  Once part (i) has been proved, parts (ii)--(iii) follow from Theorem \ref{unepi} and Proposition \ref{C.F.4}, with $\delta=2g-3$, $j=t=1$, whereas part (iv) follows from Proposition \ref{C.F.1}-(ii), with $j=2$.

The proof of part (i) consists of four  steps.


\noindent
{\bf Step 1}. In this step, we show that if $\mathcal B$ is an irreducible component of  $\widetilde{B_C^{k_2}}(d)$ such that, for $[\Ff] \in \mathcal B$ general, $i (\Ff) =2$, then $\mathcal B$ comes from a total good component $\widehat{\mathcal W}^{\rm Tot}_1 \subseteq \Pp(\mathfrak{E}_{\delta})$, for some $\delta$ (cf. Thm. \ref{thm:mainext1} and Def. \ref{def:goodc}). 

Indeed, let \eqref{eq:Fund} be a special presentation of $\Ff$ with $L$ a quotient of minimal degree. Then, from Remarks \ref{rem:claim1}, \ref{30.12l}, \ref{rem:C.F.3}, one has $h^1(L)=j= 1$ as $\mathcal B$ is a component. Hence, 
with notation as in \eqref{eq:exthyp1}, \eqref{eq:exthyp} and \eqref{eq:wt}, one must have $t=1$ and $\ell \geqslant r-1$.  Moreover, $d \leqslant 3g - 6$, $\delta \geqslant g-1$ and  $j=1$ imply that $m = 2\delta - d + g -1 \geqslant \delta - g + 3 = \ell + 1 $ (recall notation as in \eqref{eq:lem3note}).  Therefore, we can apply Theorem \ref{thm:mainext1}, finishing the proof of this step.


\noindent
{\bf Step 2}. In this step we determine which of the constructed loci either $\mathcal V_{d}^{\delta,j}$, as in \eqref{eq:Nudde}, or $\mathcal V_{d}^{\delta,j,t}$, as in \eqref{eq:nuddet}, has general point $[\Ff]$ such that $i(\Ff) =2$ and dimension at least $\rho_d^{k_2} = 8g-11-2d$ (hence, it can be conjecturally dense in a component of $\widetilde{B_C^{k_2}}(d)$).  We will prove that this only happens for $2g-3 \leqslant \delta \leqslant 2g-2$ and $j = t = 1$.  Moreover, we will show that the presentation  of $\Ff$ is specially rigid only if 
$\delta = 2g-3$. 

Let $\mathcal V$ be any such locus, and let $\Ff$ be its general point which is presented as in \eqref{eq:Fund} with special quotient $L$. 
As in Step 1, one finds $j=1$ hence $N$ has to be special and $t=1$, so $\mathcal V$ has to be necessarily of the form $\mathcal V_{d}^{\delta,1,1}$ (i.e. loci of the form $\mathcal V_{d}^{\delta,j}$ are excluded).  We have two cases to consider: (a) $N$ non-effective, (b) $N$ effective.


\noindent
{\bf Case (a)}.  As in Remark \ref{rem:bohb}-(1), recall that a necessary condition for 
$\dim(\mathcal V_{d}^{\delta,1,1}) = \nu^{\delta,1,1}_d \geqslant \rho_d^{k_2}$ is 
$\varphi_0(\delta,1,1) \geqslant 0$, i.e. $2g-3 \leqslant \delta \leqslant 2g-2$. 

In case $\delta = 2g-2$, 
$[\Ff] \in \mathcal{V}_d^{2g-2,1,1}$ general is not rsp via $\omega_C$ (it follows from the fact that $i=2$ and computation as in \eqref{eq:casaciro2}). 

In case $\delta = 2g-3$, the hypotheses $ 2g-2 \leqslant d \leqslant 3g-6$ ensure stability for $\Ff$ (cf. Theorems \ref{thm:mainext1}, \ref{unepi} and Proposition \ref{C.F.4}). By definition, $\mathcal V_{d}^{2g-3,1,1} = {\rm Im} (\pi_{d,2g-3}|_{\widehat{\mathcal W}^{\rm Tot}_1})$, where $\widehat{\mathcal W}^{\rm Tot}_1 \subset \Pp(\mathfrak{E}_{2g-3})$ is the good locus for $t=1$. To accomplish the proof, we need to show that the fibre of $\pi_{d,2g-3}|_{\widehat{\mathcal W}^{\rm Tot}_1}$ over $[\Ff] \in \mathcal V_{d}^{2g-3,1,1}$ general is finite. As in 
\eqref{eq:aiutoB} in Remark \ref{rem:17lug}, it suffices 
to prove the following:

\begin{claim}\label{4.1} $a_{F}(2g-3) = 0$.
\end{claim}
\begin{proof}[Proof of the Claim] Assume by contradiction this is not zero. Since $\Ff$ is stable, hence unsplit, from $\varphi_0(2g-3,1,1) =0$ and Remark \ref{rem:rigid}, $a_F(2g-3)$ must be $1$ (cf. Proposition \ref{prop:lem4}). 

Let $\mathfrak{F}$ be the corresponding one-dimensional family of sections of $F= \Pp(\Ff)$, which has positive self-intersection, since 
$\Ff$ is stable. From Proposition \ref{prop:lem4} and Step 1, the system $\mathfrak{F}$ cannot be contained in a linear system, otherwise we would have sections of degree lower than $2g-3$.

Thus, from the proof of Proposition \ref{prop:lem4}, there is an open, dense subset $C^0 \subset C$ such that, for any $q \in C^0$, one has $\Ff= \Ff_v$ with $ v = v_q \in \Ext^1(\omega_C(-q), N_q))$, where $\{N_q\}_{q \in C^0}$ is a $1$-dimensional family of non-isomorphic line bundles of degree $d - 2g+3$, whose general member is general in $\Pic^{d-2g+3}(C)$ .  Let $\Gamma_q \subset F_v$ be the section corresponding to 
$\Ff_v \to\!\!\to \omega_C(-q)$;  so the one-dimensional family is $\mathfrak{F} = \{\Gamma_q \}_{q\in C^0}$.

We set $\widetilde{\Gamma}_q := \Gamma_q + f_q$, for  $q \in C^0$. From \eqref{eq:Fund2},  $\widetilde{\Gamma}_q$ corresponds to $\Ff_v \to\!\!\!\!\to \omega_C(-q) \oplus \Oc_q$, whose kernel we denote by $N'_q$. 
Then  $\widetilde{\mathfrak{F}} = \{ \widetilde{\Gamma}_q \}_{q\in C^0}$ is a one-dimensional family of unisecants  of $F_v$ of degree $2g-2$ and speciality $1$ (cf. \eqref{eq:iLa}). For $h,q \in C^0$, we have
$$c_1(N'_h) = \det(\Ff_v) \otimes \omega_C^{\vee} = c_1(N'_q).$$Therefore, 
from \eqref{eq:isom2}, $\widetilde{\mathfrak{F}}$ is contained in a linear system $|\Oc_{F_v} (\Gamma)|$. By Bertini's theorem, the general member of $|\Oc_{F_v} (\Gamma)|$ is a section of degree $2g-2$. In particular,  
$\dim(|\Oc_{F_v} (\Gamma)|) \geqslant 2$.  

If $L_{\Gamma}$ is the corresponding 
quotient line bundle, since $\Gamma \sim \widetilde{\Gamma}_q$, then $c_1(L_{\Gamma}) = \omega_C$, i.e. 
$\Gamma$ is a canonical section. This is a contradiction: indeed, if $M_{\omega_C}$ is the kernel 
of the surjection $\Ff_v \to\!\!\!\to \omega_C$, we have (cf. \eqref{eq:casaciro2})  
$$2 \leqslant \dim(|\Oc_{F_v} (\Gamma)|) = h^0(\Ff_v \otimes M^{\vee}_{\omega_C}) - 1 = i(\Ff_v) -1 =1.$$
\end{proof}


\noindent
{\bf Case (b)}.  As in Remark \ref{rem:bohb}-(2), a necessary condition for $ \nu^{\delta,1,1}_d \geqslant \rho_d^{k_2}$ is  \eqref{eq:8.1}, i.e. $nr - n - 1 <0$. Since $n,r\geqslant 1$, the only possibility is $r = t= 1$. 
Taking into account \eqref{eq:fo1t}, \eqref{eq:fnjt} and Proposition \ref{C.F.4}, one has $\varphi_n(\delta, 1,1) = \varphi_0(\delta, 1,1) = \delta-2g+3 \geqslant 0$. In any case we would have $d \geqslant 3g-5$, which is out of our range. Thus, case (b) cannot occur.


\noindent
{\bf Step 3}. In this step we prove that $\overline{\mathcal V_{d}^{2g-3,1,1}}$ is actually a component of $\widetilde{B_C^{k_2}}(d)$. 

Let $\mathcal B \subseteq \widetilde{B_C^{k_2}}(d)$ be a component containing $\overline{\mathcal V_{d}^{2g-3,1,1}}$ and let 
$[\Ff] \in \mathcal B$ general.  By semicontinuity, $\Ff$ has speciality $i=2$. It has also a special presentation as in \eqref{eq:Fund}, with $2g-3 \leqslant \deg(L) = \delta \leqslant 2g-2$. Since $C$ has general moduli, then $h^1(L) = j =1$ so the corank of the coboundary map is $t=1$. If $\delta = 2g-3$, from Step 2 we are done. 

Assume therefore $\delta = 2g-2$, so $L = \omega_C$. Notice that:$ (i) \; r = h^1(N) \leqslant g; \;\;  (ii) \; m = \dim({\rm Ext}^1(N, \omega_C)) \geqslant g+1$. Indeed, (i) is trivial if $N$ is effective. On the other hand, if $h^0(N)=0$, then $h^1(N) = 3g-3-d < g$ since $d \geqslant 2g-2$. As for (ii), 
$m = 5g-5-d$ (cf. \eqref{eq:lem3note}), hence (ii) follows since $d \leqslant 3g-6$.  So we are in position to apply Theorem \ref{thm:mainext1} and Corollary \ref{cor:mainext1}, which yield that $\widehat{\mathcal{W}}^{\rm Tot}_1 \subseteq \Pp(\mathfrak{E}_{2g-2})$ is irreducible and good. 
Hence $\dim (\widehat{\mathcal{W}}_1^{\rm Tot}) \leqslant 8g-10 - 2d$ (equality holds when $N$ is general, i.e. non effective). 

On the other hand, $\mathcal B$ is the image of $\widehat{\mathcal{W}}^{\rm Tot}_1$ via $\pi_{d,2g-2}$ (cf. Step 1) and the general fibre of this map has dimension at least $1$ because $h^1(\Ff) = 2$ (cf. \eqref{eq:isom2} and computation as in \eqref{eq:casaciro2}). Thus $$8g-11-2d \geqslant  \dim(\mathcal B) \geqslant \dim (\overline{\mathcal V_{d}^{2g-3,1,1}}) = 8g-11-2d = \rho_d^{k_2}.$$This proves that $\mathcal B = \overline{\mathcal V_{d}^{2g-3,1,1}}$ is a regular component.

The previous argument also shows that $ \overline{\mathcal V_{d}^{2g-3,1,1}} =  \overline{\mathcal V_{d}^{2g-2,1,1}}$ (cf. Remark \ref{rem:bohb}) and that the dimension of the general fibre of $\pi_{d, 2g-2}|_{\widehat{\mathcal{W}}_1^{\rm Tot}}$ onto $\mathcal V_{d}^{2g-2,1,1}$ has exactly dimension $1$ (actually, it is a $\Pp^1$, cf. Lemma \ref{lem:ovviolin}).


\noindent
{\bf Step 4}. Assume we have a component  $\mathcal B \subseteq \widetilde{B_C^{k_2}}(d)$, whose general point corresponds 
to a vector bundle $\Ff$ with $i(\Ff)=2$.  From Step 1, $[\Ff] \in \mathcal B$ 
general can be specially presented as in \eqref{eq:Fund}, with $h^1(L) = j =1$, so $N$ is special. The same discussion as in Steps 2 and 3 shows that $\mathcal B = \overline{\mathcal V_{d}^{2g-3,1,1}}$. 
\end{proof}

\begin{remark}\label{rem:casaciro3} (i) For $[\Ff] \in \overline{\mathcal V_{d}^{2g-3,1,1}}$ general, one has 
$\ell = g-1$, $r = 3g-4-d$ and $m = 5g-7-d$ (cf. \eqref{eq:lem3note}, \eqref{eq:exthyp1}). So 
$\ell \geqslant r + 1$ (because $d \geqslant 2g-2$), moreover $m \geqslant \ell +g$ (because $d \leqslant 3g-6$). Note that 
the inequality $m \geqslant \ell +1$ is necessary to ensure $\emptyset \neq \mathcal{W}_1^{T\rm ot} \subset \Pp(\mathfrak{E}_{2g-2})$ (see the proof of Theorem \ref{thm:mainext1}).  

\noindent
(ii) Step 4 of Theorem \ref{i=2} shows that, if $\mathcal B$ is a component of $ \widetilde{B_C^{k_2}}(d)$, 
different from $\overline{\mathcal V_{d}^{2g-3,1,1}}$, then $\mathcal B$ is a component of 
$ \widetilde{B_C^{k_i}}(d)$, for some $i \geqslant 3$, and as such it is not regular. Otherwise, we would have 
$$8g-11 - 2d \leqslant \dim(\mathcal B) = 4g-3 - i(d-2g+2+i) \leqslant  10 -3d -16,$$i.e $d \leqslant 2g-7$ which is out of our range for $d$.   

\end{remark}

\begin{remark}\label{rem:i=2} (1) Take $\widetilde{\Gamma}_p$ as in the proof of Claim \ref{4.1}. Then, $\N_{\widetilde{\Gamma}_p /F_v}$ is non special on the (reducible) unisecant $\widetilde{\Gamma}_p$. Indeed, 
$\omega_{\widetilde{\Gamma}_p} \otimes \N_{\widetilde{\Gamma}_p /F_v}^{\vee}|_{\Gamma} 
\cong \Oc_{\Gamma} (K_F)$ whereas $ \omega_{\widetilde{\Gamma}_p} \otimes \N_{\widetilde{\Gamma}_p /F_v}^{\vee}|_{f_p} 
\cong \Oc_{\Pp^1}(-2)$.  Thus $\widetilde{\Gamma}_p \in {\rm Div}^{1,2g-2}_{F_v}$ is a smooth point. Moreover, $h^0(\N_{\widetilde{\Gamma}_p /F_v}) = 3g-2-d \geqslant 2$ for $d \leqslant 3g-4$.  From the generality of $v$ in the good locus $\mathcal W_1$, \eqref{eq:isom2} and from computation as in \eqref{eq:casaciro2}, one has 
that $\widetilde{\Gamma}_p \subset F_v$ is a (reducible) unisecant, moving in a complete linear pencil of special unisecants whose general member is a canonical section, and $\widetilde{\Gamma}_p$ is algebraically equivalent on $F_v$ to non-special sections of degree $2g-2$. 

As soon as $d \leqslant 3g-6$, there are in ${\rm Div}^{1,2g-2}_{F_v}$ unisecants containing two general fibres 
(cf. Proposition \ref{prop:lem4}) hence the ruled surface $F_v$ has (non special) sections of degree smaller than $2g-3$. 


\noindent
(2) Take $N \in \Pic^k(C)$ general with $0 < k \leqslant g-2$. Since $N$ is special, non-effective, 
from Corollary \ref{cor:mainext1} and Remark \ref{unepine}, $v \in \Lambda_1 \subset \Ext^1(\omega_C,N)$ general determines $\Ff:=\Ff_v$ stable, with $i(\Ff)=2$. If $\Gamma$ denotes the canonical section corresponding to $\Ff \to \omega_C$, from \eqref{eq:isom2} one has $\dim(|\Oc_{F} (\Gamma)|) = 1 $ and all unisecants in this linear pencil are special (cf. Lemma \ref{lem:ovviolin}). Since $F$ is indecomposable, 
$|\Oc_{F} (\Gamma)|$ has base-points (cf. the proof of  Proposition \ref{prop:lem4}, from which we keep the notation). 
Thus, $\Ff$ is rsp via  $\omega_C(-p)$, for $p= \rho(q)$ and $q \in F$ a base point of the pencil (recall Remark \ref{rem:bohb}).  
\end{remark}

\begin{remark}\label{i=2Teix} In \cite{TB0,TB00} the locus $\widetilde{B_C^{2}}(b)$ is studied, for $g \geqslant 2$ and $3 \leqslant b \leqslant 2g-1$. It is proved there  with different arguments that, when $C$ has general moduli, then $\widetilde{B_C^{2}} (b)$ is not empty, irreducible, regular (with $\rho^2_b = 2b-3$), generically smooth and  $[\Ef] \in \widetilde{B_C^{2}}(b)$ general is stable,  with $h^0(\Ef) =2$, fitting in  a sequence 
\begin{equation}\label{eq:aiutoF}
0 \to \Oc_C \to \Ef \to L \to 0.
\end{equation}Considering the natural isomorphism $\widetilde{B_C^2}(b) \cong \widetilde{B_C^{k_2}}(4g-4-b)$,  when $d := 4g-4-b$ is as in Theorem \ref{i=2} we recover Teixidor's results (without irreducibility) via a different approach.  Thus, Teixidor's results and our analysis imply that, for any $2g-2 \leqslant d \leqslant 4g-7$,  $\widetilde{B_C^{k_2}}(d) = \overline{\mathcal V_{d}^{2g-3,1,1}}$. Theorem \ref{i=2} provides in addition the rigidly special presentation of the general element of $\widetilde{B_C^{k_2}}(d)$. 
\end{remark}


\begin{theorem}\label{prop:M22K} Let $C$ be of genus $g \geqslant 3$, with general moduli. Then, 
$\widetilde{M_C^2}(2, \omega_C) \neq \emptyset$ and irreducible. Moreover, it is regular (i.e. of dimension $3g-6$), and its general point $[\Ff_v]$ fits into  an exact  sequence 
$$0 \to \Oc_C(p) \to \Ff_v \to \omega_C (-p) \to 0,$$where 

\begin{itemize}
\item[$\bullet$] $p \in C$ is general, and 
\item[$\bullet$] $v \in \Lambda_1 =\mathcal W_1  \subset \Ext^1(\omega_C(-p), \Oc_C(p))$ is general.    
\end{itemize}
\end{theorem}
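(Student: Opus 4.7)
The plan is to exhibit $\widetilde{M_C^2}(2,\omega_C)$ as the closure of the image of a natural morphism from an irreducible parameter space of the expected dimension $3g-6$, following the pattern of Theorem \ref{prop:M12K} and Theorem \ref{i=2} adapted to the canonical-determinant fibre. First I would set $L=\omega_C(-p)$ and $N=\Oc_C(p)$, so that $\det(\Ff_v)=L\otimes N=\omega_C$ holds automatically. With $\delta=2g-3$, $d=2g-2$, one computes $\ell=r=g-1$ and $m=3g-5\geqslant \ell+1$ for $g\geqslant 3$, so Corollary \ref{cor:mainext1} applies and, for $v\in \mathcal{W}_1$ general, $\cork(\partial_v)=1$, hence $i(\Ff_v)=j+1=2$, i.e.\ $h^0(\Ff_v)=h^1(\Ff_v)=2$. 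Stability is provided by Theorem \ref{unepi} (the required numerical inequalities being straightforward for $j=t=1$, $\delta=2g-3$, $d=2g-2$ and $g\geqslant 3$): one obtains $s(\Ff_v)\geqslant g-1-\epsilon>0$. Thus $[\Ff_v]\in \widetilde{M_C^2}(2,\omega_C)$, establishing non-emptiness.

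Next I would introduce the parameter space $\mathcal{T}=\{(p,[v])\,:\,p\in C,\;[v]\in \Pp(\mathcal{W}_1(p))\}$ fibred naturally over $C$. By Theorem \ref{thm:mainext1} applied pointwise on $C$, the fibre $\mathcal{W}_1(p)\subset \Ext^1(\omega_C(-p),\Oc_C(p))$ is irreducible of dimension $m-c(\ell,r,1)=3g-6$ for every $p\in C$; hence $\mathcal{T}$ is irreducible of dimension $1+(3g-7)=3g-6$, matching the expected dimension of $\widetilde{M_C^2}(2,\omega_C)$. There is a natural moduli morphism $\pi\colon \mathcal{T}\to \widetilde{M_C^2}(2,\omega_C)$ sending $(p,[v])\mapsto [\Ff_v]$.

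The key step, and the main technical obstacle, is to show that $\pi$ is generically finite and that $\overline{\pi(\mathcal{T})}$ exhausts $\widetilde{M_C^2}(2,\omega_C)$. For generic finiteness: a fibre point $(p,[v])$ of $\pi$ over a general $[\Ff]$ determines, and is determined up to scalar by (via Lemma \ref{lem:technical}), a saturated degree-one sub-line-bundle $\Oc_C(p)\hookrightarrow \Ff$, whose cokernel is forced to be $\omega_C(-p)$ by the canonical determinant. Such sub-bundles correspond to zeros of the determinant of the evaluation map $\mathrm{ev}\colon H^0(\Ff)\otimes\Oc_C\to \Ff$, a section of $\det(\Ff)=\omega_C$ which is nonzero for $\Ff$ general (since otherwise a line sub-bundle $\mathcal{L}\subseteq \Ff$ would satisfy $h^0(\mathcal{L})\geqslant 2$, violating Clifford's inequality combined with stability); it therefore vanishes on a divisor of degree $2g-2$, giving a finite fibre. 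To see that every component is hit: since the expected codimension of $\widetilde{M_C^2}(2,\omega_C)$ in $M_C(2,\omega_C)$ equals $k(k+1)/2=3$, every irreducible component $\mathcal{M}$ has dimension at least $3g-6$, and the same degeneracy argument applied to a general $\Ff\in \mathcal{M}$ yields a point $p\in C$ with $\Oc_C(p)\hookrightarrow \Ff$ saturated, hence $[\Ff]\in \overline{\pi(\mathcal{T})}$. Since $\overline{\pi(\mathcal{T})}$ is irreducible of dimension $3g-6$ and $\mathcal{M}\subseteq \overline{\pi(\mathcal{T})}$ has dimension $\geqslant 3g-6$, we conclude $\mathcal{M}=\overline{\pi(\mathcal{T})}$, so $\widetilde{M_C^2}(2,\omega_C)$ is irreducible, regular, of dimension $3g-6$. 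Generic smoothness can additionally be obtained from the injectivity of the symmetric Petri map $\mu_\Ff$ for general $\Ff$, as proved in \cite{TB000}.
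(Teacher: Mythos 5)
Your construction of the extension locus is the same as the paper's: $L=\omega_C(-p)$, $N=\Oc_C(p)$, $\ell=r=g-1$, $m=3g-5$, Corollary \ref{cor:mainext1} giving $\cork(\partial_v)=1$ and hence $i(\Ff_v)=2$, stability from the secant-variety criterion, and the $(3g-6)$-dimensional parameter space fibred over $C$. Where you diverge is in the two hardest steps, and both divergences contain gaps. First, your justification that $\det(\mathrm{ev})\not\equiv 0$ --- ``otherwise a line sub-bundle $\mathcal L\subseteq\Ff$ would satisfy $h^0(\mathcal L)\geqslant 2$, violating Clifford's inequality combined with stability'' --- is false in general: stability only forces $\deg\mathcal L\leqslant g-2$, Clifford then allows $h^0(\mathcal L)=2$, and on a general curve of genus $g\geqslant 6$ one has $\rho(g,1,g-2)=g-6\geqslant 0$, so such $\mathcal L$ actually exist. (For the specific $\Ff_v$ of the construction this is repairable: the section generating $\Oc_C(p)$ and a section lifting a nonzero element of $\ker\partial_v$ cannot factor through a common line sub-bundle, since the latter would inject into $\omega_C(-p)$ while killing the former. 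The paper instead proves generic finiteness by the ruled-surface argument of Claim \ref{4.1}, showing $\Ff_v$ is rsp via $\omega_C(-p)$, i.e. $a_{F_v}(2g-3)=0$.)

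The more serious gap is in your irreducibility argument. From a general $[\Ff]$ in an arbitrary component $\mathcal M$ your degeneracy argument produces a point $p$ and a nonzero section $s\in H^0(\Ff)$ with $s(p)=0$; but the saturation of $\Oc_C\cdot s$ is $\Oc_C(D(s))$ where $D(s)$ is the full vanishing divisor of $s$, and nothing you say forces $\deg D(s)=1$. If $\deg D(s)\geqslant 2$ the resulting presentation has quotient $\omega_C(-D)$ with $\deg D\geqslant 2$, which is a different stratum not parametrized by your $\mathcal T$, so the inclusion $\mathcal M\subseteq\overline{\pi(\mathcal T)}$ is not established. Excluding these alternative presentations is exactly the delicate case analysis the paper carries out in the non-fixed-determinant situations (Steps 1--2 of Theorems \ref{i=2} and \ref{i=3}); for the canonical-determinant case at hand the paper does not redo it, but simply quotes Osserman's irreducibility theorem \cite[Thm.\,1.3]{Oss2}, after which the dimension count and the identification of the general member follow. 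As written, your proof establishes non-emptiness, the presentation of the general point of \emph{one} component, and (modulo the Clifford fix) its dimension, but not irreducibility.
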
 

\begin{proof} Irreducibility follows from \cite[Thm. 1.3]{Oss2}. With notation as in \eqref{eq:exthyp1}, \eqref{eq:exthyp}, one has $$\ell = r = g-1, \; m = h^1(2p- K_C) = 3g - 5 \geqslant 
g = \ell + 1;$$from Corollary \ref{cor:mainext1}, $\mathcal{W}_1$ is good and $v \in \mathcal W_1$ general is such that 
$\cork(\partial_v) =1$. In particular, $\dim(\mathcal W_1) = 3g-6$. 

Stability of $\Ff_v$, with $1 < s(\Ff_v) = \sigma < g$, follows from Proposition \ref{prop:LN}. Finally one uses the same approach as in Claim \ref{4.1} to deduce that $\pi_{2g-2, 2g-3}|_{\mathcal W^{\rm Tot}_1}$ is generically finite (cf. \eqref{eq:pde}), since $\Ff_v$ is rsp via $\omega_C(-p)$.   
\end{proof}

Generic smoothness of the components of $\widetilde{M_C^2}(2, \omega_C)$ follows from results in \cite{TB000,Beau2}. 
Theorem \ref{prop:M22K} can be interpreted in the setting of \cite{BeFe} as saying that, for a curve $C$ of general moduli of genus $g \geqslant 3$, $\Pp({\rm Ext}^1(\omega_C, \Oc_C))$ is not contained in the divisor $D_1$ considered in that paper.



\subsection{Case $i=3$}\label{ssec:3} One has $\rho_d^{k_3} = 10g - 18 - 3d$. We have the following:  

\begin{theorem}\label{i=3} Let $C$ be of genus $g \geqslant 8$, with general moduli. 
For any integer $d$ s.t. $2g-2 \leqslant d \leqslant \frac{5}{2}g-6$, one has $\widetilde{B_C^{k_3}}(d) \neq \emptyset$. 
Moreover:

\noindent
(i) $ \overline{\mathcal V_d^{2g-4,1,2}}$ is the unique component of $\widetilde{B_C^{k_3}}(d) $ of types either  \eqref{eq:Nudde} or  \eqref{eq:nuddet}, whose  general point corresponds to a vector bundle $\Ff$ with $i(\Ff) = 3$. Furthermore, 
it is regular and $\overline{\mathcal V_d^{2g-4,1,2}}=  \overline{\mathcal V_d^{2g-3,1,2}} = \overline{\mathcal V_d^{2g-2,1,2}}$.

\noindent
(ii) For $[\Ff]\in  \overline{\mathcal V_d^{2g-4,1,2}}$ general, one has $s(\Ff) \geqslant 5g-10-2d-\epsilon \geqslant 2-\epsilon$ and $\Ff$ fits into an exact sequence  
$$0 \to N \to \Ff \to \omega_C (-D_2) \to 0,$$where
\begin{itemize}
\item[$\bullet$] $D_2 \in C^{(2)}$ is general, 
\item[$\bullet$] $N \in \Pic^{d-2g+4}(C)$  is general (special, non-effective), 
\item[$\bullet$] $\Ff=\Ff_v$ with $v$ general in a good component $\Lambda_2 \subset \Ext^1(N, \omega_C(-D_2))$ (cf. Definition \ref{def:goodc}). 
\end{itemize}

\noindent
(iii) Any section $\Gamma \subset F= \Pp(\Ff)$, corresponding to a quotient $\Ff \to\!\!\to \omega_C(-D_2)$, is not of minimal degree. However, 
it is minimal among special sections of $F$; moreover,  $\Gamma$ is asi but not ai (i.e., $\Ff$ is rsp via $\omega_C(-D_2)$). 
\end{theorem}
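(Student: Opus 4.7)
The plan is to adapt the four-step structure of the proof of Theorem \ref{i=2} to the parameters $j=1$ and $t=2$. Given an irreducible component $\mathcal B \subseteq \widetilde{B_C^{k_3}}(d)$ whose general element $\Ff$ has speciality $i=3$, Lemma \ref{lem:1e2note} produces a presentation $0 \to N \to \Ff \to L \to 0$ with $L$ special effective of minimal degree $\delta$. Since $C$ has general moduli and $\dim\mathcal B \geqslant \rho_d^{k_3}$, the argument of Remarks \ref{rem:claim1}, \ref{30.12l} and \ref{rem:C.F.3} forces $h^1(L)=j=1$, so $N$ is special and $\cork(\partial)=t=2$. In the range $2g-2 \leqslant d \leqslant \frac{5}{2}g-6$ the hypotheses $\ell \geqslant r \geqslant 2$ and $m \geqslant 2\ell+1$ of Theorem \ref{thm:mainext1} and Corollary \ref{cor:mainextt} are straightforward to verify, so $\mathcal B$ is dominated under $\pi_{d,\delta}$ by a total good component $\widehat{\Lambda}_2^{\mathrm{Tot}} \subseteq \Pp(\Efra_\delta)$.

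Next I pin down the possible values of $\delta$. The necessary condition $\varphi_0(\delta,1,2)=\delta-2g+4 \geqslant 0$ from \eqref{eq:fo1t}, combined with $\delta \leqslant 2g-2$ (forced by the speciality of $L$), leaves only $\delta \in \{2g-4,\,2g-3,\,2g-2\}$, corresponding respectively to $L \cong \omega_C(-D_2)$ with $D_2 \in C^{(2)}$, $L \cong \omega_C(-p)$ with $p \in C$, and $L \cong \omega_C$. A direct computation using \eqref{eq:isom2} and \eqref{eq:casaciro2} gives in each case $\dim|\Oc_F(\Gamma)| = i(\Ff) - 1 - (2g-2-\delta)$, namely $2$, $1$, and $0$ respectively, matching $\varphi_0(\delta,1,2)$; thus only for $\delta = 2g-4$ is $\Gamma$ linearly isolated and $\Ff$ a candidate for being rsp via $L$.

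The heart of the argument is the following analog of Claim \ref{4.1}: for general $[\Ff] \in \mathcal V_d^{2g-4,1,2}$ one has $a_F(2g-4)=0$. I argue by contradiction. If $a_F(2g-4) \geqslant 1$, then since $\varphi_0(2g-4,1,2)=0$ and $\Ff$ is stable, Proposition \ref{prop:lem4} together with Remark \ref{rem:rigid} forces $a_F(2g-4)=1$, yielding a non-linear $1$-parameter family $\mathfrak F = \{\Gamma_q\}_{q \in C^0}$ of special sections of degree $2g-4$, each presenting $\Ff$ via a quotient $\omega_C(-D_{2,q})$ with $\{D_{2,q}\}$ a non-constant family in $C^{(2)}$. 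Adding the corresponding fibres I form the $1$-parameter family $\widetilde{\mathfrak F} = \{\Gamma_q + f_{D_{2,q}}\}$ of reducible special unisecants of degree $2g-2$ with constant first Chern class, hence contained in the linear system $|\Oc_F(\Gamma_{\mathrm{can}})|$ associated to the canonical quotient $\Ff \to\!\!\to \omega_C$. The hard part is that $|\Oc_F(\Gamma_{\mathrm{can}})|$ now has dimension $i(\Ff)-1=2$ rather than $1$, so the Bertini/dimension-count step of Claim \ref{4.1} must be refined: concretely, one needs to show that either the linear span of $\widetilde{\mathfrak F}$ inside $|\Oc_F(\Gamma_{\mathrm{can}})|$ is too large to consist only of reducible members, or else the residual pencil obtained after fixing a general base point of the span produces special sections of degree strictly less than $2g-4$, contradicting the minimality of $\delta$ and the indecomposability of $\Ff$.

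Once the Claim is established, $\pi_{d,2g-4}|_{\widehat{\Lambda}_2^{\mathrm{Tot}}}$ is generically finite, so $\dim \overline{\mathcal V_d^{2g-4,1,2}} = \dim \widehat{\Lambda}_2^{\mathrm{Tot}} = \rho_d^{k_3}$ by the dimension computation of $\widehat{\Lambda}_2^{\mathrm{Tot}}$ via \eqref{eq:yde2} and \eqref{eq:clrt}, and $\overline{\mathcal V_d^{2g-4,1,2}}$ is a regular component that must coincide with $\mathcal B$. The fibre-dimension matching from the second paragraph then gives $\overline{\mathcal V_d^{2g-4,1,2}} = \overline{\mathcal V_d^{2g-3,1,2}} = \overline{\mathcal V_d^{2g-2,1,2}}$, completing (i). Part (ii) follows from Theorem \ref{unepi} and Proposition \ref{C.F.4} (generality of $D_2$ and $N$, special presentation, and the Segre bound). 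Part (iii) is then immediate: $\Gamma$ is asi by the Claim and is minimal among special sections by the minimality of $\delta$ as the special quotient degree, but it is not minimal overall since Theorem \ref{unepi}(iii)--(iv) already exhibits non-special sections of lower degree.
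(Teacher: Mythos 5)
Your overall skeleton (reduce to $j=1$, $t=2$, pin down $\delta\in\{2g-4,2g-3,2g-2\}$, prove generic finiteness of $\pi_{d,2g-4}$ on the total good component, then identify the three loci) matches the paper's, but the proposal has a genuine gap at exactly the step you flag as ``the hard part''. Your plan is to prove $a_F(2g-4)=0$ by a direct contradiction in the style of Claim \ref{4.1}; but as you yourself observe, the mechanism that closes Claim \ref{4.1} — namely that a one-parameter family of reducible degree-$(2g-2)$ unisecants would force $\dim|\Oc_F(\Gamma_{\rm can})|\geqslant 2$ against the computed value $i(\Ff)-1=1$ — is unavailable here, since for $i=3$ one has $\dim|\Oc_F(\Gamma_{\rm can})|=2$ and a one-dimensional family of reducible members inside a two-dimensional linear system is not a contradiction. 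The dichotomy you offer in its place (``either the span of $\widetilde{\mathfrak F}$ is too large to consist only of reducible members, or the residual pencil produces special sections of degree $<2g-4$'') is not an argument: neither branch is established, and it is not clear either can be. The paper does not prove the analog of Claim \ref{4.1} at all; instead it computes the fibre of $\pi_{d,2g-2}$ over a general point of $\mathcal V_d^{2g-2,1,2}$ \emph{exactly} (it is the $\Pp^2=|\Oc_F(\Gamma_{\rm can})|$, via Lemma \ref{lem:technical}), so that $\dim\mathcal V_d^{2g-2,1,2}=\rho_d^{k_3}$, and then constructs explicit dominant rational maps $r_1,r_2$ relating $V_d^{2g-2}$, $V_d^{2g-3}$, $V_d^{2g-4}$ by analyzing the plane model $\Psi\colon C\to C'\subset(\Pp^2)^{\vee}$ induced by $|\Oc_F(\Gamma_{\rm can})|$: the nodes of $C'$ are precisely the divisors $D\in C^{(2)}$ with $\Gamma_D+f_D\in|\Oc_F(\Gamma_{\rm can})|$, and they are finite in number. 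Equality of the three closures plus the matching of fibre dimensions (Claims \ref{cl:maronna}, \ref{cl:ri}, \ref{cl:tutte}) then yields both the regularity and the rigid special presentation at level $\delta=2g-4$. Without this (or a completed version of your dichotomy) the generic finiteness of $\pi_{d,2g-4}|_{\widehat{\Lambda}_2^{\rm Tot}}$, hence regularity and part (iii), is unproved.

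Two further, smaller problems. First, you invoke Corollary \ref{cor:mainextt} to produce the total good component, but its hypothesis $j\geqslant t$ fails here ($j=1<2=t$); the paper instead applies Theorem \ref{thm:mainextt} with $\eta=0$, which requires proving that $\mu_{V_2}$ is injective for $V_2$ general in $\mathbb{G}(2,H^0(K_C-N))$ — this is Claim \ref{cl:Grass}, proved via the base-point-free pencil trick using $h^0(N(-D_2))=0$, and it is not a formality. Second, you assert that $\overline{\mathcal V_d^{2g-4,1,2}}$ ``must coincide with $\mathcal B$'' for any component $\mathcal B$ containing it, but ruling out that a putative larger component has minimal special quotient degree $2g-3$ or $2g-2$ requires the argument of Step 5 (Claim \ref{cl:tutte}), which you do not supply.
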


\begin{proof} As in Theorem \ref{i=2}, once (i) has been proved, parts (ii)-(iii) follow from results proved in previous sections. 
Precisely, by definition of $\mathcal V_d^{2g-4,1,2}$ one has $L = \omega_C(-D_2)$, with $D_2 \in C^{(2)}$, $t=2$ and $N \in {\rm Pic}^{d-2g+4} (C)$ of speciality $r \geqslant 2$. From regularity of the component, Proposition  \ref{C.F.4} and \eqref{eq:fojt}, \eqref{eq:fo1t}, \eqref{eq:fnjt} give 
$$0 = \nu^{2g-4,1,2}_d - \rho_d^{k_3} \leqslant \dim(\widehat{\Lambda}^{\rm Tot}_2) - \rho_d^{k_3} = \varphi_n(2g-4,1,2) = \varphi_0(2g-4,1,2) - n (r-2) = - n (r-2).$$Thus, $n(r-2) = 0$. This implies that 
the general fibre of $\pi_{d,2g-4}|_{\widehat{\Lambda}^{\rm Tot}_2}$ is finite, i.e. $[\Ff_v] \in \mathcal V_d^{2g-4,1,2}$ general is rsp via  $\omega_C(-D_2)$ (correspondingly $\Gamma \subset F_v = \Pp(\Ff_v)$ is asi as in (iii)). 

Since $n(r-2) = 0$, then either $n=0$ or $r=2$. The latter case cannot occur  
otherwise we would have $n = d - 3g+7 \leqslant - \frac{g}{2} + 1 <0$, by the assumptions on $d$. 

Thus $n=0$ and $r = 3g-5-d$. Moreover, from \eqref{eq:lem3note}, \eqref{eq:clrt}, one has 
$m = 5g-10 -d$ and $c (\ell, r,2) = 2d+10 -4g$, so a good component $\Lambda_2 \subset \Pp({\rm Ext}^1(\omega_C(-D_2), N))$  has dimension $9g-20-3d$. If we add up to this quantity $g$, for the parameters of $N$, we get 
$10 g - 20 - 3d$. Thus, regularity forces $D_2 $ to be general in $C^{(2)}$.

Now, $\N_{\Gamma/F_v} \cong K_C - D_2 -N$ (cf. \eqref{eq:Ciro410}) so $h^i(\N_{\Gamma/F_v}) = h^{1-i}(N+D_2)$ for $0 \leqslant i \leqslant 1$. By the assumptions on $d$, $\deg(N+D_2) = d-2g+6 \leqslant \frac{g}{2}$, thus generality of $N$ implies that $N+D_2$ is also general, 
so $h^0(N+D_2) = 0$ and $h^1(N+D_2) = 3g-7-d \geqslant \frac{g}{2} -1$. This implies that $\Gamma$ is not ai and not of minimal degree among quotient line bundles of $\Ff$. 

Numerical conditions of Theorem \ref{unepi} (see also Remark \ref{unepine}) are satisfied for $j=1$, $t = 2$ and $\delta = 2g-4$, under the assumptions $d \leqslant \frac{5}{2} g -6$.  

Finally, the fact that $\Gamma$ is of minimal degree among  special, quotient line bundles of $\Ff$ follows from the proof of part (i) below, which consists of the following steps.


\noindent
{\bf Step 1}. In this step we determine which of the loci  of the form $\mathcal V_{d}^{\delta,j}$, as in \eqref{eq:Nudde}, or $\mathcal V_{d}^{\delta,j,t}$, as in \eqref{eq:nuddet}:

\begin{itemize}
\item[(a)] has the general point $[\Ff]$ with $i(\Ff) = 3$,

\item[(b)] is the image, via $\pi_{d,\delta}$, of a parameter space in $\Pp(\mathfrak{E}_{\delta})$ of dimension at least $\rho_d^{k_3} = 10g - 18 - 3d$. 
\end{itemize}

Let $\mathcal V$ be such locus and use notation as in \eqref{eq:exthyp1}, \eqref{eq:exthyp}. From Remarks \ref{rem:claim1}, \ref{30.12l}, \ref{rem:C.F.3},  conditions (a) and (b) are both satisfied only if the presentation of $[\Ff] \in \mathcal V$ general as in \eqref{eq:Fund} with $L$ special and effective, is such that $N \in {\rm Pic}^{d-\delta}(C)$ is special and the coboundary map $\partial : H^0(L) \to H^1(N)$ is not surjective. Possibilities are:  

\noindent
$(i)$  $j=1$ and $t= \cork(\partial)=2$;

\noindent
$(ii)$ $j=2$ and $t=1$.

In any event, one has $\ell \geqslant r$ (in particular, we will be in position to 
apply Theorems \ref{thm:mainext1}, \ref{thm:mainextt}; cf. e.g the proof of 
Claim \ref{cl:good} below). Indeed: 


\noindent
$\bullet$ in case $(i)$, the only possibilities for $\ell <r$ are $r-2 \leqslant \ell \leqslant r-1$. Then, 
$\dim(\Pp({\rm Ext}^1(L,N)) = 2 \delta - d + g - 2$, $\rho(L) = g - (\delta - g + 2)$, 
$\rho(N) = g-rn$, so the  number of parameters is 
$\delta + 4(g-1) - d - rn  < \rho_d^{k_3}$ since $d \leqslant \frac{5}{2}g -6$.


\noindent
$\bullet$ in case $(ii)$, the only possibility  for $\ell <r$  is $\ell = r-1$.  The same argument as above applies, the only difference is that $\rho(L) = g - 2(\delta - g + 3)$.

Since $\ell \geqslant r$, we see that case $(ii)$ cannot occur by  \eqref{eq:fojtb} and \eqref{eq:fnjt}. Thus, we focus on  $ \mathcal V_{d}^{\delta,1,2}$, investigating for which $\delta$  it satisfies (b).  We will prove that this only happens for $2g-4 \leqslant \delta \leqslant 2g-2$.

We have two cases: (1) $N$ effective, (2) $N$ non-effective. We will show that only case (2) occurs.


\noindent
{\bf Case (1)}. When $N$ is effective, from Remark \ref{rem:bohb}, a necessary condition for (b) to hold is \eqref{eq:8.1}, which reads $(r-2) n - 2 <0$. This gives $2 \leqslant r \leqslant 3$, since $r \geqslant t=2$.  We can apply Theorem \ref{unepi} (more precisely, Remark \ref{unepie}):  the first line of bounds on $\delta$ in Remark \ref{unepie} gives  $\delta \leqslant \frac{3g-\epsilon}{2} + r - 4 $. 
In particular, one must have $\delta \leqslant \frac{3g-\epsilon}{2} -1$. 

On the other hand, another necessary condition for (b) to hold 
is  $\varphi_n(\delta, 1,2) \geqslant 0$  (cf. Proposition \ref{C.F.4}). From Remark \ref{rem:conpar2}, 
$\varphi_n(\delta, 1,2) \leqslant \varphi_0(\delta, 1,2) = \delta-2g+4$ (cf. \eqref{eq:fo1t}) and  $\varphi_0(\delta, 1,2) \geqslant 0$ gives $\delta \geqslant 2g-4$ which contradicts $\delta \leqslant \frac{3g-\epsilon}{2} -1$, since $g \geqslant 8$.


\noindent
{\bf Case (2)}. When $N$ is non-effective, we apply Theorem \ref{unepi} (more precisely, 
Remark \ref{unepine}), with $j=1$ and $t=2$. By the same argument as in case (1), we see that $2g-4 \leqslant \delta \leqslant 2g-2$. 


\noindent
{\bf Step 2}. In this step we prove that the loci $\mathcal V_d^{\delta,1,2}$, with $2g-4 \leqslant \delta \leqslant 2g-2$, 
are not empty. Precisely, we will exhibit components $\mathcal V_d^{\delta,1,2}$ which are the image, via  $\pi_{d,\delta}$, of a total good component $\widehat{\Lambda}^{\rm Tot}_2 \subset \Pp(\Efra_\delta)$, of dimension $\rho_d^{k_3} + \delta - 2g + 4$ (cf. Definition \ref{def:goodtot} and \eqref{eq:nuddet}).

We will treat only the case $\delta= 2g-4$, i.e. $L= \omega_C(-D_2)$, with $D_2 \in C^{(2)}$, since the cases $L= \omega_C, \, \omega_C(-p)$ can be dealt  with similar arguments and can be left to the reader.

\begin{claim}\label{cl:Grass} Let  $N \in \Pic^{d-2g+4}(C)$ be general. For   
$V_2 \in \mathbb{G} (2, H^0(K_C-N))$ general, the map $\mu_{V_2}$ as in \eqref{eq:muW} is injective. 
\end{claim}
\begin{proof}[Proof of Claim \ref{cl:Grass}] The general $V_2 \in \mathbb{G} (2, H^0(K_C-N))$ determines a base point free linear pencil on $C$. Indeed, $h^0(K_C-N) = 3g-5-d \geqslant 5$. Take $\sigma_1, \sigma_2 \in H^0(K_C-N)$ general sections. If $p \in C$ is such that $\sigma_i(p)=0$, for $i=1,2$, by the generality of the sections we would have 
$p \in {\rm Bs}(|K_C-N|)$ so $h^0(N+p)=1$. This is a contradiction because $N$ is general and $\deg(N) < g-1$. The injectivity of $\mu_{V_2}$ follows from the base-point-free pencil trick: indeed, ${\rm Ker}(\mu_{V_2}) \cong H^0(N(-D_2))$ which is zero since $N$ is non-effective.
\end{proof}

\begin{claim}\label{cl:good} Let $N \in \Pic^{d-2g+4}(C)$ and $D_2 \in C^{(2)}$ be general. Then, there exists a unique good component $\Lambda_2 \subset \Ext^1(\omega_C(-D_2), N)$ whose general point $v$ is such that 
${\rm Coker}(\partial_v)^{\vee}$ is general in $\mathbb{G} (2, H^0(K_C-N))$ (cf. Remark \ref{rem:wt}). 
\end{claim}
\begin{proof}[Proof of Claim \ref{cl:good}] With notation as in \eqref{eq:lem3note}, \eqref{eq:exthyp1}, we have $\ell  = g-2$, $m=  5g-9-d$ and 
$r=  3g-5-d$. Then assumptions on $d$ and $g$ imply
\begin{equation}\label{eq:aiutow2}
m \geqslant 2 \ell +1 \; \;{\rm and}  \;\; \ell  \geqslant r \geqslant t=2
\end{equation} (cf. Step 1 for $\ell \geqslant r$). From \eqref{eq:aiutow2} and Claim \ref{cl:Grass}, we are in position to apply Theorem \ref{thm:mainextt}, with $\eta=0$ and $\Sigma_{\eta} =  \mathbb{G} (2, H^0(K_C-N))$. 

This yields the existence of a good component $\Lambda_2 \subseteq \mathcal W_2 \subset {\rm Ext^1}(\omega_C(-D_2), N)$.  Actually, $\Lambda_2$ is the only good component whose general point $v$ gives ${\rm Coker}(\partial_v)^{\vee} = V_2$ general in $\mathbb{G} (2, H^0(K_C-N))$. 

Indeed any component of $\mathcal W_2$, whose general point $v$ is such $\dim({\rm Coker}(\partial_v)) =2$, is obtained in the following way (cf. the proofs of Theorems \ref{thm:mainext1}, \ref{thm:mainextt}):  


\noindent
$\bullet$ take any $\Sigma \subseteq \mathbb{G} (2, H^0(K_C-N))$ irreducible, of codimension $\eta\geqslant 0$;

\noindent
$\bullet$  for $V_2$ general in $\Sigma$, consider  $H^0(\omega_C(-D_2)) \otimes V_2$ and the  map $\mu_{V_2}$ as in \eqref{eq:muW};

\noindent
$\bullet$ let  $\tau := \dim({\rm Ker}(\mu_{V_2})) \geqslant 0$ and $\Pp := \Pp({\rm Ext}^1(\omega_C(-D_2), N)) = \Pp(H^0(2K_C -D_2 - N)^{\vee})$ (cf. \eqref{eq:PP}); 

\noindent
$\bullet$ consider the incidence variety 
$$\Jj_{\Sigma} := \left\{(\sigma, \pi) \in \Sigma \times \Pp \; | \; {\rm Im}(\mu_{V_2}) \subset \pi \right\}.$$Since 
$m \geqslant 2 \ell + 1 - \tau$, one has $\Jj_{\Sigma} \neq \emptyset$ (cf. the proofs of Theorems \ref{thm:mainext1}, \ref{thm:mainextt}); 

\noindent
$\bullet$ consider the projections $\Sigma \stackrel{pr_1}{\longleftarrow} \Jj_{\Sigma} \stackrel{pr_2}{\longrightarrow}  \Pp$;

\noindent
$\bullet$ the fibre of $pr_1$ over any point $V_2$ in the image is  $\left\{ \pi \in \Pp\,|\, {\rm Im}(\mu_{V_2}) \subset \pi\right\}$, i.e. it 
is isomorphic to the linear system of hyperplanes of $\Pp$ passing through the linear subspace $\Pp({\rm Im}(\mu_{V_2}))$. For $V_2 \in \Sigma$ general, 
this fibre is irreducible of dimension $m -1 - 2 \ell  + \tau = 3g-6-d+\tau$. In particular, there exists a unique component $\Jj \subseteq \Jj_{\Sigma}$ dominating $\Sigma$ via $pr_1$;

\noindent
$\bullet$  since $r=3g-5-d$, one has 
$$\dim(\Jj) = 9g-20-3d + \tau - \eta = {\rm expdim}(\widehat{\mathcal W}_2) + \tau - \eta,$$where $\widehat{\mathcal W}_2 = \Pp(\mathcal W_2) \subset \Pp$ (notation as in the proof of Theorem \ref{thm:mainext1}). By construction, $pr_2(\Jj) \subseteq \widehat{\mathcal W}_2$. Moreover, if $\epsilon$ denotes the dimension of the general fibre of $pr_2|_{\Jj}$, then $pr_2(\Jj)$ can fill up a component $\mathcal X$ of $\widehat{\mathcal W}_2$  only if $\tau - \eta - \epsilon \geqslant 0$: the component $\mathcal X$ is  good when equality holds.


When $\Sigma = \mathbb{G} (2, H^0(K_C-N))$, then $\eta = 0$ and, by Claim \ref{cl:Grass},  also $\tau = 0$. Since $\Jj \subseteq \Jj_{\mathbb{G} (2, H^0(K_C-N))}$ is the unique component dominating 
$\mathbb{G} (2, H^0(K_C-N))$, then  $pr_2(\Jj)$  fills up a component $\widehat{\Lambda}_2$ of $\widehat{\mathcal W}_2$, i.e. $\epsilon = 0$. Thus $\widehat{\Lambda}_2$ is good. 
\end{proof}

By the generality of $N \in {\rm Pic}^{d-2g+4} (C)$ and of $D_2 \in C^{(2)}$, Claim \ref{cl:good} ensures the existence of a total good component $\widehat{\Lambda}_2^{\rm Tot} \subset \Pp (\mathfrak{E}_{2g-4})$.

For $2g-4 \leqslant \delta \leqslant 2g-2$, we will denote by $V_d^{\delta}$ the total good component we constructed in this step. To ease notation, we will  denote by $\mathcal V_d^{\delta}$ its image in $\widetilde{B_C^{k_3}}(d)$ via  $\pi_{d,\delta}$, which is a $\mathcal V_{d}^{\delta,1,2}$ as in Step 1.


\noindent
{\bf Step 3}. In this step, we prove that $\mathcal V_d^{2g-2}$ has dimension $\rho_d^{k_3}$.

From Step 2, one has $\dim(V_d^{2g-2}) = \rho_d^{k_3} + 2 = 10g - 16 - 3d$. 
We want to show that the general fibre of $\pi_{d,2g-2}|_{V_d^{2g-2}}$ has dimension two. To do this, we use similar arguments as in the proof of Lemma \ref{lem:claim1}. 

Let $[\Ff] \in \mathcal V_d^{2g-2}$ be  general; by Step 2,  
$\Ff = \Ff_u$, for $u \in \widehat{\Lambda}_2  \subset \Pp(H^0(2 K_C- N)^{\vee})$ general and $N \in {\rm Pic}^{d-2g+2}(C)$ general, where $\widehat{\Lambda}_2 = pr_2(\Jj)$ and  
$\Jj \subset \mathbb{G} (2, H^0(K_C-N)) \times \Pp(H^0(2 K_C- N)^{\vee})$ the unique component dominating $ \mathbb{G} (2, H^0(K_C-N))$ (cf. the proof of Claim \ref{cl:good}). Then $$(\pi_{d,2g-2}|_{V_d^{2g-2}})^{-1}([\Ff_u]) = \left\{ (N', \omega_C, u') \in V_d^{2g-2} \, | \; \Ff_{u'} \cong \Ff_u \right\}.$$In particular, one has $N \cong N'$ so $u, u' \in  \widehat{\Lambda}_2 \subset \Pp(H^0(2 K_C- N)^{\vee})$.

Let $\varphi : \Ff_{u'} \stackrel{\cong}{\to} \Ff_u$ be the isomorphism between the two bundles and consider the diagram 
\[
\begin{array}{ccccccl}
0 \to & N & \stackrel{\iota_1}{\longrightarrow} & \Ff_{u'} & \to & \omega_C & \to 0 \\ 
 & & & \downarrow^{\varphi} & & &  \\
0 \to & N & \stackrel{\iota_2}{\longrightarrow} & \Ff_u & \to & \omega_C & \to 0.
\end{array}
\]


  If $u= u'$,  then $\varphi = \lambda \in \C^*$ (since $\Ff_u$ is simple) and the maps 
$\lambda \iota_1$ and $\iota_2$ determine two non-zero sections $s_1 \neq s_2 \in H^0(\Ff_u \otimes N^{\vee})$. 
Similar computation as in \eqref{eq:casaciro2} shows that $h^0(\Ff_u \otimes N^{\vee}) = i(\Ff_u) = 3$, since 
$u \in \widehat{\Lambda}_2$ general. Therefore, if $\Gamma \subset F_u$ denotes the section 
corresponding to $\Ff_u \to \!\! \to \omega_C$,  $(\pi_{d,2g-2}|_{V_d^{2g-2}})^{-1}([\Ff_u])$ contains 
a $\Pp^2$ isomorphic to $|\Oc_{F_u}(\Gamma)|$ (cf. \eqref{eq:isom2} and Lemma \ref{lem:ovviolin}).


 The case $u \neq u'$ cannot occur. Indeed, for any inclusion $\iota_1$ as above, there exist an inclusion $\iota_2$ and a $\lambda = \lambda (\iota_1, \iota_2) \in \C^*$ such that $\varphi \circ \iota_1 = \lambda \iota_2$, otherwise we would have $\dim(|\Oc_{F_u}(\Gamma)|) > 2$, a contradiction. One concludes by Lemma \ref{lem:technical}. 


In conclusion, the general fibre of $\pi_{d,2g-2}|_{V_d^{2g-2}}$ has dimension two (actually, this fibre is a $\Pp^2$).


\noindent
{\bf Step 4}. In this step we prove that $\overline{\mathcal V_d^{2g-2}}= \overline{\mathcal V_d^{2g-3}} = \overline{\mathcal V_d^{2g-4}} := \overline{\mathcal V}$. In particular, the presentation of $[\Ff]\in \overline{\mathcal V}$ general will be specially rigid only for $\delta = 2g-4$.

From Step 2 one has $\dim(V_d^{\delta}) = \rho_d^{k_3} + \delta - 2g+4$, for $2g-4 \leqslant \delta \leqslant 2g-2$. 
Moreover, the general element of $V_d^{\delta}$ can be identified with a pair $(F,\Gamma)$, where $F = \Pp(\Ff)$, $\Gamma \subset F$ a section corresponding to $\Ff \to\!\!\to \omega_C(-D)$, where 
$D \in C^{(2g-2-\delta)}$ and, for $\delta = 2g-2$, one has $D = 0$ and $\dim(|\Oc_F(\Gamma) |)=2$.

We will now prove that  there exist dominant, rational maps: 


\noindent
$(a)$ $r_1: V^{2g-2}_d \times C \dasharrow V^{2g-3}_d$, such that $r_1((F,\Gamma), p) = (F,\Gamma_p)$, where $\Gamma_p \subset F$ is a section corresponding to $\Ff \to\!\!\to \omega_C(-p)$,


\noindent
$(b)$ $r_2: \widetilde{V}^{2g-2}_d  \dasharrow V^{2g-4}_d$, where $ \widetilde{V}^{2g-2}_d$ is a finite cover $\varphi: \widetilde{V}^{2g-2}_d \to V^{2g-2}_d$ endowed with a rational map 
$\psi:\widetilde{V}^{2g-2}_d \dasharrow C^{(2)}$:  if $\xi \in \widetilde{V}^{2g-2}_d$ is general and $\varphi(\xi) := (F,\Gamma)$, then $r_2(\xi) = (F, \Gamma')$, with 
$\Gamma'$ a section corresponding to $\Ff \to\!\!\to \omega_C(-\psi(\xi))$.


\noindent
The existence of these maps clearly proves that $\overline{\mathcal V_d^{2g-2}}= \overline{\mathcal V_d^{2g-3}} = \overline{\mathcal V_d^{2g-4}}$.


\noindent
{$(a)$} Take  $(F,\Gamma)$  general in  $V^{2g-2}_d$ and $p \in C$ general. Then, the restriction map 
$$ \C^3 \cong H^0(\Oc_F(\Gamma)) \to H^0(\Oc_{f_p} (\Gamma)) \cong \C^2$$is surjective, because the general member of 
$|\Oc_F(\Gamma) |$ is irreducible. Hence there is a unique $\Gamma_p \in | \Oc_F(\Gamma - f_p)|$. 

We claim that $\Gamma_p$ is irreducible, i.e. it is a section. If not, $\Gamma_p$ would be a section plus a number $n \geqslant 1$ of fibres. As we saw, $n \leqslant 1$ (cf. Step 1) so $n=1$. 
This determines an automorphism of $C$ and, since $C$ has general moduli, this automorphism must be the identity. This is impossible because 
the map $\Phi_{\Gamma} : F \dasharrow \Pp^2$, given by $|\Oc_F(\Gamma)|$, is dominant hence it is ramified only in codimension one.

In conclusion, $\Gamma_p$ corresponds to $\Ff \to\!\! \to \omega_C(-p)$ and $(F, \Gamma_p)$ belongs to $V_d^{2g-3}$, and this defines $r_1$. 
The proof that $ (F, \Gamma_p)$ belongs to $V_d^{2g-3}$ is postponed for a moment (cf. Claim \ref{cl:maronna}).


\noindent
{$(b)$}  Given $(F, \Gamma)$ general in  $V^{2g-2}_d$, we can consider the map $\Phi_{\Gamma}$ as in Case (a). 
Since $\Phi_{\Gamma}$ maps the rulings of $F$ to lines, it determines a morphism $\Psi : C \to  C' \subset (\Pp^2)^{\vee}$. From Step 1, no (scheme-theoretical) 
fibre of $\Psi$ can have length bigger than two. Therefore, since $C$ has general moduli, $\Psi : C \to C'$ is birational and moreover, since $g \geqslant 8$, 
$C'$ has a certain number $n$ of double points, corresponding to  curves of type $\Gamma_D + f_D$, with $D \in C^{(2)}$ fibre of $\Psi$ over a double point of $C'$. 

Then the general point $\xi$ of $\widetilde{V}_d^{2g-2}$ corresponds to a triple $(F, \Gamma, D)$ (with $D \in C^{(2)}$ as above), the pair $(F, \Gamma_D)$ belongs to $V_d^{2g-4}$ (cf. 
Claim \ref{cl:maronna})  and $r_2(F, \Gamma, D) = (F,\Gamma_D)$, $\psi(F, \Gamma, D)= D$. 

\begin{claim}\label{cl:maronna} With the above notation,  $ (F, \Gamma_p)$ belongs to $V_d^{2g-3}$ and $(F, \Gamma_D)$ belongs to $V_d^{2g-4}$. 

\end{claim}
\begin{proof}[Proof of Claim \ref{cl:maronna}] We prove the claim for $(F, \Gamma_D)$, since the proof is similar in the other case. Take $(F,\Gamma)$  general in  $V^{2g-2}_d$; this determines 
a sequence 
\begin{equation}\label{eq:maronna}
0 \to N \to \Ff \to \omega_C \to 0,
\end{equation}where $N$ is general of degree $d-2g+2$ and the corresponding extension is general in the unique (good) component $\widehat{\Lambda}_2 \subset 
\Pp({\rm Ext}^1(\omega_C,N))$ dominating $\mathbb{G}(2, H^0(K_C-N))$ (cf. the proof of Step 2); thus,  if $\partial$ is the coboundary map, then 
${\rm Coker} (\partial) $ is a general two-dimensional quotient of $H^1(N)$. 

On the other hand, $(F, \Gamma_D)$ determines a sequence $$0 \to N(D) \to \Ff \to \omega_C(-D) \to 0.$$ 
Since $\deg(N(D)) = d-2g+4 \leqslant \frac{g}{2} - 2 < g-1$ and $N(D)$ is general of its degree, then $h^0(N(D))=0$. In view of 
$$0 \to N \to N(D) \to \Oc_D \to 0,$$one has 
the exact sequence 
$$0 \to H^0(\Oc_D) \cong \C^2 \to H^1(N) \stackrel{\alpha}{\longrightarrow} H^1(N(D)) \to 0.$$

The existence of the unisecant $\Gamma_D + f_D$ on $F$ gives rise to the sequence 
\begin{equation}\label{eq:maronna2}
0 \to N \to \Ff \to \omega_C(-D) \oplus \Oc_D \to 0
\end{equation}(cf. \eqref{eq:Fund2}). This sequence corresponds to an element $\xi \in {\rm Ext}^1( \omega_C(-D) \oplus \Oc_D, N)$, which by Serre duality, is isomorphic to $H^0(\Oc_D)^{\vee} \oplus {\rm Ext}^1 (\omega_C(-D), N)$ (cf. \cite[Prop. III.6.7, Thm. III.7.6]{Ha}).  So $\xi = 
(\sigma, \eta)$, with $\sigma \in  H^0(\Oc_D)^{\vee}$and $\eta \in {\rm Ext}^1 (\omega_C(-D), N) \cong H^1(N(D) \otimes \omega_C^{\vee})$. 

We have the following diagram
\[
\begin{array}{ccc}
H^0(\Oc_D) \oplus H^0(\omega_C (-D)) & \stackrel{\partial_0}{\longrightarrow} & H^1(N)  \\
\uparrow & & \;\; \downarrow^{\alpha} \\
H^0(\omega_C(-D)) & \stackrel{\partial'}{\longrightarrow} & H^1(N(D)) \\
\uparrow & & \downarrow \\
0 & & 0
\end{array}
\]where $\partial_0$, $\partial'$ are the coboundary maps. The action of $\xi$ on $\{0\} \oplus H^0(\omega_C (-D))$ coincides with the action of $\eta$ on $H^0(\omega_C (-D))$ via cup-product. This yields an isomorphism 
 ${\rm Coker}(\partial') \stackrel{\cong}{\longrightarrow} {\rm Coker}(\partial_0)$.

Notice that \eqref{eq:maronna2} can be seen as a limit of \eqref{eq:maronna}. Since ${\rm Coker} (\partial)$ is a general two-dimensional quotient of $H^1(N)$, then also ${\rm Coker} (\partial_0)$ is general. The above argument 
implies that ${\rm Coker} (\partial')$ is also general, proving the assertion (cf. the proof of Claim \ref{cl:good}). \end{proof}

Finally, to prove that $r_1$, $r_2$ are dominant, it suffices to prove the following: 

\begin{claim}\label{cl:ri} The general fibre of $r_i$ has dimension two, for $1 \leqslant i \leqslant 2$.

\end{claim} \begin{proof}[Proof of Claim \ref{cl:ri}] It suffices to prove that there are fibres of dimension two. For $r_1$,  take $(F, \Gamma, p) $ general in $V_{d}^{2g-2} \times C$. 
The fibre of $r_1$ containing this triple consists of all triples $(F, \Gamma', p)$, with $\Gamma' \in |\Oc_{F} (\Gamma)|$ so it has dimension 
two since $i(\Ff) =3$ (cf. computation as in \eqref{eq:casaciro2}). The same argument works for $r_2$. \end{proof}


\noindent
{\bf Step 5}. In this step we prove that $\overline{\mathcal V}$ is an irreducible component of $\widetilde{B_C^{k_3}}(d)$.

\begin{claim}\label{cl:tutte} Let $(F, \Gamma_D) \in V_d^{2g-2-i}$ be general, with $1 \leqslant i \leqslant 2$ and $D \in C^{(i)}$. Then $|\Gamma_D + f_D|$ has dimension two, its general member $\Gamma$ is smooth and it corresponds to a sequence $0 \to N (-D) \to \Ff \to \omega_C \to 0$. Consequently, the pair $(F, \Gamma_D)$ is in the image of $r_i$. 

\end{claim}

\begin{proof}[Proof of Claim \ref{cl:tutte}]  Given the first part of the statement, the conclusion is clear. 
To prove the first part, note that the existence of $\Gamma \subset F$ gives an exact sequence 
\begin{equation}\label{eq:lala}
0 \to N \to \Ff \to \omega_C(-D) \to 0,
\end{equation}hence $h^0(\Oc_F(\Gamma_D + f_D)) = h^0(\Ff \otimes N^{\vee} (D)) = h^0(\Ff \otimes \omega_C \otimes \det(\Ff)^{\vee}) = h^1(\Ff) = 3$ (cf. \eqref{eq:isom2}). This implies the assertion. \end{proof}

Let now $\mathcal B \subseteq \widetilde{B_C^{k_3}}(d)$ be a component containing $\overline{\mathcal V}$. From Step 4, $[\Ff] \in \mathcal B$ general  has speciality $i=3$ and a special presentation as in \eqref{eq:Fund} with $L$ of minimal degree $\delta$.  
Thus $2g-4 \leqslant \delta \leqslant 2g-2$, since the Segre invariant is lower semi-continuous  (cf. Remark \ref{rem:seginv} and also 
\cite[\S\;3]{LN}).

By Claim \ref{cl:tutte}, $2g-3\leqslant \delta \leqslant 2g-2$ does not occur under the minimality assumption on $L$. Indeed, in both cases we have a two-dimensional linear system $|\Gamma|$, whose general member is a section, 
corresponding to a surjection $\Ff \to\!\! \to \omega_C$ and we proved that there would be curves in this linear system containing two rulings.

If $\delta = 2g-4$, we have an exact sequence as in \eqref{eq:lala}. By specializing to a general point of $\overline{\mathcal V} = \overline{\mathcal V^{2g-4,1,2}}$, because of Claim \ref{cl:tutte}, 
we see that in \eqref{eq:lala} one has $h^0(N) = 0$. Hence, $h^1(N)$ is constant. Since for the general element 
of  $\overline{\mathcal V}$, ${\rm Ker}(\mu_{V_2}) = (0)$ the same happens for the general element of $\mathcal B$ i.e., 
with notation as in the proof of Claim \ref{cl:good}, $\tau$ is constant equal to zero. Therefore, also $\eta = \epsilon = 0$ for the general point of $\mathcal B$ (see l.c.),  which implies the assertion. \end{proof}

With our approach,  we cannot conclude that $\overline{\mathcal V_{d}^{2g-4,1,2}} $ in Theorem \ref{i=3} is the unique regular component, whose general  point $[\Ff]$ is such that $i(\Ff)=3$, because 
we do not know if $\Lambda_2 \subset \mathcal W_2$ is the only good component when $N \in \Pic^{d-2g+4}(C)$ and $D_2 \in C^{(2)}$ general. However, results in  \cite{Tan,TB00} imply that $\widetilde{B_C^{k_3}}(d)$ is irreducible for $d \leqslant \frac{10}{3}g -7$, though they say nothing on rigid special presentation of the general element. Putting all together, we have:

\begin{corollary}\label{cor:i=3} Under the assumptions of Theorem \ref{i=3}, one has  $\widetilde{B_C^{k_3}}(d) = \overline{\mathcal V_{d}^{2g-4,1,2}}$. 
\end{corollary}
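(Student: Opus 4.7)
The plan is to combine the component-level information already established in Theorem \ref{i=3} with an external irreducibility statement to collapse the Brill--Noether locus onto the single component we constructed. Concretely, Theorem \ref{i=3}(i) gives that $\overline{\mathcal V_{d}^{2g-4,1,2}}$ is a regular irreducible component of $\widetilde{B_C^{k_3}}(d)$, so what remains to show is that $\widetilde{B_C^{k_3}}(d)$ has no other irreducible component in the relevant range.

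First I would check the numerical compatibility: the hypotheses of Theorem \ref{i=3} impose $g\geqslant 8$ and $2g-2\leqslant d\leqslant \tfrac{5}{2}g-6$, and a direct comparison shows $\tfrac{5}{2}g-6\leqslant \tfrac{10}{3}g-7$ as soon as $g\geqslant 2$ (since this is equivalent to $1\leqslant \tfrac{5g}{6}$). Hence the full range of $d$ considered in the corollary sits inside the range $d\leqslant \tfrac{10}{3}g-7$ in which the results of \cite{Tan,TB00} apply and guarantee irreducibility of $\widetilde{B_C^{k_3}}(d)$.

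Second, I would invoke this external irreducibility: on $C$ of general moduli of genus $g\geqslant 8$, for every admissible $d$, the locus $\widetilde{B_C^{k_3}}(d)$ is irreducible. Combining this with Theorem \ref{i=3}(i), which asserts that $\overline{\mathcal V_{d}^{2g-4,1,2}}$ is an irreducible component, one gets the set-theoretic equality $\widetilde{B_C^{k_3}}(d)=\overline{\mathcal V_{d}^{2g-4,1,2}}$, as desired.

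The only real step here is verifying the numerical containment of ranges; the substantive content is split between Theorem \ref{i=3} (which supplies the component together with its rigid special presentation) and the irreducibility theorem from \cite{Tan,TB00} (which rules out any other component). There is no genuine obstacle beyond making sure the hypotheses of the cited irreducibility result are satisfied under our assumptions on $d$ and $g$; in particular, the obstruction mentioned in the remark preceding the corollary---that we do not know a priori that $\Lambda_2\subset\mathcal W_2$ is the unique good component for general $N$ and $D_2$---is bypassed entirely by using the external irreducibility input rather than trying to prove uniqueness of the good component intrinsically.
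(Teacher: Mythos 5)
Your proposal is correct and coincides with the paper's own argument: the paper likewise observes that the range $2g-2\leqslant d\leqslant \tfrac{5}{2}g-6$ lies within the range $d\leqslant\tfrac{10}{3}g-7$ where the irreducibility of $\widetilde{B_C^{k_3}}(d)$ from \cite{Tan,TB00} applies, and combines this with Theorem \ref{i=3} to conclude. Your numerical check and the remark that this bypasses the open question of uniqueness of the good component are exactly the points the paper makes in the paragraph preceding the corollary.
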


\begin{remark}\label{rem:i=3} (1) Theorem \ref{C.F.VdG}, for $j=3$, shows the existence of elements of $\widetilde{B_C^{k_3}}(d)$ with injective Petri map in the range 
 $g \geqslant 21$, $g+3 \leqslant \delta \leqslant \frac{4}{3} g - 4$,  $2g+6 \leqslant d \leqslant \frac{8}{3}g-9$. This gives a proof, alternative to the one in \cite{TB00}, of generic smoothness 
of  $\widetilde{B_C^{k_3}}(d)$ in the above range.

\noindent
(2) If $\frac{5}{2}g - 5 \leqslant d \leqslant \frac{10}{3}g-7 $, $\widetilde{B_C^{k_3}}(d)$ is, as we saw, irreducible but in general it is no longer true that it is determined by a (total) good component. To see this, 
we consider a specific example.

Take $\widetilde{B_C^{k_3}}(3g-4)$, which  is non-empty, irreducible, generically smooth, of dimension $g-6$ and 
$[\Ff] \in  \widetilde{B_C^{k_3}}(3g-4)$ general is such that $i(\Ff)=3$ by \cite{Tan,TB00}. 
By Lemma \ref{lem:1e2note}, $\Ff$ can be rigidly presented as in \eqref{eq:Fund}, where $L  \in W_{\delta}^{\delta-g+j}(C)$ and $1 \leqslant j \leqslant 3$. 

The cases $j=2,\;3$ cannot occur: the stability  condition \eqref{eq:assumptions}  imposes $\delta  > \frac{3}{2}g - 2$, but if $j=3$, $\rho(L) \geqslant 0$ forces $\delta \leqslant \frac{4}{3}g-3$ whereas if $j=2$, $\rho(L) \geqslant 0$ implies $\delta  \leqslant \frac{3}{2}g - 3$; in both cases we get a contradiction. 

The only possible case is therefore $j=1$, so the corank of the coboundary map is $t =2$, which implies that $N$ is of speciality $r \geqslant 2$. Since $\chi(N) = 2g-3 - \delta$, the case $N$ non-effective would give $\delta>2g-3$, i.e. $L \cong \omega_C$. 
But in this case, $a_{F} (2g-2) \geqslant 2$ (usual computations as in \eqref{eq:casaciro2}) against the rigidity assumptions.

Therefore $N$ must be effective, with $n = h^0(N) = 2g-3-\delta + r$. We want to show that the hypotheses of Corollary \ref{cor:mainext1} hold. Assume by contradiction $\ell <r$; then 
\begin{equation}\label{eq:zaid}
\delta < g-2+r.
\end{equation} From stability $3g-4 < 2\delta < 2g-4+2r$, i.e. $g-2r <0$. Since $C$ has general moduli, one has $\rho(N)\geqslant 0$, hence $h^0(N)=1$.  So $d-\delta = g-r$ and  \eqref{eq:zaid} yields 
$d = \delta + d-\delta < 2g-2$ a contradiction. Thus, $\ell \geqslant r$.

Now, from \eqref{eq:lem3note}, $m = 2 \delta - 2 g + 3$ since $N$ is not isomorphic to $L$. Thus, $m \geqslant \ell +1$: this is equivalent to $\delta \geqslant g$, which holds by stability. 

In conclusion, by Corollary \ref{cor:mainext1}, $\mathcal W_1^{\rm Tot}$ is irreducible, of the expected dimension.  Assume that $\widehat{\mathcal W}^{\rm Tot}_1$ contains a total good component $\widehat{\Lambda}_2^{\rm Tot}$, 
whose image via $\pi_{3g-4, \delta}$ is $\widetilde{B_C^{k_3}}(3g-4)$. Thus, $r \geqslant 2$. On the other hand $r=2$ cannot occur since $$c(\ell,2,2) = 2(\delta - g +2) > 2\delta - 2g +2 = m-1= \dim(\Pp({\rm Ext}^1(L,N))),$$a contradiction. Therefore, one has $r \geqslant 3$. 

From the second equality in \eqref{eq:yde2}$$\dim(\Pp(\Efra_\delta)|_{\zd}) = (r+1) \delta - (2r-1) g - r (r-3)$$and the codimension of $\widehat{\Lambda}_2^{\rm Tot}$ 
is $$c(\ell,r,2) =  2 (\delta - g + 4 - r).$$Set $a := a_{F_v}(\delta)$. From Remark \ref{rem:rigid} we can assume  $a \leqslant 1$. Therefore,$$\dim({\rm Im}(\pi_{d, \delta}|_{\widehat{\Lambda}_2^{\rm Tot}})) = g-6$$gives $(r-1) \delta = 2r g - 2g + r^2 - 5 r + 2 + a$, i.e. 
$$\delta = 2g+r-4 + \frac{a-2}{r-1}.$$This yields a contradiction. Indeed, since $0 \leqslant a \leqslant 1$, $r \geqslant 3$ and $\delta$ is an integer, the only possibility is $r=3$, $a=0$, $\delta = 2g-2$ which we already saw to contradict 
the rigidity assumption. 
\end{remark}


\begin{theorem}\label{prop:M32K} Let $C$ be of genus $g \geqslant 4$, with general moduli. Then, 
$\widetilde{M_C^3}(2, \omega_C) \neq \emptyset$. Moreover, there exists an irreducible component which is regular (i.e. of dimension $3g-9$), whose general point $[\Ff]$ fits in a sequence
\begin{equation}\label{eq:presott}
0 \to \Oc_C(p+q) \to \Ff \to \omega_C (-p-q) \to 0,
\end{equation}where 

\begin{itemize}
\item[$\bullet$] $p +q \in C^{(2)}$ general, and 
\item[$\bullet$] $\Ff=\Ff_v$ with $v \in \Lambda \subset \mathcal W_2  \subset \Ext^1(\omega_C(-p), \Oc_C(p))$ general in $\Lambda$, 
which is a component of $\mathcal W_2$ of dimension $3g-10$ (hence, not good).    
\end{itemize}
\end{theorem}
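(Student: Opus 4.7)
The plan is to carry out the construction of \S\,\ref{S:Ns} in the canonical-determinant setting, adapting the combined strategies of Theorems~\ref{prop:M22K} and~\ref{i=3}. First I would fix $D_2 = p+q \in C^{(2)}$ general and set $N = \Oc_C(D_2)$, $L = \omega_C(-D_2)$, which forces $\det(\Ff_v) = N \otimes L = \omega_C$ automatically. Riemann--Roch gives $\ell = h^0(L) = g-2$, $j = h^1(L) = 1$, $n = h^0(N) = 1$, $r = h^1(N) = g-2$ and $m = \dim \Ext^1(L,N) = 3g-7$; in particular $\ell = r$ and the expected codimension of $\mathcal{W}_2$ is $c(\ell,r,2) = 4$. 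The peculiar feature of the canonical--determinant case is that $K_C - N = L$, so the map $\mu$ in \eqref{eq:mu} becomes the \emph{symmetric} multiplication $\mu : H^0(L)\otimes H^0(L) \to H^0(L^{\otimes 2})$.

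Next I would construct $\Lambda \subset \mathcal{W}_2$ by the incidence-variety machinery of Theorems \ref{thm:mainext1} and \ref{thm:mainextt}. The symmetric nature of $\mu$ forces $\ker \mu_{V_2}$ to contain the Koszul element $\alpha\otimes\beta - \beta\otimes\alpha$ for any $V_2 = \langle \alpha,\beta\rangle$; for generic $V_2$ the base-point-free pencil trick (applicable since $|L|$ is base-point-free for $D_2$ general and $g\geq 4$) gives equality $\dim \ker \mu_{V_2} = 1$, hence $\dim \mathrm{Im}\,\mu_{V_2} = 2g-5$. The incidence $\Jj \subset \mathbb{G}(2,H^0(L))\times \Pp$ therefore dominates the Grassmannian (of dimension $2g-8$) with generic fibre of projective dimension $g-3$, producing a unique component $\Lambda$ of $\mathcal{W}_2$ of dimension $3g-10$. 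This exceeds the expected $3g-11$ by one, the excess being exactly the unavoidable Koszul contribution $\tau = 1$ in the proof of Claim~\ref{cl:good}; this is why $\Lambda$ fails to be good in the sense of Definition~\ref{def:goodc}.

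I would then verify the structural properties of $\Ff_v$ for $v \in \Lambda$ general. From the cohomology sequence of $0\to N\to \Ff_v\to L\to 0$ with $\cork \partial_v = 2$, one reads off $h^0(\Ff_v) = h^1(\Ff_v) = 3$, so $[\Ff_v] \in M_C^3(2,\omega_C)$. For stability I would apply Proposition~\ref{prop:LN} with $\sigma = 2$: since $d$ is even, stability reduces to $\widehat{\Lambda} \not\subset \mathrm{Sec}_{g-3}(X)$, where $X = \varphi_{|2K_C-2D_2|}(C)$. A dimension count shows $\dim\mathrm{Sec}_{g-3}(X) \leq 2g-7 < 3g-11 = \dim \widehat{\Lambda}$ whenever $g \geq 5$, so containment is impossible and stability of $\Ff_v$ follows; the borderline case $g=4$, in which $\dim \widehat\Lambda = \dim X = 1$, would be handled by a separate direct check that $\widehat\Lambda \neq X$.

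Finally I would prove that the morphism $\pi : \mathcal{T} \to \widetilde{M_C^3}(2,\omega_C)$ from the parameter space $\mathcal{T} = \{(D_2,[v]) : D_2 \in C^{(2)},\ [v] \in \widehat{\Lambda}\}$ (of dimension $2 + (3g-11) = 3g-9$) is generically finite; equivalently, that the presentation \eqref{eq:presott} is rigidly specially presented at the general $[\Ff_v]$. I would mimic the contradiction argument of Claim~\ref{4.1} and Step~3 of Theorem~\ref{i=3}: a positive-dimensional algebraic family $\{\Gamma_{D_2(t)}\}$ of sections corresponding to quotients $\omega_C(-D_2(t))$ would, after adding fibres $f_{D_2(t)}$, produce reducible unisecants algebraically equivalent to a canonical one on $F_v = \Pp(\Ff_v)$; Bertini applied to the resulting linear system of canonical sections, together with \eqref{eq:isom2} and the computation $h^0(\Ff_v\otimes N_{\text{can}}^{\vee}) = h^1(\Ff_v) = 3$ of \eqref{eq:casaciro2}, would force $\dim |\Oc_{F_v}(\Gamma_{\text{can}})| \geq 2$, which combined with the generality of $D_2$ (imposing independent conditions) gives the desired contradiction; Lemma~\ref{lem:technical} rules out the case of non-proportional extensions yielding the same bundle. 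Once $\pi$ is generically finite, its image has dimension $3g-9$, matching the expected dimension of $\widetilde{M_C^3}(2,\omega_C)$; since every component of the latter has dimension at least the expected one, the image fills a regular component, completing the proof. The main obstacle is this last step: unlike in Theorem~\ref{i=3}, the canonical-determinant constraint forces $N$ to be effective rather than general, so the Claim~\ref{4.1}-style contradiction must be re-derived in the richer cohomological regime $h^0(\Ff_v) = 3$, where unisecants have more room to move in linear systems.
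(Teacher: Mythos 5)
Your construction coincides with the paper's: same choice $L=\omega_C(-D_2)$, $N=\Oc_C(D_2)$ with the invariants $\ell=r=g-2$, $m=3g-7$, $c(\ell,r,2)=4$; the same identification of $\mu_{V_2}$ with (a restriction of) the symmetric multiplication map, whose one-dimensional kernel is computed by the base-point-free pencil trick exactly as in the paper's appeal to Claim \ref{cl:good}; the same incidence-variety count producing the non-good component $\Lambda$ of dimension $3g-10$; and the same use of Proposition \ref{prop:LN} for stability (your separate treatment of the borderline $g=4$ is a welcome refinement the paper glosses over). The coboundary computation $h^0(\Ff_v)=h^1(\Ff_v)=3$ and the parameter count $\dim\mathcal T=2+(3g-11)=3g-9$ are also exactly what the paper's ``obvious parameter computation'' amounts to.

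The genuine gap is in your last inference: from ``$\pi$ is generically finite, so $\overline{{\rm Im}(\pi)}$ has dimension $3g-9$'' and ``every component of $\widetilde{M_C^3}(2,\omega_C)$ has dimension at least the expected $3g-9$'' you conclude that the image \emph{fills} a regular component. The two facts you invoke both bound dimensions from below, so they are compatible with $\overline{{\rm Im}(\pi)}$ being a proper subvariety of a superabundant component $\mathcal B$ with $\dim(\mathcal B)>3g-9$, in which case the general point of $\mathcal B$ would admit no presentation \eqref{eq:presott} and the theorem as stated would not follow. What is missing is an upper bound $\dim(\mathcal B)\leqslant 3g-9$; the paper obtains it by citing the injectivity of the symmetric Petri map $\mu_{\Ff}$ on a general curve (\cite{TB000}, \cite{Beau2}), which forces any component through $[\Ff_v]$ to be smooth of dimension exactly $3g-9$ at its general point. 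Without that input (or, alternatively, a full analysis in the style of Steps 4--5 of Theorem \ref{i=3} showing that the general member of the ambient component itself admits a presentation \eqref{eq:presott}), your argument establishes non-emptiness and the existence of a $(3g-9)$-dimensional family with the stated presentation, but not regularity of the component nor that its \emph{general} point has that presentation.
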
 

\begin{proof} With notation as in \eqref{eq:exthyp1}, \eqref{eq:exthyp}, for $\Ff=\Ff_v$ as in \eqref {eq:presott}, we have $$\ell = r = g-2, \; t = 2, \;  m = h^1(2p + 2q - K_C) = 3g - 7.$$Consider the map (notation as in   \eqref{eq:mu} and \eqref {eq:muW}) 
$$\mu: H^0(\omega_C(-p-q)) \otimes H^0(\omega_C(-p-q)) \to H^0(\omega_C^{\otimes 2} (-2p-2q)).$$
For $V_2 \in \mathbb{G}(2, H^0(\omega_C(-p-q)))$ general, $\mu_{V_2}$ has kernel of dimension $1$ (cf. computations as in  Claim \ref{cl:good}).  
Arguing as in the proofs of Theorem \ref{thm:mainextt} and Claim \ref{cl:good}, there is a component $\Lambda \subset \mathcal W_2  \subset \Ext^1(\omega_C(-p-q), \Oc_C(p+q))$ (dominating $\mathbb{G} (2, H^0(\omega_C(-p-q))$, hence not good) of dimension $3g-10$. 

Stability of $\Ff$ follows from Proposition \ref{prop:LN}. This shows that $\widetilde{M_C^3}(2, \omega_C) \neq \emptyset$. Regularity and generic smoothness follow from the injectivity of the symmetric Petri map as in \cite{TB000,Beau2}. 

The fact that $[\Ff]$ general has a presentation as in \eqref{eq:presott} follows from an obvious parameter computation. 
\end{proof}

By \cite[\S\,4.3]{IVG}, $\widetilde{M_C^3}(2, \omega_C)$ contains a unique regular component; Theorem \ref{prop:M32K} provides in addition 
a rigidly special presentation of its general element. 



\subsection{A conjecture for $i \geqslant 4$}\label{sec:high} For any integers $i \geqslant 4$ and $d$ as in Theorems \ref{LN}, \ref{C.F.VdG}, \ref{uepi}, \ref{unepi}, one has $\widetilde{B_C^{k_i}}(d) \neq \emptyset$. In particular, when $d$ is as in Theorem \ref{C.F.VdG}, with $j=i$, one deduces that $\widetilde{B_C^{k_i}}(d)$ contains a regular, generically smooth component. This gives existence results in the same flavour as Theorem \ref{thm:TB}. 

One may  wish to give a special, rigid presentation of the general point of all components of $\widetilde{B_C^{k_i}}(d)$.  The following less ambitious conjecture is inspired by the results in this paper.

\begin{conjecture}\label{igeq4} {\em  Let $i \geqslant 4$ and  $g > i^2 - i + 1$ be integers. Let $C$ 
be of genus $g$, with general moduli. Let $d$ be an integer such that 
$$2g-2 \leqslant d < 2g-2 - i + \frac{g-\epsilon}{i-1},$$with $\epsilon \in \{0,1\}$ such that  $d + g - (i-1) k_i \equiv \epsilon \pmod{2}$. Then, there exists an irreducible, regular component $\mathcal B \subseteq \widetilde{B_C^{k_i}}(d)$, s.t. 
$$\mathcal B = \overline{\mathcal V_{d}^{2g-1-i,1,i-1}}.$$In particular, 
$[\Ff] \in \mathcal B$ general is stable, with $i(\Ff)=i$, $s(\Ff) \geqslant g - (i-1) k_i - \epsilon >0$ and it is rigidly specially presented as 
$$0 \to N \to \Ff \to \omega_C (-D_{i-1}) \to 0,$$where

\begin{itemize}
\item[$\bullet$] $D \in C^{(i-1)}$ is general, 
\item[$\bullet$] $N \in \Pic^{d-2g+1+i}(C)$ is  general (i.e. special, non-effective), 
\item[$\bullet$] $\Ff=\Ff_v$ with $v \in \Lambda_{i-1} \subset \Ext^1(N, \omega_C(-D))$ general in a good component; 
\end{itemize}

}
\end{conjecture}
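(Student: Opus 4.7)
The plan is to imitate and generalize the strategy used for $i=1,2,3$ in Theorems \ref{i=1}, \ref{i=2}, \ref{i=3}, organizing the argument into five steps. Throughout, set $\delta=2g-1-i$ (so $L=\omega_C(-D_{i-1})$ has $j=h^1(L)=1$ and $\ell=g-i+1$), take $N\in\Pic^{d-\delta}(C)$ general, so $N$ is special and non-effective with $r=h^1(N)=3g-3-d-\delta+1\ge 1$, and compute $m=\dim\Ext^1(L,N)$ via \eqref{eq:lem3note}. The numerical hypotheses on $d$ and $g$ in the conjecture are designed to give $\ell\ge r\ge t=i-1\ge 3$, $m\ge\ell t+1$, and $2\delta-d\ge\max\{2,g-c(\ell,r,i-1)-\epsilon\}$, so that Theorems \ref{unepi}, \ref{thm:mainextt} and Corollary \ref{cor:mainextt} will apply.

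\textbf{Step 1 (Existence of a good component and stability).} Since $j=1\le t=i-1$ fails for $i\ge 3$, one cannot invoke Corollary \ref{cor:mainextt} directly; instead one runs the incidence--variety argument of the proof of Theorem \ref{thm:mainextt} with $\eta=0$, reduced to showing that for a general $V_t\in\mathbb G(t,H^0(K_C-N))$ the multiplication map $\mu_{V_t}$ in \eqref{eq:muW} is injective. This is the higher-$t$ analogue of Claim \ref{cl:Grass}: for $N$ general and non-effective of the given degree, $|K_C-N|$ is base-point-free of large dimension, and the base-point-free pencil trick (applied iteratively to general subpencils of $V_t$) reduces injectivity of $\mu_{V_t}$ to $h^0(N(-D_{i-1}))=0$, which holds by generality of $N$. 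This yields a good component $\Lambda_{i-1}\subset\mathcal W_{i-1}$, and Theorem \ref{unepi} then gives that for $v\in\Lambda_{i-1}$ general $\Ff_v$ is stable with $i(\Ff_v)=j+t=i$ and $s(\Ff_v)\ge g-(i-1)k_i-\epsilon>0$.

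\textbf{Step 2 (Dimension count and location in $\widetilde{B_C^{k_i}}(d)$).} A routine computation with \eqref{eq:yde2}, \eqref{eq:fojt}, \eqref{eq:fo1t} and \eqref{eq:nuddet} shows that when $D_{i-1}$ and $N$ are general and $v\in\Lambda_{i-1}$ is general, the total good component $\widehat{\Lambda}_{i-1}^{\rm Tot}\subset\Pp(\Efra_\delta)$ has dimension exactly $\rho_d^{k_i}$ (this is the multi-index analogue of the equalities $\varphi_0(2g-3,1,1)=0$ and $\varphi_0(2g-4,1,2)=0$ that were the backbone of Theorems \ref{i=2} and \ref{i=3}). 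Thus $\pi_{d,\delta}|_{\widehat{\Lambda}_{i-1}^{\rm Tot}}$ is expected to be generically finite onto its image $\mathcal V_d^{\delta,1,i-1}$, provided we can show the general fiber is finite.

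\textbf{Step 3 (Rigid special presentation).} This is the main obstacle. One has to prove that for $[\Ff]\in\mathcal V_d^{\delta,1,i-1}$ general, the section $\Gamma\subset F=\Pp(\Ff)$ corresponding to $\Ff\twoheadrightarrow\omega_C(-D_{i-1})$ is asi, i.e.\ $a_F(\delta)=0$ (cf.\ Remark \ref{rem:rigid} and the estimate \eqref{eq:aiutoB}). The model is Claim \ref{4.1}: if $\Gamma$ were not asi, by Proposition \ref{prop:lem4} and Remark \ref{rem:rigid} there would be an algebraic family $\mathfrak{F}$ of sections of degree $\delta$, not contained in a linear system, and adding a suitable effective divisor of degree $i-1$ to each member would produce a positive-dimensional family of reducible unisecants algebraically equivalent to a canonical section $\Gamma_0$ corresponding to $\Ff\twoheadrightarrow\omega_C$; then $\dim|\Oc_F(\Gamma_0)|\ge i-1$ would contradict the computation, valid as in \eqref{eq:casaciro2}, that $\dim|\Oc_F(\Gamma_0)|=h^0(\Ff\otimes M_{\omega_C}^\vee)-1=i(\Ff)-2=i-2$. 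This step is delicate because for $i\ge 4$ the algebraic family $\mathfrak{F}$ could a priori have dimension bigger than $1$; one must rule this out by a more careful parameter count, estimating $\dim\mathfrak{F}$ against the codimension in $\widehat{\Lambda}_{i-1}^{\rm Tot}$ of the locus over which $a_F(\delta)$ jumps, exactly as in Step 2 of the proof of Theorem \ref{i=3}.

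\textbf{Step 4 (Component and collapsing of the chain).} Once rigidity is established, $\overline{\mathcal V_d^{\delta,1,i-1}}$ has dimension $\rho_d^{k_i}$. To show it is a component of $\widetilde{B_C^{k_i}}(d)$, take any component $\mathcal B$ containing it, present $[\Ef]\in\mathcal B$ general as in \eqref{eq:Fund} with $L'$ of minimal degree $\delta'$; the Segre-invariant semicontinuity (cf.\ \cite[\S 3]{LN}) plus the analogues of Remarks \ref{rem:claim1}, \ref{rem:C.F.3} force $j'=1$, hence $t'=i-1$, and $\delta'\in\{\delta,\delta+1,\dots,2g-2\}$. The analogue of Step 4 of Theorem \ref{i=3} then builds rational dominant maps $V_d^{2g-2}\dashrightarrow V_d^{2g-3}\dashrightarrow\cdots\dashrightarrow V_d^{\delta}$ by successively subtracting base fibres from the linear system $|\Oc_F(\Gamma_0)|$ of dimension $i-2$ on the canonical scroll, showing $\overline{\mathcal V_d^{\delta',1,i-1}}=\overline{\mathcal V_d^{\delta,1,i-1}}$ for all admissible $\delta'$; together with the semicontinuity step ensuring $\tau=\eta=\epsilon=0$ along $\mathcal B$ (as in the end of Step 5 of Theorem \ref{i=3}), this yields $\mathcal B=\overline{\mathcal V_d^{\delta,1,i-1}}$.

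The hard part, as indicated, is Step 3: the control of $a_F(\delta)$ for large $i$ requires a higher-dimensional analogue of the Bertini-type argument used in the proof of Claim \ref{4.1}, and this is where a new idea beyond a direct translation of the $i=3$ argument may be needed.
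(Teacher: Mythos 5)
The statement you are addressing is stated in the paper as a \emph{conjecture}: the authors do not prove it, and the discussion immediately following it reduces the conjecture to two open facts, (a) the existence of a good component $\Lambda_{i-1}$ and (b) the rigidity of the special presentation. Your plan founders at exactly those two points.

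Your Step 1 rests on a false claim. You assert that for $V_{i-1}\in\mathbb{G}(i-1,H^0(K_C-N))$ general the map $\mu_{V_{i-1}}$ is injective, via the base-point-free pencil trick ``applied iteratively to general subpencils''. But a dimension count (carried out in the paper right after the conjecture) shows that for $i\geqslant 4$ and $d\geqslant 2g-2$ one has
$$\dim\bigl(H^0(\omega_C(-D))\otimes V_{i-1}\bigr)=(i-1)g-(i-1)^2>5g-3-d-2i=h^0(2K_C-N-D),$$
so $\mu_{V_{i-1}}$ is \emph{never} injective, for any $V_{i-1}$. The bpf pencil trick controls $\ker(\mu_{V_2})$ for pencils only; knowing that the restrictions to general subpencils are injective says nothing about the kernel of the full map $\mu_{V_t}$ for $t\geqslant 3$, and here that kernel is forced to be nonzero. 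Consequently the case $\eta=0$ of Theorem \ref{thm:mainextt} can never apply: one would have to exhibit an irreducible subvariety $\Sigma_\eta$ of the Grassmannian of positive codimension $\eta$ on which $\dim\ker(\mu_{V_{i-1}})$ is exactly $\eta$, and no such construction is known. This is precisely open problem (a).

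Your Step 3 also fails as written, for a computational reason. From \eqref{eq:casaciro2} one gets $h^0(\Ff\otimes M_{\omega_C}^{\vee})=h^1(\Ff)=i$, hence $\dim|\Oc_F(\Gamma_0)|=i-1$, not $i-2$ as you compute. Therefore producing an $(i-1)$-dimensional linear family of unisecants equivalent to the canonical section $\Gamma_0$ is not a contradiction --- it is exactly what happens, since $\Gamma_{D}+f_{D}$ moves in the $(i-1)$-dimensional system $|\Gamma_0|$. The correct reformulation of rigidity, as the paper explains, is that the Severi variety of $(i-1)$-nodal curves in $|\Gamma_0|$ has the expected dimension zero, equivalently that $D$, viewed as a divisor on $\Gamma_D$, imposes independent conditions on $|\Gamma_0|$; the available regularity results for Severi varieties do not cover this situation. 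This is open problem (b). So both genuinely hard steps of your plan rest on arguments that demonstrably break down for $i\geqslant 4$, which is why the statement remains a conjecture.
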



\noindent
The bounds on $g$ and $d$ in Conjecture \ref{igeq4} ensure the following: 

\begin{itemize}
\item[(1)] $\rho_d^{k_i} \geqslant 0$ which is equivalent to $ d \leqslant 2g-2 - i + \frac{4g-3}{i}$ (cf. \eqref{eq:bn}). 
\item[(2)] $N \in \Pic^{d-2g+1+i}(C)$ general is special, non-effective; indeed $r= 3g-2-i -d >0$. 
\item[(3)] $r \geqslant i-1 = \cork(\partial_v)$, which is equivalent to  $d \leqslant   3g-1-2i$. 
\item[(4)] There are no obstructions for a good component $\Lambda_{i-1} \subset \Ext^1(\omega_C(-D),N)$ to exist; indeed, one has $$\dim(\Pp) = m-1 = 5g-4 - 2i - d$$ from \eqref{eq:lem3note}, and from \eqref{eq:clrt}, Remark \ref{unepine}, we have 
$$c(\ell,r,t) =  (i-1) k_i = (i-1) (d-2g+2+i),$$since $t =i-1$ and $N$ non-effective. Therefore 
$$\dim(\Lambda_{i-1}) = m-1 - c (\ell,r,t) = 3g-2 - i - i (d-2g+2+i)$$is non-negative as soon as 
$d \leqslant 2g-3-i + \frac{3g-2}{i}$. 
\item[(5)] From Remark \ref{unepine}, $v \in \Lambda_{i-1}$ general is such that 
$s(\Ff_v) \geqslant g - (i-1) k_i - \epsilon$, which is positive because of the upper-bound on $d$. Thus 
$\Ff_v$ is stable.
\item[(6)] The interval $ 2g-2 \leqslant d < 2g-2 - i + \frac{g-\epsilon}{i-1}$ is not empty. 
\end{itemize}


From Remark \ref{rem:rigid} and \eqref{eq:fojt}, to prove 
Conjecture \ref{igeq4} it would suffice to prove the following two facts:


\begin{itemize}
\item[(a)] {\em For $i \geqslant 4$, $D \in C^{(i-1)}$ general and $L = \omega_C(- D)$, there exists a good component 
$$\Lambda_{i-1} \subseteq \mathcal W_{i-1} \subset {\rm Ext}^1(\omega_C(- D), N).$$ }

\item[(b)] {\em For $v \in \Lambda_{i-1}$ general, $\Ff_v$ is rsp via $\omega_C(-D)$. }

\end{itemize}


Concerning (a), notice that for no $V_{i-1} \in \mathbb{G}(i-1, H^0(K_C-N))$ the map $\mu_{V_{i-1}}$ can be injective. Indeed, $d\geqslant 2g-2$ and $i \geqslant 4$ imply
$$\dim(H^0(K_C-D) \otimes V_{i-1}) = (i-1) g - (i-1)^2>5g-3-d-2i=h^0(2K_C - N - D).$$
Hence, according to Theorem \ref{thm:mainextt}, one should find an irreducible subvariety 
$\Sigma_{\eta} \subset \mathbb{G}(i-1, H^0(K_C - D))$  of codimension $\eta:= d + (i-6) g + 7 - (i-2)^2$ such that $\dim({\rm Ker}(\mu_{V_{i-1}})) = \eta$ for $V_{i-1} \in \Sigma_{\eta}$ general.

Concerning (b), the minimality assumption implies $a_{F_v} (2g-1-i) \leqslant 1$, for $v \in \Lambda_{i-1}$ general and $F_v = \Pp(\Ff_v)$. 
To prove rigidity, one has to show that $a_{F_v} (2g-1-i) = 0$. This is equivalent to prove a regularity statement for a Severi variety of nodal curves on $F_v$. Indeed, for any section $\Gamma_{D}$ corresponding to a quotient  $\Ff_v \to\!\! \to \omega_C(-D)$ as above, 
the linear system $|\Gamma_D + f_D|$ has dimension $i-1$, it is independent on $D$ and its general member $\Gamma$ is a section corresponding to a quotient $\Ff_v \to \!\! \to \omega_C$. The curve $\Gamma_D + f_D$ belongs to the Severi variety of $(i-1)$-nodal curves in $|\Gamma|$. So rigidity is equivalent to show that this Severi variety has the expected dimension zero. Proving this is equivalent to prove that $D$, considered as a divisor on $\Gamma_D$, imposes independent condition to $|\Gamma|$. Unfortunately, the known results on regularity of Severi varieties (see  \cite{N,Tanb,Tanb1}) do not apply in this situation.



\begin{thebibliography}{}


\bibitem{ACGH} E.~Arbarello, M.~Cornalba, P.A.~Griffiths, J.~Harris,  {\em Geometry of algebraic curves, Vol. I}.
Grundlehren der Mathematischen Wissenschaften, {\bf 267}.
Springer-Verlag, New York, 1985.

\bibitem{Ballico} E.~Ballico,
Brill-Noether theory for vector bundles on projective curves, {\em Math. Proc. Camb. Phil. Soc.} {\bf 124} (1998), 483--499.

\bibitem{Ballico1} E.~Ballico,
Special stable vector bundles on smooth curves, {\em International Mathematical Forum} {\bf 3} (2008), 1121--1130.


\bibitem{Beau} A.~Beauville, Vector bundles on curves and generalized theta functions: recent results and open problems, Current topics in complex algebraic geometry (Berkeley, CA, 1992/93), {\em Math. Sci. Res. Inst. Publ.}, {\bf 28} (1995), Cambridge Univ. Press, Cambridge, 17-Ð33.


\bibitem{Beau2} A.~Beauville, Remark on a conjecture of Mukai, {\em arxiv:math/0610516v1[mathAG]17 Oct 2006},  1--6. 


\bibitem{Be} A.~Bertram, Moduli of rank-2 vector bundles, theta divisors and the geometry of curves in projective space, in {\em J. Differentia Geometry}, {\bf 35} (1992), 429--469.

\bibitem{BeFe} A.~Bertram, B.~Feinberg, On stable rank-two bundles with canonical determinant and many sections, in {\em Algebraic Geometry (Catania 1993/Barcelona 1994)}, Lecture Notes in Pure and Pallied  Mathematics, {\bf 200} (1998), Dekker, 259--269.




\bibitem{BGN} L.~Brambila-Paz, I.~Grzegorczyk, P.E.~Newstead, Geography of Brill-Noether loci for small slopes,  
{\em J. Algebraic Geom.}, {\bf 6} (1997), no. 4, 645-669. 


\bibitem{BMNO} L.~Brambila-Paz, V.~Mercat, P.E.~Newstead, F.~Ongay, Nonemptiness of Brill-Noether loci, 
{\em Internat. J. Math.}, {\bf 11} (2000), no. 6, 737-760. 


\bibitem{CCFMnonsp} A.~Calabri, C.~Ciliberto, F.~Flamini, R.~Miranda,
Non-special scrolls with general moduli, {\em Rend. Circ. Mat. Palermo} {\bf 57} (2008), no. 1, 1--32.


\bibitem{CCFMsp} A.~Calabri, C.~Ciliberto, F.~Flamini, R.~Miranda,
Special scrolls whose base curve has general moduli, {\em Contemporary Mathematics} {\bf 496} (2009), 133--155.

\bibitem{CCFMBN} A.~Calabri, C.~Ciliberto, F.~Flamini, R.~Miranda, Brill-Noether theory and non-special scrolls with general moduli, {\em Geom. Dedicata}, {\bf 139} (2009), 121-138.


\bibitem{CMT} A.~Castorena, A.L.~Martin, M.~Teixidor I Bigas,
Petri map for vector bundles near good bundles, {\em arxiv:1203.0983v1 [math.AG] 5 Mar 2012}, 1--10.



\bibitem{CF} C.~Ciliberto, F.~Flamini, Brill-Noether loci of stable Rank-two vector bundles on a general curve, 
to be published in the {\em Proceedings of the conference "Geometry and Arithmetic: a conference on the occasion of  
Gerard van der Geer's 60th birthday}, Schiermonnikoog, September 20-24  2010, (2012), 1--19.





\bibitem{FO} G.~Farkas, A.~Ortega, The maximal rank conjecture and rank two Brill-Noether theory,
{\em Pure and Applied Mathematics Quarterly}, {\bf 7} (2011), 1265--1296.


\bibitem{Frie} R.~Friedman, {\em Algebraic surfaces and holomorphic vector bundles}, Universitext.
Springer-Verlag, New York, 1998.




\bibitem{GP2} L.~Fuentes-Garcia, M.~Pedreira, The projective theory of ruled surfaces,
{\em Note Mat.} {\bf 24} (2005), no. 1, 25--63.

\bibitem{GP3} L.~Fuentes-Garcia, M.~Pedreira, The general special scroll of genus
$g$ in $\Pp^N$. Special scrolls in $\Pp^3$, {\em math.AG/0609548
20Sep2006} (2006), pp. 13.


\bibitem{Ghio} F.~Ghione, Quelques r\'esultats de Corrado Segre sur les surfaces r\'egl\'ees, 
{\em Math. Ann.}, {\bf 255} (1981), no. 3, 77--95.


\bibitem{GT} I.~Grzegorczyk, M.~Teixidor I Bigas, Brill-Noether theory for stable vector bundles.
{\em Moduli spaces and vector bundles}, 29-50, {\em London Math. Soc. Lecture Note Ser.}, {\bf 359}, Cambridge Univ. Press, Cambridge, 2009. 


\bibitem{Ha} R.~Hartshorne, {\em Algebraic Geometry} (GTM No. {\bf 52}), Springer-Verlag, New York - Heidelberg, 1977.


\bibitem{HR} J.M.~Hwang, S.~Ramanan, Hecke curves and Hitchin discriminant,
{\em Annales Sci. \'Ecole Normale Sup\'erieure}, {\bf 37} (2004), 801--817.




\bibitem{IVG} E.~Izadi, B.~van Geemen, The tangent space to the moduli space of vector bundles on a curve and the singular locus of the theta divisor of the Jacobian, {\em J. Algebraic Geom.}, {\bf 10} (2001), no. 1, 133--177.


\bibitem{LN} H.~Lange, M.S.~Narashiman, Maximal subbundles of rank-two vector bundles on curves,
{\em Mat. Ann.}, {\bf 266} (1983), 55--72.


\bibitem{LaNw} H.~Lange, P.E.~Newstead, V.~Strehl, Non-emptiness of Brill-Noether loci in $M(2,L)$, {\em arxiv:1312.1844v2 [math.AG] 28 Jan 2014}, 1--22.


\bibitem{LNP} H.~Lange, P.E.~Newstead, S.S.~Park, Non-emptiness of Brill-Noether loci in $M(2,K)$, {\em arxiv:1311.5007v1 [math.AG] 20 Nov 2013}, 1--21.

\bibitem{Lau} G.~Laumon, Fibres vectoriels speciaux,
{\em Bull. Soc. Math. France}, {\bf 119} (1990), 97--119.


\bibitem{Ma3} M.~Maruyama, On automorphism groups of ruled surfaces.
{\em J. Math. Kyoto Univ.},  {\bf 11-1} (1971), 89--112. 



\bibitem{M1} V.~Mercat, Le probl$\grave{e}$me de Brill-Noether pur des fibr$\acute{e}$s stables de petite pente,  
{\em J. reine. angew. Math}, {\bf 56} (1999), 1--41.


\bibitem{M} V.~Mercat, Le probl$\grave{e}$me de Brill-Noether: pr\'esentation. $http://www.liv.ac.uk/ \sim newstead/bnt.html$ (2001). 

\bibitem{M4} V.~Mercat, Le probl$\grave{e}$me de Brill-Noether: 
{\em Bull. London Math. Soc.},  {\bf 33} (2001),  no. 5, 535-542.
 
\bibitem{Muk2} S.~Mukai, Vector bundles and Brill-Noether theory, in {\em Current topics in complex algebraic geometry}, 145--158; Math.Sci. Res. Inst. Publ.  {\bf 28} (1995), Cambridge Univ. Press. 

\bibitem{Muk} S.~Mukai, Non-Abelian Brill-Noether theory and Fano 3-folds,
{\em Sugaku Expositions}, {\bf 14} (2001), no. 2, 125-153. [translation of {\em Sugaku}, {\bf 49} (1997), no. 1, 1-24]


\bibitem{Na} M.~Nagata, On the self-intersection number of a section on a ruled surface.
{\em Nagoya Math. J.}, {\bf 37} (1970), 191--196 . 

\bibitem{NR1}  M.S.~Narashiman, S.~Ramanan, Deformation of the moduli of vector bundles, {\em Ann. Math.}, 
{\bf 101} (1975), 391--417 . 


\bibitem{New} P.E.~Newstead, {\em Introduction to moduli problems and orbit spaces},
Tata Institute of Fundamental Research Lectures on Mathematics and
Physics, {\bf 51}, Narosa Publishing House, New Delhi, 1978.


\bibitem{New2} P.E.~Newstead, Vector bundles on algebraic curves. Lectures during 
the 25th Autumn School in Algebraic Geometry {\em Vector bundles over curves and higher dimensional varieties, and their moduli}, Lukecin, Poland; 8--15 September 2002 (available at $http://www.mimuw.edu.pl/\sim jarekw/postscript/Lukecin-Newstead.ps$). 

\bibitem{N} Nobile A., Families of curves on surfaces, {\em Math. Zeitschrift}, {\bf 187} (1984), 453-470.


\bibitem{Oss} B.~Osserman, Brill-Noether loci with fixed determinant in rank 2, {\em arxiv:1005.0448v2 [math.AG] 24 Aug 2011}, 1--20.


\bibitem{Oss2} B.~Osserman, Special determinants in higher rank Brill-Noether theory, {\em Int. J. Math.}, {\bf 24}  (2013), no. 11, 1--20.

\bibitem{Ram} S.~Ramanan, The moduli spaces of vector bundles over an algebraic curve,
{\em Math. Ann.}, {\bf 200} (1973), 68--84.

\bibitem{Seg} C.~Segre, Recherches g\'en\'erales sur les courbes et les surfaces r\'egl\'ees alg\'ebriques,
{\em OPERE - a cura dell'Unione Matematica Italiana e col
contributo del Consiglio Nazionale delle Ricerche}, {\bf vol. 1},
\S\,XI - pp. 125-151, Edizioni Cremonese, Roma 1957 (cf. {\em
Math. Ann.} {\bf 34} (1889), 1--25).

\bibitem{Ser} E.~Sernesi, {\it Deformations of Algebraic Schemes}, Grundlehren der mathematischen Wissenschaften {\bf 334}, Springer-Verlag, Berlin, 2006.

\bibitem{Ses} C.S.~Seshadri, {\em Fibr\'es vectoriels sur les courbes alg\'ebriques}, Ast\'erisque, {\bf 96}. S.M.F., Paris, 1982.


\bibitem{Sun} N.~Sundaram, Special divisors and vector bundles, {\em T\^ohoku Math. J.}, {\bf 39} (1987), 175--213.


\bibitem{Tha} M.~Thaddeus, An introduction to the topology of the moduli space of stable bundles on a Riemann surface,
{\em Geometry and physics (Aarhus, 1995)}, 71-99, Lecture Notes in Pure and Appl. Math., 
{\bf 184}, Dekker, New York, 1997


\bibitem{Tan} X.J.~Tan, Some results on the existence of rank 2 special stable vector bundles, {\em Man. Math.}, {\bf 75} (1992), 365--373.

\bibitem{Tanb} A .~Tannenbaum A., Families of algebraic curves with nodes, 
{\em Comp. Math.}, {\bf 41}, (1980), 107-126.

\bibitem{Tanb1} A .~Tannenbaum, Families of curves with nodes on $K3$ surfaces, 
{\em Math. Ann.}, {\bf 260} (1982), 239-253.

\bibitem{TB0} M.~Teixidor I Bigas, Brill-Noether theory for vector bundles of rank $2$,
{\em T\^ohoku Math. J.}, {\bf 43} (1991), 123--126.

\bibitem{TB00} M.~Teixidor I Bigas, On the Gieseker-Petri map for rank 2 vector bundles,
{\em Man. Math.}, {\bf 75} (1992), 375--382.


\bibitem{TB1} M.~Teixidor I Bigas, Rank two vector bundles with canonical determinant,
{\em Math. Nachr.}, {\bf 265} (2004), 100--106. 

\bibitem{TB} M.~Teixidor I Bigas, Existence of  vector bundles of rank-two with sections,
{\em Adv. Geom.}, {\bf 5} (2005), 37--47.


\bibitem{TB000} M.~Teixidor I Bigas, Petri map for rank two bundles with canonical determinant,
{\em Compos. Math.}, {\bf 144} (2008), no. 3, 705--720. 



\bibitem{Voi} C.~Voisin, Sur l'application de Wahl des courbes satisfaisant le condition de Brill-Noether-Petri, {\em Acta Math.}, {\bf 168} (1992), 249--272.

\end{thebibliography}
\end{document}